\documentclass[11pt]{article}
\usepackage[usenames,dvipsnames]{xcolor}
\usepackage{amsmath}
\usepackage{mathrsfs}
\usepackage{amssymb}
\usepackage{dcolumn}
\usepackage{graphicx}
\usepackage{fancybox}
\usepackage{amsthm}
\usepackage{tikz}
\usepackage{geometry}
\usepackage[sc]{mathpazo}    
\usepackage{bbm}
\usepackage{authblk}

\usetikzlibrary{arrows}
\usetikzlibrary{decorations.markings}
\pgfdeclarelayer{nodelayer}
\pgfdeclarelayer{edgelayer}
\pgfsetlayers{edgelayer,nodelayer,main}

\usepackage{physics}
\usepackage[UKenglish]{babel}
\usepackage[title]{appendix}
\usepackage{subcaption}
\usepackage{adjustbox}
\usepackage{ragged2e}
\usepackage{comment}
\usepackage{nicematrix}
\usepackage{longtable}

\usepackage[colorlinks=true,linkcolor=NavyBlue,urlcolor=RoyalBlue,citecolor=PineGreen,bookmarks=true,bookmarksopen=true,bookmarksopenlevel=2,unicode=true,linktocpage]{hyperref}
\usepackage{aliascnt}
\usepackage{cleveref}

\usepackage[backend=biber,natbib=true,citestyle=alphabetic,bibstyle=alphabetic,giveninits=true,maxbibnames=99,isbn=false]{biblatex}
\usepackage{xurl}
\DeclareFieldFormat[article]{eid}{art\adddot\space#1}
\AtBeginBibliography{\setlength{\emergencystretch}{2em}}

\AtEveryBibitem{
    \clearfield{issn}
    \clearfield{month}
}

\usepackage{csquotes}

\newtheorem{theorem}{Theorem}[section]

\newaliascnt{definition}{theorem}
\newtheorem{definition}[definition]{Definition}
\aliascntresetthe{definition}

\newaliascnt{proposition}{theorem}
\newtheorem{proposition}[proposition]{Proposition}
\aliascntresetthe{proposition}

\newaliascnt{lemma}{theorem}
\newtheorem{lemma}[lemma]{Lemma}
\aliascntresetthe{lemma}

\newaliascnt{corollary}{theorem}
\newtheorem{corollary}[corollary]{Corollary}
\aliascntresetthe{corollary}

\newaliascnt{example}{theorem}
\newtheorem{example}[example]{Example}
\aliascntresetthe{example}

\newaliascnt{remark}{theorem}
\newtheorem{remark}[remark]{Remark}
\aliascntresetthe{remark}

\newaliascnt{hypothesis}{theorem}

\aliascntresetthe{hypothesis}

\newaliascnt{conjecture}{theorem}
\newtheorem{conjecture}[conjecture]{Conjecture}
\aliascntresetthe{conjecture}

\newaliascnt{question}{theorem}

\aliascntresetthe{question}

\crefname{theorem}{theorem}{theorems}
\Crefname{theorem}{Theorem}{Theorems}

\crefname{definition}{definition}{definitions}
\Crefname{definition}{Definition}{Definitions}

\crefname{proposition}{proposition}{propositions}
\Crefname{proposition}{Proposition}{Propositions}

\crefname{lemma}{lemma}{lemmata}
\Crefname{lemma}{Lemma}{Lemmata}

\crefname{corollary}{corollary}{corollaries}
\Crefname{corollary}{Corollary}{Corollaries}

\crefname{example}{example}{examples}
\Crefname{example}{Example}{Examples}

\crefname{remark}{remark}{remarks}
\Crefname{remark}{Remark}{Remarks}

\crefname{hypothesis}{hypothesis}{hypotheses}
\Crefname{hypothesis}{Hypothesis}{Hypotheses}

\crefname{conjecture}{conjecture}{conjectures}
\Crefname{conjecture}{Conjecture}{Conjectures}

\crefname{question}{question}{questions}
\Crefname{question}{Question}{Questions}

\newcommand{\st}{{\bigm|}}

\newcommand{\Inv}{\mathrm{Inv}}
\newcommand{\End}{\mathrm{End}}
\newcommand{\Hom}{\mathrm{Hom}}
\newcommand{\Path}{\mathsf{Path}}
\newcommand{\State}{\mathsf{St}}
\newcommand{\Span}{\operatorname{span}}

\DeclareMathOperator{\Alg}{Alg}

\newcommand{\R}{\mathbb{R}}
\newcommand{\Z}{\mathbb{Z}}
\newcommand{\C}{\mathbb{C}}
\newcommand{\N}{\mathbb{N}}

\newcommand{\Q}{\mathbb{Q}}

\newcommand{\Net}{\mathsf{N}}
\newcommand{\Web}{\mathsf{W}}
\newcommand{\W}{\mathscr{W}}
\newcommand{\WebBasis}[1]{\mathsf{B}^{\mathrm{web}}_{#1}}
\newcommand{\GenSet}{\mathsf{G}}

\newcommand{\id}{\mathrm{id}}

\newcommand{\uqtwo}{U_q(\mathfrak{sl}_2)}
\newcommand{\uqslt}{U_q(\mathfrak{sl}_3)}
\newcommand{\eva}{\mathrm{ev}}
\newcommand{\coev}{\mathrm{coev}}
\newcommand{\wt}{\mathrm{wt}}

\newcommand{\TL}[2]{(E^{#1}_{#2})}

\newcommand{\Ydual}{\overline{\mathrm{Y}}}
\newcommand{\Fold}{\operatorname{Fold}}
\newcommand{\Unfold}{\operatorname{Unf}}
\newcommand{\Mir}{\operatorname{mir}_{\mathrm v}}
\newcommand{\WeylAlt}{\operatorname{Alt}_{A_2}}
\newcommand{\YL}[2]{(\mathrm{Y}\Ydual^{#1#2})}
\newcommand{\LY}[2]{(\Ydual\mathrm{Y}^{#1#2})}

\newcommand{\Rep}{\mathbf{Rep}^{\pm}(\mathfrak{sl}_3)}
\newcommand{\Sp}{\mathbf{Sp}_{A_2}}
\newcommand{\Fsp}{\mathbf{FSp}_{A_2}}
\newcommand{\ccup}{\mathrm{cup}}
\newcommand{\ccap}{\mathrm{cap}}

\tikzset{
    style/.style={#1}, 
    none/.style={},     
    mid arrow/.style={postaction={decorate,decoration={
            markings, mark=at position 0.55 with {\arrow{stealth}}
        }}
    },
    dotted/.style={dashed}
}

\author[2]{Azat M. Gainutdinov}
\author[1,3,4]{Jesper Lykke Jacobsen}
\author[1,3]{Yueheng Li}

\affil[1]{Laboratoire de Physique de l'\'Ecole Normale Sup\'erieure, ENS, Universit\'e PSL,
CNRS, Sorbonne Universit\'e, Universit\'e de Paris, F-75005 Paris, France}
\affil[2]{Institut Denis Poisson, CNRS, Universit\'e de Tours, Universit\'e d'Orl\'eans, Parc de Grandmont, F-37200 Tours, France}
\affil[3]{Sorbonne Universit\'e, \'Ecole Normale Sup\'erieure, CNRS, 
Laboratoire de Physique (LPENS), F-75005 Paris, France}
\affil[4]{Universit\'e Paris Saclay, CNRS, CEA, Institut de Physique Th\'eorique,  F-91191 Gif-sur-Yvette, France}

\begin{document}

\title{Cellular algebra of $A_2$ webs}
\date{}
\maketitle

\begin{abstract}
        We introduce a diagram algebra $\mathcal{K}_L$ based on Kuperberg's $A_2$-webs and prove a Schur-Weyl duality $\mathcal{K}_L \cong\End_{\uqslt}((V^+\oplus V^-)^{\otimes L})$, where $V^\pm$ are the fundamental representations of $\uqslt$. Thus, $\mathcal{K}_L$ is a natural $A_2$ analogue of the Temperley-Lieb algebra. We construct an explicit cellular structure for $\mathcal{K}_L$, with the web basis as its cellular basis; with the cell modules indexed by dominant $\uqslt$-weights, whose bases are parametrized by dominant lattice paths via a recursive growth algorithm. Using this structure, we determine local generators of $\mathcal{K}_L$ and establish web formulations of the first and second fundamental theorems of the invariant theory. In particular, we show that the braid-group image, augmented by local idempotents, does not generate $\mathcal{K}_L$, and identify an additional local generator. Finally, we derive a bimodule decomposition of  $(V^+\oplus V^-)^{\otimes L}$, yielding a complete set of irreducible $\mathcal{K}_L$-modules for formal or generic $q$.
\end{abstract}

\tableofcontents

\section{Introduction}

Let $\mathcal{A}$ be a group or a Hopf algebra over a field $\C(q)$, and $V$ a finite-dimensional $\mathcal{A}$-module. A central object in the invariant theory of $\mathcal{A}$ is the endomorphism algebra
\begin{align}\label{eqn: general endomorphism algebra}
    \mathcal{K}:=\End_{\mathcal{A}}\big( V^{\otimes L} \big).
\end{align}
The first and second fundamental theorems (henceforth FFT and SFT) of invariant theory for the pair $(\mathcal{A},V)$ give a \emph{presentation} of $\mathcal{K}$. Specifically, FFT determines a set of \emph{generators} $\GenSet$, along with a surjective algebra morphism 
\begin{align}\label{eqn: surjective morphism from free algebra}
    \Psi: \Alg_{\C(q)}(\GenSet) \to \mathcal K,
\end{align}
where $\Alg_{\C(q)}(\GenSet)$ is the free $\C(q)$-algebra generated by $\GenSet$; SFT, on the other hand, describes $\ker(\Psi)$ as the ideal generated by a list of linear relations between words in $\GenSet$ called \emph{defining relations}. The case $(\mathcal{A}=\mathrm{GL}(n,\C(q)),V=\C(q)^n)$ is the scalar extension to $\C(q)$ of the classical case $(\mathcal{A}=\mathrm{GL}(n,\C),V=\C^n)$, first considered by Schur \cite{schur_1901} and further developed by Weyl \cite{weyl_1939_classical}; it reveals deep connections between representations of $\mathrm{GL}(n)$ and the symmetric group $S_L$, now termed the Schur-Weyl duality.

In what follows, we take $\mathcal{A}=U_q(\mathfrak{g})$ to be the Drinfeld-Jimbo quantum group associated with a simple Lie algebra $\mathfrak{g}$ \cite{Jimbo:161297}, and $V$ to be one of its fundamental representations. For certain Dynkin types of $\mathfrak{g}$, a particular basis of $\mathcal{A}$-invariants in $V^{\otimes L}$ is encoded by planar graphs called \emph{webs} \cite{kuperberg_1996_spiders,cautis_kamnitzer_morrison_2014,bodish2026typebwebs,bodish2021typecwebs}. Consequently, webs encoding invariants in $V^{\otimes L}\otimes (V^*)^{\otimes L}$ give a \emph{web basis} for $\mathcal{K}$. Under the web basis, the composition of $U_q(\mathfrak{g})$-intertwiners is translated to concatenation of graphs. Linear relations between intertwiners are then encoded by local reduction rules of webs, i.e. the replacement of subgraphs by linear combinations of other subgraphs. Here, by locality, we mean that each subgraph involved in the replacements contains a finite number of vertices independent of $L$.

When a web basis exists, the presentation of the algebra can be interpreted diagrammatically. We demonstrate this perspective for $\mathcal{A}=\uqtwo$ and $V$ its fundamental representation. Here webs do not contain multivalent vertices, hence degenerate to planar non-crossing matchings of points \cite{Weyl1932,frenkel_khovanov_1997}, also called \emph{$A_1$ webs}. There is indeed an isomorphism of algebras \cite{Jimbo:161297}:
\begin{align}\label{eqn: TL centralizing property}
    \End_{\uqtwo}\big( V^{\otimes L} \big) \cong \mathrm{TL}_L(\delta),
\end{align}
where the Temperley--Lieb algebra $\mathrm{TL}_L(\delta)$ \cite{temperley_1971_relations} is a diagram algebra of non-crossing matchings of $L$ upper and $L$ lower points, multiplication being vertical concatenation. As a unital algebra, it has web generators $e_i$ $(1\leq i\leq L-1)$,
\begin{equation}\label{eqn: TL generators}
    \scalebox{0.8}{\begin{tikzpicture}[x=1cm,y=1cm,line width=0.8pt,line cap=round]

    \begin{scope}
    \node at (-1.05,0.8) {$e_i={}$};

    \draw (0,0) -- (0,1.6);
    \node at (0,-0.28) {$1$};
    \node at (0.9,0.8) {$\cdots$};
    \draw (1.8,0) -- (1.8,1.6);
    \node at (1.8,-0.28) {$i-1$};

    \draw (3,1.6) .. controls (3,0.9) and (4.2,0.9) .. (4.2,1.6);
    \draw (3,0) .. controls (3,0.7) and (4.2,0.7) .. (4.2,0);
    \node at (3,-0.28) {$i$};
    \node at (4.2,-0.28) {$i+1$};

    \draw (5.4,0) -- (5.4,1.6);
    \node at (5.4,-0.28) {$i+2$};
    \node at (6.3,0.8) {$\cdots$};
    \draw (7.2,0) -- (7.2,1.6);
    \node at (7.2,-0.28) {$L$};
    \end{scope}
\end{tikzpicture}}
\end{equation}
with only one local reduction rule: evaluate a loop to $\delta=-q-q^{-1}$. Indeed, the defining relations of $\mathrm{TL}_L(\delta)$ are recalled in \cref{fig: TL defining relations}: relation (a) uses the loop evaluation, while (b) and (c) are simply the isotopy equivalence of different embeddings of the same graph.
\begin{figure}[htbp]
    \centering
    \scalebox{0.9}{\begin{tikzpicture}[x=1cm,y=1cm,line width=0.8pt,line cap=round,line join=round]

    \begin{scope}
        \draw (0,0) .. controls (0,0.35) and (0.75,0.35) .. (0.75,0);
        \draw (0.375,1.20) ellipse [x radius=0.375,y radius=0.32];
        \draw (0,2.40) .. controls (0,2.05) and (0.75,2.05) .. (0.75,2.40);

        \node at (1.20,1.20) {$=$};
        \node at (1.62,1.20) {$\delta$};

        \draw (2.00,0) .. controls (2.00,0.55) and (2.75,0.55) .. (2.75,0);
        \draw (2.00,2.40) .. controls (2.00,1.85) and (2.75,1.85) .. (2.75,2.40);

        \node at (1.375,-0.48) { (a) $e_i^2=\delta e_i$};
    \end{scope}

    \begin{scope}[xshift=4.05cm]
        \draw (0,0) .. controls (0,0.25) and (0.65,0.25) .. (0.65,0);
        \draw (0,0.80) .. controls (0,0.55) and (0.65,0.55) .. (0.65,0.80);
        \draw (1.30,0) -- (1.30,0.80);

        \draw (0,0.80) -- (0,1.60);
        \draw (0.65,0.80) .. controls (0.65,1.05) and (1.30,1.05) .. (1.30,0.80);
        \draw (0.65,1.60) .. controls (0.65,1.35) and (1.30,1.35) .. (1.30,1.60);

        \draw (0,1.60) .. controls (0,1.85) and (0.65,1.85) .. (0.65,1.60);
        \draw (0,2.40) .. controls (0,2.15) and (0.65,2.15) .. (0.65,2.40);
        \draw (1.30,1.60) -- (1.30,2.40);

        \node at (1.77,1.20) {$=$};

        \draw (2.25,0) .. controls (2.25,0.55) and (2.90,0.55) .. (2.90,0);
        \draw (2.25,2.40) .. controls (2.25,1.85) and (2.90,1.85) .. (2.90,2.40);
        \draw (3.55,0) -- (3.55,2.40);

        \node at (1.775,-0.48) { (b) $e_i e_{i\pm 1}e_i=e_i$};
    \end{scope}

    \begin{scope}[xshift=9.05cm]
        \draw (0,0) .. controls (0,0.38) and (0.55,0.38) .. (0.55,0);
        \draw (0,1.20) .. controls (0,0.82) and (0.55,0.82) .. (0.55,1.20);
        \draw (1.25,0) -- (1.25,1.20);
        \draw (1.80,0) -- (1.80,1.20);

        \draw (0,1.20) -- (0,2.40);
        \draw (0.55,1.20) -- (0.55,2.40);
        \draw (1.25,1.20) .. controls (1.25,1.58) and (1.80,1.58) .. (1.80,1.20);
        \draw (1.25,2.40) .. controls (1.25,2.02) and (1.80,2.02) .. (1.80,2.40);

        \node at (2.22,1.20) {$=$};

        \draw (2.65,0) -- (2.65,1.20);
        \draw (3.20,0) -- (3.20,1.20);
        \draw (3.90,0) .. controls (3.90,0.38) and (4.45,0.38) .. (4.45,0);
        \draw (3.90,1.20) .. controls (3.90,0.82) and (4.45,0.82) .. (4.45,1.20);

        \draw (2.65,1.20) .. controls (2.65,1.58) and (3.20,1.58) .. (3.20,1.20);
        \draw (2.65,2.40) .. controls (2.65,2.02) and (3.20,2.02) .. (3.20,2.40);
        \draw (3.90,1.20) -- (3.90,2.40);
        \draw (4.45,1.20) -- (4.45,2.40);

        \node at (2.225,-0.48) { (c) $e_i e_j=e_j e_i$, $|i-j|>1$};
    \end{scope}

\end{tikzpicture}}
    \caption{The defining relations of the Temperley--Lieb algebra in diagrammatic form. Relations (b) are obtained by the shown figure and its mirror reflection.}
    \label{fig: TL defining relations}
\end{figure}

Web bases are known for many other quantum groups $U_q(\mathfrak{g})$, and the constructions depend on the type of $\mathfrak{g}$. Since the seminal construction of webs of type $A_2$, $B_2$ and $G_2$ by Kuperberg \cite{kuperberg_1996_spiders}, the type $A$ family has been constructed in \cite{cautis_kamnitzer_morrison_2014}, the type $C$ family in \cite{Rose2021On,Bodish2023Web,bodish2021typecwebs}, and the type $B$ family in \cite{bodish2026typebwebs}. There is also a different construction for type $A_3$ in \cite{gaetz_pechenik_pfannerer_striker_swanson_2026}, where the web basis enjoys a remarkable rotational invariance property. 

For an algebra with a web basis, we propose the following form of the FFT and SFT:

\begin{definition}\label{def: web presentation}
    Let $\mathcal K$ be an algebra with a web basis $\mathsf{B}$. A \emph{web presentation} consists of the following:
    \begin{enumerate}
        \item (FFT) A set $\GenSet\subset \mathsf{B}$, such that \eqref{eqn: surjective morphism from free algebra} is surjective.
        \item (SFT) A set of local reduction rules that reduce any word in $\GenSet$ to an element in the span of $\mathsf{B}$.
    \end{enumerate}
\end{definition}

While much is known about web bases, web presentations of the endomorphism algebras $\mathcal{K}$ remain largely unknown: beyond the aforementioned Temperley-Lieb algebra, a web presentation is available only in special cases where a presentation of $\mathcal{K}$ is known a priori through its connection to the Hecke algebra \cite{martin_1992_tlm}\cite{webimmanants}\cite[Corollary 3.14]{Lafay:2024tne}.

\subsection{Main results}

Our first main result is a web presentation for the endomorphism algebra $\mathcal{K}$ associated to the pair $(\mathcal{A}=\uqslt, V=V^+ \oplus V^-)$, where $V^\pm$ are the fundamental representations of $\uqslt$. Specifically, we define the algebra $\mathcal{K}_L$ in \cref{def: K_L}, as a $\C(q)$-algebra with basis given by trivalent bipartite graphs called \emph{$A_2$ webs}, subject to local reduction rules in \cref{fig: Kuperberg relations} that replace webs with inner faces of degree 0, 2 or 4 to linear combinations of webs without such faces. We show, as part of a stronger Schur-Weyl duality-type result in \cref{thm: Schur-Weyl duality}, that $\mathcal{K}_L$ is indeed isomorphic to the desired endomorphism algebra:
\begin{align}\label{eqn: centralizing property}
    \End_{\uqslt}\big( (V^+\oplus V^-)^{\otimes L} \big) \cong \mathcal{K}_L.
\end{align}
In view of \eqref{eqn: TL centralizing property} and \eqref{eqn: centralizing property}, \emph{self-duality} of the 6-dimensional $\uqslt$-module $V=V^+ \oplus V^-$ establishes $\mathcal{K}_L$ as the natural $A_2$ analogue of the Temperley-Lieb algebra: $V$ is the simplest non-trivial self-dual $\uqslt$-module in terms of dimension.\footnote{The 8-dimensional adjoint representation of $\uqslt$ is another self-dual candidate for $V$ that defines a natural rank two generalization of the Temperley-Lieb algebra.} The choice $V^+ \oplus V^-$ makes $\mathcal{K}_L$ well-adapted to the web calculus, as is evident in the following web formulation of first and second fundamental theorems for $\mathcal{K}_L$:
\begin{theorem}[First fundamental theorem, \Cref{thm: generators of K_L}]\label{thm: intro FFT}
The generators of $\mathcal{K}_L$ take the form of a non-trivial web $X$ on two or three consecutive sites, tensored with identity webs on the remaining sites. See \cref{fig: generating set} for a complete list of webs $X$, and \cref{fig: oriented diagram operators} for two examples of these generators. The matrix forms of the generators under the isomorphism~\eqref{eqn: centralizing property} are listed in appendix~\ref{app: matrix form of generators}.
\begin{figure}[htbp]
    \centering
    \scalebox{0.8}{\begin{tikzpicture}
	\begin{pgfonlayer}{nodelayer}
		\node (0) at (-7, 1) {};
		\node (1) at (-6, 0.5) {};
		\node (2) at (-5, 1) {};
		\node (3) at (-6, -0.5) {};
		\node (4) at (-7, -1) {};
		\node (5) at (-5, -1) {};
		\node at (-5, 1.25) {$\pm$};
		\node at (-6, -1.5) {$I^{\pm}$};

		\node (6) at (-3, 1) {};
		\node (7) at (-2.5, 0) {};
		\node (8) at (-1.5, 0) {};
		\node (9) at (-1, 1) {};
		\node (10) at (-3, -1) {};
		\node (11) at (-1, -1) {};
		\node at (-1, 1.25) {$\pm$};
		\node at (-2, -1.5) {$H^{\pm}$};

		\node (12) at (1, 1) {};
		\node (13) at (1, -1) {};
		\node (14) at (3, 1) {};
		\node (15) at (3, -1) {};
		\node at (1, 1.25) {$s_1$};
		\node at (3, 1.25) {$s_2$};
		\node at (2, -1.5) {$\id_{s_1s_2}$};

		\node (16) at (5, 1) {};
		\node (17) at (5.5, 0) {};
		\node (18) at (6, 1) {};
		\node (19) at (5, -1) {};
		\node (20) at (7, 1) {};
		\node (21) at (6.5, 0) {};
		\node (22) at (6, -1) {};
		\node (23) at (7, -1) {};
		\node at (5, 1.25) {$\pm$};
		\node at (7, 1.25) {$\pm$};
		\node at (6, -1.5) {$\YL{\pm}{\pm}$};
	\end{pgfonlayer}
	\begin{pgfonlayer}{edgelayer}
		\draw (1.center) to (0.center);
		\draw (1.center) to (2.center);
		\draw (1.center) to (3.center);
		\draw (4.center) to (3.center);
		\draw (5.center) to (3.center);
		\draw (7.center) to (6.center);
		\draw (7.center) to (10.center);
		\draw (7.center) to (8.center);
		\draw (9.center) to (8.center);
		\draw (11.center) to (8.center);
		\draw (12.center) to (13.center);
		\draw (14.center) to (15.center);
		\draw (17.center) to (16.center);
		\draw (17.center) to (18.center);
		\draw (17.center) to (19.center);
		\draw (21.center) to (20.center);
		\draw (21.center) to (23.center);
		\draw (21.center) to (22.center);
	\end{pgfonlayer}
\end{tikzpicture}}
    \caption{Generating set of $\mathcal{K}_L$. The webs are oriented so that each trivalent vertex is either a source or a sink. The signs $s_1,s_2\in \{\pm\}$ denote the orientations of the strands they sit on, with $+$ for upward orientation and $-$ for downward orientation.}
    \label{fig: generating set}
        \begin{tikzpicture}[x=1cm,y=1cm,line width=0.8pt,line cap=round]
    \begin{scope}[yshift=2.4cm]
        \node at (-1.25,0.8) {$H_i^{+}={}$};

		\draw (0,0) -- (0,1.6);
		\node at (0,-0.58) {$1$};
        \node at (0.7,0.8) {$\cdots$};
		\draw (1.4,0) -- (1.4,1.6);
		\node at (1.4,-0.58) {$i-1$};

        \coordinate (htl) at (2.8,1.6);
        \coordinate (htr) at (4.2,1.6);
        \coordinate (hl)  at (3.35,0.8);
        \coordinate (hr)  at (3.65,0.8);
        \coordinate (hbl) at (2.8,0);
        \coordinate (hbr) at (4.2,0);
        \draw[style=mid arrow] (htl) -- (hl);
        \draw[style=mid arrow] (hbl) -- (hl);
        \draw[style=mid arrow] (hr) -- (hl);
        \draw[style=mid arrow] (hr) -- (htr);
        \draw[style=mid arrow] (hr) -- (hbr);
		\node at (2.8,-0.58) {$i$};
		\node at (4.2,-0.58) {$i+1$};

		\draw (5.6,0) -- (5.6,1.6);
		\node at (5.6,-0.58) {$i+2$};
        \node at (6.3,0.8) {$\cdots$};
		\draw (7,0) -- (7,1.6);
		\node at (7,-0.58) {$L$};
    \end{scope}

    \begin{scope}[yshift=-0.5cm]
        \node at (-1.25,0.8) {$\YL{+}{+}_j={}$};

		\draw (0,0) -- (0,1.6);
		\node at (0,-0.58) {$1$};
        \node at (0.6,0.8) {$\cdots$};
		\draw (1.2,0) -- (1.2,1.6);
		\node at (1.2,-0.58) {$j-1$};

        \coordinate (ytl) at (2.5,1.6);
        \coordinate (ytm) at (3.5,1.6);
        \coordinate (ytr) at (4.5,1.6);
        \coordinate (yl)  at (3,0.8);
        \coordinate (yr)  at (4,0.8);
        \coordinate (ybl) at (2.5,0);
        \coordinate (ybm) at (3.5,0);
        \coordinate (ybr) at (4.5,0);
        \draw[style=mid arrow] (yl) -- (ytl);
        \draw[style=mid arrow] (yl) -- (ytm);
        \draw[style=mid arrow] (yl) -- (ybl);
        \draw[style=mid arrow] (yr) -- (ytr);
        \draw[style=mid arrow] (yr) -- (ybm);
        \draw[style=mid arrow] (yr) -- (ybr);
		\node at (2.5,-0.58) {$j$};
		\node at (3.5,-0.58) {$j+1$};
		\node at (4.5,-0.58) {$j+2$};

		\draw (5.8,0) -- (5.8,1.6);
		\node at (5.8,-0.58) {$j+3$};
        \node at (6.4,0.8) {$\cdots$};
		\draw (7,0) -- (7,1.6);
		\node at (7,-0.58) {$L$};
    \end{scope}
\end{tikzpicture}
        \caption{Two examples of generators of $\mathcal{K}_L$. For the generator $H^+_i$, the non-trivial web $X$ is an $H$-web on the $i$th and $(i+1)$th strands, tensored with identities represented by unoriented vertical strands, each of which is a sum over two possible orientations.}
        \label{fig: oriented diagram operators}
\end{figure}
\end{theorem}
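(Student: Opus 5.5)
We would prove that the subalgebra $\mathcal{K}'\subseteq\mathcal{K}_L$ generated by the local webs of \cref{fig: generating set} (the $\id_{s_1s_2}$ tensored with identities give the orientation projectors) contains every element of the web basis. The natural tool is the cellular structure, which the paper builds with the web basis as cellular basis. Each basis web factors as a ``top half'' and a ``bottom half'' glued along a through-line sequence of dominant weight $\lambda$. So the claim reduces to two tasks: generating the half-webs (the cell module bases indexed by dominant lattice paths) from local pieces, and generating the middle identity on the cut.

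\textbf{Step 1 (growth along paths).} Each half-web is produced by the recursive growth algorithm from a dominant lattice path, one boundary site at a time. We would show by induction on $L$ that every such half-web is obtained from a simpler one by composing with local generators acting near the last site:
\begin{itemize}
\item $I^\pm$ realizes a cup/cap, i.e.\ a step that annihilates a $V^+\otimes V^-$ pair;
\item $H^\pm$ realizes the step $V^\pm\otimes V^\pm\to V^\mp$, merging two same-sign strands into a trivalent vertex;
\item $\YL{\pm}{\pm}$ handles the three-site configurations, in which a trivalent vertex must attach across a neighbouring strand.
\end{itemize}
Concretely, for a full basis web $w$ we would take a ``height'' function, such as the number of internal vertices plus cap/cup pairs, and peel off a local generator at the boundary. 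That is, we write $w = g\cdot w'$ modulo lower cellular terms, with $g$ a generator and $w'$ of smaller height or smaller cell index. Since lower terms lie in the span of webs in lower cells, induction on the cell order together with height closes the argument.

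\textbf{Step 2 (triangularity).} Concatenating a generator with a reduced web may create faces of degree $0$, $2$ or $4$. Applying the Kuperberg reductions of \cref{fig: Kuperberg relations} then yields the intended web plus a combination of other webs. We need these extra webs to be strictly smaller in the chosen order. For digons and loops this is clear, since those reductions decrease the vertex count. For squares, the reduction gives two terms, and we would argue that the ``wrong'' resolution has fewer through-lines, hence lies in a lower cell.

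\textbf{Main obstacle.} The hard step is showing that every reduced basis web admits a boundary position where a generator can be peeled off. Non-elliptic webs can have all boundary-adjacent faces large, e.g.\ hexagonal regions near the boundary. This is exactly where $I^\pm$ and $H^\pm$ fail and $\YL{\pm}{\pm}$ is needed. The introduction indicates that braid elements plus idempotents do not suffice, so the three-site generator must be used essentially.

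Our approach would be to read off from the minimal cut path, i.e.\ the growth rules (Khovanov--Kuperberg style), which local growth move occurs at the last site. We would then check case by case that each growth move is realized by one of $I^\pm$, $H^\pm$, $\YL{\pm}{\pm}$ composed with identity, up to lower terms. Since the growth rules have finitely many local cases, this check is finite, but verifying triangularity in each case is the delicate part.
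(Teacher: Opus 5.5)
Your outer framework is sound: web basis as cellular basis, half-webs glued through a bottleneck, induction on the cell order. But the engine of Step~1 cannot run as described.

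\textbf{Gap 1: the moves you propose do not live in $\mathcal K_L$.} You misidentify the generators in a way that matters. $I^\pm=Y_\mp\circ\Ydual^\mp$ is an $I$-shaped web on two equal-sign strands, not a cup/cap; cup--cap webs only appear as combinations such as $\TL{\pm}{\pm}=H^\mp H^\pm-\id_{\pm\mp}$. And $H^\pm$ exchanges a $(\pm,\mp)$ pair of strands; it does not merge $V^\pm\otimes V^\pm\to V^\mp$.

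More fundamentally, every element of $\mathcal K_L$ maps $L$ strands to $L$ strands. Of the defining growth rules, only the $H^-$-rule preserves path length. The cup rule and the $Y_\pm$-rules shorten the path, so $G(P_1P^fP_2)$ does not even lie in the module $\W^L_S$ containing $G(P)$. Hence no generator can satisfy $G(P)=g\,G(P_1P^fP_2)$, and the finite case check you propose (each growth move at the last site realized by a generator) cannot succeed.

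What is needed are \emph{length-preserving} identities $G(P)=(\id\otimes g\otimes\id)\,G(P')$ between two dominant paths of the same length $L$ with the same bottom sign string. The paper derives a family of them from the Kuperberg relations: arc reversal $\ccup_+=-[2]_q^{-1}H^-\ccup_-$, arc switching, triplet reversal and switching, the $I$-rule, the $Y$-switch, $Y$-merge, and $(Y_{-s}\otimes\id_s)H^s=I^s_1(\id_s\otimes Y_{-s})$. These are exact equalities of single basis webs, which makes your Step~2 unnecessary.

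Step~2 is also false as stated. In $I^+_4\cdot G(P_{\mathrm{sq}})=G(Q_{\mathrm L})+G(Q_{\mathrm R})$ in $\Delta_6(3\lambda_1)$, and in the cascade of \cref{exmp: cascade of squares}, every square resolution stays in the same cell. So ``intended web plus lower cells'' is not what the reductions give.

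\textbf{Gap 2: your descent has no designated endpoint.} A height function does not supply one, because height is not monotone along the moves you actually need. Arc reversal relates two webs of equal height, and \cref{lem: triplet merge} expresses two triplets (height $2$) through three cups (height $3$).

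The paper instead fixes, for each bottom sign string $S$, a normal form $P^S_{cyc}$: cups, then at most one triplet, then identities; or a single $Y$ followed by identities. It proves via the active-area reduction (\cref{lem: active area reduction}) that every $G(P)$ equals $g\,G(P^S_{cyc})$ with $g$ a word in $\GenSet_L$. It then uses \cref{lem: basis decomposition} to write $C^\lambda_{P,Q}=g\,C^\lambda_{P^S_{cyc},P^T_{cyc}}\,h^\dagger$ exactly.

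The nontrivial base case is showing that $w^S_{cyc}\,\mathcal H^S_T\,(w^T_{cyc})^\dagger$ lies in the generated subalgebra. The middle piece $\mathcal H^S_T$ is trivially a product of $H$'s; the ends are the problem. For example, $(Y\otimes\id)\,\mathcal H^S_T\,(\Ydual\otimes\id)$ must be reduced using $Y_kH_k=I_kY_{k+1}$ and the $Y$-switch identity. This step, together with the $Y$-switch and triplet-reversal moves, is where $\YL{\pm}{\pm}$ is used, via the auxiliary webs of \cref{fig: auxiliary basic webs}.

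The reason $\YL{\pm}{\pm}$ is needed is also different from the one you give; boundary hexagons are not the obstruction. $I^\pm$, $H^\pm$ and the idempotents all preserve the number of $+$ signs. So without $\YL{+}{+}$, which maps $(-,-,-)$ to $(+,+,+)$, you can never reach a block $\W^T_S$ in which $S$ and $T$ have different numbers of $+$ signs.
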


Any word in the generators in \cref{thm: intro FFT} is a trivalent bipartite graph, which is reduced via the Kuperberg relations in \cref{fig: Kuperberg relations} to a linear combination of the web basis of $\mathcal K_L$ \cite[Sec.\,4]{kuperberg_1996_spiders}. We therefore have the following theorem.

\begin{theorem}[Second fundamental theorem]\label{thm: intro SFT}
The loop, digon, and square relations in \cref{fig: Kuperberg relations} generate a complete set of relations in the sense of \cref{def: web presentation}.
\end{theorem}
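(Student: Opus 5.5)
The plan is to check directly the two conditions of \cref{def: web presentation}(2) for the generating set $\GenSet$ of \cref{thm: intro FFT}, taking as given the definition of $\mathcal{K}_L$ in \cref{def: K_L}, whose basis is the set of non-elliptic webs. First, any word $w=g_1g_2\cdots g_k$ with $g_i\in\GenSet$ is, by definition of multiplication in $\mathcal{K}_L$, the vertical concatenation of the corresponding webs. The result is a planar trivalent graph with $L$ boundary points at the bottom and $L$ at the top. It is bipartite because each trivalent vertex is a source or a sink and orientations are matched along glued boundary points. It may, however, contain closed loops, digons and squares as inner faces.

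The second step is to reduce such a graph to the span of the web basis $\mathsf{B}$ using only the loop, digon and square relations of \cref{fig: Kuperberg relations}. I would follow \cite[Sec.\,4]{kuperberg_1996_spiders} and argue by induction on the number of trivalent vertices plus closed components. Each loop removal deletes a component. Each digon reduction deletes two vertices. Each square relation replaces a square by two terms, each with four fewer vertices. So every application strictly decreases the complexity, and the process terminates.

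When no relation applies, every term is a web with no inner face of degree $0$, $2$ or $4$. Since faces of a bipartite trivalent planar graph have even degree, every inner face then has degree at least $6$, so each term is non-elliptic and therefore lies in $\mathsf{B}$. This shows that the three relations reduce every word in $\GenSet$ to an element of $\Span\mathsf{B}$, which is exactly condition (2).

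The one point needing care, which I expect to be the main obstacle, is well-definedness. The relations must be consistent with the identification of $\mathcal{K}_L$ with the endomorphism algebra, so that the reduction does not depend on the order in which relations are applied. I would deduce this from the Schur--Weyl duality \cref{thm: Schur-Weyl duality}. Each Kuperberg relation holds for the corresponding $\uqslt$-intertwiners, and non-elliptic webs map to a basis of $\End_{\uqslt}((V^+\oplus V^-)^{\otimes L})$. Hence any two reductions of the same word give the same element of $\Span\mathsf{B}$, and no further relations are needed. Without this, the alternative is a diamond-lemma check on overlapping squares and digons, which is known from Kuperberg's confluence argument.
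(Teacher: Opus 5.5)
Your proposal is correct and is essentially the paper's argument. The paper only notes that a word in $\GenSet_L$ is a (sum of) bipartite trivalent net(s), which Kuperberg's loop, digon and square reductions bring into the span of non-elliptic webs \cite[Sec.\,4]{kuperberg_1996_spiders}; your complexity count and non-ellipticity check just spell out that citation. For the confluence step, appeal directly to the linear independence of the $\Gamma$-images of non-elliptic webs in \cref{thm: equivalence of Sp and Rep} rather than to \cref{thm: Schur-Weyl duality}, since the latter already presupposes that multiplication in $\mathcal K_L$ is well defined.
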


There is a natural action of the braid group algebra $\C(q)[B_L]$ on $(V^+\oplus V^-)^{\otimes L}$ via the R-matrix of $\uqslt$~\cite{kirillov_reshetikhin_1990_qweyl}. It intertwines the $\uqslt$-action. Due to \eqref{eqn: centralizing property}, its image lies in $\mathcal K_L$. The presence of the generators $\YL{+}{+}_j$ in \cref{fig: generating set} implies the following consequence:

\begin{corollary}[\Cref{prp: KL(q) is not quotient of braid group}]
    The image of $\C(q)[B_L]$ in $\mathcal{K}_L$, completed with the idempotents $\id_{s_1s_2}$, is a proper subalgebra of $\mathcal{K}_L$.
\end{corollary}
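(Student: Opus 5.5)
We need to show that the subalgebra $\mathcal{B}_L\subset\mathcal{K}_L$ generated by the braid image and the idempotents $\id_{s_1s_2}$ misses something. The natural candidate is the Y-Ybar generator $\YL{+}{+}_j$. The plan is to find a filtration or grading that all generators of $\mathcal{B}_L$ respect but $\YL{+}{+}_j$ violates.

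\textbf{Step 1 (locality of the generators).} Each braid generator $\sigma_i$ acts by the R-matrix on sites $i,i+1$. Under \eqref{eqn: centralizing property} it decomposes into components between $V^{s_1}\otimes V^{s_2}$ and $V^{s_2}\otimes V^{s_1}$. For $s_1=s_2$ the component is a combination of the identity and $H^\pm$. For $s_1\neq s_2$ it is a combination of the crossing-type identity and $I^\pm$; here I would use the Kuperberg-type skein relation expressing the $\uqslt$ braiding in webs. The idempotents $\id_{s_1s_2}$ are likewise two-site webs. Hence every generator of $\mathcal{B}_L$ lies in the span of the two-site webs $I^\pm$, $H^\pm$ and $\id_{s_1s_2}$, each tensored with identities. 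So $\mathcal{B}_L$ is contained in the subalgebra $\mathcal{K}^{(2)}_L$ generated by all two-site generators of \cref{fig: generating set}.

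\textbf{Step 2 (an invariant that two-site webs preserve).} I would use the sign count. For a word $w=s_1\cdots s_L$ in $\{\pm\}$, let $n_\pm(w)$ be the number of $\pm$ letters. A two-site generator maps a pair $(s_1,s_2)$ to a pair with the same multiset of signs, or turns $++$ into $--$ ($H$-type with flipped orientation) and vice versa. In all cases $n_+-n_-\pmod 3$ is preserved, as is the total $L$. The generator $\YL{+}{+}_j$ maps $+++$ to $---$ in the orientation of \cref{fig: oriented diagram operators}, which also preserves $n_+-n_- \bmod 3$. So this invariant does not separate it, and a finer argument is needed.

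\textbf{Step 3 (the main obstacle: showing $\YL{+}{+}_j\notin\mathcal{K}^{(2)}_L$).} I would restrict to $L=3$ and the block $\Hom(V^{+\otimes3},V^{-\otimes3})$, whose dimension is $1$ because $\Lambda^3$ gives a single invariant. Any element of $\mathcal{K}^{(2)}_3$ mapping into this block is a sum of products of two-site operators. Each product must pass through intermediate sign-words; each factor acts on two adjacent sites and preserves $n_+-n_-\bmod 3$. Getting from $+++$ to $---$ through two-site moves requires passing through words such as $--+$ or $+--$. By planarity and bipartiteness, the resulting web would then contain a face that reduces by the square or digon relations to something that factors through a smaller intermediate object.

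The key claim is that such a composite factors through $V^{s}\otimes(\text{two-site object})$ in a way that lands in a proper ideal. I would make this precise using the cellular structure with cell modules indexed by dominant weights. The composite $+++\to--+\to---$ factors through the cell with weight $\wt(V^-\otimes V^-\otimes V^+)$ restricted appropriately. I would then check the matrix coefficient of the unique basis web of $\YL{+}{+}$ against the explicit products, computing the rank of the span of all such products in this one-dimensional block.

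If the span turns out to be nonzero, the argument must instead be carried out at larger $L$. There one would compare dimensions of $\mathcal{B}_L$ (computed via the Hecke-algebra quotient, since the braid image on each sign sector is a Hecke-algebra image) with $\dim\mathcal{K}_L$ for a small case such as $L=3$ or $4$, by an explicit count. This dimension comparison is the step I expect to be decisive and the hardest.
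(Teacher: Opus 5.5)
Step 2 contains the error that breaks your argument. You claim a two-site generator can turn $++$ into $--$, but this is false. We have $\Hom_{\uqslt}(V^+\otimes V^+,V^-\otimes V^-)=0$, because $V^+\otimes V^+\cong L_q(2\lambda_1)\oplus L_q(\lambda_2)$ and $V^-\otimes V^-\cong L_q(2\lambda_2)\oplus L_q(\lambda_1)$ share no summand. The claim also contradicts your own invariant, since $n_+-n_-$ would jump from $2$ to $-2$. Your Step 1 also has the two cases swapped: $H^\pm$ maps $+-$ to $-+$. The same-sign components of the braiding are combinations of $\id$ and $I^\pm$, and the opposite-sign ones are combinations of $H^\pm$ and a cap--cup web.

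The observation you missed is this. On two sites $n_+-n_-\in\{-2,0,2\}$, so preservation modulo $3$ forces \emph{exact} preservation. Hence every two-site web preserves the number $n_+$ of $+$ signs, and so does every product of such webs. Since $\YL{+}{+}_j$ sends $---$ to $+++$, it changes $n_+$ by $3$, so it cannot lie in $\mathcal{K}^{(2)}_L\supseteq\mathcal{B}_L$. That one line is the whole proof, and it is essentially the paper's argument. The paper phrases it without expanding the braiding in webs: $\check R=\tau\circ R$ maps $V^s\otimes V^t$ to $V^t\otimes V^s$, because $R$ acts through $\uqslt$ operators that preserve each $V^\pm$. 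So braidings only permute signs, the idempotents fix them, and the generated subalgebra preserves $\wt(S)$, whereas $\YL{+}{+}_1$ shifts it by $3(\lambda_1-\lambda_2)$.

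As a result, Step 3 attacks a problem that is already settled, and as sketched it would not work anyway. The block $\Hom_{\uqslt}(V^{---},V^{+++})$ is $5$-dimensional, not $1$-dimensional. Both $V^{+++}$ and $V^{---}$ contain $2L_q(\lambda_1+\lambda_2)\oplus L_q(0)$, which gives $2\cdot 2+1=5$; equivalently $|\Web^{+++}_{---}|=\dim\Inv\bigl((V^+)^{\otimes 6}\bigr)=5$. Words such as $--+$ can never appear as intermediate boundaries in a product of two-site generators applied to $+++$, since $n_+$ is fixed. Your fallback comparison of $\dim\mathcal{B}_L$ with $\dim\mathcal{K}_L$ is left entirely unexecuted. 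It also rests on a false premise: the braid generators satisfy no quadratic relation on sign sectors mixing $+$ and $-$, so the braid image there is not a Hecke-algebra image. As written, the proposal does not prove the statement; the exact-sign-count observation above does.
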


Our second main result provides an explicit \emph{cellular structure} for $\mathcal K_L$ in the sense of \cite{graham1996cellular}, where the web basis of $\mathcal K_L$ is the \emph{cellular basis}. It is also the main tool in the proof of \cref{thm: intro FFT,thm: intro SFT}. This structure defines special \emph{cell modules} of $\mathcal K_L$, such that the cellular basis is obtained by ``gluing'' cell module vectors, analogous to how the basis of a matrix algebra is obtained by multiplying column and row basis vectors. 

A practical review of our results on the cellular structure of $\mathcal K_L$ is given in \cref{sec: cellular structure guide}, while its technical aspects are left for \cref{sec: cellular structure}, and with the final result formulated in \cref{thm: cellular structure}. Now we demonstrate how the gluing mechanism works.

Let us first recall the cell modules and their gluing for $\mathrm{TL}_L(\delta)$. Let $0\leq k\leq L$ and $k= L \mod 2$. The cell module $\Delta_L(k)$ has a basis of $A_1$ webs, indexed by paths $P$ on $\Z^2$ from the origin to $(L,k)$, taking steps $(1,1)$ and $(1,-1)$, and staying above the horizontal axis. The basis vectors are generated by a map $P\mapsto v_P$: a $(1,1)$ step in $P$ opens an arc, a $(1,-1)$ step closes an arc, and arcs that are never closed are connected to the bottom. See \cref{fig: TL cellular decomposition} (a) for a basis of $\Delta_{4}(2)$, along with the paths that index them. The gluing is shown in \cref{fig: TL cellular decomposition} (b): take two vectors $v_{P_1}$, $v_{P_3}$ in $\Delta_4(2)$, place the former on the top, reflect the latter against a horizontal mirror $\dagger$ and place it at the bottom, then connect the $k$ lines. Denote the resulting cellular basis element by $C^k_{P_1,P_3}$.

\begin{figure}[htbp]
    \centering
    \scalebox{0.85}{\begin{tikzpicture}[
    x=1cm,y=1cm,
    line width=0.8pt,
    line cap=round,
    line join=round
]

    \begin{scope}[xshift=9.00cm]
    \node at (1.20,3.30)
        {$C^2_{P_1,P_3}=v_{P_1}\circ v_{P_3}^\dagger$};

    \draw (1.60,2.80) .. controls (1.60,2.28) and (2.40,2.28) .. (2.40,2.80);

    \draw (0,0) .. controls (0,0.50) and (0.80,0.50) .. (0.80,0);

    \draw (0,2.80)
        .. controls (0.10,2.18) and (0.78,1.78) .. (0.80,1.40)
        .. controls (0.82,1.02) and (1.50,0.62) .. (1.60,0);
    \draw (0.80,2.80)
        .. controls (0.90,2.18) and (1.58,1.78) .. (1.60,1.40)
        .. controls (1.62,1.02) and (2.30,0.62) .. (2.40,0);

    \draw[dashed,gray,line width=0.35pt] (-0.20,1.40) -- (2.60,1.40);
    \node at (0.45,1.57) {};
    \node[anchor=west] at (2.75,1.40) {};

    \node[anchor=west] at (3.05,2.10) {$v_{P_1}\in\Delta_4(2)$};
    \node[anchor=west] at (3.05,0.65)
        {$v_{P_3}^\dagger\in\Delta_4(2)^\dagger$};

    \node at (1.20,-0.42) {\small (b)};
    \end{scope}

    \node at (3.05,3.30)
        {$\Delta_4(2)=\operatorname{span}\{v_{P_1},v_{P_2},v_{P_3}\}$};

    \begin{scope}[xshift=0cm]
        \draw (0,2.05) -- (0,2.85);
        \draw (0.36,2.05) -- (0.36,2.85);
        \draw (0.72,2.85) .. controls (0.72,2.35) and (1.08,2.35) .. (1.08,2.85);
        \draw[dashed,gray,line width=0.35pt] (-0.10,2.05) -- (1.18,2.05);
        \node at (0.54,1.78) {$v_{P_1}$};

        \draw[gray,line width=0.35pt] (0,0.20) -- (1.20,0.20);
        \draw (0,0.20) -- (0.30,0.52) -- (0.60,0.84)
            -- (0.90,1.16) -- (1.20,0.84);
        \foreach \x/\y in {0/0.20,0.30/0.52,0.60/0.84,0.90/1.16,1.20/0.84}
            \fill (\x,\y) circle (0.035);
        \node at (0.60,-0.10) {\scriptsize $P_1:\ +,+,+,-$};
    \end{scope}

    \begin{scope}[xshift=2.45cm]
        \draw (0,2.05) -- (0,2.85);
        \draw (1.08,2.05) -- (1.08,2.85);
        \draw (0.36,2.85) .. controls (0.36,2.35) and (0.72,2.35) .. (0.72,2.85);
        \draw[dashed,gray,line width=0.35pt] (-0.10,2.05) -- (1.18,2.05);
        \node at (0.54,1.78) {$v_{P_2}$};

        \draw[gray,line width=0.35pt] (0,0.20) -- (1.20,0.20);
        \draw (0,0.20) -- (0.30,0.52) -- (0.60,0.84)
            -- (0.90,0.52) -- (1.20,0.84);
        \foreach \x/\y in {0/0.20,0.30/0.52,0.60/0.84,0.90/0.52,1.20/0.84}
            \fill (\x,\y) circle (0.035);
        \node at (0.60,-0.10) {\scriptsize $P_2:\ +,+,-,+$};
    \end{scope}

    \begin{scope}[xshift=4.90cm]
        \draw (0.72,2.05) -- (0.72,2.85);
        \draw (1.08,2.05) -- (1.08,2.85);
        \draw (0,2.85) .. controls (0,2.35) and (0.36,2.35) .. (0.36,2.85);
        \draw[dashed,gray,line width=0.35pt] (-0.10,2.05) -- (1.18,2.05);
        \node at (0.54,1.78) {$v_{P_3}$};

        \draw[gray,line width=0.35pt] (0,0.20) -- (1.20,0.20);
        \draw (0,0.20) -- (0.30,0.52) -- (0.60,0.20)
            -- (0.90,0.52) -- (1.20,0.84);
        \foreach \x/\y in {0/0.20,0.30/0.52,0.60/0.20,0.90/0.52,1.20/0.84}
            \fill (\x,\y) circle (0.035);
        \node at (0.60,-0.10) {\scriptsize $P_3:\ +,-,+,+$};
    \end{scope}

    \node at (3.05,-0.42) {\small (a)};

\end{tikzpicture}}
    \caption{Cellular structure of $\mathrm{TL}_L(\delta)$.}
    \label{fig: TL cellular decomposition}
\end{figure}

For $\mathcal K_L$, we show that the cell modules $\Delta_L(\lambda)$ are parametrized by $\mathfrak{sl}_3$ weights $\lambda=n\lambda_1+k\lambda_2$ such that $n,k\geq 0$ and $n+k\leq L$. We construct for $\Delta_L(\lambda)$ a basis of $A_2$ webs, indexed by paths $P$ in the dominant Weyl chamber of the $\mathfrak{sl}_3$ weight lattice, with $L$ steps and final weight~$\lambda$.\footnote{The aforementioned $\Z^2$ paths for $\mathrm{TL}$ should be thought of as paths in the $\mathfrak{sl}_2$ dominant Weyl chamber $\Z_{\geq 0}$ with $L$ steps and final weight $k$.} The basis vectors are obtained from an explicit map $P\mapsto G(P)$ that generalizes the growth algorithm \cite{khovanov_1999_web} to non-zero $\mathfrak{sl}_3$ weights. The map constructs $G(P)$ recursively by concatenating basic pieces of webs; see \cref{def: growth algorithm} for a detailed account.

We demonstrate in appendix~\ref{app: cell module basis vectors} the basis vectors of all cell modules of $\mathcal K_4$, along with their indexing lattice paths. Formulae for dimensions of $\mathcal{K}_L$ and its cell modules are given in \cref{sec: dimension formulae}. 

The gluing for $\mathcal K_L$ is shown in \cref{fig: cellular structure of KL(q)}: let $\dagger$ be the mirror reflection of graphs as in the aforementioned TL case, with orientations of edges reversed after reflection. Take $G(P),G(Q)$ to be two basis vectors of $\Delta_{12}(4\lambda_1+2\lambda_2)$, and place $G(P)$, $G(Q)^\dagger$ on top and bottom, respectively. In order to connect them, we put a sequence of generators $H^\pm$ (see \cref{fig: generating set}) in between, which we call a \emph{bottleneck} and denote by $\mathcal H^S_T$. This bottleneck depends on $S,T$, which encode the edge orientations of $G(P)$ (resp. $G(Q)^\dagger$) on the bottom (resp. top). It is constructed explicitly in \cref{cor: wiring construction for H}. The resulting cellular basis element is
\begin{align*}
    C^\lambda_{P,Q} = G(P) \mathcal H^S_T G(Q)^\dagger.
\end{align*}
We remark that in the Temperley-Lieb cellular structure, the bottleneck is trivially the identity.

\begin{figure}[htbp]
    \centering
    \scalebox{1.2}{\input{tikz_drawings/cellular_example.tikz}}
    \caption{An illustration of $C^\lambda_{P,Q}$. The lattice paths $P,Q$ are encoded by steps (see \cref{def: steps}), and are given at the top and bottom of the web respectively. They are read from left to right.}
    \label{fig: cellular structure of KL(q)}
\end{figure}

Now that the cellular basis of $\mathcal K_L$ and the basis vectors of its cell modules are given by graphs, the $\mathcal K_L$-action on cell modules can be analyzed combinatorially. We thereby provide an algorithm in \cref{sec: cellular structure guide} that computes the action of $\mathcal K_L$ on the aforementioned basis vectors of cell modules. Based on the algorithm, we can show that each cellular basis element $C^\lambda_{P,Q}$ is a linear combination of words in the generators $\GenSet$ of $\mathcal K_L$, which proves \cref{thm: intro FFT}. The technical aspects of this analysis are given in \cref{sec: generators of K_L}.

As an application of the cellular structure of $\mathcal K_L$, we get a bimodule decomposition:
\begin{theorem}[\Cref{thm: bimodule decomposition}]\label{thm: intro bimodule}
    The $(\uqslt,\mathcal K_L)$-bimodule on $(V^+\oplus V^-)^{\otimes L}$ decomposes as:
\begin{align}\label{eqn: intro bimodule decomposition}
        (V^+\oplus V^-)^{\otimes L}
        \cong
        \bigoplus_{\lambda\in\Lambda_+(L)}
        L_q(\lambda)\otimes \Delta_L(\lambda)^\dagger,
    \end{align}
where $L_q(\lambda)$ is the irreducible highest-weight $\uqslt$-module of highest weight $\lambda$, and $\Delta_L(\lambda)^\dagger$ is the right cell module with action $v\cdot a:=a^\dagger v$. As a consequence, the $\mathcal K_L$-cell modules form a complete set of irreducible $\mathcal K_L$-modules.
\end{theorem}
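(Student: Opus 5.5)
The plan is to combine the Schur--Weyl isomorphism \eqref{eqn: centralizing property} with the cellular structure of $\mathcal K_L$, and then do a dimension count. Throughout, $q$ is formal (or generic), so $\uqslt$-modules of type 1 are semisimple. First I decompose $V^{\otimes L}$, with $V=V^+\oplus V^-$, as a $\uqslt$-module. This gives
\[
V^{\otimes L}\cong\bigoplus_\lambda L_q(\lambda)\otimes M_\lambda,
\qquad
M_\lambda:=\Hom_{\uqslt}\big(L_q(\lambda),V^{\otimes L}\big).
\]
By the double centralizer theorem, the $M_\lambda$ are pairwise non-isomorphic simple modules over $\End_{\uqslt}(V^{\otimes L})\cong\mathcal K_L$. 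Each $M_\lambda$ is a right module via the convention $v\cdot a=a^\dagger v$. This exhausts the simple $\mathcal K_L$-modules, and $\mathcal K_L$ is semisimple. Which $\lambda$ occur follows from iterated Pieri rules: tensoring with $V^\pm$ moves a weight by a weight of $V^\pm$, staying dominant. So the occurring $\lambda$ are exactly the $\lambda\in\Lambda_+(L)$, and $\dim M_\lambda$ equals the number of dominant lattice paths with $L$ steps ending at $\lambda$.

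Second, I identify $M_\lambda$ with $\Delta_L(\lambda)^\dagger$. By the cellular structure (\cref{thm: cellular structure}), $\Delta_L(\lambda)$ has a basis $\{G(P)\}$ indexed by exactly those paths, so $\dim\Delta_L(\lambda)=\dim M_\lambda$. Next I use the standard Graham--Lehrer theory. Cellularity gives
\[
\dim\mathcal K_L=\sum_{\lambda}(\dim\Delta_L(\lambda))^2 .
\]
Semisimplicity gives
\[
\dim\mathcal K_L=\sum_\lambda(\dim M_\lambda)^2 .
\]
These agree, which is consistent but not yet a proof of the isomorphism. For that I show each cell module is irreducible. By Graham--Lehrer, $\mathcal K_L$ is semisimple if and only if every cell bilinear form $\phi_\lambda$ is nondegenerate. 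In that case the $\Delta_L(\lambda)$ are pairwise non-isomorphic simples and exhaust the simples. We already know $\mathcal K_L$ is semisimple from Step 1, via the isomorphism with a centralizer of a semisimple module. Hence all $\phi_\lambda$ are nondegenerate, and $\{\Delta_L(\lambda)\}_{\lambda\in\Lambda_+(L)}$ is a complete set of irreducibles.

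It remains to match labels, namely $M_\lambda\cong\Delta_L(\lambda)^\dagger$ with the same $\lambda$. This is the main obstacle, because the dimensions of different cell modules can coincide, so dimension alone does not determine the matching. I would use the cell order, or a highest-weight argument. A cellular basis element $C^\lambda_{P,Q}=G(P)\mathcal H^S_TG(Q)^\dagger$ factors through a bottleneck carrying, on the middle strands, $n$ upward and $k$ downward lines. I expect these to correspond to a $\uqslt$-map through $(V^+)^{\otimes n}\otimes(V^-)^{\otimes k}$, followed by a projection onto its top component $L_q(n\lambda_1+k\lambda_2)$.

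Concretely, I take the ideal $\mathcal K_L^{\le\lambda}$ spanned by the $C^\mu_{P,Q}$ with $\mu\le\lambda$ in the cell order. I then show that, under \eqref{eqn: centralizing property}, this ideal is exactly the set of endomorphisms factoring through isotypic components $L_q(\mu)$ with $\mu\le\lambda$, where the order is suitably compatible with dominance. With this in hand, the subquotient $\mathcal K_L^{\le\lambda}/\mathcal K_L^{<\lambda}$ acts nontrivially only on $M_\lambda$, which forces $\Delta_L(\lambda)^\dagger\cong M_\lambda$.

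If the ideal identification is hard, I would fall back on an induction in $L$ using the growth-algorithm recursion. I would compare restriction along $\mathcal K_{L-1}\subset\mathcal K_L$, which appends an identity strand, with the Pieri rule $L_q(\mu)\otimes V^\pm$. Both branching graphs are the same dominant-path lattice, and the labels are fixed at $L=0$, so the matching propagates from $L-1$ to $L$.
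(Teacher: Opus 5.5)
Your main route is sound and genuinely different from the paper's. However, the step that actually matches the labels is only announced, not proved.

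\textbf{The paper's route.} The paper uses neither Pieri rules nor the Graham--Lehrer semisimplicity criterion. It realizes $\Delta_L(\lambda)$ as the clasped web space $\W^L_{[S]}=\W^L_S/\W^L_{\prec S}$ for some $S$ with $\wt(S)=\lambda$. It maps $\W^L_S$ isomorphically onto $\Hom_{\uqslt}(V^S,\mathscr V_L)$ by $w\mapsto e_T\circ\Gamma(w)$. In \cref{lem: lower ideal as Hom} it shows that $\W^L_{\prec S}$ corresponds exactly to the maps vanishing on the top summand $L_q(\lambda)\subset V^S$. One inclusion comes from factoring through $V^R$ with $\wt(R)\prec\lambda$; the other from complete reducibility and fullness of $\Gamma$. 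Restriction to $L_q(\lambda)$ then gives an explicit isomorphism $\Delta_L(\lambda)\cong M_\lambda$. This matches labels, proves irreducibility, and identifies the occurring weights all at once.

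\textbf{Closing your ideal identification.} Your version is the algebra-level counterpart. You need $\Phi(\mathcal K_L^{\preceq\lambda})=\bigoplus_{\mu\preceq\lambda}\End(M_\mu)$, where $\mathcal K_L^{\preceq\lambda}$ is spanned by the $C^\mu_{P,Q}$ with $\mu\preceq\lambda$. You can close this with your own ingredients.
\begin{itemize}
\item For ``$\subseteq$'': \cref{lem: basis decomposition} gives $C^\mu_{P,Q}=G(P)\circ C^\mu_{P^{\mathrm{hw}}_Z,Q}$ with $\wt(Z)=\mu$. So $\Phi(C^\mu_{P,Q})$ factors through $V^Z$, all of whose summands have highest weight $\preceq\mu$.
\item For equality, compare dimensions. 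Both sides have dimension $\sum_{\mu\preceq\lambda}|\Path^+_L(\mu)|^2$ by your Pieri count, and $\Phi$ is injective.
\item Set $d=|\Path^+_L(\lambda)|$. As left modules, $\mathcal K_L^{\preceq\lambda}/\mathcal K_L^{\prec\lambda}$ is $d$ copies of $\Delta_L(\lambda)$ by \cref{prp: factorized multiplication}. Via $\Phi$ it is also $\End(M_\lambda)$, i.e.\ $d$ copies of $M_\lambda$.
\item Krull--Schmidt then gives $\Delta_L(\lambda)\cong M_\lambda$ directly, without Graham--Lehrer. If you keep your Graham--Lehrer argument instead, note that it is $\phi_\lambda\neq0$ that makes $\mathcal K_L^{\preceq\lambda}$ act nontrivially on $\Delta_L(\lambda)$.
\end{itemize}

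\textbf{Trade-off.} The paper gets an explicit isomorphism and needs no Pieri rule. Your route needs the Pieri rule, but yields the stronger structural statement that the cell ideals are exactly the dominance-order block ideals.

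\textbf{Two corrections.}
\begin{itemize}
\item The order in \eqref{eqn: partial ordering} is literally the dominance order: its generators subtract $\alpha_1=2\lambda_1-\lambda_2$ or $\alpha_2=2\lambda_2-\lambda_1$. So no compatibility needs arguing.
\item $C^\lambda_{P,Q}$ does not itself ``project onto'' $L_q(\lambda)$. It factors through all of $V^S$, and only modulo $\mathcal K_L^{\prec\lambda}$ does it see just the top component.
\end{itemize}

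\textbf{The fallback fails.} Your induction on $L$ would not work. At $L=2$, $M_0$ and $M_{\lambda_1+\lambda_2}$ are both $2$-dimensional, and both restrict along $\mathcal K_1\subset\mathcal K_2$ to $M_{\lambda_1}\oplus M_{\lambda_2}$. So restriction to level $L-1$ cannot separate them. In addition, no branching rule for the cell modules is available in the paper.
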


\medskip

The explicitness of the cellular structure we just described, especially the basis vectors for cell modules, is crucial in the proof of \cref{thm: intro FFT}. Let us review some related constructions of cellular structures. In \cite{elias2015lightladdersclaspconjectures,martin2026cellmodules}, a cellular structure is constructed for the $\mathfrak{sl}_n$-web category, and is used to compute Gram determinants on the $\Hom$-spaces of this category. Their cellular basis elements are linear combinations of webs with implicit coefficients. As a vector space, $\mathcal K_L$ is a direct sum of $\Hom$-spaces of the $\mathfrak{sl}_3$-web category, so it is possible to obtain a cellular structure for $\mathcal K_L$ using their cellular structure for the category. However, the cellular basis thus obtained would not be explicit enough to prove \cref{thm: intro FFT}. An $\mathfrak{sl}_3$-web algebra is defined in \cite{mackaay2014sl3web} as an algebra of $\mathfrak{sl}_3$-foams between Kuperberg webs, and is shown to be graded cellular. Its cellularity does not give a cellular structure for $\mathcal K_L$.

\medskip

\paragraph{Specializations.} In the previous discussions, $q$ is a formal parameter. Now we briefly describe which results remain when $q$ is specialized to $q_0\in\C^\times$. Put $R=\C[q,q^{-1}]$ and let $\mathcal K_{L,R}$ be the $R$-algebra with the same web basis as $\mathcal K_L$. Indeed, the coefficients that the Kuperberg relations produce lie in $R$, hence the multiplication coefficients from the cellular structure of $\mathcal K_L$ lie in $R$ as well. In particular, $\mathcal K_L\cong\mathcal K_{L,R}\otimes_R\C(q)$. Define
\[
    \mathcal K_L(q_0)
    :=\mathcal K_{L,R}\otimes_{R,\,q\mapsto q_0}\C,
\]
where $\C$ is considered as a $R$-module, with $q$ acting as multiplication by $q_0$. Observe that the arguments in \cref{sec: cellular structure} hold over $R$, in particular the anti-involution $\dagger$ preserves $\mathcal K_{L,R}$, and \cref{prp: factorized multiplication} holds as well. Thus the same cell datum from \cref{thm: cellular structure} gives a cellular structure on $\mathcal K_L(q_0)$ for every $q_0\in\C^\times$.

Furthermore, the FFT in \cref{thm: intro FFT} holds for specialization of $q\mapsto q_0$ including the cases when $q_0$ is a root of unity, except when $[2]_{q_0}= 0$ or $[3]_{q_0}=0$. Indeed, in the proof of \cref{thm: intro FFT}, \cref{lem: arc reversal,lem: triplet reversal} require $[2]_{q_0}\neq 0$ and $[3]_{q_0}\neq 0$. However, we expect that with an appropriate normalization, \cref{thm: intro FFT} holds for specialization to all roots of unity.

Finally, the bimodule decomposition in \cref{thm: intro bimodule} uses semisimplicity of $(V^+\oplus V^-)^{\otimes L}$ over $\uqslt$. Hence the result holds for specialization of $q$ to generic (non-roots of unity) $q_0$.

\subsection{Outlook}

\paragraph{Defining relations.} The second fundamental theorem of invariant theory for $\mathcal K$ from \eqref{eqn: generating set} describes the kernel $\ker(\Psi)$ of $\Psi: \Alg_{\C(q)}(\GenSet)\to \mathcal K$ as an ideal generated by a set $\mathsf R:=\{E_i\}_{i\in I}\subset \Alg_{\C(q)}(\GenSet)$, whose elements $E_i$ are called \emph{defining relations}. In a web presentation, defining relations are encoded implicitly by local reduction rules for webs, as in \cref{thm: intro SFT}. It remains an interesting problem to write $\mathsf R$ explicitly for the algebra $\mathcal K_L$ from \eqref{eqn: centralizing property}, and a web presentation is thus an intermediate step towards an ordinary presentation. Based on the analysis in \cref{sec: local surgery}, we conjecture: Based on the analysis in \cref{sec: local surgery}, we conjecture:
\begin{conjecture}
    The algebra $\mathcal K_L$ admits a set of defining relations each supported on at most six strands, i.e. every generator appearing in a relation is the identity outside a common set of at most six strand positions.
\end{conjecture}

\paragraph{Web presentation for other algebras.}
A natural generalization of \cref{thm: intro FFT} to higher rank is to find web generators for the algebras 
\[
\End_{U_q(\mathfrak{sl}_n)} \left(\bigoplus_{i=1}^{n-1}V_{\omega_i} \right)^{\otimes L},
\]
where $V_{\omega_i}$ is the fundamental $U_q(\mathfrak{sl}_n)$-module with highest weight $\omega_i$. The $\mathfrak{sl}_n$ webs \cite{cautis_kamnitzer_morrison_2014} provide the diagrammatic framework, and to find a web presentation of this algebra following the method of the current paper, a key missing ingredient is a recursive construction of $\mathfrak{sl}_n$ basis webs analogous to the growth algorithm $G$ used in the cellular structure of $\mathcal K_L$.

Another case of interest is the higher-spin algebras in rank one. Consider 
\[
\mathcal K^{(j)}:=\End_{U_q(\mathfrak{sl}_2)}(V_j^{\otimes L}),
\]
where $V_j$ is the irreducible $(j+1)$-dimensional $\uqtwo$-module. These algebras are described as quotients of the fused Hecke algebras \cite[Prp.\,9.5\,(2)]{crampe_poulain_dandecy_2023}, but a presentation is only available for $\mathcal K^{(1)}$, which is $\mathrm{TL}_L(\delta)$ by \eqref{eqn: TL centralizing property}; and for $\mathcal K^{(2)}$ which is given in \cite[Thm.\,2.6]{lehrer_zhang_2008_endomorphisms}, using a specialization of the BMW algebra. For $j>2$, finding a presentation remains an open problem, and we propose to find a web presentation as a first step, based on the valenced Temperley-Lieb diagrams in \cite{flores_peltola_2020_higher_spin}. Indeed, for $j=1$ these diagrams reduce to the usual $A_1$ webs. While for $j=2$, the local reduction rules for the valenced Temperley-Lieb diagrams are known, because $\mathcal K^{(2)}$ is isomorphic \cite[Thm.\,7.2.1\,\&\,Cor.\,8.2.4]{dallaire_2024_chromatic_category} to the \emph{chromatic algebra} \cite{fendley_krushkal_2010_link_invariants}. This algebra has a basis of trivalent graphs, and local reduction rules that preserve the chromatic polynomial. To complete the web presentation for $\mathcal K^{(2)}$, it suffices to find the web generators of the chromatic algebra using the results of \cite{lehrer_zhang_2008_endomorphisms}. For $j>2$, the local reduction rules for the valenced Temperley-Lieb diagrams will have to be worked out.

\paragraph{Applications in statistical mechanics.} The two-dimensional $3$-state Potts model is a model of magnetic spins on a lattice, where each spin takes $3$ possible values. The \emph{domain walls} are random curves such that the value of the spin changes upon crossing the curve. They are described \cite{lafay_2021_uqsln} by a model of random $\mathfrak{sl}_3$ webs specialized to $q_0=e^{i\pi/4}$. Many physically relevant properties of the Potts model, e.g.\,the correlations of spins and clusters of spins, are encoded in the spectrum of a special element $T\in \mathcal K_L(q_0)$ acting on $(V^+\oplus V^-)^{\otimes L}$, which is called the \emph{transfer matrix} in the physics literature. Here $L$ is the width of the lattice. The cellular structure of $\mathcal K_L(q_0)$ is crucial to a spectral analysis of $T$, for both technical and conceptual reasons. Technically, to study a real magnet, $L$ has to take a large value, and the large $\dim (V^+\oplus V^-)^{\otimes L}$ makes the analysis of subleading eigenvalues computationally challenging \cite{lafay_2022_uqsl3}. However, due to the bimodule decomposition \eqref{eqn: intro bimodule decomposition}, one may instead study the spectrum of $T$ on the cell modules $\Delta_L(\lambda)$, which are of much smaller dimension.\footnote{Strictly speaking, the bimodule decomposition does not hold for $q_0=e^{i\pi/4}$. However, since the eigenvalues of $T$ depend smoothly on $q$, in the spectral analysis we consider a generic $q$ close to $q_0$, and then take the limit.} Conceptually, the bimodule decomposition \eqref{eqn: intro bimodule decomposition} partitions the eigenvalues of $T$ on $(V^+\oplus V^-)^{\otimes L}$ into subsets indexed by $\mathfrak{sl}_3$ weights $\lambda$, according to the cell module they belong to. This partition has physical significance, as it allows an identification of the critical exponents, such as the thin-domain wall exponents considered in \cite{dubail2010critical}, in terms of $\mathfrak{sl}_3$-web cell modules.


\subsection{A list of notations}

Throughout the paper we use typography to record the nature of an object: sans-serif letters ($\Web$) denote combinatorial sets or distinguished bases, script letters ($\W$) denote vector spaces or modules, calligraphic letters ($\mathcal W$) denote
algebras, and bold letters ($\mathbf{W}$) denote categories. We use the quantum integers
\begin{align}\label{eqn: quantum integers}
    [n]_q := q^{n-1} + q^{n-3} + \cdots + q^{-(n-1)} = \sum_{k=0}^{n-1} q^{n-1-2k} = \frac{q^n-q^{-n}}{q-q^{-1}}.
\end{align}
We denote by $\N$ the set of nonnegative integers.
\begin{center}
\renewcommand{\arraystretch}{1.25}
\small
\begin{longtable}{|p{0.22\textwidth}|p{0.53\textwidth}|p{0.17\textwidth}|}
    \hline
    Notation & Description & Definition \\
    \hline
    $\C(q)$ & Ground field. &   \\
    \hline
    $\uqslt$ & The quantum group $U_q(\mathfrak{sl}_3)$. & appendix~\ref{app: quantum group conventions} \\
    \hline
    $V^+, V^-$ & The two fundamental representations of $\uqslt$. & appendix~\ref{app: quantum group conventions}.\\
    \hline
    $\mathscr{V}_L$ & The $\uqslt$-module $(V^+\oplus V^-)^{\otimes L}$. & \cref{thm: Schur-Weyl duality}.\\
    \hline
    $\Fsp,\ \Sp$ & The free spider and the Kuperberg spider. & \cref{def: Sp} \\
    \hline
    $s\in\{+,-\}$; $S$ and $|S|$ & A sign, a sign string, and its length. & \cref{def: nets} \\
    \hline
    $\Sigma_L$ & The set $\{+,-\}^L$ of sign strings of length $L$. & \cref{def: Omega} \\
    \hline
    $S^\vee$ & The dual sign string $(-s_{|S|},\ldots,-s_1)$. & \eqref{eqn: duality}.\\
    \hline
    $\ccup_S,\ \ccap_S$ & Coevaluation and evaluation webs for a sign string. & \cref{def: cup and cap for sign strings}\\
    \hline
    $\Web^T_S,\ \W^T_S$ & Reduced basis-web set and web vector space with boundary sign strings $S,T$. & \eqref{eqn: web set and web space}\\
    \hline
    $\Lambda,\ \Lambda_+$ & The $\mathfrak{sl}_3$ weight lattice and the dominant Weyl chamber. & \cref{sec: weight lattice and paths} \\
    \hline
    $\Lambda_+(L)$ & The dominant Weyl alcove at level $L$. & \cref{def: LambdaL}\\
    \hline
    $P=(S,J)$ & Lattice path encoded by a sign string and a state string. & \cref{def: lattice path}\\
    \hline
    $\wt(P),\ \wt(S)$ & Weight of paths and sign strings. & \cref{def: lattice path,def: weight of sign string}\\
    \hline
    $\Path_S(\lambda),\ \Path^+_S(\lambda)$ & Paths, respectively dominant paths, with sign string $S$ and endpoint $\lambda$. & \cref{def: lattice path}\\
    \hline
    $\Path_L(\lambda),\ \Path^+_L(\lambda)$ & Paths, respectively dominant paths, of length $L$ and endpoint $\lambda$. & \cref{def: lattice path}\\
    \hline
    $P_S^{\mathrm{hw}}$ & The highest-weight path with sign string $S$, meaning that every state entry is $1$. & \cref{eqn: highest-weight path} \\
    \hline
    $P^\vee$ & Dual path. & \cref{def: inversion map}\\
    \hline
    $P \mapsto G(P)$ & The growth algorithm. & \cref{def: growth algorithm}\\
    \hline
    $\WebBasis{L}$ & Set of basis webs of $\mathcal K_L$. & \eqref{def: Omega}\\
    \hline
    $C$; $C^\lambda_{P,Q}$ & Cellular indexing bijection and the corresponding cellular basis element. & \cref{def: pairing map C}\\
    \hline
    $\Delta_L(\lambda)$ & Cell module of $\mathcal{K}_L$ indexed by $\lambda$. & \cref{def: cell modules}\\
    \hline
    $\GenSet_L,\ \Alg_{\C(q)}(\GenSet_L)$ & Local generating set and the unital algebra it generates. & \cref{thm: generators of K_L}\\
    \hline
    $\Mir,\ \dagger$ & Vertical mirror automorphism and the anti-involution in the cellular datum. & \cref{def: dagger,lem: elementary geometry}\\
    \hline
\end{longtable}
\end{center}

\paragraph{Acknowledgments} 
 The authors acknowledge the hospitality of Institut Pascal in Orsay during the program {\sl CFT: Algebraic, Topological and Probabilistic Approaches in Conformal Field Theory} in 2025. A.M.G.\ acknowledges support by C.N.R.S.\ and l'Agence nationale de la recherche
(ANR grant ``New algebraic structures in quantum integrability: towards 3D" NASQI3D ANR-24-CE40-7252). 
A.M.G.\ is also grateful to the laboratory
LPENS in Paris for their kind hospitality in 2024-2026.
Y.L.\ acknowledges the hospitality of Beijing International Center for Mathematical Research during the program \emph{Peking Summer Program in Probability: Interactions with Quantum Field Theory} in 2026, where part of this paper was written.

\section{Growing Kuperberg webs}\label{sec: backgrounds}

In this section, we provide a growth algorithm constructing Kuperberg $A_2$ webs from lattice paths in the $\mathfrak{sl}_3$ dominant Weyl chamber.
We start by reviewing the $A_2$ webs and the associated category $\Sp$ in \cref{sec: kuperberg webs,sec: folding and duality}. In \cref{sec: equivalence Sp to Rep} we review the monoidal equivalence $\Gamma:\Sp\to\Rep$, where $\Rep$ is the full subcategory of finite-dimensional $\uqslt$-modules whose objects are tensor products of the fundamental representations. In \cref{sec: weight lattice and paths} we recall the $\mathfrak{sl}_3$ weight lattice and (dominant) lattice paths.
Finally, in \cref{sec: growth algorithm} we introduce the growth algorithm that generalizes the one from \cite{khovanov_1999_web} to dominant lattice paths with arbitrary weights, and give
a minimal set of growth rules. To the best of our knowledge, this generalization and the minimal growth rules have not appeared previously in the literature.

\subsection{The Kuperberg spiders}\label{sec: kuperberg webs}

Spiders are diagrammatic categories introduced by Kuperberg in \cite{kuperberg_1996_spiders} to study the representation theory of rank-two simple Lie algebras and their associated quantum groups. In this paper we focus on the spider of $A_2$ type, whose relevant diagrams are \textit{nets} and \textit{webs}, defined as follows:

\begin{definition}\label{def: nets}
    A \textup{monomial net} is an oriented planar graph embedded in a closed rectangle, considered up to isotopy relative to the boundary, such that
    \begin{enumerate}
        \item [(1)] Vertices of the graph are embedded either in the interior of the rectangle, or on the top and bottom boundaries.
        \item [(2)] Internal vertices are trivalent, and boundary vertices have valence one.
        \item [(3)] Each vertex is either a \textrm{source} (all edges oriented out of the vertex) or a \textrm{sink} (all edges oriented in).
    \end{enumerate}
\end{definition}

\begin{remark}
     As in the introduction, nets and webs (defined below) can be considered abstract bipartite graphs satisfying the following conditions: 1.\ their vertices are partitioned into three groups: internal (trivalent), top and bottom (univalent); 2.\ they admit an embedding into the rectangle, such that internal vertices are in the interior of the rectangle, and top/bottom vertices are on the top/bottom boundaries of the rectangle.
\end{remark}

The source/sink distinction endows monomial nets with a bipartite structure. We assign a $+$ sign to sources on the bottom boundary and sinks on the top boundary. 
Similarly we assign a $-$ sign to sinks on the bottom boundary and sources on the top boundary.
Reading the signs on the top (resp. bottom) boundary from left to right, we obtain the top (resp.\ bottom) \textit{sign string} of a monomial net. For a sign string $S$, we denote by $|S|$ its number of elements, and write
\begin{align*}
    \Sigma_L:=\{+,-\}^L,
    \qquad
    \Sigma_*:=\bigsqcup_{L\geq0}\Sigma_L.
\end{align*}
Thus $\Sigma_L$ is the set of sign strings of length $L$, including the
empty sign string $\emptyset\in\Sigma_0$.  \Cref{fig: web sign strings}
shows an example of a net.

\begin{figure}[htbp]
    \centering
    \begin{tikzpicture}[scale=0.75,
        midarrow/.style={postaction={decorate,decoration={
            markings, mark=at position 0.55 with {\arrow{stealth}}
        }}}
    ]
        \draw[thick, dashed] (-3,-2) rectangle (3,2);
        
        \node[above] at (-2,2.1) {$+$};
        \node[above] at (0,2.1) {$+$};
        \node[above] at (2,2.1) {$-$};
        \node[below] at (1,-2.1) {$+$};
        
        \coordinate (v1) at (-1, 0); 
        \coordinate (v2) at (1, 0);  
        
        \draw[thick, mid arrow] (v1) to[out=135, in=270] (-2,2);
        \draw[thick, mid arrow] (v1) to[out=45, in=270] (0,2);
        
        \draw[thick, mid arrow] (v1) -- (v2);
        
        \draw[thick, style=mid arrow] (2,2) to[out=270, in=45] (v2);
        
        \draw[thick, style=mid arrow] (1,-2) to[out=90, in=315] (v2);
        
        \filldraw (v1) circle (2pt) node[below=2pt] {source};
        \filldraw (v2) circle (2pt) node[right=2pt] {sink};
    \end{tikzpicture}
    \caption{A Kuperberg net with bottom sign string $(+)$ and top sign string $(+, +, -)$.}
    \label{fig: web sign strings}
\end{figure}

For sign strings $S,T$, define the monomial-net set and its free
$\C(q)$-span by
\begin{align}\label{eqn: net set}
    \Net_S^{T}&:=\{\text{monomial nets with bottom sign string $S$ and top sign string $T$}\},\\
    \mathscr N_S^T&:=\Span_{\C(q)}\Net_S^T.
\end{align}

\begin{definition}\label{def: web}
    The \textup{web vector space} with boundary $S\to T$ is the quotient
    \begin{align*}
        \W_S^T:=\mathscr N_S^T/\sim,
    \end{align*}
    where the equivalence $\sim$ is generated by the local relations in \cref{fig: Kuperberg relations}.
\end{definition}

\begin{figure}[htbp]
        \centering
        \scalebox{0.9}{\begin{tikzpicture}[scale=0.5]
    \begin{scope}[shift={(0,0)}]
        \draw [style=mid arrow] (1,0) arc (0:360:1);
        \draw [style=mid arrow] (12,0) arc (360:0:1);
        
        \node at (4,0) {$=$};
        \node at (6,0) {$[3]_q$};
        \node at (8,0) {$=$};
        
        \node[right] at (17,0) {``loop relation''};
    \end{scope}

    \begin{scope}[shift={(0,-5)}]
        \draw [style=mid arrow] (0,1) -- (0,2);
        \draw [style=mid arrow] (0,-2) -- (0,-1);
        
        \draw [style=mid arrow] (0,1) to[bend right=60] (0,-1);
        \draw [style=mid arrow] (0,1) to[bend left=60] (0,-1);
        
        \node at (4,0) {$=$};
        \node at (6,0) {$-[2]_q$};
        
        \begin{scope}[shift={(8,0)}]
            \draw [style=mid arrow] (0,-2) -- (0,2);
        \end{scope}
        
        \node[right] at (17,0) {``digon relation''};
    \end{scope}

    \begin{scope}[shift={(0,-11)}]
        \draw [style=mid arrow] (0,1) -- (0,2);   
        \draw [style=mid arrow] (0,-1) -- (0,-2); 
        \draw [style=mid arrow] (-2,0) -- (-1,0); 
        \draw [style=mid arrow] (2,0) -- (1,0);   
        
        \draw [style=mid arrow] (0,1) -- (-1,0);  
        \draw [style=mid arrow] (0,1) -- (1,0);   
        \draw [style=mid arrow] (0,-1) -- (-1,0); 
        \draw [style=mid arrow] (0,-1) -- (1,0);  
        
        \node at (4,0) {$=$};
        
        \begin{scope}[shift={(8,0)}]
            \draw [style=mid arrow] (-2,0) to[out=0, in=270] (0,2);
            \draw [style=mid arrow] (2,0) to[out=180, in=90] (0,-2);
        \end{scope}
        
        \node at (11,0) {$+$};
        
        \begin{scope}[shift={(14,0)}]
            \draw [style=mid arrow] (-2,0) to[out=0, in=90] (0,-2);
            \draw [style=mid arrow] (2,0) to[out=180, in=270] (0,2);
        \end{scope}
        
        \node[right] at (17,0) {``square relation''};
    \end{scope}
\end{tikzpicture}}
        \caption{The Kuperberg relations.}
        \label{fig: Kuperberg relations}
\end{figure}

\begin{definition}\label{def: reduced web}
    A \textup{basis web}, or \textup{monomial web}, is a monomial net
    that does not contain any internal face of degree $0$ (loop), $2$
    (digon), or $4$ (square).\footnote{These are called \textit{non-elliptic
    webs} in \cite{kuperberg_1996_spiders}.}
\end{definition}

Through the Kuperberg relations, every monomial net is expressed as a linear combination of basis webs.  Distinct basis webs determine distinct
    equivalence classes of nets, so they form a basis of $\W_S^T$. We write
\begin{align}\label{eqn: web set and web space}
    \Web_{S}^{T}&:=\{\text{basis webs with bottom sign string $S$ and top sign string $T$}\},\\
    \W_S^{T}&:=\Span_{\C(q)} \Web_S^{T}.
\end{align}
Thus $w\in\Web_S^T$ denotes an individual basis web, whereas
$x\in\W_S^T$ may be an arbitrary linear combination of basis webs.

\begin{definition}\label{def: Sp}
The \textup{Kuperberg spider} $\Sp$ is a $\C(q)$-linear category with the following data:
    \begin{enumerate}
        \item Objects are finite sequences of $\pm$ signs called sign strings.
        \item Given two objects $S,T$, the $\Hom$-space is
        \begin{align}
            \Hom_{\Sp}(S,T):=\W_S^{T}.
        \end{align}
        \item For morphisms $f\in \W_{S_1}^{S_2}$ and $g\in \W_{S_2}^{S_3}$, the composition $g \circ f$ is obtained by stacking $g$ on top of $f$ and extending the operation bilinearly. \Cref{fig: composition example} gives an example. For brevity, we often write $gf$ instead of $g \circ f$. Composition in the category is defined only for matching intermediate boundaries; the zero convention for mismatched boundaries is introduced later in the algebra $\mathcal K_L$.
        \item For each object $S$, the identity morphism $\id_S$ is the web in $\Web_S^S$ that consists of $|S|$ parallel vertical strands, oriented according to $S$.
    \end{enumerate}
\end{definition}

\begin{figure}[htbp]
    \centering
    \scalebox{0.8}{\begin{tikzpicture}
    \begin{scope}[shift={(0,2.5)}]
        \draw[thick, dashed] (-3,0) rectangle (3,2);
        
        \node at (-1,1) {};
        \node at (1,1) {};
        
        \draw[thick, style=mid arrow] (1,1) -- (-1,1);
        
        \draw[thick, style=mid arrow] (1,1) -- (1,2);
        \node[above] at (1,2) {$+$};
        
        \draw[thick, style=mid arrow] (-2,0) to[out=90, in=225] (-1,1);
        \node[below] at (-2,0) {$+$};
        
        \draw[thick, style=mid arrow] (0,0) to[out=90, in=315] (-1,1);
        \node[below] at (0,0) {$+$};
        
        \draw[thick, style=mid arrow] (1,1) to[out=315, in=90] (2,0);
        \node[below] at (2,0) {$-$};
        
        \node at (-3.5, 1) {$g$};
    \end{scope}
    
    \node at (0, 1.25) {$\circ$};
    
    \begin{scope}[shift={(0,-0.5)}]
        \draw[thick, dashed] (-3,-2) rectangle (3,0);
        
        \node at (-1,-1) {};
        \node at (1,-1) {};
        
        \draw[thick, style=mid arrow] (-1,-1) -- (1,-1);
        
        \draw[thick, style=mid arrow] (1,-2) -- (1,-1);
        \node[below] at (1,-2) {$+$};
        
        \draw[thick, style=mid arrow] (-1,-1) to[out=135, in=270] (-2,0);
        \node[above] at (-2,0) {$+$};
        
        \draw[thick, style=mid arrow] (-1,-1) to[out=45, in=270] (0,0);
        \node[above] at (0,0) {$+$};
        
        \draw[thick, style=mid arrow] (2,0) to[out=270, in=45] (1,-1);
        \node[above] at (2,0) {$-$};
        
        \node at (-3.5, -1) {$f$};
    \end{scope}
    
    \node at (3.5, 1.25) {$=$};
    
    \begin{scope}[shift={(7.5,0.5)}]
        \draw[thick, dashed] (-3,-2) rectangle (3,3);
        
        \node at (-1,2) {}; 
        \node at (1,2) {};  
        \node at (-1,-1) {}; 
        \node at (1,-1) {};  
        
        \draw[thick, style=mid arrow] (-1,-1) to[out=135, in=225] (-1,2);
        \draw[thick, style=mid arrow] (-1,-1) to[out=45, in=315] (-1,2);
        
        \draw[thick, style=mid arrow] (-1,-1) -- (1,-1);
        \draw[thick, style=mid arrow] (1,2) -- (-1,2);
        
        \draw[thick, style=mid arrow] (1,2) to[out=315, in=45] (1,-1);
        
        \draw[thick, style=mid arrow] (1,-2) -- (1,-1);
        \node[below] at (1,-2) {$+$};
        
        \draw[thick, style=mid arrow] (1,2) -- (1,3);
        \node[above] at (1,3) {$+$};
        
        \node at (4.5, 0.5) {$=[2]_q^2 \id_{+}$};
    \end{scope}
    
\end{tikzpicture}}
    \caption{Composition of two webs $g \in \Hom_{\Sp}( (+,+,-), + )$ and $f \in \Hom_{\Sp}(+, (+,+,-) )$. Concatenating the top boundary of $f$ to the bottom of $g$ forms a digon and a square. One may decide to apply either the digon or the square relation first, which does not affect the final result $g \circ f = [2]_q^2 \id_{+}$. }
    \label{fig: composition example}
\end{figure}

The monoidal product of $\Sp$ is defined by juxtaposition on both the object and morphism levels. The unit object for this product is the empty sign string $\emptyset$, whose identity morphism is the empty web $\id_{\emptyset}$. This makes $\Sp$ a strict monoidal $\C(q)$-linear category. Morphisms in $\Sp$ are generated by linear combinations, monoidal products, and compositions of the eight basic morphisms in \Cref{fig: elementary webs}. Therefore, the category $\Sp$ is presented by these generators, with the Kuperberg relations as defining relations \cref{fig: Kuperberg relations}. For example, the morphisms $f$ and $g$ in \cref{fig: composition example} are generated through the following identities:
\begin{align*}
    f&=(Y_{-} \otimes \id_{-})\circ Y_{+},\\
    g&=(\ccap_{+} \otimes \id_{+}) \circ (\ccap_{-}\otimes \id_{-} \otimes Y_{-}) \circ (\id_{+}\otimes Y_{+} \otimes \id_{-}).
\end{align*}

\begin{figure}[htbp]
    \centering
    \scalebox{0.8}{\begin{tikzpicture}

    \draw [dashed] (-5.5, 1.75) rectangle (-2.5, 3.75);
    \node at (-5, 4.05) {$-$};
    \node at (-3, 4.05) {$-$};
    \node at (-4, 1.45) {$+$};
    \draw [style=mid arrow] (-5, 3.75) to (-4, 3);
    \draw [style=mid arrow] (-3, 3.75) to (-4, 3);
    \draw [style=mid arrow] (-4, 1.75) to (-4, 3);
    \node at (-4, 0.75) {$Y_{+}$};

    \draw [dashed] (-1.5, 1.75) rectangle (1.5, 3.75);
    \node at (-1, 4.05) {$+$};
    \node at (1, 4.05) {$+$};
    \node at (0, 1.45) {$-$};
    \draw [style=mid arrow] (0, 3) to (0, 1.75);
    \draw [style=mid arrow] (0, 3) to (-1, 3.75);
    \draw [style=mid arrow] (0, 3) to (1, 3.75);
    \node at (0, 0.75) {$Y_{-}$};

    \draw [dashed] (2.5, 1.75) rectangle (5.5, 3.75);
    \node at (4, 4.05) {$+$};
    \node at (4, 1.45) {$+$};
    \draw [style=mid arrow] (4, 1.75) to (4, 3.75);
    \node at (4, 0.75) {$\id_{+}$};

    \draw [dashed] (6.5, 1.75) rectangle (9.5, 3.75);
    \node at (8, 4.05) {$-$};
    \node at (8, 1.45) {$-$};
    \draw [style=mid arrow] (8, 3.75) to (8, 1.75);
    \node at (8, 0.75) {$\id_{-}$};


    \draw [dashed] (-5.5, -2) rectangle (-2.5, 0);
    \node at (-5, 0.3) {$+$};
    \node at (-3, 0.3) {$-$};
    \draw [style=mid arrow, bend left=90, looseness=1.25] (-3, 0) to (-5, 0);
    \node at (-4, -2.75) {$\ccup_{+}$};

    \draw [dashed] (-1.5, -2) rectangle (1.5, 0);
    \node at (-1, 0.3) {$-$};
    \node at (1, 0.3) {$+$};
    \draw [style=mid arrow, bend right=90, looseness=1.25] (-1, 0) to (1, 0);
    \node at (0, -2.75) {$\ccup_{-}$};

    \draw [dashed] (2.5, -2) rectangle (5.5, 0);
    \node at (3, -2.3) {$+$};
    \node at (5, -2.3) {$-$};
    \draw [style=mid arrow, bend left=90, looseness=1.25] (3, -2) to (5, -2);
    \node at (4, -2.75) {$\ccap_{-}$};

    \draw [dashed] (6.5, -2) rectangle (9.5, 0);
    \node at (7, -2.3) {$-$};
    \node at (9, -2.3) {$+$};
    \draw [style=mid arrow, bend right=90, looseness=1.25] (9, -2) to (7, -2);
    \node at (8, -2.75) {$\ccap_{+}$};

\end{tikzpicture}}
    \caption{The eight basic webs. The signs of the $\ccup$ (resp.\ $\ccap$) webs are determined by the orientation of their boundary edges on the left (resp.\ right) side.}
    \label{fig: elementary webs}
\end{figure}

\subsection{Folding and duality}\label{sec: folding and duality}

We first define the duality $(-)^\vee$ on a sign string $(s_1,\dots,s_L)$ by 
\begin{align}\label{eqn: duality}
    (s_1,\dots,s_L)^\vee:=(-s_L,\dots,-s_1).
\end{align}
It is an anti-involution with respect to the monoidal product, i.e. for any two objects $S_1$ and $S_2$, we have $(S_1 S_2)^\vee = S_2^\vee S_1^\vee$ and $(S^\vee)^\vee = S$. Generalizing the $\ccup_{\pm}$ and $\ccap_{\mp}$ webs defined in \cref{fig: elementary webs}, we define webs $\ccup_S$ and $\ccap_S$ for any sign string $S$:

\begin{definition}[$\ccup_S$ and $\ccap_S$]\label{def: cup and cap for sign strings}
    Let $S$ be a sign string. We define $\ccup_S \in \Web^{S S^\vee}_{\emptyset}$ as the unique monomial web that consists of $|S|$ nested cups, connecting the $i$-th and the $(2|S|-i+1)$-th boundary edges on top. See \cref{fig: cupS and capS} for an example. Similarly, we define $\ccap_S \in \Web^{\emptyset}_{S^\vee S}$ as the unique monomial web consisting of $|S|$ parallel nested caps, connecting the $i$-th and the $(2|S|-i+1)$-th boundary edges on the bottom.
\end{definition}

\begin{figure}[htbp]
    \centering
    \scalebox{0.8}{\begin{tikzpicture}[scale=0.65]
    
    \node[above] at (-9, 3) {$+$};
    \node[above] at (-8, 3) {$-$};
    \node[above] at (-7, 3) {$+$};
    \node[above] at (-6, 3) {$+$};
    \node[above] at (-4, 3) {$-$};
    \node[above] at (-3, 3) {$-$};
    \node[above] at (-2, 3) {$+$};
    \node[above] at (-1, 3) {$-$};

    \draw [style=mid arrow] (-8, 3) to[out=-90, in=-90, looseness=1.25] (-2, 3);
    \draw [style=mid arrow] (-1, 3) to[out=-90, in=-90, looseness=1.25] (-9, 3);
    \draw [style=mid arrow] (-4, 3) to[out=-90, in=-90, looseness=1.25] (-6, 3);
    \draw [style=mid arrow] (-3, 3) to[out=-90, in=-90, looseness=1.25] (-7, 3);

    \node at (-5, -1.5) {$\ccup_{+-++}$};

    
    \node[below] at (1, 0) {$-$};
    \node[below] at (2, 0) {$-$};
    \node[below] at (3, 0) {$+$};
    \node[below] at (4, 0) {$-$};
    \node[below] at (6, 0) {$+$};
    \node[below] at (7, 0) {$-$};
    \node[below] at (8, 0) {$+$};
    \node[below] at (9, 0) {$+$};

    \draw [style=mid arrow] (4, 0) to[out=90, in=90, looseness=1.25] (6, 0);
    \draw [style=mid arrow] (7, 0) to[out=90, in=90, looseness=1.25] (3, 0);
    \draw [style=mid arrow] (2, 0) to[out=90, in=90, looseness=1.25] (8, 0);
    \draw [style=mid arrow] (1, 0) to[out=90, in=90, looseness=1.25] (9, 0);

    \node at (5, -1.5) {$\ccap_{+-++}$};

\end{tikzpicture}}
    \caption{The webs $\ccup_S$ and $\ccap_S$ for a sign string $S=(+,-,+,+)$.}
    \label{fig: cupS and capS}
\end{figure}

\begin{lemma}\label{lem: zigzag relations}
    Let $S$ be an object in $\Sp$. We have the following zigzag identities:
    \begin{align}\label{eqn: zigzag in Sp}
        \begin{aligned}
            (\ccap_{S^\vee} \otimes \id_{S}) \circ (\id_{S} \otimes \ccup_{S^\vee}) = \id_{S},\\
            (\id_{S^\vee} \otimes \ccap_{S^\vee}) \circ (\ccup_{S^\vee} \otimes \id_{S^\vee}) = \id_{S^\vee}.
        \end{aligned}
        \end{align}
    \end{lemma}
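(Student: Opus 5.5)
Both identities are statements about the isotopy class of a single planar graph, so I will prove them by showing that the left-hand side, as a monomial net, is isotopic rel boundary to the net $\id_S$ (resp.\ $\id_{S^\vee}$). No Kuperberg relation is needed, because the composite has no internal vertices, loops, digons or squares. It is a disjoint union of arcs, and a monomial net without internal vertices is determined up to isotopy by its boundary matching and edge orientations.

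\textbf{Step 1: check the objects.} Write $S=(s_1,\dots,s_L)$, so $S^\vee=(-s_L,\dots,-s_1)$ and $(S^\vee)^\vee=S$. Then $\ccup_{S^\vee}\in\Web_\emptyset^{S^\vee S}$, so $\id_S\otimes\ccup_{S^\vee}$ has top string $S S^\vee S$. Also $\ccap_{S^\vee}\in\Web^\emptyset_{(S^\vee)^\vee S^\vee}=\Web^\emptyset_{SS^\vee}$, so $\ccap_{S^\vee}\otimes\id_S$ has bottom string $SS^\vee S$. The composite is therefore defined and lies in $\Web_S^S$. The second identity is checked the same way.

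\textbf{Step 2: trace each strand.} Label the $3L$ intermediate points $1,\dots,3L$ from left to right. By definition, $\ccup_{S^\vee}$ joins intermediate points $L+j$ and $3L-j+1$, and $\ccap_{S^\vee}$ joins points $i$ and $2L-i+1$.
\begin{itemize}
\item Bottom point $i$ runs up the identity strand to intermediate point $i$.
\item The cap takes it to point $2L-i+1=L+j$ with $j=L-i+1$.
\item The cup takes it to point $3L-j+1=2L+i$.
\item The identity strand takes it to top point $i$.
\end{itemize}
So the composite is a union of $L$ arcs joining bottom point $i$ to top point $i$. These arcs are pairwise disjoint and form a planar zigzag.

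\textbf{Step 3: orientations.} Consistency of orientation is automatic: each cup or cap edge carries a single orientation, and gluing along matching signs is exactly what makes composition in $\Sp$ well defined. Hence arc $i$ is oriented upward if $s_i=+$ and downward if $s_i=-$, matching $\id_S$.

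\textbf{Step 4: straighten.} Each arc is isotopic rel endpoints to a vertical segment, by the usual zigzag straightening, done one arc at a time from the innermost nesting outward so that no two arcs cross. This gives $\id_S$. The second identity follows by the mirror-image argument, or by applying the first identity with $S$ replaced by $S^\vee$ and reflecting the picture left to right.

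The only real obstacle is the index bookkeeping in Step 2: making sure the nested-cup convention of \cref{def: cup and cap for sign strings} matches the tensor placement, so that the arcs indeed connect position $i$ to position $i$ rather than to the reversed position. The rest is planar isotopy.
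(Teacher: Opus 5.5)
Your proof is correct and takes the same route as the paper, which only says that the identities follow from isotopy invariance of webs; your Step 2 bookkeeping (bottom point $i$ $\to$ intermediate point $i$ $\to$ $2L-i+1$ $\to$ $2L+i$ $\to$ top point $i$) is exactly the check the paper leaves implicit. One small remark: the ``innermost nesting outward'' order in Step 4 is ill-defined, because the cap nesting and the cup nesting are reversed relative to each other, but no order is needed, since any two families of disjoint arcs in the rectangle with the same boundary matching are isotopic rel boundary.
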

\begin{proof}
    This follows from the isotopy invariance of webs.
\end{proof}

\begin{definition}\label{def: folding and duality}
    Let $S_1, S_2$ be sign strings. We define the \textup{unfolding} operation $\Unfold_{S_1}$, the \textup{folding} operation $\Fold_{S_1}$, and the duality of morphisms $(-)^\vee$ as follows:
    \begin{align*}
        \Unfold_{S_1} : \W^{S_2}_{S_1} & \to \W^{S_2 S_1^\vee}_{\emptyset}, & w & \mapsto (w \otimes \id_{S_1^\vee}) \circ \ccup_{S_1}, \\
        \Fold_{S_1} : \W^{S_2 S_1^\vee}_{\emptyset} & \to \W^{S_2}_{S_1}, & w & \mapsto (\id_{S_2} \otimes \ccap_{S_1}) \circ (w \otimes \id_{S_1}), \\
        (-)^\vee : \W^{S_2}_{S_1} & \to \W^{S_1^\vee}_{S_2^\vee}, & w & \mapsto (\ccap_{S_2} \otimes \id_{S_1^\vee}) \circ (\id_{S_2^\vee} \otimes w \otimes \id_{S_1^\vee}) \circ ( \id_{S_2^\vee} \otimes \ccup_{S_1}).
    \end{align*}
    The operations are illustrated in \cref{fig: folding and duality}.
\end{definition}

\begin{figure}[htbp]
    \centering
    \begin{tikzpicture}[scale=0.7]
        
        \begin{scope}[shift={(0,0)}]
            \draw[dashed, thick] (-1,0) rectangle (1, 1.2);
            \node at (0, 0.6) {$w$};
            
            \draw (0, 1.2) -- (0, 2);
            \node[above] at (0, 2) {$\id_{S_2}$};
            
            \draw (0, 0) -- (0, -0.3) to[out=-90, in=-90, looseness=1.5] (2, -0.3) -- (2, 2);
            \node[above] at (2, 2) {$\id_{S_1^\vee}$};
            \node[below] at (1, -1.2) {$\ccup_{S_1}$};
            
            \node at (0.5, -2.3) {$\Unfold_{S_1}(w)$};
        \end{scope}

        \begin{scope}[shift={(5.5,0)}]
            \draw[dashed, thick] (-1.5, 0) rectangle (1.5, 1.2);
            \node at (0, 0.6) {$w$};
            
            \draw (-0.75, 1.2) -- (-0.75, 2);
            \node[above] at (-0.75, 2) {$\id_{S_2}$};
            
            \draw (0.75, 1.2) -- (0.75, 1.5) to[out=90, in=90, looseness=1.5] (2.75, 1.5) -- (2.75, -0.5);
            \node[below] at (2.75, -0.5) {$\id_{S_1}$};
            \node[above] at (1.75, 2.2) {$\ccap_{S_1}$};
            
            \node at (0.5, -2.3) {$\Fold_{S_1}(w)$};
        \end{scope}

        \begin{scope}[shift={(11.5,0)}]
            \draw[dashed, thick] (-1, 0) rectangle (1, 1.2);
            \node at (0, 0.6) {$w$};
            
            \draw (0, 1.2) -- (0, 1.5) to[out=90, in=90, looseness=1.5] (-2, 1.5) -- (-2, -0.5);
            \node[below] at (-2, -0.5) {$\id_{S_2^\vee}$};
            \node[above] at (-1, 2.2) {$\ccap_{S_2}$};
            
            \draw (0, 0) -- (0, -0.3) to[out=-90, in=-90, looseness=1.5] (2, -0.3) -- (2, 2);
            \node[above] at (2, 2) {$\id_{S_1^\vee}$};
            \node[below] at (1, -1.2) {$\ccup_{S_1}$};
            
            \node at (0, -2.3) {$w^\vee$};
        \end{scope}
    \end{tikzpicture}
    \caption{Illustration of unfolding $\Unfold_{S_1}$, folding $\Fold_{S_1}$, and duality $w^\vee$. }
    \label{fig: folding and duality}
\end{figure}

As immediate consequences of the zigzag identities in \cref{lem: zigzag relations}, we obtain \cref{lem: folding is invertible}--\cref{lem: inserting zigzag into a product}:

\begin{lemma}\label{lem: folding is invertible}
    Folding and unfolding are inverses to each other. Specifically, for any $w\in \W^{S_2}_{S_1}$, we have
    \begin{align*}
        \Fold_{S_1} \circ \Unfold_{S_1}  = \id_{\W^{S_2}_{S_1}}, \quad
        \Unfold_{S_1} \circ \Fold_{S_1}  = \id_{\W^{S_2 S_1^\vee}_{\emptyset}}.
    \end{align*}
\end{lemma}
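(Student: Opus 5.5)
The plan is to verify the two compositions directly from \cref{def: folding and duality}. The only input is the zigzag identities of \cref{lem: zigzag relations}, together with the interchange law of the strict monoidal category $\Sp$, i.e.\ $(f\otimes \id)\circ(\id\otimes g)=f\otimes g=(\id\otimes g)\circ(f\otimes \id)$. Since both maps are linear, it suffices to check the identities on arbitrary elements $w$.

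For the first identity, take $w\in\W^{S_2}_{S_1}$ and expand:
\begin{align*}
    \Fold_{S_1}(\Unfold_{S_1}(w)) = (\id_{S_2}\otimes \ccap_{S_1})\circ\big((w\otimes \id_{S_1^\vee})\circ \ccup_{S_1}\otimes \id_{S_1}\big).
\end{align*}
\begin{enumerate}
    \item Use the interchange law to slide $w$ past the cup and cap, rewriting this as $w\circ\big[(\id_{S_1}\otimes \ccap_{S_1})\circ(\ccup_{S_1}\otimes \id_{S_1})\big]$.
    \item Recognize the bracket as the second zigzag identity of \cref{lem: zigzag relations}, applied with $S:=S_1^\vee$. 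Since $(S_1^\vee)^\vee=S_1$, this instance reads $(\id_{S_1}\otimes\ccap_{S_1})\circ(\ccup_{S_1}\otimes\id_{S_1})=\id_{S_1}$.
    \item Conclude that the composition equals $w\circ\id_{S_1}=w$.
\end{enumerate}

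For the second identity, take $w\in\W^{S_2S_1^\vee}_\emptyset$. Then $\Unfold_{S_1}(\Fold_{S_1}(w))$ equals
\begin{align*}
    \big(\id_{S_2}\otimes \ccap_{S_1}\otimes \id_{S_1^\vee}\big)\circ\big(w\otimes \id_{S_1}\otimes\id_{S_1^\vee}\big)\circ \ccup_{S_1}.
\end{align*}
Because $w$ has empty source, interchange lets us write $(w\otimes\id_{S_1S_1^\vee})\circ\ccup_{S_1}=(\id_{S_2S_1^\vee}\otimes \ccup_{S_1})\circ w$. What remains on the $S_1^\vee$ strand is $(\ccap_{S_1}\otimes\id_{S_1^\vee})\circ(\id_{S_1^\vee}\otimes\ccup_{S_1})$. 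This is the first zigzag identity with $S=S_1^\vee$, so it equals $\id_{S_1^\vee}$, and the composition reduces to $w$.

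The only real obstacle is bookkeeping. One must make sure that the sign-string labels match the conventions $\ccup_S\in\Web^{SS^\vee}_\emptyset$ and $\ccap_S\in\Web^\emptyset_{S^\vee S}$, so that the correct instance of \cref{lem: zigzag relations} (with $S$ replaced by $S_1^\vee$) applies. As a fallback, both identities are immediate by planar isotopy, as in the proof of \cref{lem: zigzag relations}: straightening the zigzag strand returns $w$.
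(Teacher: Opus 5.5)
Your proof is correct and follows the same route as the paper, which states this lemma as an immediate consequence of the zigzag identities in \cref{lem: zigzag relations}. Your argument supplies the omitted details: the interchange law in the strict monoidal category $\Sp$, and the second and first zigzag identities instantiated at $S=S_1^\vee$, with the sign-string types matching.
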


\begin{lemma}\label{lem: anti-involution property of *}
    The duality map $(-)^\vee$ is an anti-involution with respect to composition in $\Sp$, i.e. for webs $w_1,w_2$ we have
    \begin{align*}
        (w_1 w_2)^\vee = w_2^\vee w_1^\vee, \qquad (w_1^\vee)^\vee = w_1.
    \end{align*}
\end{lemma}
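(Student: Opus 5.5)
The plan is to prove both identities directly from \cref{def: folding and duality} together with the zigzag identities of \cref{lem: zigzag relations} and the interchange law $(f\otimes g)\circ(f'\otimes g')=(f\circ f')\otimes(g\circ g')$ of the strict monoidal category $\Sp$. Equivalently, everything can be read off from isotopy invariance of webs, but I will write out the algebraic manipulation so that the two inputs are explicit.

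\emph{Involutivity.} Take $w\in\W^{S_2}_{S_1}$, so $w^\vee\in\W^{S_1^\vee}_{S_2^\vee}$ and $(w^\vee)^\vee\in\W^{S_2}_{S_1}$, using $(S^\vee)^\vee=S$.
\begin{itemize}
\item Substituting the definition twice gives a composite in which $w$ is flanked, on each side, by a nested cap/cup pair coming from $S_1$ and one coming from $S_2$.
\item Use interchange to slide $w$ past the strands it does not touch.
\item On the $S_2$ side, the two nested arcs combine into $(\ccap_{S_2}\otimes\id_{S_2})\circ(\id_{S_2}\otimes\ccup_{S_2})$, possibly with $S$ replaced by $S^\vee$ according to which label appears. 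This equals $\id_{S_2}$ by \cref{lem: zigzag relations}.
\item The $S_1$ side simplifies symmetrically to $\id_{S_1}$, leaving $w$.
\end{itemize}

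\emph{Anti-multiplicativity.} Take $w_2\in\W^{S_2}_{S_1}$ and $w_1\in\W^{S_3}_{S_2}$. Write $w_2^\vee w_1^\vee$ using the definition.
\begin{itemize}
\item In the middle, the cup $\ccup_{S_2}$ from $w_1^\vee$ sits below the cap $\ccap_{S_2}$ from $w_2^\vee$.
\item Slide $w_1$ and $w_2$ vertically apart from this middle region using interchange.
\item The middle then becomes a zigzag on $S_2$ tensored with identities, and collapses to $\id_{S_2}$ by \cref{lem: zigzag relations}.
\item What remains is $\ccap_{S_3}$, then $w_1\circ w_2$, then $\ccup_{S_1}$, which is $(w_1w_2)^\vee$.
\end{itemize}
Both identities extend from basis webs to arbitrary $x\in\W$ by bilinearity of composition and $\otimes$.

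\emph{Main obstacle.} The only real difficulty is bookkeeping: checking that the nested arcs match up with the correct sign strings, namely $S$ versus $S^\vee$ and the reversed order in $\ccup_S$. This is what guarantees that the zigzag appearing is exactly one of the two forms in \cref{lem: zigzag relations}, rather than a mirrored variant. I would settle this by first checking the case $|S_1|=|S_2|=1$, and then arguing inductively via the nesting $\ccup_{sS}=(\id_s\otimes\ccup_S\otimes\id_{-s})\circ\ccup_s$.
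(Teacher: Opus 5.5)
\textbf{Verdict.} Your anti-multiplicativity argument is correct. The involutivity argument has a genuine gap.

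\textbf{Anti-multiplicativity.} This is exactly the zigzag collapse the paper has in mind. In $w_2^\vee w_1^\vee$, the strand leaving the bottom of $w_1$ meets $\ccup_{S_2}$ and then $\ccap_{S_2}$, which carry the same subscript. After interchange they form $(\id_{S_2}\otimes\ccap_{S_2})\circ(\ccup_{S_2}\otimes\id_{S_2})=\id_{S_2}$, which is the second identity of \cref{lem: zigzag relations} with $S=S_2^\vee$.

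\textbf{Involutivity: the arcs do not form zigzags.} Unwinding the definition gives
\[
(w^\vee)^\vee=(\ccap_{S_1^\vee}\otimes\id_{S_2})\circ(\id_{S_1}\otimes w^\vee\otimes\id_{S_2})\circ(\id_{S_1}\otimes\ccup_{S_2^\vee}).
\]
So the strand leaving the top of $w$ passes through $\ccap_{S_2}$ (from the inner $\vee$) and then $\ccup_{S_2^\vee}$ (from the outer one). On the other side, the strand passes through $\ccup_{S_1}$ and then $\ccap_{S_1^\vee}$.
\begin{itemize}
\item Both identities of \cref{lem: zigzag relations} pair $\ccup_X$ with $\ccap_X$ for the \emph{same} $X$. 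Here the subscripts are $S_2$ versus $S_2^\vee$, and $S_1$ versus $S_1^\vee$.
\item The arcs are nested inside one another rather than side by side. Each boundary strand wraps once around $w$, so the picture is $w$ rotated by $2\pi$, not a snake.
\item Your displayed composite $(\ccap_{S_2}\otimes\id_{S_2})\circ(\id_{S_2}\otimes\ccup_{S_2})$ does not typecheck, which is a symptom of this.
\end{itemize}

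\textbf{Why bookkeeping cannot fix it.} The zigzag identities plus interchange cannot imply involutivity. Replace $(\ccup_-,\ccap_-)$ by $(c^{-1}\ccup_-,\,c\,\ccap_-)$. This preserves every identity in \cref{lem: zigzag relations}, the nesting of $\ccup_S,\ccap_S$, and of course interchange. Yet the resulting double dual of $Y_+\in\W^{--}_{+}$ becomes $c^3Y_+$: the inner dual picks up $c^2$ from $\ccap_{--}$, and the outer dual picks up $c$ from $\ccap_-$. (The paper's one-line justification, ``immediate from the zigzag identities'', tacitly relies on the same isotopy invariance it uses to prove \cref{lem: zigzag relations}.)

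\textbf{How to repair it.} The missing input is isotopy invariance, i.e.\ the pivotal property of $\Sp$. Your passing remark about isotopy is the right ingredient, but it has to be used explicitly. Either of two routes works:
\begin{itemize}
\item \emph{Direct isotopy.} The web $(w^\vee)^\vee$ is $w$ with its boundary strands wound once around it. Rotating a disc containing $w$ back by $2\pi$ shows it is isotopic rel boundary to $w$, hence equal to $w$ in $\Sp$.
\item \emph{Other-handed formula.} First establish, by isotopy and independently of the present lemma, the other-handed formula for $w^\vee$ in \cref{cor: equivalent duality}. Use it for the inner $\vee$. The two sides then become $(\ccap_{S_1^\vee}\otimes\id_{S_1})\circ(\id_{S_1}\otimes\ccup_{S_1^\vee})$ and $(\ccap_{S_2^\vee}\otimes\id_{S_2})\circ(\id_{S_2}\otimes\ccup_{S_2^\vee})$. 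These are genuinely adjacent snakes, and interchange lets you order them as $Z_{S_2}\circ w\circ Z_{S_1}$. Both collapse by the first identity of \cref{lem: zigzag relations}.
\end{itemize}
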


\begin{lemma}\label{lem: inserting zigzag into a product}
    Let $w_1\in \W_{S_1}^{S}$ and $w_2\in \W_{S}^{S_2}$. Then
    \begin{align*}
        \Unfold_{S_1}(w_2w_1)=(w_2 \otimes w_1^\vee) \circ \ccup_S.
    \end{align*}
\end{lemma}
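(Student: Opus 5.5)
The plan is to compute directly from \cref{def: folding and duality}, expanding both sides of the identity as stacked diagrams and then applying the zigzag identities of \cref{lem: zigzag relations}. Start with the left-hand side: by definition,
\[
\Unfold_{S_1}(w_2w_1)=(w_2w_1\otimes \id_{S_1^\vee})\circ\ccup_{S_1}=(w_2\otimes\id_{S_1^\vee})\circ(w_1\otimes\id_{S_1^\vee})\circ\ccup_{S_1},
\]
where the second equality is the interchange law of the strict monoidal category $\Sp$. Since $w_2\otimes w_1^\vee=(w_2\otimes\id_{S_1^\vee})\circ(\id_S\otimes w_1^\vee)$, again by interchange, it suffices to prove the ``sliding'' identity
\[
(w_1\otimes\id_{S_1^\vee})\circ\ccup_{S_1}=(\id_S\otimes w_1^\vee)\circ\ccup_S
\]
in $\W^{SS_1^\vee}_{\emptyset}$. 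Informally, this identity moves $w_1$ around the cup from the left leg to the right leg.

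To prove the sliding identity, expand the right-hand side using the definition of $w_1^\vee$ with $S_2=S$:
\[
w_1^\vee=(\ccap_{S}\otimes\id_{S_1^\vee})\circ(\id_{S^\vee}\otimes w_1\otimes\id_{S_1^\vee})\circ(\id_{S^\vee}\otimes\ccup_{S_1}).
\]
Substituting this into $(\id_S\otimes w_1^\vee)\circ\ccup_S$ and using interchange, we move $\ccup_{S_1}$ and $w_1$ past the strands $\id_S\otimes\id_{S^\vee}$ coming from $\ccup_S$. The composite then becomes
\[
\big((\id_S\otimes\ccap_S)\circ(\ccup_S\otimes\id_S)\otimes\id_{S_1^\vee}\big)\circ(w_1\otimes\id_{S_1^\vee})\circ\ccup_{S_1}.
\]
The bracketed factor is a zigzag on the strands labelled $S$. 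The key bookkeeping point is that $\ccup_S\in\Web^{SS^\vee}_\emptyset$ and $\ccap_S\in\Web^\emptyset_{S^\vee S}$, so the composite $(\id_S\otimes\ccap_S)\circ(\ccup_S\otimes\id_S)$ is well-typed. By \cref{lem: zigzag relations}, applied with $S$ replaced by $S^\vee$ and using $(S^\vee)^\vee=S$, it equals $\id_S$. This yields the left-hand side of the sliding identity.

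The only genuine obstacle is matching conventions: we must check that the zigzag actually needed is one of the two listed in \cref{lem: zigzag relations} under the substitution $S\mapsto S^\vee$, and confirm the nesting order in $\ccup_S$, namely that it connects position $i$ to position $2|S|-i+1$. If a direct match fails, a fallback is available. Both sides are single webs up to isotopy relative to the boundary, so the identity holds in $\W$ because the two diagrams are isotopic: $w_1$ is dragged along the cup, in the same way that \cref{lem: zigzag relations} itself was proved.
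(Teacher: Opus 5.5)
Your proposal is correct and follows the paper's approach: the paper records this lemma as an immediate consequence of the zigzag identities, and your computation spells that out by unpacking $\Unfold_{S_1}$ and $w_1^\vee$, sliding pieces past each other with the interchange law, and collapsing the snake $(\id_S\otimes\ccap_S)\circ(\ccup_S\otimes\id_S)$. The direct match you were unsure about does work, since this snake is the second zigzag identity with $S$ replaced by $S^\vee$, so the isotopy fallback is not needed.
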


\begin{remark}
    \Cref{lem: inserting zigzag into a product} explains the name ``unfolding'': the map $\Unfold$ unfolds the product $w_2 w_1$ into a monoidal product $w_2 \otimes w_1^\vee$, and connects the two factors with a cup web $\ccup_S$.
\end{remark}

\begin{lemma}\label{lem: duality as rotation}
    Let $w$ be a web. Then $w^\vee=O_{\pi}w$, where $O_{\pi}$ rotates a web by an angle of $\pi$.
\end{lemma}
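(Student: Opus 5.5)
The statement to prove is that $w^\vee = O_\pi w$ for any web $w\in\W^{S_2}_{S_1}$. The plan is to compute $w^\vee$ straight from \cref{def: folding and duality} and then identify the resulting diagram, up to isotopy relative to the boundary, with the rotated web. By linearity it suffices to take $w$ a monomial net (or basis web). Write $w^\vee$ as the composite
\[
(\ccap_{S_2}\otimes\id_{S_1^\vee})\circ(\id_{S_2^\vee}\otimes w\otimes\id_{S_1^\vee})\circ(\id_{S_2^\vee}\otimes\ccup_{S_1}).
\]
Since $\Sp$ is defined with webs taken up to isotopy, the claim reduces to a topological statement about this planar embedded graph.

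First, read the composite as follows. The nested cups $\ccup_{S_1}$ bend the bottom boundary strands of $w$ to the right and up, ending on the top boundary as $S_1^\vee$. The nested caps $\ccap_{S_2}$ bend the top strands of $w$ to the left and down, ending on the bottom boundary as $S_2^\vee$. The ordering reversal built into the nesting of \cref{def: cup and cap for sign strings} matches the reversal under a half-turn. Also, an upward strand at the bottom becomes downward at the top after bending, so a sign $s$ becomes $-s$, consistent with \eqref{eqn: duality}. This matches the effect of $O_\pi$ on boundary data: it sends the bottom string $S_1$ to the top string $S_1^\vee$ and the top string $S_2$ to the bottom string $S_2^\vee$. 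Orientations of edges are carried along by the rotation, so sources and sinks are preserved.

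Next, exhibit an explicit isotopy. Shrink $w$ into a small disk $D$ with its boundary points on an arc. The composite diagram is then $D$ with the $|S_2|$ top legs travelling a half-turn counterclockwise around $D$ to the bottom-left, and the $|S_1|$ bottom legs travelling a half-turn counterclockwise around $D$ to the top-right. Rotating $D$ by $\pi$ counterclockwise, while keeping the outer endpoints fixed, straightens all these legs simultaneously into vertical strands. The result is exactly $O_\pi w$ placed in the rectangle. Equivalently, one can bend $\ccup$ and $\ccap$ using \cref{lem: zigzag relations}, but the direct isotopy is cleaner. No Kuperberg relations are used, so the identity holds already at the level of nets, and hence in $\W$.

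The main obstacle is to make the isotopy rigorous and to check the orientation and sign conventions. In particular, one must confirm that the counterclockwise direction matches the chosen placement of $\ccap_{S_2}$ on the left and $\ccup_{S_1}$ on the right, and not the clockwise rotation. I would check this on the elementary generators $Y_\pm$, $\id_\pm$, $\ccup_\pm$ and $\ccap_\pm$ of \cref{fig: elementary webs}. The general case then follows either from the direct isotopy argument, or from the compatibility of both sides with composition (\cref{lem: anti-involution property of *}, together with $O_\pi(w_1w_2)=O_\pi w_2\, O_\pi w_1$) and with the monoidal product. For the monoidal product, $(w_1\otimes w_2)^\vee=w_2^\vee\otimes w_1^\vee$ also follows from the nested structure of the cups and caps.
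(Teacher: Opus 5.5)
Your proof is correct, but your main route differs from the paper's. The paper checks $w^\vee=O_\pi w$ on the eight basic webs of \cref{fig: elementary webs}. It then extends to all webs using that both $(-)^\vee$ and $O_\pi$ reverse composition.

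You instead give one global isotopy. You shrink $w$ to a disk around which the $\ccap_{S_2}$ legs and the $\ccup_{S_1}$ legs each wind half a turn counterclockwise, and you unwind them by rotating the disk. This treats every net uniformly and uses only the isotopy invariance built into \cref{def: nets}; no generating set is needed.

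It also avoids a point the paper's sketch passes over. A general web is a composite of \emph{tensor products} of basic webs padded with identities, not of the eight basic webs alone. So the generator argument also needs $(w_1\otimes w_2)^\vee=w_2^\vee\otimes w_1^\vee$ and the corresponding reversal for $O_\pi$, which you correctly flag in your fallback.

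What the paper's route buys is that it is local and algebraic. After eight finite checks, everything follows from \cref{lem: anti-involution property of *}, with no planar isotopy to formalize.

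Your concern about the direction resolves as you expected. With caps on the left and cups on the right, both families wind counterclockwise, so a counterclockwise rotation by $\pi$ straightens them all at once. In any case, a full $2\pi$ rotation of the disk is itself an isotopy relative to the boundary, so the resulting isotopy class does not depend on the direction chosen.
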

\begin{proof}
    The fact is checked directly on the eight basic webs in \cref{fig: elementary webs}, and the general result follows from the facts that a general web $w$ is a composition of the basic webs, and that $(-)^\vee$ and $O_{\pi}$ are both anti-involutions with respect to composition.
\end{proof}

\begin{corollary}\label{cor: equivalent duality}
    The duality map $(-)^\vee$ satisfies
    \begin{align}\label{eqn: equivalent duality}
        (-)^\vee : \W^{S_2}_{S_1} & \to \W^{S_1^\vee}_{S_2^\vee}, & w & \mapsto (\id_{S_1^\vee} \otimes \ccap_{S_2^\vee} ) \circ (\id_{S_1^\vee} \otimes w \otimes \id_{S_2^\vee}) \circ (\ccup_{S_1^\vee} \otimes \id_{S_2^\vee} ).
    \end{align}
\end{corollary}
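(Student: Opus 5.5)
The plan is to derive the corollary from \cref{lem: duality as rotation} together with the definition of $(-)^\vee$ and the zigzag identities of \cref{lem: zigzag relations}. The right-hand side of \eqref{eqn: equivalent duality} is the ``mirror'' construction of the dual: the strands of $w$ are bent the other way, going up and over on the right and down and under on the left. Call this map $w\mapsto w^{\vee'}$.

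\textbf{Step 1 (compatibility with composition).} First I show that $w\mapsto w^{\vee'}$ is an anti-homomorphism for composition. For composable $w_1,w_2$, the diagram for $(w_1w_2)^{\vee'}$ can be rewritten as $w_2^{\vee'}w_1^{\vee'}$. To do this I insert, on the intermediate boundary $S$, the second zigzag identity of \cref{lem: zigzag relations} (applied with $S$ replaced by $S^\vee$). The remaining manipulations are the interchange law of the strict monoidal structure, $(f\otimes\id)(\id\otimes g)=f\otimes g=(\id\otimes g)(f\otimes\id)$. The map is also clearly monoidal up to the expected reversal of factors. Alternatively, one can skip this step and argue purely by isotopy.

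\textbf{Step 2 (agreement with rotation).} Next I show that $w^{\vee'}=O_\pi w$. As in the proof of \cref{lem: duality as rotation}, it suffices to check this on the eight basic webs of \cref{fig: elementary webs}. Both sides are anti-involutions with respect to composition and behave the same way under $\otimes$, so equality on generators implies equality on all webs. On each generator the check is immediate. For $\id_\pm$ and for $\ccup_\pm$, $\ccap_\pm$, one application of a zigzag identity straightens the bent strands into the rotated picture. For $Y_\pm$, bending the three legs on the right is planar-isotopic to rotating the trivalent vertex by $\pi$.

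\textbf{Step 3 (conclusion).} Combining Step 2 with \cref{lem: duality as rotation} gives $w^{\vee'}=O_\pi w=w^\vee$, which is \eqref{eqn: equivalent duality}. A more direct route is also available: webs are considered up to isotopy rel boundary, and both formulas describe planar embeddings of the same abstract graph with the same boundary order. Each is isotopic to the $\pi$-rotation of $w$, obtained by sliding $w$ along the cup and cap in the appropriate direction.

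The main obstacle is bookkeeping rather than mathematics. One must check that the sign strings in \eqref{eqn: equivalent duality} match up: $\ccup_{S_1^\vee}\in\Web_\emptyset^{S_1^\vee S_1}$ and $\ccap_{S_2^\vee}\in\Web^\emptyset_{S_2 S_2^\vee}$, using $(S^\vee)^\vee=S$. One must also confirm that the nesting conventions of \cref{def: cup and cap for sign strings} produce the same orientations as the $\pi$-rotation.
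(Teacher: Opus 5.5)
Your proposal is correct and follows the paper's route: the paper reruns the generator check of \cref{lem: duality as rotation} for the displayed map. Your Step 1 makes explicit the anti-multiplicativity (and compatibility with $\otimes$) that this rerun tacitly needs. One bookkeeping correction: with the strands bent as in the displayed formula, the zigzag you straighten in Step 1 is the first identity $(\ccap_{S^\vee}\otimes\id_S)\circ(\id_S\otimes\ccup_{S^\vee})=\id_S$, not the second identity with $S$ replaced by $S^\vee$ (that one is what arises for the original definition of $(-)^\vee$).
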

\begin{proof}
    An argument identical to the proof of \cref{lem: duality as rotation} shows that the displayed expression also coincides with $O_{\pi}w$.
\end{proof}

\subsection{The equivalence functor $\Gamma: \Sp \to \Rep$}\label{sec: equivalence Sp to Rep}
Recall the Drinfeld-Jimbo quantum group $\uqslt$. It is a Hopf algebra over $\C(q)$, with two fundamental representations $V^+$ and $V^-$, reviewed in appendix~\ref{app: quantum group conventions}. For a sign string $S=(s_1,\dots,s_n)$ with $s_i\in\{+,-\}$, define
\begin{align}\label{eqn: V^S}
    V^S := V^{s_1} \otimes \cdots \otimes V^{s_n}.
\end{align}
The action $\rho_S:\uqslt\to\End_{\C(q)}(V^S)$ defined by iterated coproduct \eqref{eqn: coproduct} makes $V^S$ a $\uqslt$ module. For the empty sign string, $V^\emptyset:=\C(q)$ is the trivial module.
\begin{definition}\label{def: Rep}
    Let $\Rep$ denote the full subcategory of the category of
    finite-dimensional $\uqslt$-modules, whose objects are $V^S$ for all sign strings $S$. It is a monoidal subcategory.
\end{definition}

Now we define a functor $\Gamma:\Sp \to \Rep$, following \cite[Sec.\,5]{kuperberg_1996_spiders}. On the object level it sends
\begin{align}\label{eqn: Gamma object}
    \Gamma: S\mapsto V^S,
\end{align}
and on the morphism level, the eight basic webs in \cref{fig: elementary webs} are sent by $\Gamma$ to the eight basic intertwiners $\id_{V^\pm}$, $\eva_{\pm}$, $\coev_{\pm}$, and $\mathcal{Y}_{\pm}$ in $\Rep$, whose precise definitions are in \eqref{eqn: coev}--\eqref{eqn: Y}:
\begin{align}\label{eqn: Gamma-morphism}
    \begin{aligned}
        \id_{\pm} &\mapsto \id_{V^{\pm}}\in \Hom_{\Rep}(V^\pm,V^\pm), \\
        Y_{\pm} &\mapsto \mathcal{Y}_{\pm}\in \Hom_{\Rep}(V^\pm,V^\mp\otimes V^\mp), \\
        \ccup_{\pm} &\mapsto \coev_{\pm}\in \Hom_{\Rep}(\C(q),V^\pm \otimes V^\mp), \\
        \ccap_{\pm} &\mapsto \eva_{\pm}\in \Hom_{\Rep}(V^\mp \otimes V^\pm,\C(q)).
    \end{aligned}
\end{align}
Hence $\Gamma$ is defined on the generators of $\Sp$. A straightforward calculation in \cref{app: intertwiners Gamma} shows that $\Gamma$ preserves the Kuperberg relations \cref{fig: Kuperberg relations} and the zigzag identities \eqref{eqn: zigzag in Sp} in $\Sp$. Therefore, $\Gamma$ is a well-defined functor. Furthermore, Kuperberg showed that it is fully faithful:
\begin{theorem}(\cite[Thms.\,5.1 \& 6.1]{kuperberg_1996_spiders})\label{thm: equivalence of Sp and Rep}
    The functor $\Gamma$ is a monoidal equivalence $\Sp\to\Rep$.
\end{theorem}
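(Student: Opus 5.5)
The statement to prove is Kuperberg's theorem that $\Gamma$ is a monoidal equivalence. Monoidality is immediate, since $\Gamma$ sends juxtaposition to tensor product by construction. Essential surjectivity onto $\Rep$ is tautological, because the objects of $\Rep$ are exactly the $V^S = \Gamma(S)$. So everything reduces to showing that $\Gamma$ is fully faithful, i.e. that for all sign strings $S,T$ the linear map $\Gamma:\W_S^T\to\Hom_{\uqslt}(V^S,V^T)$ is bijective.

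First, I reduce to invariant spaces with empty source. By \cref{lem: folding is invertible}, $\Unfold_S:\W_S^T\to\W_\emptyset^{TS^\vee}$ is an isomorphism. On the representation side, the analogous map built from $\coev$ and $\eva$ is also an isomorphism, $\Hom(V^S,V^T)\cong\Hom(\C(q),V^{TS^\vee})$. Because $\Gamma$ is a functor sending cups and caps to $\coev,\eva$, it intertwines the two unfoldings. So it suffices to show that $\Gamma:\W_\emptyset^U\to\Inv(V^U)$ is bijective for every sign string $U$.

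Second, I compare dimensions. The basis webs $\Web_\emptyset^U$ span $\W_\emptyset^U$, since the Kuperberg relations reduce any net. By Kuperberg's count via the growth algorithm / minimal cut paths, non-elliptic webs with boundary $U$ are in bijection with dominant lattice paths of sign string $U$ ending at $0$. By iterating the Pieri rules $L_q(\lambda)\otimes V^\pm\cong\bigoplus L_q(\lambda\pm\text{step})$ (keeping dominant terms), this number equals $\dim\Inv(V^U)$. So $\dim\W_\emptyset^U\le\dim\Inv(V^U)$, and it is enough to prove either injectivity or surjectivity.

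Third, I prove surjectivity by induction on $|U|$. For $U=(s)\cdot U'$, I decompose $\Inv(V^U)$ using $\Hom(V^{U'}, V^{-s})$ and the intermediate paths. Concretely, any invariant factors through $V^{s}\otimes V^{U'}$, and the image of $\Gamma$ contains the maps obtained by attaching the $Y$, cup, and $H$-pieces of the growth algorithm to images of smaller webs. The alternative I would actually try first is injectivity via a triangularity argument: attach to each basis web $w$, grown from a path $P$, a weight vector $v_P\in V^U$ whose tensor-factor weights are read off from $P$. I would then show that the coefficient of $v_P$ in $\Gamma(w_Q)$ is nonzero for $Q=P$ and vanishes unless $Q\preceq P$ in a suitable order. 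This is Kuperberg's leading-term argument from the state-sum expansion of $\Gamma(w)$ as a sum over edge labelings.

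The main obstacle is this triangularity step. Expanding $\Gamma(w)$ as a state sum over colorings of edges by weights of $V^\pm$, I must show that the minimal-cut-path labeling is the unique extremal state. This is a combinatorial geometry fact about non-elliptic webs: faces have degree at least $6$, which yields the unique-extremal-state property. Establishing it carefully is where the real work lies; the dimension count and the folding reduction are routine given the earlier lemmas.
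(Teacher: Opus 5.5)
Your proposal is correct in substance, but it does not follow the paper's route. The paper gives no argument of its own beyond checking, in \cref{app: intertwiners Gamma}, that $\Gamma$ respects the Kuperberg relations and zigzag identities. Full faithfulness is imported from \cite{kuperberg_1996_spiders}, whose argument, as far as I recall, pairs a spanning (surjectivity) statement with a count of non-elliptic webs.

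You keep the count (\cref{thm: growth algorithm} plus the minuscule Pieri rule) and replace the spanning input by linear independence of the vectors $\Gamma(G(P))$, $P\in\Path^+_U(0)$. That independence is exactly what \cref{thm: unitriangularity} gives: restricted to the columns $Q\in\Path^+_U(0)$, its expansion is a unitriangular matrix for the lexicographic order. With the unfolding reduction via \cref{lem: folding is invertible}, this proves the theorem from inputs the paper already uses. It needs no separate statement that the basic intertwiners generate $\Rep$. As a by-product, it justifies the linear independence of basis webs in $\W^T_S$, which the paper asserts just after \cref{def: reduced web}.

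There are two points to fix.

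First, the reduction step is misstated. From $\dim\W^U_\emptyset\le\dim\Inv(V^U)$ alone, injectivity of $\Gamma$ is not enough: it is compatible with the basis webs being linearly dependent in $\W^U_\emptyset$. What your triangularity argument actually delivers is independence of $|\Web^U_\emptyset|=\dim\Inv(V^U)$ image vectors. That gives surjectivity, and spanning then forces $\dim\W^U_\emptyset=\dim\Inv(V^U)$ and bijectivity.

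Second, the face-degree heuristic (faces of degree at least $6$ giving a unique extremal state) is not a proof of the triangularity. Either cite \cref{thm: unitriangularity}, or prove it by induction along the growth recursion \eqref{eqn: recursion}. For that induction, check that each local map $\Gamma(w)$ of a growth rule does two things:
\begin{itemize}
\item it sends the tensor labelled by $P^f$ to the tensor labelled by $P^i$ plus lexicographically lower terms;
\item it sends tensors labelled by lexicographically smaller local states to strictly lower ones.
\end{itemize}
Weight considerations essentially force both properties. Once this is in place, your third-step sketch of surjectivity by induction is unnecessary, and as written it does not constrain anything.
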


\begin{remark}
The terminology of \cite{kuperberg_1996_spiders} is very different from that in this work. Most importantly, $\Sp$ is termed the ``combinatorial spider''; $\Rep$ the ``algebraic spider''; $\Gamma$ is denoted by $\Phi$, and termed ``a morphism from combinatorial to algebraic spider''. However, the image of $\Gamma$ on the generators of $\Sp$ is not given as explicitly as in \eqref{eqn: Gamma-morphism}.

A generalization of \cref{thm: equivalence of Sp and Rep} to the entire $A_{n-1}$ family is given in \cite{cautis_kamnitzer_morrison_2014}. To do so, they generalize Kuperberg webs by introducing indices and ``tags'' on edges. To reduce these $A_2$ webs with tags and indices to the Kuperberg webs, see \cite[Sec.\,3.2 \& 3.3]{lafay_2021_uqsln} or \cite[Sec.\,1.5]{lauda_queffelec_rose_2015_skew_howe}.
\end{remark}

\subsection{The $\mathfrak{sl}_3$ weight lattice and lattice paths}\label{sec: weight lattice and paths}

Let $\Lambda = \Z\lambda_1 + \Z\lambda_2$ be the weight lattice of $\mathfrak{sl}_3$, where $\lambda_1=(-\frac{1}{2}, \frac{\sqrt{3}}{2})$ and $\lambda_2=(\frac{1}{2},\frac{\sqrt{3}}{2})$. Each $\mathfrak{sl}_3$-weight takes the form 
\begin{align*}
    \lambda=n\lambda_1+k\lambda_2,\quad  n\in \N, k\in \Z.
\end{align*}
The \textit{dominant Weyl chamber} $\Lambda_+$ \label{def: Lambda} is the sublattice of $\Lambda$ consisting of \textit{dominant weights}, which have $n,k\geq 0$ in the above expression, see \cref{fig: weight lattice}.

\begin{figure}[htbp]
\centering
\begin{tikzpicture}[
    scale=1.5,
    x={(-0.25cm,0.433cm)}, 
    y={(0.25cm,0.433cm)},
    >=stealth
]

\def\R{4} 

    \fill[green!10] (0,0) -- (0,\R+0.5) -- (\R+0.5,0) -- cycle;

    
    \foreach \x in {0,...,\R} {
        \foreach \y in {0,...,\R} {
            \pgfmathparse{max(abs(\x), abs(\y), abs(\x+\y)) <= \R ? 1 : 0}
            \ifnum\pgfmathresult=1
                \ifnum\y > -2
                    \fill[gray!60] (\x,\y) circle (1pt);
                \fi
            \fi
        }
    }
    
    \fill[black] (0,0) circle (2.5pt) node[below left] {$0$};

    \draw[->, blue, thick] (0,0) -- (0,1) node[above right] {$\lambda_2$};
    
    \draw[->, blue, thick] (0,0) -- (1,0) node[above left] {$\lambda_1$};

    \draw[->] (0,0) -- (1,0);
    \draw[->] (1,0) -- (1,1);
    \draw[->] (1,1) -- (0,2);
    \draw[->] (0,2) -- (1,2);
    \draw[->] (1,2) -- (2,2);
    \draw[->] (2,2) -- (2,1);

\end{tikzpicture}
\caption{A portion of the dominant $\mathfrak{sl}_3$ Weyl chamber, colored in light green. The weights $\lambda_1$ and $\lambda_2$ are shown as blue arrows. Dominant weights are shown as gray dots. The black arrows constitute the path from \cref{emp: path}.}
\label{fig: weight lattice}
\end{figure}

A partial ordering $\succ$ on $\Lambda$ is generated by the following relations: 
\begin{equation}\label{eqn: partial ordering}
    \begin{aligned}
        &a\lambda_1+b\lambda_2 \succ  (a-2)\lambda_1+(b+1)\lambda_2,\\
        &a\lambda_1+b\lambda_2 \succ  (a+1)\lambda_1+(b-2)\lambda_2.
    \end{aligned}
\end{equation}

\begin{definition}\label{def: LambdaL}
    For an integer $L\geq2$, define
    \begin{align}
        \Lambda_+(L):=\{n\lambda_1+k\lambda_2:n,k\in\Z_{\geq0},\ n+k\leq L\}.
    \end{align}
\end{definition}

\begin{definition}\label{def: steps}
    The \textup{steps} on $\Lambda$ are the six vectors $u_{s,j}:(s, j) \in \{+ , -\} \times \{1, 0, -1\}$, defined by $u_{+,1} = \lambda_1, u_{+,0} = \lambda_2 - \lambda_1$ and $u_{+,-1} = -\lambda_2$, and
    \begin{align}\label{eqn: negation of steps}
    -u_{s,j}=u_{-s,-j}, \quad s\in \{+,-\}, \ j\in \{1,0,-1\}.
    \end{align}
\end{definition}

The six steps are the vectors connecting the origin of $\Lambda$ to its nearest neighbors. It is convenient to draw these steps around a primitive triangle in $\Lambda$, so that the three steps with $s=+$ (resp.\ $s=-$) form a clockwise (resp.\ counterclockwise) contour around the triangle, see \cref{fig: steps in weight lattice}. 

\begin{figure}[htbp]
\centering
\begin{tikzpicture}
    \node at (-0.75,0.2) {$u_{+,1}$};
    \node at (0.9,0.2) {$u_{+,-1}$};
    \node at (0,1.2) {$u_{+,0}$};
    \begin{scope}[sloped, allow upside down]
        \draw  [style=mid arrow] (0,0) -- (-0.5,0.866) ;
        \draw  [style=mid arrow] (-0.5,0.866) -- (0.5,0.866) ;
        \draw  [style=mid arrow] (0.5,0.866) -- (0,0) ;
    \end{scope}
\end{tikzpicture} \quad
\begin{tikzpicture}
    \node at (-0.9,0.2) {$u_{-,-1}$};
    \node at (0.75,0.2) {$u_{-,1}$};
    \node at (0,1.2) {$u_{-,0}$};
    \begin{scope}[sloped, allow upside down]
        \draw  [style=mid arrow] (-0.5,0.866) -- (0,0) ;
        \draw  [style=mid arrow] (0,0) -- (0.5,0.866) ;
        \draw  [style=mid arrow] (0.5,0.866) -- (-0.5,0.866) ;
    \end{scope}
\end{tikzpicture}
\caption{Steps on the weight lattice.}
\label{fig: steps in weight lattice}
\end{figure}

\begin{definition}\label{def: lattice path}
    A \textup{path} $P$ of length $L$ on the $\mathfrak{sl}_3$ weight lattice is a sequence of weights $(p_0, p_1, \dots, p_L)$ in $\Lambda$, such that $p_0 = 0$ and each step $(p_m - p_{m-1})$ belongs to the set of six steps defined above. Equivalently, a path is uniquely determined by its sequence of steps, i.e.\ an ordered pair $(S,J)$, where $S=(s_1,\dots,s_L)\in\Sigma_L$ is a \textup{sign string} and $J=(j_1,\dots,j_L)\in\State_L:=\{-1,0,1\}^L$ is a \textup{state string}, so that the $m$-th step of the path is $u_{s_m, j_m}$. We write $P=(S,J)$ for brevity.
    
    We adopt the following terminology for paths:
    \begin{enumerate}
        \item The \textup{length} of $P$ is the length of its sign string $S$, denoted by $|P| = L$.
        \item A path $P$ is \textup{dominant} if every point $p_m$ in the sequence lies within the dominant Weyl chamber $\Lambda_+$. 
        \item The \textup{weight} of a path $P$ is its end point $p_L$, denoted by $\wt(P)$.
        \item Let $P,Q$ be two paths. The \textup{concatenation} of $P=(S_P,J_P)$ and $Q=(S_Q,J_Q)$, denoted by $PQ$, is the path encoded by $(S_P S_Q, J_P J_Q)$, where $S_P S_Q$ and $J_P J_Q$ are the concatenations of the respective sequences.
    \end{enumerate}
    
    To distinguish paths with a fixed sign string from paths with only a fixed length, we use the following notation:
    \begin{align*}
        \Path_S(\lambda)
        &:=\{P: P\text{ has sign string } S,\wt(P)=\lambda\},
        &
        \Path_S^+(\lambda)
        &:=\{P\in\Path_S(\lambda):P\text{ is dominant}\},\\
        \Path_L(\lambda)
        &:=\bigsqcup_{S\in\Sigma_L}\Path_S(\lambda),
        &
        \Path_L^+(\lambda)
        &:=\bigsqcup_{S\in\Sigma_L}\Path_S^+(\lambda).
    \end{align*}
    We omit $\lambda$ when the endpoint is unrestricted, and omit the
    subscript when neither the sign string nor the length is fixed.  Thus
    $\Path_S$ is the set of all paths with sign string $S$, while
    $\Path^+(\lambda)$ is the set of all dominant paths ending at
    $\lambda$.
\end{definition}

\begin{example}\label{emp: path}
    An example of a dominant path $P$ of length $L=6$ and weight $\lambda=2\lambda_1 + \lambda_2$ is shown in \cref{fig: weight lattice}. The sign string of $P$ is $S=(+, -, +, +, +, +)$ and the state string is $J=(1, 1, 0, 1, 1, -1)$.
\end{example}

\begin{definition}\label{def: weight of sign string}
    For a sign string $S=(s_1,\dots,s_L)$, set
    \begin{align}
        \wt(S):=\sum_{m=1}^L u_{s_m,1}=\lambda_1 \cdot \#\{m: s_m=+\} + \lambda_2 \cdot \#\{m: s_m=-\}.
    \end{align}
\end{definition}
\begin{definition}\label{def: inversion map}
    Let $P=(S,J)$ be a path with length $L$, $J=(j_1,\dots,j_L)$. We extend the definition of $(-)^\vee$ from sign strings to paths by
    \begin{align*}
        P=(S,J) &\mapsto P^\vee=(S^\vee,J^\vee).
    \end{align*}
    Here $S^\vee$ is defined in \eqref{eqn: duality}, and $J^\vee := (-j_L, \dots, -j_1)$.
\end{definition}

\begin{definition}\label{def: order on paths}
    Paths of greater length have higher order. For paths with the same sign string $S$, we order equal-length paths lexicographically by their state strings $J$, using $-1<0<1$. For example, with fixed $S$,
    \[
        (0,-1,1) < (0,0,-1) < (1,-1,-1).
    \]
\end{definition}

\begin{lemma}\label{lem: weight and existence of paths}
    Let $L\geq 2$ and let $\lambda\in \Lambda$. The set $\Path_L^+(\lambda)$ is non-empty if and only if $\lambda \in \Lambda_+(L)$.
\end{lemma}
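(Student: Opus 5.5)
The forward direction comes from a linear functional that changes by at most one per step. The converse comes from an explicit construction: a straight path to $\lambda$, padded by short dominant closed loops.

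\emph{Necessity.} Write a weight as $\mu=n\lambda_1+k\lambda_2$ and consider $h(\mu):=n+k$. By \cref{def: steps}, the six steps are $\pm\lambda_1$, $\pm\lambda_2$ and $\pm(\lambda_2-\lambda_1)$. Each of them changes $h$ by $+1$, $-1$ or $0$. A path of length $L$ starts at $p_0=0$, so its endpoint satisfies $h(\wt(P))\leq L$. If $P$ is dominant, its endpoint lies in $\Lambda_+$, so $n,k\geq 0$. Hence $\wt(P)\in\Lambda_+(L)$.

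\emph{Sufficiency.} Let $\lambda=n\lambda_1+k\lambda_2\in\Lambda_+(L)$ and set $r:=L-n-k\geq 0$. Take $n$ steps $u_{+,1}=\lambda_1$ followed by $k$ steps $u_{-,1}=\lambda_2$. This path is dominant, because both coordinates only increase, and it reaches $\lambda$ in $n+k$ steps. It remains to append $r$ further steps forming a dominant closed loop based at $\lambda$.

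For $r=2$, append $\lambda_1$ and then $-\lambda_1$ (that is, $u_{+,1}$ then $u_{-,-1}$); the only new point is $\lambda+\lambda_1\in\Lambda_+$. For $r=3$, append the triangle $u_{+,1},u_{+,0},u_{+,-1}$, which visits $\lambda+\lambda_1$ and $\lambda+\lambda_2$ and returns to $\lambda$; both visited points are dominant. Every integer $r\geq 2$ is a sum of $2$'s and $3$'s, so concatenating such loops handles all $r\geq2$, and $r=0$ needs nothing.

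The only delicate case, and the place where the hypothesis $L\geq 2$ enters, is $r=1$. Here we need a different route to $\lambda$ using exactly $n+k+1$ steps.
\begin{itemize}
\item If $n\geq1$, replace one $\lambda_1$ step by the two steps $\lambda_2$ and then $\lambda_1-\lambda_2=u_{-,0}$, taken at the very beginning: $0\to\lambda_2\to\lambda_1$. Then continue with the remaining $n-1$ steps $\lambda_1$ and the $k$ steps $\lambda_2$. All points are dominant.
\item If $n=0$ and $k\geq1$, do the symmetric thing: $0\to\lambda_1\to\lambda_2$ via $u_{+,1}$ and $u_{+,0}$, then continue with $k-1$ steps $\lambda_2$.
\item If $\lambda=0$ and $r=1$, then $L=1$, which is excluded by the assumption $L\geq 2$.
\end{itemize}
In every case this produces an element of $\Path^+_L(\lambda)$, which completes the proof.
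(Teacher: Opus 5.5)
Your proof is correct and follows the paper's argument: necessity via the quantity $n+k$, which each step changes by at most one, and sufficiency via the straight path to $\lambda$ padded with dominant back-and-forth loops and a parity fix. The only difference is cosmetic. You use the triangle loop $u_{+,1}u_{+,0}u_{+,-1}$ for every odd $r\geq 3$ and the step-splitting trick only when $r=1$. The paper instead splits a step whenever $n+k>0$ and uses the triangle only when $\lambda=0$.
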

\begin{proof}
Suppose that $P\in \Path_L^+(\lambda)$ for some $\lambda\in \Lambda$. By dominance of $P$, we have $\lambda\in\Lambda_+$, so that $\lambda=n\lambda_1+k\lambda_2$ for some $n,k\geq0$. Since each allowed step changes the sum of the two fundamental-weight coordinates by at most one, and the path starts at the origin, we have $n+k\leq L$.

Conversely, suppose that $\lambda\in\Lambda_+(L)$, so that $\lambda=n\lambda_1+k\lambda_2$ with $n,k\geq0$ and $n+k\leq L$. We construct a path in $\Path_L^+(\lambda)$ as follows. If $L-n-k$ is even, take the first $n$ steps to be $u_{+,1}$, the next $k$ steps to be $u_{-,1}$, and the remaining steps to be $u_{+,1}$ and $u_{-,-1}$ in turn. If $L-n-k$ is odd, there are two cases. If $n+k>0$, replace one of the first $n+k$ steps $u_{\pm,1}$ by the two steps $u_{\mp,1},u_{\mp,0}$, reducing to the even case. If $n=k=0$, then $L\geq3$; take the first three steps to be
\[
    u_{+,1},\quad u_{+,0},\quad u_{+,-1},
\]
and take the remaining steps to be $u_{+,1}$ and $u_{-,-1}$ in turn. In each case the resulting path is dominant and has endpoint $\lambda$.
\end{proof}

For a sign string $S$, define its highest-weight path by
\begin{equation}\label{eqn: highest-weight path}
    P_S^{\mathrm{hw}}:=(S,(1,1,\dots,1)).
\end{equation}

\begin{lemma}\label{lem: weight domination}
    Every path $P=(S,J)$ satisfies $\wt(P)\preceq \wt(S)$, with equality if and only if $P=P_S^{\mathrm{hw}}$.
\end{lemma}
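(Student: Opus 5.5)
The plan is to compare each step $u_{s_m,j_m}$ with the highest step $u_{s_m,1}$ and show that the difference is a nonnegative combination of the two generating differences of the order $\succ$ in \eqref{eqn: partial ordering}. Write $\alpha_1:=2\lambda_1-\lambda_2$ and $\alpha_2:=2\lambda_2-\lambda_1$. The relations \eqref{eqn: partial ordering} say $\mu\succ\mu-\alpha_1$ and $\mu\succ\mu-\alpha_2$ for every $\mu\in\Lambda$. Hence $\mu\succeq\nu$ whenever $\mu-\nu\in\N\alpha_1+\N\alpha_2$, with strict inequality when $\mu\neq\nu$.

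First I compute the step differences from \cref{def: steps} and \eqref{eqn: negation of steps}. For $s=+$:
\[
u_{+,1}-u_{+,0}=2\lambda_1-\lambda_2=\alpha_1,\qquad u_{+,1}-u_{+,-1}=\lambda_1+\lambda_2=\alpha_1+\alpha_2.
\]
For $s=-$ we have $u_{-,1}=-u_{+,-1}=\lambda_2$, $u_{-,0}=-u_{+,0}=\lambda_1-\lambda_2$ and $u_{-,-1}=-u_{+,1}=-\lambda_1$. Therefore
\[
u_{-,1}-u_{-,0}=2\lambda_2-\lambda_1=\alpha_2,\qquad u_{-,1}-u_{-,-1}=\lambda_1+\lambda_2=\alpha_1+\alpha_2.
\]
In every case, $u_{s,1}-u_{s,j}\in\N\alpha_1+\N\alpha_2$, and the difference is nonzero exactly when $j\neq1$.

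Next I sum over the steps. Since $\wt(P)=\sum_m u_{s_m,j_m}$ and $\wt(S)=\sum_m u_{s_m,1}$ by \cref{def: weight of sign string}, we get
\[
\wt(S)-\wt(P)=\sum_{m=1}^L\bigl(u_{s_m,1}-u_{s_m,j_m}\bigr)=a\alpha_1+b\alpha_2,\qquad a,b\in\N.
\]
Descending from $\wt(S)$ by $a$ steps of $\alpha_1$ and then $b$ steps of $\alpha_2$ is a chain of generating relations of $\succ$, so $\wt(P)\preceq\wt(S)$.

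For the equality case, note that $P_S^{\mathrm{hw}}$ trivially has weight $\wt(S)$. Conversely, suppose some $j_m\neq1$. Then the coefficient sum $a+b$ is at least $1$. Since $\alpha_1,\alpha_2$ are linearly independent, $a\alpha_1+b\alpha_2\neq0$, so $\wt(P)\neq\wt(S)$.

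The only point needing care is the implicit claim that $\preceq$ is a genuine partial order, in particular antisymmetric, so that ``equality'' is meaningful. The generated relation is $\mu\succeq\nu$ iff $\mu-\nu\in\N\alpha_1+\N\alpha_2$. Antisymmetry then follows because $\N\alpha_1+\N\alpha_2$ contains no nonzero element together with its negative, again by linear independence of $\alpha_1,\alpha_2$. Everything else is the routine arithmetic above.
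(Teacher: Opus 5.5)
Your proof is correct and follows the same route as the paper: both compare each step $u_{s_m,j_m}$ with the highest step $u_{s_m,1}$, and both observe that lowering a state from $1$ to $0$ or $-1$ moves the weight down in $\preceq$. Your version spells out what the paper leaves as ``easy to check'': each difference $u_{s,1}-u_{s,j}$ lies in $\N\alpha_1+\N\alpha_2$ and is nonzero for $j\neq 1$, and the linear independence of $\alpha_1,\alpha_2$ then gives both the inequality and the equality case.
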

\begin{proof}
    By definition, $\wt(S)=\wt(P_S^{\mathrm{hw}})$. We then observe that any path $P=(S,J)$ is obtained from $P_S^{\mathrm{hw}}$ by replacing states in the state string. It is easy to check from the definition of steps in \cref{def: steps} that replacement of state $1$ by $0$ or $-1$ always decreases the weight of the path, and in particular does not produce weights incomparable with $\wt(S)$. Hence $\wt(P)\preceq \wt(S)$, and equality holds if and only if $P=P_S^{\mathrm{hw}}$.
\end{proof}

\begin{lemma}\label{lem: PQ-dual is an excursion}
    Let $P=(p_0,p_1,\dots,p_L)$, $Q=(q_0,q_1,\dots,q_M)$ be two paths. The following statements are equivalent:
    \begin{enumerate}
        \item $P$ and $Q$ are dominant and $p_L=q_M$.
        \item $PQ^\vee\in \Path_{L+M}^+(0)$.
    \end{enumerate}
\end{lemma}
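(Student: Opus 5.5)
The plan is to describe the vertices of the dual path $Q^\vee$ explicitly, so that dominance of $PQ^\vee$ reduces to a statement about $P$ and $Q$ separately.

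\textbf{Vertices of $Q^\vee$.} Write $Q=(S_Q,J_Q)$ with steps $u_{s_1,j_1},\dots,u_{s_M,j_M}$. By \cref{def: inversion map}, the steps of $Q^\vee$ in order are $u_{-s_M,-j_M},\dots,u_{-s_1,-j_1}$. By \eqref{eqn: negation of steps} these equal $-u_{s_M,j_M},\dots,-u_{s_1,j_1}$. Hence the $m$-th vertex of $Q^\vee$, counted from its start, is
\begin{align*}
    -\sum_{i=M-m+1}^{M} u_{s_i,j_i} = q_{M-m}-q_M .
\end{align*}
So $Q^\vee$ traverses the vertices of $Q$ backwards, shifted by $-q_M$.

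\textbf{Vertices of $PQ^\vee$.} Concatenating, the path $PQ^\vee$ has vertices
\begin{align*}
    (p_0,\dots,p_L,\; p_L+q_{M-1}-q_M,\;\dots,\; p_L+q_0-q_M).
\end{align*}
Its endpoint is $p_L-q_M$, since $q_0=0$.

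\textbf{(1)$\Rightarrow$(2).} Assume $p_L=q_M$. The vertices of $PQ^\vee$ are then $p_0,\dots,p_L,q_{M-1},\dots,q_0$. All of these lie in $\Lambda_+$ by the dominance of $P$ and $Q$. The endpoint is $q_0=0$, so $PQ^\vee\in\Path^+_{L+M}(0)$.

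\textbf{(2)$\Rightarrow$(1).} Assume $PQ^\vee\in\Path^+_{L+M}(0)$. The endpoint being $0$ gives $p_L=q_M$. Substituting this, the vertex list again reads $p_0,\dots,p_L,q_{M-1},\dots,q_0$, and all of these are dominant by hypothesis. Hence every $p_m$ and every $q_m$ lies in $\Lambda_+$; the vertex $q_M=p_L$ is dominant as well. So $P$ and $Q$ are dominant with $p_L=q_M$.

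The only step requiring care is the vertex computation for $Q^\vee$. This means checking the index reversal in \cref{def: inversion map} against the relation $-u_{s,j}=u_{-s,-j}$ from \eqref{eqn: negation of steps}. Everything else is bookkeeping.
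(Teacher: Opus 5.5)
Your proposal is correct and follows essentially the same route as the paper. Both arguments compute that the vertex $L+m$ of $PQ^\vee$ equals $p_L-q_M+q_{M-m}$, using the index reversal in $Q^\vee$ together with $-u_{s,j}=u_{-s,-j}$, and then read off both implications from that vertex list exactly as you do.
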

\begin{proof}
    Let $Q=(S,J)$ and let $PQ^\vee=(c_0,c_1,\dots,c_{L+M})$. By definition of $PQ^\vee$, the first $L$ steps of are those of $P$, and the last $M$ steps are those of $Q^\vee$. Hence 
    \begin{align}\label{eqn: expression for c_k}
        c_k = \begin{cases}
            p_k & \text{if } k \leq L, \\
            p_L + \sum_{n=1}^m u_{(S^\vee)_{n},(J^\vee)_{n}} = p_L -\sum_{n=1}^m u_{s_{M-n+1},j_{M-n+1}} & \text{if } k=L+m, \qquad m=1,\dots,M.
        \end{cases}
    \end{align}
    where in the second case we used the definition of $(-)^\vee$ and \eqref{eqn: negation of steps}.

    By definition of $Q$, for each $m=0,\dots,M$ we have $q_{M-m}=\sum_{n=1}^{M-m} u_{s_n,j_n}$, from which we deduce
    \begin{align}\label{eqn: expression for q_M-m}
        q_{M-m}=q_M - \sum_{n=M-m+1}^M u_{s_n,j_n} = q_M - \sum_{n=1}^m u_{s_{M-n+1},j_{M-n+1}}.
    \end{align}

    \textbf{1 implies 2:} We now have $p_L=q_M$, hence by comparing \eqref{eqn: expression for c_k} and \eqref{eqn: expression for q_M-m}, we have $c_{L+m}=q_{M-m}$. The dominance of $Q$ implies $c_{L+m}=q_{M-m}\in \Lambda_+$ for $m=1,\dots,M$, and setting $m=M$ yields $c_{L+M}=q_0=0$. It remains to show that $c_k\in \Lambda_+$ for $0\leq k \leq L$, which is true by the dominance of $P$ and the first case of \eqref{eqn: expression for c_k}.
    
    \textbf{2 implies 1:} Now $C$ is dominant and $c_{L+M}=0$. Choosing $m=M$ in \eqref{eqn: expression for c_k} yields $0=c_{L+M}=p_L-q_M$, hence comparison of \eqref{eqn: expression for c_k} and \eqref{eqn: expression for q_M-m} gives again $c_{L+m}=q_{M-m}$ for $m=1,\dots,M$, from which the dominance of $Q$ follows. The dominance of $P$ follows from the first case in \eqref{eqn: expression for c_k}.
\end{proof}

\begin{corollary}\label{cor: involution preserves excursions}
    Let $Q\in \Path_L^+(0)$. Then $Q^\vee \in \Path_L^+(0)$.
\end{corollary}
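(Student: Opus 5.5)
The plan is to apply \cref{lem: PQ-dual is an excursion} with a trivial second path. Let $Q\in\Path_L^+(0)$. I take $P$ to be the empty path $E=(p_0)=(0)$, of length $0$; it is dominant and its weight is $0$. The path $Q$ is dominant with endpoint $q_L=0$. Hence condition 1 of \cref{lem: PQ-dual is an excursion} holds for the pair $(E,Q)$, and the lemma gives $EQ^\vee\in\Path_{L}^+(0)$. Concatenating with the empty path changes neither the sign string nor the state string, so $EQ^\vee=Q^\vee$, which is the claim.

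The one point I would check is that \cref{lem: PQ-dual is an excursion} is valid when the first path has length $0$. Its proof never uses that $L>0$: the case $k\le L$ reduces to $c_0=p_0=0$, and formula \eqref{eqn: expression for c_k} holds for $m=1,\dots,M$ unchanged. So the application to the empty path is legitimate.

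As a backup that avoids empty paths, I would argue directly from \eqref{eqn: expression for q_M-m}. The points of $Q^\vee$ are $c_m=-\sum_{n=1}^m u_{s_{L-n+1},j_{L-n+1}}$. Since $q_L=0$, these equal $q_{L-m}$, which are dominant because $Q$ is dominant. The endpoint is $c_L=q_0=0$. In other words, $Q^\vee$ is $Q$ traversed backwards, so its vertex set is the same as that of $Q$.
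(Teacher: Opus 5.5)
Your proposal is correct and follows the paper's proof exactly: the paper also takes the first path to be the trivial path of length $0$ in \cref{lem: PQ-dual is an excursion}, so that $PQ^\vee=Q^\vee$. Your backup argument, which reads off $c_m=q_{L-m}$ from \eqref{eqn: expression for q_M-m}, is just that lemma's proof specialized to this case and is also valid.
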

\begin{proof}
    We may set $P$ to be the trivial path of length $0$ in \cref{lem: PQ-dual is an excursion}, so that $PQ^\vee=Q^\vee$. The result follows immediately.
\end{proof}

\subsection{The growth algorithm}\label{sec: growth algorithm}

Let $\lambda\in \Lambda_+$ be an arbitrary dominant weight. The growth algorithm $G$ maps each $P\in \Path^+(\lambda)$ to a web $G(P)$. It was introduced in \cite{khovanov_1999_web} in the case $\lambda=0$, and we generalize it to arbitrary $\lambda\in \Lambda_+$. The map $G$ is defined recursively, based on a set of defining growth rules:

\begin{definition}\label{def: defining growth rules}
    A \textup{defining growth rule} is one of the triples $(w,P^i,P^f)$ displayed in \cref{fig: defining growth rules}. Here $w\in \W^{S^i}_{S^f}$ is a web, and $P^i=(S^i,J^i)$ and $P^f=(S^f,J^f)$ are paths whose steps are written next to the top and bottom boundary vertices of the web $w$, respectively.
\begin{figure}[htbp]
    \centering
    \scalebox{0.8}{\begin{tikzpicture}


        \node [label={$+,1$}] (h1topl) at (-4.2, 1.3) {};
        \node [label=below:{$-,0$}] (h1botl) at (-4.2, 0) {};
        \node [label={$-,0$}] (h1topr) at (-2.9, 1.3) {};
        \node [label=below:{$+,1$}] (h1botr) at (-2.9, 0) {};
        \node (h1l) at (-3.875, 0.65) {};
        \node (h1r) at (-3.225, 0.65) {};
        \node [label={$H^{-}$}] (h1label) at (-3.55, -1.55) {};

        \draw [style=mid arrow] (h1l.center) to (h1topl.center);
        \draw [style=mid arrow] (h1l.center) to (h1botl.center);
        \draw [style=mid arrow] (h1l.center) to (h1r.center);
        \draw [style=mid arrow] (h1topr.center) to (h1r.center);
        \draw [style=mid arrow] (h1botr.center) to (h1r.center);

        \node [label={$-,1$}] (cuptopl) at (-0.95, 1.3) {};
        \node [label={$+,-1$}] (cuptopr) at (0.35, 1.3) {};
        \node [label={$\ccup_+$}] (cuplabel) at (-0.3, -1.55) {};

        \draw [style=mid arrow, bend left=90, looseness=1.50] (cuptopr.center) to (cuptopl.center);

        \node [label={$-,1$}] (y1topl) at (2.2, 1.3) {};
        \node [label={$-,k$}] (y1topr) at (3.5, 1.3) {};
        \node [label=below:{$+,k+1$}] (y1bot) at (2.85, 0) {};
        \node (y1mid) at (2.85, 0.82) {};
        \node at (2.85, -1.15) {$Y_+$};
        \node at (2.85, -1.55) {$(k=-1,0)$};

        \draw [style=mid arrow] (y1topl.center) to (y1mid.center);
        \draw [style=mid arrow] (y1topr.center) to (y1mid.center);
        \draw [style=mid arrow] (y1bot.center) to (y1mid.center);

\end{tikzpicture}}
    \caption{Four defining growth rules. The notations $H^-$, $Y_+$ and $\ccup_+$ for the involved web $w$ are given below the rules. For example, the defining growth rule on the left is $(H^-,(u_{+,1}u_{-,0}),(u_{-,0}u_{+,1}))$. Four similar rules are obtained by reversing all orientations and signs.}
    \label{fig: defining growth rules}
\end{figure}
\end{definition}

\begin{definition}[Growth algorithm]\label{def: growth algorithm}\label{def: extension of growth algorithm}
    Let $\lambda\in \Lambda_+$ and let $P\in \Path^+(\lambda)$. The web $G(P)$ is defined recursively as follows:
    \begin{enumerate}
        \item If $P=P_T^{\mathrm{hw}}=(T,(1,1,\dots,1))$ where $\wt(T)=\lambda$, then set $G(P):=\id_T$.
        \item Otherwise, write $P=P_1P^iP_2$, where $P^i$ belongs to a defining growth rule $(w,P^i,P^f)$ from \cref{def: defining growth rules},\footnote{In the defining growth rules, distinct rules have distinct $P^i$ Hence for each $P^i$, the rule $(w,P^i,P^f)$ is defined unambiguously.} and $P_1,P_2$ are the remaining steps of the path. Then define
        \begin{align}\label{eqn: recursion}
            G(P) := (\id_{S_1} \otimes w \otimes \id_{S_2}) \circ G(P_1P^fP_2),
        \end{align}
        where $\circ$ is composition in $\Sp$, and $S_1$ and $S_2$ are the sign strings of $P_1$ and $P_2$, respectively.
    \end{enumerate}
\end{definition}

\begin{remark}
    In the case $\lambda=0$ studied in \cite{khovanov_1999_web}, the sign string $T$ with $\wt(T)=0$ is the empty string, and the path $P_T^{\mathrm{hw}}$ is the trivial path $\emptyset$ of length $0$.

    The growth rules in \cref{fig: defining growth rules} form a proper subset of the growth rules in \cite{khovanov_1999_web}. We show in \cref{prp: growth algorithm is well-defined} that this subset suffices to define the maps $G$, and in \cref{rmk: minimality of growth rules} we show that removal of any one of them would make $G$ ill-defined. Therefore they form a minimal set of growth rules.
\end{remark}

\begin{example}\label{exm: example of growth algorithm}
    The web $G(P)$ for the path $P=(S,J)$ with $S=(-,+,-,+,+,-)$ and $J=(1,1,0,0,-1,-1)$ is constructed in \cref{fig: example of growth algorithm}. We first write $P=P_1P^iP_2$, where $P_1=(u_{-,1})$, $(P^i=u_{+,1}u_{-,0})$ and $P_2=(u_{+,0}u_{+,-1}u_{-,-1})$. By the growth rule $(H^-,P^i,P^f)$ where $P^f=(u_{-,0}u_{+,1})$, we define $G(P)=(\id_{-} \otimes H^- \otimes \id_{++-}) \circ G(P_1P^fP_2)$, thus yielding the first recursion. The remaining recursive steps are carried out similarly, and use growth rules whose relevant webs are $Y_-$, $\ccup_-$, $Y_+$ and $\ccup_+$, respectively.
\end{example}

\begin{figure}[htbp]
        \centering
        \scalebox{0.7}{\begin{tikzpicture}
	\begin{pgfonlayer}{nodelayer}
		\node (p10) at (-5, 2) {};
		\node (p11) at (-3, 2) {};
		\node (p12) at (-1, 2) {};
		\node (p13) at (1, 2) {};
		\node (p14) at (3, 2) {};
		\node (p15) at (5, 2) {};
		\node at (-4.5, 2.25) {$-,1$};
		\node at (-2.5, 2.25) {$+,1$};
		\node at (-0.5, 2.25) {$-,0$};
		\node at (1.5, 2.25) {$+,0$};
		\node at (3.5, 2.25) {$+,-1$};
		\node at (5.5, 2.25) {$-,-1$};

		\node (p20) at (-5, 0) {};
		\node (p21) at (-3, 0) {};
		\node (p22) at (-1, 0) {};
		\node (p23) at (1, 0) {};
		\node (p24) at (3, 0) {};
		\node (p25) at (5, 0) {};
		\node at (-4.5, 0.25) {$-,1$};
		\node at (-2.5, 0.25) {$-,0$};
		\node at (-0.5, 0.25) {$+,1$};
		\node at (1.5, 0.25) {$+,0$};
		\node at (3.5, 0.25) {$+,-1$};
		\node at (5.5, 0.25) {$-,-1$};

		\node (p30) at (-5, -2) {};
		\node (p31) at (-3, -2) {};
		\node (p32) at (0, -2) {};
		\node (p33) at (3, -2) {};
		\node (p34) at (5, -2) {};
		\node at (-4.5, -1.75) {$-,1$};
		\node at (-2.5, -1.75) {$-,0$};
		\node at (0.5, -1.75) {$-,1$};
		\node at (3.5, -1.75) {$+,-1$};
		\node at (5.5, -1.75) {$-,-1$};

		\node (p40) at (-5, -4) {};
		\node (p41) at (-3, -4) {};
		\node (p42) at (5, -4) {};
		\node at (-4.5, -3.75) {$-,1$};
		\node at (-2.5, -3.75) {$-,0$};
		\node at (5.5, -3.75) {$-,-1$};

		\node (p50) at (-4, -6) {};
		\node (p51) at (5, -6) {};
		\node at (-3.5, -5.75) {$+,1$};
		\node at (5.5, -5.75) {$-,-1$};

		\node (h1l) at (-2.5, 1) {};
		\node (h1r) at (-1.5, 1) {};
		\node (y2) at (0, -0.75) {};
		\node (y4) at (-4, -5) {};

		\node (d10) at (-7, 0) {};
		\node (d11) at (7, 0) {};
		\node (d20) at (-7, -2) {};
		\node (d21) at (7, -2) {};
		\node (d30) at (-7, -4) {};
		\node (d31) at (7, -4) {};
		\node (d40) at (-7, -6) {};
		\node (d41) at (7, -6) {};
		\node at (-9, 1) {Recursion 1};
		\node at (-9, -1) {Recursion 2};
		\node at (-9, -3) {Recursion 3};
		\node at (-9, -5) {Recursion 4};
		\node at (-9, -7) {Recursion 5};
	\end{pgfonlayer}
	\begin{pgfonlayer}{edgelayer}
		\draw (p10.center) to (p20.center);
		\draw (p11.center) to (h1l.center);
		\draw (p21.center) to (h1l.center);
		\draw (h1l.center) to (h1r.center);
		\draw (p12.center) to (h1r.center);
		\draw (p22.center) to (h1r.center);
		\draw (p13.center) to (p23.center);
		\draw (p14.center) to (p24.center);
		\draw (p15.center) to (p25.center);

		\draw (p20.center) to (p30.center);
		\draw (p21.center) to (p31.center);
		\draw (p22.center) to (y2.center);
		\draw (p23.center) to (y2.center);
		\draw (p32.center) to (y2.center);
		\draw (p24.center) to (p33.center);
		\draw (p25.center) to (p34.center);

		\draw (p30.center) to (p40.center);
		\draw (p31.center) to (p41.center);
		\draw (p34.center) to (p42.center);
		\draw [looseness=1.25, bend left=90] (p33.center) to (p32.center);

		\draw (p40.center) to (y4.center);
		\draw (p41.center) to (y4.center);
		\draw (p50.center) to (y4.center);
		\draw (p42.center) to (p51.center);

		\draw [looseness=0.75, bend left=90] (p51.center) to (p50.center);

		\draw [dashed] (d10.center) to (d11.center);
		\draw [dashed] (d20.center) to (d21.center);
		\draw [dashed] (d30.center) to (d31.center);
		\draw [dashed] (d40.center) to (d41.center);
	\end{pgfonlayer}
\end{tikzpicture}}
        \caption{An example of the algorithm, with orientations encoded by the signs.}
        \label{fig: example of growth algorithm}
\end{figure}


The growth algorithm is well-defined:

\begin{proposition}\label{prp: growth algorithm is well-defined}
    Let $\lambda\in \Lambda_+$ and $P\in \Path^+(\lambda)$. The following properties ensure that the map $P\mapsto G(P)$ is well-defined:
    \begin{enumerate}
        \item The algorithm terminates after a finite number of recursions. \label{cond: termination}
        \item The result $G(P)$ does not depend on the choice of growth rules used in each recursion.\label{cond: order-independence}
    \end{enumerate}
\end{proposition}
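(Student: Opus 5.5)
The plan is to view \cref{def: growth algorithm} as a rewriting system on $\Path^+(\lambda)$ and to check three things: each rule keeps us inside $\Path^+(\lambda)$, some rule applies exactly when $P$ is not highest-weight, and every rewriting sequence is finite. Order-independence will then follow from a Newman-type (diamond lemma) argument. The rewriting is simple: a defining growth rule $(w,P^i,P^f)$ replaces a two-step subpath $P^i$ by $P^f$. The eight patterns $P^i$, with the reversed ones included, are
\[
(s,1)(-s,0),\qquad (s,1)(-s,-1),\qquad (s,1)(s,k)\ \ (k\in\{0,-1\}),\quad s\in\{\pm\}.
\]
In every pattern the first step has state $1$ and the second has state $0$ or $-1$.

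\textbf{Step 1: invariance.} I will check that $\wt(P_1P^fP_2)=\wt(P)$, by comparing the sums of steps: for instance $u_{+,1}+u_{-,0}=u_{-,0}+u_{+,1}$, $u_{-,1}+u_{+,-1}=0$ and $u_{-,1}+u_{-,k}=u_{+,k+1}$. I will also check that $P_1P^fP_2$ is again dominant. The cup and $Y$ rules only delete intermediate lattice points, so dominance is automatic for them. For the $H$ rules, one intermediate point $p$ is replaced by another point $p'$, and I will verify directly that $p'$ is dominant. For example, for $(+,1)(-,0)$ starting at $a\lambda_1+b\lambda_2$, the endpoint $(a+2)\lambda_1+(b-1)\lambda_2$ is dominant, so $b\ge1$; hence the new point $(a+1)\lambda_1+(b-1)\lambda_2$ is dominant. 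Consequently the recursion \eqref{eqn: recursion} never leaves $\Path^+(\lambda)$.

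\textbf{Step 2: existence of a rule and termination.} Let $P$ be dominant with $P\neq P^{\mathrm{hw}}_S$, and let $m$ be the first position with $j_m\neq1$. Then $m\ge2$, because a first step with state $0$ or $-1$ leaves $\Lambda_+$. The pair formed by steps $m-1$ and $m$ is $(s,1)(t,j_m)$ with $j_m\in\{0,-1\}$. If $t=s$ this is a $Y$ pattern; if $t=-s$ and $j_m=0$ it is an $H$ pattern; if $t=-s$ and $j_m=-1$ it is a cup pattern. So some rule always applies. Conversely, a highest-weight path contains no pattern, since every pattern contains a state different from $1$. Therefore the two cases of \cref{def: growth algorithm} are exhaustive and exclusive.

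For termination I will use the lexicographic measure
\[
\mu(P)=\Bigl(|P|,\ \textstyle\sum_{m:\,j_m\neq1} m\Bigr).
\]
The cup and $Y$ rules shorten the path. The $H$ rules keep the length fixed but move a $0$-state one position to the left while the $1$-state moves right, so the second component drops by $1$. Since $\mu$ takes values in a well-ordered set, every rewriting sequence is finite, which proves property~\ref{cond: termination}.

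\textbf{Step 3: order-independence (the main point).} I will argue by well-founded induction on $\mu$. Suppose two choices $P=P_1P^iP_2=P_1'P'^{i}P_2'$ are available. The key observation is that two pattern occurrences cannot overlap. An overlap would force the second step of one occurrence, whose state is $0$ or $-1$, to be the first step of the other, whose state is $1$. So the occurrences sit on disjoint positions, and applying one rule leaves the other pattern intact. Applying both rules in either order therefore produces the same path $R$, with $\mu(R)<\mu$ of both intermediate paths. By the interchange law in the monoidal category $\Sp$, the two corresponding webs $(\id\otimes w\otimes\id\otimes w'\otimes\id)$ are equal. The induction hypothesis says that $G$ is well defined on all paths of smaller measure, so both branches give $(\ldots)\circ G(R)$ up to this interchange, and the two definitions of $G(P)$ agree. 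This proves property~\ref{cond: order-independence}.

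I expect Step~3 to be the only delicate point. The care needed is in setting up the induction correctly: both one-step reducts must have strictly smaller $\mu$ than $P$, and the common reduct $R$ must have strictly smaller $\mu$ than each of them, so that the induction hypothesis applies to every path involved. The non-overlap observation then makes local confluence immediate.
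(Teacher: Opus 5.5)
Your proof is correct and follows essentially the same route as the paper: growth rules preserve weight and dominance, some rule applies at the first non-$1$ state, every rule strictly lowers a well-founded order, and two rule occurrences can never overlap (the second step of one has state $0$ or $-1$, while the first step of the other has state $1$), so the two rules commute.

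The differences are minor. Your measure $(|P|,\sum_{j_m\neq 1} m)$ is a cleaner stand-in for the paper's length-then-lexicographic order, which as literally stated only compares paths with the same sign string, even though the $H$-rules change it. Your explicit well-founded induction makes precise what the paper compresses into ``the two growth rules can be applied in either order.''
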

\begin{proof}
    Order the paths in $\Path^+(\lambda)$ by \cref{def: order on paths}. The proof of \cref{cond: termination} is based on the following observations: 
    \begin{enumerate}
        \item For each fixed $P\in \Path^+(\lambda)$, there are only finitely many lower-order paths in $\Path^+(\lambda)$.
        \item A path $P\in \Path^+(\lambda)$ is of minimal order if and only if $P=P_T^{\mathrm{hw}}$ where $\wt(T)=\lambda$.
        \item A growth rule is applicable to $P\in\Path^+(\lambda)$ unless $P=P_T^{\mathrm{hw}}$ for some $\wt(T)=\lambda$.
        \item The application of a growth rule strictly lowers the order of the path.
        \item If $P=P_1P^iP_2\in \Path^+(\lambda)$ and $(w,P^i,P^f)$ is a growth rule, then $P_1P^fP_2 \in \Path^+(\lambda)$.
    \end{enumerate}
    The first observation follows since only finitely many paths in $\Path^+(\lambda)$ have length less than that of $P$, and for each fixed length there are only finitely many paths. The second observation follows from \cref{lem: weight domination}. The third observation uses the fact that, for a dominant path $P=(S,J)$, one has $j_1=1$, and if $J$ contains a state $0$ or $-1$, then there is a minimal $m$ such that $j_m=1,j_{m+1}\in \{0,-1\}$, and a growth rule from \cref{fig: defining growth rules} is applicable at the $m$-th and $(m+1)$-th steps of $P$. The fourth observation simply follows since all growth rules either reduce the length of the path, or reduce the lexicographical order of the state string. The fifth observation is technical and is therefore deferred to \cref{lem: dominance} in appendix~\ref{sec: well-definedness of the growth algorithm}.
    
    Now we prove \cref{cond: termination}: given a path $P\in \Path^+(\lambda)$, growth strictly lowers the order of the path at each step, and does not terminate before reaching a path of the form $P_T^{\mathrm{hw}}$ for some $\wt(T)=\lambda$, for which $G(P)$ is defined, and $P_T^{\mathrm{hw}}$ is indeed reached after a finite number of recursions.
    
    To show \cref{cond: order-independence}, suppose that two distinct growth rules are applicable for a path $P$. We claim that the steps of $P$ on which the two rules are applicable must be disjoint. This implies \cref{cond: order-independence} since the two growth rules can be applied in either order. 
    
    To prove the claim, we first observe that the steps admitting the two distinct rules cannot overlap completely: Two consecutive steps admit at most one growth rule. Moreover, the steps admitting the two growth rules cannot overlap by one step: for each growth rule in \cref{fig: defining growth rules}, the path $P^i=u_{s_1,1}u_{s_2,j}$, where $j<1$ and $s_1,s_2\in\{+,-\}$. Now suppose that $P=P_1u_{s_1,j_1}u_{s_2,j_2}u_{s_3,j_3}P_2$, where a growth rule is applicable at the first two steps $u_{s_1,j_1}u_{s_2,j_2}$, and another growth rule is applicable at the last two steps $u_{s_2,j_2}u_{s_3,j_3}$. This is however impossible, since the first growth rule implies $j_2<1$, while the second growth rule implies $j_2=1$, a contradiction. Hence the steps admitting the two growth rules are disjoint, and the claim is proved.
\end{proof}

\begin{remark}\label{rmk: minimality of growth rules}
    Removing any of the defining growth rules in \cref{fig: defining growth rules} makes the map $G$ undefined for some dominant paths: for example, the paths
    \begin{align}
        P_1=u_{+,1}u_{-,1}u_{+,0}, \quad P_2=u_{+,1}u_{-,-1}, \quad P_3=u_{+,1}u_{+,0}, \quad P_4=u_{-,1}u_{+,1}u_{+,-1}
    \end{align}
    each admit one of the rules in \cref{fig: defining growth rules} as the only applicable rule. As a consequence, the removal of any one of these rules would make $G(P_i)$ undefined for some path $P_i,i=1,2,3,4$. 
\end{remark}

\begin{lemma}\label{lem: growth preserves weight}
    Let $\lambda\in \Lambda_+$ and $P\in \Path^+_L(\lambda)$. Let $S$ be the bottom sign string of $G(P)$. Then $\wt(S)=\lambda$.
\end{lemma}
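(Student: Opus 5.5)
Since $G$ is well-defined by \cref{prp: growth algorithm is well-defined}, the natural approach is induction along the recursion \eqref{eqn: recursion}, using the order on paths from \cref{def: order on paths}. The invariant to maintain is
\[
\wt(\text{bottom sign string of } G(P)) \;=\; \wt(P).
\]
This gives the lemma because $\wt(P)=\lambda$ for $P\in\Path^+_L(\lambda)$. In the base case $P=P_T^{\mathrm{hw}}$ with $\wt(T)=\lambda$ and $G(P)=\id_T$, so the bottom sign string is $T$ and $\wt(T)=\lambda$ holds by hypothesis.

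For the inductive step, write $P=P_1P^iP_2$ with a defining growth rule $(w,P^i,P^f)$. Then
\[
G(P)=(\id_{S_1}\otimes w\otimes \id_{S_2})\circ G(P_1P^fP_2).
\]
The bottom boundary of $G(P)$ is the bottom boundary of $G(P_1P^fP_2)$, because composition stacks the first factor on top. By the fifth observation in the proof of \cref{prp: growth algorithm is well-defined} (\cref{lem: dominance}), $P_1P^fP_2\in\Path^+$, and its order is strictly lower. The induction hypothesis therefore gives that its bottom sign string has weight $\wt(P_1P^fP_2)$.

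It remains to check that $\wt(P^f)=\wt(P^i)$ for every defining growth rule; then $\wt(P_1P^fP_2)=\wt(P)$. This is a direct computation with the steps of \cref{def: steps}, and by sign reversal via \eqref{eqn: negation of steps} it suffices to check the four rules in \cref{fig: defining growth rules}.
\begin{itemize}
\item[] For $H^-$: $u_{+,1}+u_{-,0}=u_{-,0}+u_{+,1}$, trivially.
\item[] For $\ccup_+$: $P^f$ is empty, and $u_{-,1}+u_{+,-1}=\lambda_2-\lambda_2=0$.
\item[] For $Y_+$ with $k=0$: $u_{-,1}+u_{-,0}=\lambda_2+(\lambda_1-\lambda_2)=\lambda_1=u_{+,1}$.
\item[] For $Y_+$ with $k=-1$: $u_{-,1}+u_{-,-1}=\lambda_2+\lambda_1-\lambda_2=\lambda_1$, which equals $u_{+,0}=\lambda_2-\lambda_1$? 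No: the final step is $u_{+,k+1}=u_{+,0}$, and $\lambda_2+(\lambda_1-\lambda_2)$ must be recomputed.
\end{itemize}
Recall $u_{-,j}=-u_{+,-j}$, so $u_{-,1}=\lambda_2$, $u_{-,0}=\lambda_1-\lambda_2$ and $u_{-,-1}=-\lambda_1$. For $k=-1$ the sum is therefore $\lambda_2-\lambda_1=u_{+,0}$, and for $k=0$ it is $\lambda_1=u_{+,1}$. Both identities hold.

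The only real care needed is in reading the rules correctly: which boundary of $w$ carries $P^i$ and which carries $P^f$, and the convention that composition puts $w$ on top. Once these conventions are fixed, the weight check above is routine, and the induction closes.
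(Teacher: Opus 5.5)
Your argument is correct and is essentially the paper's: each defining growth rule satisfies $\wt(P^i)=\wt(P^f)$, and the recursion ends at $P_T^{\mathrm{hw}}$ with $G=\id_T$; you organize this as an explicit induction on the path order. Two cosmetic fixes: replace the garbled fourth bullet with the corrected computation $u_{-,1}+u_{-,-1}=\lambda_2-\lambda_1=u_{+,0}$ that you give afterwards, and justify the reduction to the four displayed rules by the linear swap $\lambda_1\leftrightarrow\lambda_2$ (which sends $u_{s,j}$ to $u_{-s,j}$) rather than by \eqref{eqn: negation of steps}.
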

\begin{proof}
    Note that for each defining growth rule from \cref{def: defining growth rules} $(w,P^i,P_f)$, $\wt(P^i)=\wt(P^f)$. Hence in each recursion \eqref{eqn: recursion} the weight is preserved. The algorithm constructing $G(P)$ terminates at the path $P^{\mathrm{hw}_S}$, which has the same weight as $S$ by \cref{lem: weight domination}. Hence $\lambda=\wt(P)=\wt(P)=\wt(P^{\mathrm{hw}_S})=\wt(S)$.
\end{proof}

\begin{definition}\label{def: growth rule}
    A \textup{growth rule} is a triple $(w,P^i,P^f)$, where $w$ is a web, and $P^i$, $P^f$ are paths, such that \eqref{eqn: recursion} holds for every $\lambda\in \Lambda_+$ and $P\in \Path^+(\lambda)$ of the form $P=P_1P^iP_2$.
\end{definition}

\begin{figure}
    \centering
    \scalebox{1.0}{\begin{tikzpicture}


        \node [label={$+,0$}] (h1topl) at (-2.3, 1.3) {};
        \node [label=below:{$-,-1$}] (h1botl) at (-2.3, 0) {};
        \node [label={$-,k$}] (h1topr) at (-0.8, 1.3) {};
        \node [label=below:{$+,k+1$}] (h1botr) at (-0.8, 0) {};
        \node (h1l) at (-1.875, 0.65) {};
        \node (h1r) at (-1.225, 0.65) {};
        \node [label={$H^{-}$}] (h1label) at (-1.55, -1.35) {};
        \node at (-1.65, -1.55) {$(k=-1,0)$};

        \draw [style=mid arrow] (h1l.center) to (h1topl.center);
        \draw [style=mid arrow] (h1l.center) to (h1botl.center);
        \draw [style=mid arrow] (h1l.center) to (h1r.center);
        \draw [style=mid arrow] (h1topr.center) to (h1r.center);
        \draw [style=mid arrow] (h1botr.center) to (h1r.center);

        \node [label={$-,0$}] (y1topl) at (1.2, 1.3) {};
        \node [label={$-,-1$}] (y1topr) at (2.5, 1.3) {};
        \node [label=below:{$+,-1$}] (y1bot) at (1.85, 0) {};
        \node (y1mid) at (1.85, 0.82) {};
        \node [label={$Y_+$}] (y1label) at (1.85, -1.35) {};

        \draw [style=mid arrow] (y1topl.center) to (y1mid.center);
        \draw [style=mid arrow] (y1topr.center) to (y1mid.center);
        \draw [style=mid arrow] (y1bot.center) to (y1mid.center);

\end{tikzpicture}}
    \caption{Three growth rules, with three more obtained by arrow and sign reversals.}
    \label{fig: growth rules}
\end{figure}

The defining growth rules in \cref{fig: defining growth rules} are growth rules by definition, and we have the following additional growth rules:

\begin{proposition}\label{prp: extra relations}
    The triples in \cref{fig: growth rules} are growth rules.
\end{proposition}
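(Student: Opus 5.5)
We prove the claim by strong induction on the order of $P$ (\cref{def: order on paths}), using that $G$ is well defined (\cref{prp: growth algorithm is well-defined}). Independence of the choice of rule lets us compute $G(P)$ with any applicable defining rule.

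Fix a rule $(w,P^i,P^f)$ from \cref{fig: growth rules} and a dominant path $P=P_1P^iP_2$. In all three cases the first step of $P^i$ has state $0$, so no defining rule begins at the first step of $P^i$. Since dominant paths start with state $1$, $P_1$ is nonempty. We first check, as in \cref{lem: dominance}, that $P_1P^fP_2$ is again dominant. This is a direct weight computation: for $H^-$, the intermediate point changes from $p+u_{+,0}$ to $p+u_{-,-1}$, and dominance is forced by the neighbouring steps. The argument then splits according to the last step of $P_1$.

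\textbf{Case 1: the last step of $P_1$ has state $\neq 1$.} Since $P_1$ starts with state $1$, there is a first index where a state $1$ is followed by a state $<1$ inside $P_1$. Some defining rule $(w',Q^i,Q^f)$ therefore applies strictly inside $P_1$, disjoint from $P^i$. The same rule applies at the same position of $P_1P^fP_2$. Both $P$ and $P_1P^fP_2$ can then be grown first by $w'$, and the resulting paths $P_1'P^iP_2$ and $P_1'P^fP_2$ have lower order. By the induction hypothesis (or by \eqref{eqn: recursion} directly), and because the local webs $w$ and $w'$ act on disjoint strands, the identity
\[
G(P)=(\id_{S_1}\otimes w\otimes\id_{S_2})\,G(P_1P^fP_2)
\]
follows from the interchange law in $\Sp$.

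\textbf{Case 2: the last step of $P_1$ is $u_{\pm,1}$.} We compute both sides explicitly on the three relevant steps, using only defining rules, and reach a common path on each side.

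For $H^-$ with $P^i=u_{+,0}u_{-,k}$:
\begin{itemize}
\item If the preceding step is $u_{-,1}$, the left side uses $H^+$ (turning $u_{-,1}u_{+,0}$ into $u_{+,0}u_{-,1}$) and then $Y_+$ on $u_{-,1}u_{-,k}$, reaching $u_{+,0}u_{+,k+1}$. The right side $u_{-,1}u_{-,-1}u_{+,k+1}$ uses $Y_+$ with $k=-1$ and reaches the same path.
\item If the preceding step is $u_{+,1}$, the left side uses $Y_-$ and then $Y_+$, reaching $u_{+,k+1}$. The right side $u_{+,1}u_{-,-1}u_{+,k+1}$ uses $\ccup_-$ and reaches the same path.
\end{itemize}

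For $Y_+$ with $P^i=u_{-,0}u_{-,-1}$:
\begin{itemize}
\item If the preceding step is $u_{-,1}$, the left side uses $Y_+$ (with $k=0$) and then $\ccup_-$. The right side $u_{-,1}u_{+,-1}$ uses $\ccup_+$.
\item If the preceding step is $u_{+,1}$, the left side uses $H^-$ and then $\ccup_-$, reaching $u_{-,0}$. The right side $u_{+,1}u_{+,-1}$ uses $Y_-$ with $k=-1$.
\end{itemize}

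Since both sides land on the same path, it remains to check one equality of small webs on two or three strands in each sub-case. Concretely, this is the composite of the left-hand webs versus $w$ composed with the right-hand web.

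The main obstacle is these four local web identities. Most are planar isotopies: a trivalent vertex bent around by a cup or cap, or the zigzag identities of \cref{lem: zigzag relations}. For the $H^-$ cases I would first try to see the $H$-web composed with a $Y$ or with a cup as isotopic to the other side, after contracting the extra edge. If a digon or square appears, it must instead be reduced with the Kuperberg relations of \cref{fig: Kuperberg relations}. The orientation-reversed rules follow by the global symmetry that reverses all signs and arrows.
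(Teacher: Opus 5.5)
Your proof is correct. It is a leaner variant of the paper's confluence argument rather than the same proof.

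The paper adjoins the new rules to the defining ones and forms the enlarged algorithm $G'$. It proves $G'$ is independent of choices by a minimal-counterexample argument: two applicable rules must overlap in exactly one step, and each of the nine resulting three-step patterns is closed by complementary rules. Then $G'=G$ because one may always grow using defining rules only.

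You instead fix a single new rule and compare it only against the defining algorithm. A defining rule strictly inside $P_1$ is commuted past $P^i$ using the interchange law and the induction hypothesis. Otherwise, the unique defining rule on the last step of $P_1$ and the first step of $P^i$ gives one explicit diamond. Your route needs only four local checks (plus reversal), never treats overlaps between two new rules or overlaps to the right of $P^i$, and never needs the enlarged system to be well defined. The paper's route yields confluence of the whole enlarged rewriting system at once; that is more uniform, but for this proposition the two conclusions are equivalent.

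The four identities you flagged as the main obstacle are all pure planar isotopies. In each sub-case both composites are the same bipartite tree, with one, two, or three trivalent vertices, attached to the boundary in the same way. For example, for $Y_+$ preceded by $u_{+,1}$, both sides are a source joined to the first top strand and the bottom strand, adjacent to a sink joined to the other two top strands. So no digon or square ever arises, and no Kuperberg relation is needed.

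Two points should be stated explicitly:
\begin{itemize}
\item The order you induct on must compare state strings lexicographically regardless of sign strings, since the $H$-rules change the sign string. The paper uses the same convention implicitly.
\item The reduction to one orientation uses that $G$ commutes with global reversal. This holds because the defining rules are closed under reversal, and flipping all signs is the $\lambda_1\leftrightarrow\lambda_2$ symmetry, which preserves dominance.
\end{itemize}
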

\begin{proof}
    The proposition is proved in the $\lambda=0$ case in \cite{khovanov_1999_web}, and we generalize the proof to arbitrary $\lambda\in \Lambda_+$ with a similar argument in appendix~\ref{sec: well-definedness of the growth algorithm}. The main idea is to show that the result of the growth algorithm does not depend on the choice of growth rules.
\end{proof}

We conclude this subsection with several properties of the webs $G(P)$ obtained from the growth algorithm.

\begin{theorem}[\cite{khovanov_1999_web}]\label{thm: growth algorithm}
Let $S$ be a sign string. The growth algorithm $G$ defines a bijection
\begin{align}
    G: \Path_S^+(0) \to \Web^S_{\emptyset}.
\end{align}
In particular, $G(P)$ is a monomial web for every $P\in \Path_S^+(0)$.
\end{theorem}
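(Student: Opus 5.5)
The plan has three steps: show that $G(P)$ is non-elliptic, construct an explicit inverse $\Web^S_\emptyset\to\Path^+_S(0)$, and then check that the two maps undo each other.

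\textbf{Step 1: $G(P)$ is a basis web.} I will use Kuperberg's characterization of non-elliptic webs with boundary on a single line by minimal cut paths. For a basis web $w\in\Web^S_\emptyset$ and a boundary point between sites $m$ and $m+1$, consider curves from that point to the region below the web, crossing $w$ transversally. Each crossing of an edge contributes $\lambda_1$ or $\lambda_2$ according to the orientation. The \emph{minimal cut weight} $p_m(w)\in\Lambda_+$ is the vector of minimal numbers of crossings of each orientation. I will argue that $G(P)$ has no loops, digons or squares by induction along the recursion~\eqref{eqn: recursion}. Each defining growth rule glues an $H$, $Y$ or cup on top of a web $G(P_1P^fP_2)$ which, by induction, is non-elliptic. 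Any new face of degree $\le4$ would have to use the newly attached piece together with edges at adjacent boundary sites. That configuration would force the intermediate path $P_1P^fP_2$ to leave $\Lambda_+$, or it would contradict the state constraints $j_m=1$, $j_{m+1}<1$ at the application site. The first alternative is ruled out by the fifth observation in the proof of \cref{prp: growth algorithm is well-defined}, i.e.\ \cref{lem: dominance}. The degenerate case is a cup closing directly on a cup; it is excluded because the rule $\ccup_+$ requires $(-,1)(+,-1)$, and dominance then forces a nonzero weight below.

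\textbf{Step 2: the inverse map.} For $w\in\Web^S_\emptyset$, define $\Phi(w)=(p_0,\dots,p_L)$ using the minimal cut weights above. Consecutive cuts differ by crossing one boundary edge. Kuperberg's and Khovanov–Kuperberg's analysis (the ``minimal cut path'' / state-string algorithm of \cite{khovanov_1999_web}) then shows that each $p_m-p_{m-1}$ is one of the six steps $u_{s_m,j}$ with $s_m$ the sign at that site. The path starts and ends at $0$ and is dominant by construction, so $\Phi(w)\in\Path^+_S(0)$.

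\textbf{Step 3: mutual inverses.} First I will show $\Phi(G(P))=P$ by induction along the recursion. Each growth rule changes cut weights only locally, and the labels in \cref{fig: defining growth rules} are exactly the local cut-weight increments. So it suffices to check that gluing the local piece $w$ turns the state labels $P^f$ below into $P^i$ above. This gives injectivity of $G$. For surjectivity, I will show that every non-elliptic $w\neq\emptyset$ with $\Phi(w)=P$ contains, at the first position $m$ with $j_m=1$, $j_{m+1}<1$, a local piece matching one of the growth rules. Removing that piece gives a non-elliptic web $w'$ with $\Phi(w')=P_1P^fP_2$. Induction on the order of \cref{def: order on paths} then yields $w=G(P)$. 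This is the main obstacle. It needs the Euler-characteristic fact that a non-elliptic web with boundary on a line has a boundary $H$, $Y$ or cup piece. It also needs the more delicate fact that such a piece sits precisely where the state string first descends. Rather than reprove this, I would invoke the corresponding statement of \cite{khovanov_1999_web}, noting that our rules are a subset of theirs and that the extra rules there are consistent with ours by \cref{prp: extra relations}.
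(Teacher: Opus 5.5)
\textbf{Comparison with the paper.} The paper gives no argument of its own for this statement. It imports it from \cite{khovanov_1999_web}; the only in-paper input is that its reduced rule set produces the same webs as Khovanov--Kuperberg's full set (\cref{prp: growth algorithm is well-defined}, \cref{prp: extra relations}). Your Steps 2--3 outline Khovanov--Kuperberg's minimal-cut-path proof. By your own account they defer the decisive facts to that paper: that cut increments are steps, and that a removable piece sits at the first descent. To that extent you are doing what the paper does, and your remark on the two rule sets covers the compatibility point.

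\textbf{The gap is Step 1, which you present as self-contained.}
\begin{itemize}
\item \emph{The cited lemma does not bear on faces.} \Cref{lem: dominance} only guarantees that $P'=P_1P^fP_2$ is dominant, so that $G(P')$ is defined. It says nothing about the faces of $G(P')$. Likewise, the condition $j_m=1$, $j_{m+1}<1$ concerns $P$, not the web you are gluing onto.
\item \emph{Which rules can create a face.} The $Y$ rule is glued onto a single strand, and the cup rule onto nothing ($P^f=\emptyset$). So neither creates an internal face, and your ``cup closing on a cup'' case never arises. Only the $H$ rule can. The only candidate new face lies below the middle edge of $H$, bounded by its two lower legs and the boundary path of $G(P')$ joining top sites $m$ and $m+1$. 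It is a digon if that path is an arc and a square if the path has two vertices; one vertex is impossible by bipartiteness.
\item \emph{What excluding these cases requires.} You must relate such short connections in $G(P')$ to the states of $P'$ at $(m,m+1)$. For example, an arc joining $m,m+1$ forces minimal-cut states $(1,-1)$ there, since the enclosed face can only be exited through the arc into the common face adjacent to both neighbouring boundary points. The $H$ rule's $P^f=u_{\mp,0}u_{\pm,1}$ has states $(0,1)$, so this case is excluded. But the argument presupposes that the minimal-cut path of $G(P')$ is $P'$.
\end{itemize}

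So Step 1 cannot precede Step 3. You need a joint induction on the statement ``$G(P)$ is non-elliptic and its minimal-cut path is $P$''. You also need a local lemma excluding a two-vertex connection between adjacent sites with opposite signs and states $(0,1)$.

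Relatedly, you define $\Phi$ by minimizing each orientation count separately. That is not Khovanov--Kuperberg's definition, and nothing guarantees a single cut realizes both minima; use their minimal cuts instead. The simplest repair is to cite \cite{khovanov_1999_web} for non-ellipticity as well, since it is part of the cited theorem; this is exactly what the paper does.
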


\begin{remark}
    The finiteness of the set $\Web^S_{\emptyset}$ is not immediately clear from its definition, but indeed follows from \cref{thm: growth algorithm} since $\Path_S^+(0)$ is a finite set.
\end{remark}

Seen as invariants through the equivalence $\Gamma:\Sp \to \Rep$, the webs obtained from the growth algorithm have remarkable properties. Let $P=(S,J)$ be a path of length $L$. Let $\{v^\pm_{j} \st j=-1,0,1\}$ be the basis vectors of $V^\pm$ defined in appendix~\ref{app: fundamental reps of uqslt}. We define
\begin{align}\label{def(eqn): vP}
    v_P := v_{j_1}^{s_1} \otimes v_{j_2}^{s_2} \otimes \cdots \otimes v_{j_L}^{s_L} \in V^{S}.
\end{align}

\begin{theorem}(\cite[Thm.\,2]{khovanov_1999_web})\label{thm: unitriangularity}
    For $P\in\Path_S^+(0)$, the invariant $\Gamma(G(P))$ has an expansion in the tensor basis of the weight-zero subspace of $V^S$ of the form
    \begin{align}\label{condition: unitriangular}
        \Gamma(G(P))
        =v_P+\sum_{\substack{Q\in\Path_S(0)\\Q<P}}\alpha_{P,Q}v_Q,
    \end{align}
    where $\alpha_{P,Q}\in \N[q,q^{-1}]$ are scalars. The order $<$ is defined in \cref{def: order on paths}.
\end{theorem}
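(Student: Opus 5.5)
We argue by induction along the recursion defining $G(P)$ in \cref{def: growth algorithm}, proving a slightly stronger statement that makes sense for arbitrary dominant $\lambda$. For $P\in\Path^+_S(\lambda)$ the web $G(P)\in\W^S_{S'}$ has bottom sign string $S'$ with $\wt(S')=\lambda$ by \cref{lem: growth preserves weight}. We claim
\[
\Gamma(G(P))\,v_{P^{\mathrm{hw}}_{S'}} = v_P+\sum_{Q\in\Path_S(\lambda),\,Q<P}\alpha_{P,Q}v_Q,\qquad \alpha_{P,Q}\in\N[q,q^{-1}].
\]
For $\lambda=0$ we have $S'=\emptyset$ and $v_{P^{\mathrm{hw}}_\emptyset}=1$, so this is exactly \eqref{condition: unitriangular}. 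The base case $P=P^{\mathrm{hw}}_T$ is immediate, since then $G(P)=\id_T$.

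For the inductive step, write $P=P_1P^iP_2$ for a defining growth rule $(w,P^i,P^f)$, so that $G(P)=(\id_{S_1}\otimes w\otimes\id_{S_2})\circ G(P_1P^fP_2)$. By the fifth observation in the proof of \cref{prp: growth algorithm is well-defined}, the path $P_1P^fP_2$ is dominant and of lower order. By induction,
\[
\Gamma(G(P_1P^fP_2))v_{\mathrm{hw}}=v_{P_1P^fP_2}+\sum_{Q'<P_1P^fP_2}\alpha_{Q'}v_{Q'}.
\]
Applying $\id\otimes\Gamma(w)\otimes\id$ acts only on the middle tensor factors. The key local input is a direct check from the explicit intertwiners \eqref{eqn: coev}--\eqref{eqn: Y} in the appendix, covering each of the four defining webs $H^\mp$, $\ccup_\pm$, $Y_\pm$ and their sign reversals. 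On the basis vector $v_{P^f}$, the map $\Gamma(w)$ produces $v_{P^i}$ with coefficient $1$, plus lower terms $v_R$ with $R<P^i$ in the lexicographic order, all with coefficients in $\N[q,q^{-1}]$. More precisely, we need the following for every basis vector $v_{R'}$ in the source: if $R'\le P^f$, then $\Gamma(w)v_{R'}$ is a nonnegative combination of $v_R$ with $R\le P^i$, and equality $R=P^i$ occurs only from $R'=P^f$. Since $\Gamma(w)$ is an intertwiner, it preserves weight, so every output term has total weight $\lambda$.

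We then combine these local facts. The lexicographic order compares state strings, and the prefix $P_1$ is unchanged. A term $v_{Q'}$ with $Q'<P_1P^fP_2$ either differs from it already in the $P_1$ part, and stays smaller after applying $w$; or it agrees on $P_1$ and is $\le$ on the middle block, which the local statement handles. If it is equal on the middle block, then it is smaller in $P_2$, and the middle block maps with leading term $P^i$, so the result is still smaller in $P_2$. Hence all resulting terms are $<P$, except the single leading term $v_P$ with coefficient $1$. One subtlety arises for the $\ccup$ and $Y$ rules, which change the length: there the comparison must be done on the length-$L$ side directly, by aligning positions outside the inserted block. Positivity of the coefficients follows because the growth rules use only the basic intertwiners, whose matrix entries lie in $\N[q,q^{-1}]$ for suitable sign conventions.

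The main obstacle is this local check. For the lexicographic leading-term argument to go through, one must verify, rule by rule, that $\Gamma(w)$ sends smaller inputs to outputs $\le P^i$ within the block, and that all matrix coefficients are nonnegative. This depends on the normalisations of $\mathcal Y_\pm$, $\coev_\pm$ and $\eva_\pm$ chosen in the appendix. I would first tabulate $\Gamma(w)$ on the $3$- or $9$-dimensional input spaces for the four rules and confirm monotonicity directly. If some sign turned out negative, I would absorb it by rescaling the basis vectors $v^\pm_j$, consistently with Khovanov–Kuperberg's conventions.
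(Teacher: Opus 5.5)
Your route differs from the paper's. The paper gives no argument of its own: it invokes \cite[Thm.\,2]{khovanov_1999_web} and matches notation ($e^S_J$ with $v_P$, $w^S_J$ with $G(P)$, $q_{KK}^{1/2}=q$). Your induction along \eqref{eqn: recursion}, strengthened to $\Gamma(G(P))\,v_{P^{\mathrm{hw}}_{S'}}$ for arbitrary dominant $\lambda$, gives a self-contained proof of the unitriangularity with leading coefficient exactly $1$. The local checks hold with the appendix formulas. For instance $\Gamma(H^+)(v^+_0\otimes v^-_1)=v^-_1\otimes v^+_0$, $\mathcal Y_+v^+_1=v^-_1\otimes v^-_0+z^{-1}v^-_0\otimes v^-_1$, and $\coev_\pm(1)$ has leading term $v^\pm_1\otimes v^\mp_{-1}$. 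Inputs strictly below $P^f$ only produce outputs strictly below $P^i$. Your blockwise lexicographic comparison also works for the length-changing rules: the prefix has the same length on both sides, and every comparison is between blocks of equal length.

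The gap is the positivity clause. Your step ``the basic intertwiners have entries in $\N[q,q^{-1}]$ for suitable sign conventions'' fails in the conventions the statement is made in. With $z=-q$, the maps $\coev_\pm$, $\mathcal Y_\pm$ and $\Gamma(H^\pm)$ have entries in $\{0,1,z,z^{-1},z^{-2}\}$. Already for $P=P^+_c$ you get $\Gamma(\ccup_+)(1)=v^+_1\otimes v^-_{-1}-q^{-1}\,v^+_0\otimes v^-_0+q^{-2}\,v^+_{-1}\otimes v^-_1$.

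Your fallback of rescaling the $v^\pm_j$ cannot repair this. First, it changes the vectors $v_P$ about which the statement is made. More decisively, take $t_+=G(P^+_t)=(Y_-\otimes\id_+)\circ\ccup_-$. On the six orderings of $v^+_1,v^+_0,v^+_{-1}$, the coordinates of $\Gamma(t_+)$ are $1,-q^{-1},-q^{-1},q^{-2},q^{-2},-q^{-3}$. Any rescaling multiplies all six by one common factor, and that factor must be $1$ to keep the leading coefficient equal to $1$.

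So the clause $\alpha_{P,Q}\in\N[q,q^{-1}]$ is false in the paper's normalisation and no argument can establish it. This is a defect of the statement itself, which the paper's citation, resting on the identification $q_{KK}^{1/2}=q$, does not address. What your induction actually proves is $\alpha_{P,Q}\in\N[z,z^{-1}]\subset\Z[q,q^{-1}]$, since every entry involved lies in $\N[z,z^{-1}]$. That corrected form suffices for everything used later: \cref{prp: lower-weight web basis} needs only the unitriangularity.
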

\begin{proof}
    The notation differs considerably between \cite{khovanov_1999_web} and this work. In order to use the theorem, we identify $\uqslt$, $v_P$, $G(P)$ and $\Gamma$ in this paper with the corresponding objects in \cite{khovanov_1999_web}. First, in \cite{khovanov_1999_web} $\uqslt$ is a $\C(q_{KK}^{1/2})$-algebra. After identifying $q_{KK}^{1/2}=q$ in this work, the algebras are the same. Second, for a path $P=(S,J)$, the tensor $e^S_J\in V^S$ in \cite{khovanov_1999_web} is identified with $v_P$ in this work. Third, the web $G(P)$ is the same as the web $w^S_J$ in \cite{khovanov_1999_web}. Finally, comparing the images under $\Gamma:\Sp\to \Rep$ of the web generators in \cref{fig: elementary webs}, we see that the functors $\Gamma$ are the same in \cite{khovanov_1999_web} and in this work. So their theorem 2 applies.
\end{proof}

\begin{lemma}\label{lem: web in web}
    Let $P,Q$ be paths such that the concatenation $PQ^\vee\in \Path^+(\lambda)$ is a dominant path. Let $R$ be the bottom sign string of $G(P)$, $P^{\mathrm{hw}}_R$ be as in \eqref{eqn: highest-weight path}, and let $T$ be the sign string of $Q$. Then
    \begin{align}\label{eqn: relation}
        G(PQ^\vee) = (G(P) \otimes \id_{T^\vee}) \circ G(P_R^{\mathrm{hw}}Q^\vee).
    \end{align}
\end{lemma}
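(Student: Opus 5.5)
The proof goes by induction on the order of $P$ (\cref{def: order on paths}) within $\Path^+(\wt(P))$. It uses the freedom, guaranteed by \cref{prp: growth algorithm is well-defined}, to choose which applicable growth rule to use at each recursion of $G(PQ^\vee)$. Two preliminary facts are needed.

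\begin{enumerate}
    \item $P$ is a prefix of the dominant path $PQ^\vee$, so $P$ is itself dominant. Hence $G(P)$ is defined, with bottom sign string $R$.
    \item By \cref{lem: growth preserves weight}, $\wt(R)=\wt(P)$, so $\wt(P_R^{\mathrm{hw}})=\wt(P)$ by \cref{lem: weight domination}. Moreover $P_R^{\mathrm{hw}}$ is dominant, since all its steps have state $1$ and thus only add $\lambda_1$ or $\lambda_2$. It follows that $P_R^{\mathrm{hw}}Q^\vee$ visits exactly the same points as $PQ^\vee$ after the first $|P|$ (resp.\ $|R|$) steps. So it is again dominant with the same endpoint, and $G(P_R^{\mathrm{hw}}Q^\vee)$ makes sense.
\end{enumerate}

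Base case: $P=P_R^{\mathrm{hw}}$. Then $G(P)=\id_R$ and the identity \eqref{eqn: relation} is trivial.

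Inductive step: suppose $P$ is not highest-weight. By the third observation in the proof of \cref{prp: growth algorithm is well-defined}, some defining growth rule $(w,P^i,P^f)$ applies inside $P$, so we can write $P=P_1P^iP_2$. This rule is also applicable to $PQ^\vee=P_1P^i(P_2Q^\vee)$. By order-independence (\cref{cond: order-independence}), we may use it as the first recursion of $G(PQ^\vee)$, giving
\[
G(PQ^\vee)=(\id_{S_1}\otimes w\otimes \id_{S_2}\otimes\id_{T^\vee})\circ G(P'Q^\vee), \qquad P':=P_1P^fP_2 .
\]
By the fifth observation (\cref{lem: dominance}), $P'Q^\vee$ is dominant. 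The path $P'$ has strictly lower order than $P$ and the same weight, and the recursion for $G(P)$ gives $G(P)=(\id_{S_1}\otimes w\otimes\id_{S_2})\circ G(P')$. Since $G(P')$ is a later stage of the recursion for $G(P)$, it has the same bottom sign string $R$.

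Applying the induction hypothesis to $P'$ gives
\[
G(P'Q^\vee)=(G(P')\otimes\id_{T^\vee})\circ G(P_R^{\mathrm{hw}}Q^\vee).
\]
Substituting this and using the interchange law $(a\otimes\id)\circ(b\otimes\id)=(ab)\otimes\id$ in the strict monoidal category $\Sp$ yields \eqref{eqn: relation}.

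The main point requiring care is the legitimacy of applying the rule inside the prefix. The rule must be a valid choice in the recursion for the longer path $PQ^\vee$, which is exactly what order-independence provides. The intermediate paths must also stay dominant, so that both $G(P')$ and $G(P'Q^\vee)$ are defined; this is supplied by \cref{lem: dominance} applied to both $P$ and $PQ^\vee$. Finally, the induction must be set up on the order of $P$ with $Q$ fixed. Termination is then automatic, because only finitely many lower-order paths lie in $\Path^+(\wt(P))$.
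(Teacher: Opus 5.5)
Your proof is correct and follows the paper's argument. The paper likewise applies, inside the $P$-segment of $PQ^\vee$, the same growth rules that build $G(P)$ (justified by order-independence), repeats until the prefix becomes $P_R^{\mathrm{hw}}$, and makes the same preliminary check that $P_R^{\mathrm{hw}}Q^\vee$ is dominant. Your induction on the order of $P$, together with the explicit appeal to \cref{lem: dominance} for the intermediate paths, is a more careful formalization of the paper's ``repeating this process'' step.
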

\begin{proof}
    Since $PQ^\vee$ is dominant, its initial segment $P$ is dominant, so $G(P)$ is defined. By \cref{lem: growth preserves weight}, $\wt(P_R^{\mathrm{hw}})=\wt(P)$. Since $P_R^{\mathrm{hw}}$ has only $\lambda_1$- and $\lambda_2$-steps, while the subsequent $Q^\vee$ segment visits the same vertices as in the dominant path $PQ^\vee$, the whole path $P_R^{\mathrm{hw}}Q^\vee$ is dominant, and the right-hand side of \eqref{eqn: relation} is well-defined.

    Let $G(P) = (\id_{S_1} \otimes w \otimes \id_{S_2}) \circ G(P_1P^fP_2)$ be the first recursion in the definition of $G(P)$, with $P=P_1P^iP_2$ and $(w,P^i,P^f)$ being a growth rule. We choose accordingly for $G(PQ^\vee)$ the same growth rule at the same steps of $P$, so that
    \begin{align*}
        G(PQ^\vee) & = (\id_{S_1} \otimes w \otimes \id_{S_2} \otimes \id_{T^\vee}) \circ G(P_1P^fP_2Q^\vee).
    \end{align*}
    Repeating this process until the $P$ segment has become the highest path $P_R^{\mathrm{hw}}$ gives \eqref{eqn: relation}.
\end{proof}

\begin{figure}[htbp]
    \centering
    \begingroup
    \scalebox{0.8}{\begin{tikzpicture}
        \begin{scope}[shift={(0,0)}]
            \draw[thick, dashed] (-1.2, 0) rectangle (1.2, 1.2);
            \node at (0, 0.6) {$G(PQ^\vee)$};

            \draw (-0.6, 1.2) -- (-0.6, 2.6);
            \node[above] at (-0.6, 2.6) {$S_P$};

            \draw (0.6, 1.2) -- (0.6, 2.0) to[out=90, in=90, looseness=1.5] (2.2, 2.0) -- (2.2, -0.5);
            \node[above] at (1.4, 2.7) {$\ccap_{S_Q}$};
            \node[below] at (2.2, -0.5) {$S_Q$};

            \node at (0.5, -1.8) {$\Fold_{S_Q}(G(PQ^\vee))$};
        \end{scope}

        \node at (2.8, 1.2) {$=$};

        \begin{scope}[shift={(4.5,0)}]
            \draw[thick, dashed] (-1.2, 0) rectangle (1.2, 0.8);
            \node at (0, 0.4) {$G(P_Z^{\mathrm{hw}} Q^\vee)$};

            \draw[thick, dashed] (-1.2, 1.4) rectangle (0, 2.4);
            \node at (-0.6, 1.9) {$G(P)$};

            \draw (-0.6, 0.8) -- (-0.6, 1.4);

            \draw (-0.6, 2.4) -- (-0.6, 3.4);
            \node[above] at (-0.6, 3.4) {$S_P$};

            \draw (0.6, 0.8) -- (0.6, 2.8) to[out=90, in=90, looseness=1.5] (2.2, 2.8) -- (2.2, -0.5);
            \node[right] at (0.6, 1.8) {$\id_{S_Q^\vee}$};
            \node[above] at (1.4, 3.5) {$\ccap_{S_Q}$};
            \node[below] at (2.2, -0.5) {$S_Q$};

            \node at (0.5, -1.8) {Apply \cref{lem: web in web}};
        \end{scope}

        \node at (7.6, 1.2) {$=$};

        \begin{scope}[shift={(9.6,0)}]
            \draw[thick, dashed] (-1.2, 0) rectangle (1.2, 0.8);
            \node at (0, 0.4) {$G(P_Z^{\mathrm{hw}} Q^\vee)$};

            \draw[thick, dashed] (-1.2, 2.0) rectangle (0, 3.0);
            \node at (-0.6, 2.5) {$G(P)$};

            \draw (-0.6, 0.8) -- (-0.6, 2.0);

            \draw (-0.6, 3.0) -- (-0.6, 3.8);
            \node[above] at (-0.6, 3.8) {$S_P$};

            \draw (0.6, 0.8) -- (0.6, 1.1) to[out=90, in=90, looseness=1.5] (2.2, 1.1) -- (2.2, -0.5);
            \node[above] at (1.4, 1.6) {$\ccap_{S_Q}$};
            \node[below] at (2.2, -0.5) {$S_Q$};

            \node at (0.5, -1.8) {Isotopy};
        \end{scope}

        \node at (12.6, 1.2) {$=$};

        \begin{scope}[shift={(14.8,0)}]
            \draw[thick, dashed] (-1.2, 0) rectangle (1.2, 1.0);
            \node at (0, 0.4) {$C^\mu_{P_Z^{\mathrm{hw}}, Q}$};

            \draw (0, -0.5) -- (0, 0);
            \node[below] at (0, -0.5) {$S_Q$};

            \draw[thick, dashed] (-1.2, 1.4) rectangle (1.2, 2.4);
            \node at (0, 1.9) {$G(P)$};

            \draw (0, 1.0) -- (0, 1.4);

            \draw (0, 2.4) -- (0, 3.4);
            \node[above] at (0, 3.4) {$S_P$};

            \node at (0, -1.8) {$G(P) \circ C^\mu_{P_Z^{\mathrm{hw}},Q}$};
        \end{scope}

    \end{tikzpicture}}
    \endgroup
    \caption{Illustration of \eqref{eqn: relation}.}
    \label{fig: relation between old and new grown web}
\end{figure}

\begin{corollary}\label{cor: extended growth algorithm monomial}
    Let $\lambda\in \Lambda_+$. For every $P\in \Path^+(\lambda)$, the web $G(P)$ is monomial.
\end{corollary}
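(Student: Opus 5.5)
Our plan is to reduce to the weight-zero case, \cref{thm: growth algorithm}, by completing $P$ to an excursion. Let $\lambda=\wt(P)$, and let $R$ be the bottom sign string of $G(P)$, so $\wt(R)=\lambda$ by \cref{lem: growth preserves weight}. We want a path $Q$ with $\wt(Q)=\lambda$ such that the concatenation $PQ^\vee$ is a dominant path of weight $0$. A natural choice is $Q:=P_R^{\mathrm{hw}}$, which is dominant because it uses only the steps $\lambda_1,\lambda_2$ and has weight $\wt(R)=\lambda$. By \cref{lem: PQ-dual is an excursion}, $PQ^\vee\in\Path^+(0)$, and then \cref{thm: growth algorithm} says $G(PQ^\vee)$ is a monomial web.

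Next we apply \cref{lem: web in web} to this $P$ and $Q$, with $T=R$:
\[
G(PQ^\vee)=(G(P)\otimes\id_{R^\vee})\circ G(P_R^{\mathrm{hw}}Q^\vee).
\]
With our choice, $P_R^{\mathrm{hw}}Q^\vee=P_R^{\mathrm{hw}}(P_R^{\mathrm{hw}})^\vee$. We would check that $G$ of this path is simply the nested cup web $\ccup_R$. The argument is by induction on $|R|$. The innermost pair of steps is $u_{s,1}u_{-s,-1}$, and the $\ccup$ defining rule applies there, reducing to $P_{R'}^{\mathrm{hw}}(P_{R'}^{\mathrm{hw}})^\vee$ with $R'$ equal to $R$ with its last sign removed. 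By the order-independence in \cref{prp: growth algorithm is well-defined}, we may use these rules. Hence
\[
G(PQ^\vee)=(G(P)\otimes\id_{R^\vee})\circ\ccup_R=\Unfold_R(G(P)).
\]

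The remaining step is the main point: if $G(P)$ contained an internal loop, digon or square face, then so would $(G(P)\otimes\id_{R^\vee})\circ\ccup_R$. This composition is obtained by gluing cups onto the bottom boundary edges, which only creates new faces adjacent to the boundary. Any internal face of $G(P)$ stays an internal face of the same degree, because composing with cups neither adds vertices to it nor merges it with another face. This contradicts the monomiality of $G(PQ^\vee)$.

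A subtlety is that $G(P)$ is a priori only an element of $\W$, defined as a composition of webs. So we must first note that $G(P)$ is given by a single net, the stacked composition of the rule webs with no relations applied. The equality from \cref{lem: web in web} holds at the level of nets, since both sides arise from the same stacking. So the net $G(PQ^\vee)$ contains the net $G(P)$ as a subgraph. Making this "equality of nets, not just of web classes" precise is where we expect the care to be needed. Once it is in place, the face argument goes through directly.
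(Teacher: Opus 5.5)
Your proof is correct and follows the same route as the paper: you embed $G(P)$ into the excursion web $G(PQ^\vee)$ via \cref{lem: web in web}, invoke \cref{thm: growth algorithm}, and observe that an internal loop, digon or square face of $G(P)$ would survive in $G(PQ^\vee)$. The paper takes an arbitrary $Q\in\Path^+(\lambda)$, since internal faces of a factor survive any stacking. Your choice $Q=P_R^{\mathrm{hw}}$, which turns the lower factor into $\ccup_R$ and gives $G(PQ^\vee)=\Unfold_R(G(P))$, is therefore a harmless specialization. Your remark about working with unreduced stacked nets makes explicit what the paper's one-line proof leaves implicit.
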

\begin{proof}
    Choose any $Q\in \Path^+(\lambda)$. By \eqref{eqn: relation}, any reduction of $G(P)$ by Kuperberg relations would induce a reduction of $G(PQ^\vee)$, which contradicts \cref{thm: growth algorithm} stating that $G(PQ^\vee)$ is monomial.
\end{proof}

\section{The algebra $\mathcal{K}_L$}\label{sec: K_L}

In this section, we define the algebra $\mathcal{K}_L$ and state the main results of this paper. In \cref{sec: def of K_L} we define $\mathcal{K}_L$ and prove a Schur-Weyl duality between $\mathcal{K}_L$ and $\uqslt$. In \cref{sec: cellular structure guide}, we describe the cellular structure constructed in \cref{sec: cellular structure} in practical terms and give examples and a bimodule decomposition result that enhances the Schur-Weyl duality. In \cref{sec: generators guide} we identify the generators of $\mathcal{K}_L$ in \cref{thm: generators of K_L} and discuss its consequences. In \cref{sec: dimension formulae} we give formulae for the dimension of $\mathcal{K}_L$ and its cell modules using the reflection principle.

\subsection{Definition and Schur-Weyl duality}\label{sec: def of K_L}
Let $ L \in \N$ and $\Sigma_L$ be the set of sign strings of length $L$. We define the basis web set
\begin{align}\label{def: Omega}
    \WebBasis{L}:=\bigsqcup_{S,T\in \Sigma_L} \Web^{T}_{S},
\end{align}
where $\Web^T_S$ is the set of monomial webs with boundary orientation $T$ and $S$, defined in \eqref{eqn: web set and web space} and \cref{def: reduced web}.

\begin{definition}\label{def: K_L}
    The algebra $\mathcal K_L$ is a $\C(q)$-vector space
    \begin{align}\label{def: KLq}
        \mathcal{K}_L
        :=\Span_{\C(q)}\WebBasis{L}.
    \end{align}
    Its multiplication is defined by concatenation: for basis webs
    $w_1\in \Web^{S_1}_{S_2}$ and $w_2\in \Web^{S_3}_{S_4}$, set
    $w_1w_2=0$ if $S_2\neq S_3$; otherwise stack $w_1$ on top of $w_2$
    and apply the Kuperberg relations in \cref{fig: Kuperberg relations}. Extend this multiplication
    $\C(q)$-bilinearly. Its unit is
    \begin{align*}
        1_L:=\sum_{S\in\Sigma_L}\id_S,
    \end{align*}
    where $\id_S$ is the web in $\Web_S^S$ that consists of $|S|$ parallel vertical strands, oriented according to $S$.
\end{definition}

\begin{remark}\label{rmk: identity in KL(q)}
 The unit $1_L$ is not a basis web unless $L=0$.
\end{remark}

Consider the vector space
\begin{align}\label{eqn: def of V_L}
    \mathscr{V}_L:=(V^+\oplus V^-)^{\otimes L}\cong\bigoplus_{S\in\Sigma_L}V^S, 
\end{align}
where each summand $V^S$ defined in \eqref{eqn: V^S} admits the $\uqslt$-action $\rho_S$. Define an action $\rho_L:\uqslt\to \End_{\C(q)}(\mathscr V_L)$ by setting $\rho_L|_{V^S}=\rho_S$, which makes $\mathscr V_L$ a $\uqslt$-module. Let
\begin{align}\label{eqn: inclusion-projection}
    p_S: \mathscr{V}_L  \to V^S, \qquad e_S: V^S  \to \mathscr{V}_L
\end{align}
be the projection and inclusion maps of the $V^S$-summand, respectively.
These maps are $\uqslt$-equivariant and satisfy
\begin{align}\label{eqn: orthogonality}
    p_{T}\circ e_S = \delta_{S,T} \id_{V^S}, \qquad \forall S,T\in \Sigma_L.
\end{align}
Recall the equivalence $\Gamma$ from \cref{thm: equivalence of Sp and Rep}, and define the $\C(q)$-linear map
\begin{align}\label{eqn: def of Phi}
    \Phi: \mathcal{K}_L & \to \End_{\uqslt}(\mathscr{V}_L), \\
    \W^{S_2}_{S_1} \ni w & \mapsto e_{S_2}\circ \Gamma(w) \circ p_{S_1}.
\end{align}

\begin{theorem}\label{thm: Schur-Weyl duality}
    The map $\Phi$ is a faithful action of
    $\mathcal{K}_L$ on $\mathscr{V}_L$, and the images of $\uqslt$ and
    $\mathcal{K}_L$ are mutual centralizers:
    \begin{align}\label{eqn: double centralizer for KL}
        \Phi(\mathcal{K}_L)
        &=\End_{\rho_L(\uqslt)}(\mathscr{V}_L),
        &
        \rho_L(\uqslt)
        &=\End_{\Phi(\mathcal{K}_L)}(\mathscr{V}_L).
    \end{align}
    In particular, $\mathcal{K}_L$ is a semisimple $\C(q)$-algebra, and there is an isomorphism of $\C(q)$-algebras
    \begin{align*}
        \mathcal{K}_L\cong\End_{\uqslt}(\mathscr{V}_L).
    \end{align*}
\end{theorem}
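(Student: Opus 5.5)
We first check that $\Phi$ is an algebra homomorphism. Take basis webs $w_1\in\Web^{S_1}_{S_2}$ and $w_2\in\Web^{S_3}_{S_4}$. By \eqref{eqn: orthogonality},
\[
\Phi(w_1)\Phi(w_2)=e_{S_1}\Gamma(w_1)\,p_{S_2}e_{S_3}\,\Gamma(w_2)p_{S_4}=\delta_{S_2,S_3}\,e_{S_1}\Gamma(w_1)\Gamma(w_2)p_{S_4}.
\]
This vanishes exactly when $w_1w_2=0$ in $\mathcal K_L$. When $S_2=S_3$, functoriality of $\Gamma$ gives $\Gamma(w_1)\Gamma(w_2)=\Gamma(w_1\circ w_2)$. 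Here $w_1\circ w_2$ is computed in $\Sp$, which is the same as the product in $\mathcal K_L$, since both use stacking followed by the Kuperberg relations. For the unit, $\Phi(1_L)=\sum_S e_Sp_S=\id_{\mathscr V_L}$. The image lands in $\End_{\uqslt}(\mathscr V_L)$ because $e_S$, $p_S$ and $\Gamma(w)$ are all intertwiners.

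Next we show $\Phi$ is a linear isomorphism onto $\End_{\uqslt}(\mathscr V_L)$. As vector spaces,
\[
\mathcal K_L=\bigoplus_{S,T\in\Sigma_L}\W^T_S,\qquad \End_{\uqslt}(\mathscr V_L)=\bigoplus_{S,T}\Hom_{\uqslt}(V^S,V^T),
\]
where the second decomposition uses $f\mapsto (p_Tfe_S)_{S,T}$. Under these decompositions $\Phi$ restricts on each block to $\Gamma:\W^T_S\to\Hom_{\uqslt}(V^S,V^T)$. This is bijective by \cref{thm: equivalence of Sp and Rep}, since $\Gamma$ is fully faithful and $\Rep$ is a full subcategory. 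Hence $\Phi$ is faithful and $\Phi(\mathcal K_L)=\End_{\rho_L(\uqslt)}(\mathscr V_L)$, giving the first centralizer identity and the isomorphism $\mathcal K_L\cong\End_{\uqslt}(\mathscr V_L)$.

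For the second centralizer identity and semisimplicity, we use that over $\C(q)$ the finite-dimensional type-1 module $\mathscr V_L$ is completely reducible. Write $\mathscr V_L\cong\bigoplus_\lambda L_q(\lambda)^{\oplus m_\lambda}$. The irreducibles $L_q(\lambda)$ are absolutely irreducible, so $\End_{\uqslt}(L_q(\lambda))=\C(q)$. Then
\[
\End_{\uqslt}(\mathscr V_L)\cong\prod_\lambda \mathrm{Mat}_{m_\lambda}(\C(q)),
\]
which is semisimple, and hence $\mathcal K_L$ is semisimple.

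Now $A=\rho_L(\uqslt)$ is a finite-dimensional subalgebra of $\End(\mathscr V_L)$ over which $\mathscr V_L$ is semisimple, so the Jacobson density theorem (double centralizer theorem) gives $A=\End_{\End_A(\mathscr V_L)}(\mathscr V_L)$. Substituting $\End_A(\mathscr V_L)=\Phi(\mathcal K_L)$ yields $\rho_L(\uqslt)=\End_{\Phi(\mathcal K_L)}(\mathscr V_L)$.

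The main obstacle is justifying density in this setting. The density theorem requires $A$ to act semisimply with commutant equal to the stated endomorphism ring; both hold here. We must also check that the irreducible constituents are absolutely irreducible over $\C(q)$, which follows from their highest-weight construction. Everything else reduces directly to Kuperberg's theorem.
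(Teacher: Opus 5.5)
Your proposal is correct and follows essentially the same route as the paper. Both arguments get blockwise bijectivity of $\Phi$ from Kuperberg's equivalence $\Gamma$ together with $p_T e_S=\delta_{S,T}\id_{V^S}$, and then use complete reducibility of $\mathscr V_L$ over $\C(q)$ with the double centralizer theorem for the second identity and semisimplicity. The minor differences are that you check the unit and derive semisimplicity explicitly from the Wedderburn form of the commutant; absolute irreducibility of the $L_q(\lambda)$ is not actually needed there, since Schur's lemma already makes the commutant of a semisimple module a product of matrix algebras over division rings.
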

\begin{proof}
By \eqref{eqn: def of V_L}, there is an $\C(q)$-linear isomorphism 
\begin{align*}
    F:\bigoplus_{S,T\in\Sigma_L}\Hom_{\uqslt}(V^S,V^T) &\to \End_{\uqslt}(\mathscr{V}_L),\\
     \Hom_{\uqslt}(V^S,V^T)\ni f &\mapsto e_T\circ f\circ p_S.
\end{align*}
The equivalence $\Gamma:\W_S^T\to \Hom_{\uqslt}(V^S,V^T)$ is a $\C(q)$-linear isomorphism, hence $\Phi=F\circ \Gamma$ is a $\C(q)$-linear isomorphism. It remains to check that $\Phi$ is an algebra
    morphism, which amounts to checking \eqref{eqn: Phi preserves multiplication}:
    \begin{align}\label{eqn: Phi preserves multiplication}
        \Phi(w_1w_2) & = \Phi(w_1) \circ \Phi(w_2), \qquad \forall w_1,w_2\in \WebBasis{L}.
    \end{align}
    If the target of $w_2$ is not the source of $w_1$, then both sides are zero: On the right-hand side it follows from the orthogonality relation \eqref{eqn: orthogonality} that $\Phi(w_1)\circ \Phi(w_2) = 0$; on the left-hand side, $w_1w_2=0$ by definition of multiplication in $\mathcal{K}_L$, hence $\Phi(w_1w_2)=0$ by linearity of $\Phi$. If the target of $w_2$ is the source of $w_1$, then
    \begin{align*}
        \Phi(w_1w_2) =& e_{T}\circ \Gamma(w_1w_2) \circ p_S \\
        =& e_{T}\circ \Gamma(w_1)\circ \Gamma(w_2) \circ p_S \\
        =& \Phi(w_1)\circ \Phi(w_2)
    \end{align*}
    where the second equality follows from the functoriality of $\Gamma$
    and the third equality uses the orthogonality relation
    \eqref{eqn: orthogonality}. Therefore, $\Phi$ is an isomorphism of
    $\C(q)$-algebras, and its image is
    \begin{align*}
        \Phi(\mathcal{K}_L)=\End_{\rho_L(\uqslt)}(\mathscr{V}_L),
    \end{align*}
    which proves the first equality in
    \eqref{eqn: double centralizer for KL}.

    Set $A_L:=\rho_L(\uqslt)\subseteq\End_{\C(q)}(\mathscr{V}_L)$. Since $q$ is an indeterminate, each $V^S$ is semisimple as a $\uqslt$-module over $\C(q)$~\cite[Thm.\,10.1.7]{chari_pressley_1994}. Hence their direct sum $\mathscr{V}_L$ is semisimple as an $A_L$-module, and $A_L$ acts faithfully by definition. The finite-dimensional double centralizer theorem, valid over the field $\C(q)$, now gives
    \begin{align*}
        \rho_L(\uqslt)
        &=\End_{\End_{A_L}(\mathscr{V}_L)}(\mathscr{V}_L)
        =\End_{\Phi(\mathcal{K}_L)}(\mathscr{V}_L),
    \end{align*}
    proving the second equality in \eqref{eqn: double centralizer for KL}, and the semisimplicity of $\mathcal K_L$.
\end{proof}

\subsection{Cellular structure}\label{sec: cellular structure guide}

We first recall the definition of a cellular algebra following \cite{graham1996cellular}.
\begin{definition}\label{def: cellular algebra}
    A \emph{cellular algebra} over $\C(q)$ is a unital associative algebra $A$, with a \emph{cell datum} $(\Lambda, M, C, \dagger)$, where
\begin{enumerate}
    \item $\Lambda$ is a finite partially ordered set (poset). \label{cond: poset}
    \item For each $\lambda\in \Lambda$, $M(\lambda)$ is a finite set. \label{cond: finiteness}
    \item $C$ is an injective map $\coprod_{\lambda\in \Lambda} M(\lambda)\times M(\lambda)\to A$, with $(s,t)\mapsto C^\lambda_{s,t}$. The image of $C$ is a basis of $A$ called the \emph{cellular basis}. \label{cond: C}
    \item $\dagger$ is an algebra anti-automorphism of $A$ such that $(C_{s,t}^\lambda)^\dagger=C_{t,s}^\lambda$ for all $s,t\in M(\lambda)$ and $\lambda\in \Lambda$. \label{cond: dagger}
\end{enumerate}
Moreover, the following condition is satisfied: For each $\lambda\in \Lambda$, let $A(\prec \lambda)$ be the $\C(q)$-subspace of $A$ generated by $\{C_{s,t}^\mu\st s,t\in M(\mu), \mu\prec \lambda\}$. Then for any $\lambda\in \Lambda$, $s\in M(\lambda)$ and $a\in A$, there exist $r_a(s',s)\in \C(q)$ such that for all $t\in M(\lambda)$,
\begin{align}\label{eqn: factorized multiplication}
    aC_{s,t}^\lambda \equiv \sum_{s'\in M(\lambda)} r_a(s',s)C_{s',t}^\lambda \mod A(\prec \lambda).
\end{align}
\end{definition}
The cellular structure of $\mathcal{K}_L$ for $L\geq 2$ is stated in \cref{thm: cellular structure}, whose construction involves various ingredients from \cref{sec: cellular structure}. In this subsection, we provide a practical description of the cellular structure of $\mathcal{K}_L$ and its consequences.

\vspace{2ex}

We recall the set $\Lambda_+(L)$ of $\mathfrak{sl}_3$-dominant weights from \cref{def: LambdaL}, equipped with the partial ordering $\prec$ generated by \eqref{eqn: partial ordering}. The poset $(\Lambda_+(L),\prec)$ is $\Lambda$ in \cref{def: cellular algebra}. We furthermore recall, for each $\lambda\in \Lambda_+(L)$, the set $\Path^+_L(\lambda)$ of paths in the $\mathfrak{sl}_3$-dominant Weyl chamber from \cref{def: lattice path}, which will be $M(\lambda)$ in \cref{def: cellular algebra}.

\paragraph{The cellular basis $\WebBasis{L}$.} Recall the monomial web basis $\WebBasis{L}$ of $\mathcal K_L$ in \eqref{def: Omega}. We index $\WebBasis{L}$ by pairs of dominant $\mathfrak{sl}_3$-lattice paths on $\Lambda_+(L)$. Specifically, we have a map $C$ from \cref{def: pairing map C}:
\begin{equation}\label{eqn: bijection C}
    C: \bigsqcup_{\lambda\in \Lambda_+(L)}\Path_L^+(\lambda)\times \Path_L^+(\lambda) \longrightarrow \WebBasis{L}, \qquad (P,Q) \longmapsto C^\lambda_{P,Q},
\end{equation}
which is a bijection by \cref{prp: structure theorem}. For $\lambda\in \Lambda_+(L)$ and $P,Q\in \Path_L^+(\lambda)$, by \cref{lem: practical C}, the web $C^\lambda_{P,Q}$ is the concatenation of three webs:
\begin{align}\label{eqn: practical C}
    C^\lambda_{P,Q} = G(P) \circ \mathcal{H}^S_T \circ G(Q)^\dagger,
\end{align}
where $G$ is the growth algorithm from \cref{def: growth algorithm}; $\dagger$ is the reflection of webs against a horizontal axis, followed by reversal of orientations on edges from \cref{def: dagger}; and $\mathcal{H}^S_T$ is a concatenation of $H^\pm$-webs, constructed in \cref{cor: wiring construction for H} and \cref{exp: H^S_T}. An example of $C^\lambda_{P,Q}$ is given in \cref{fig: cellular structure of KL(q)}. The map $\dagger$ is an anti-involution, and a $\mathcal K_L$-anti-automorphism, see \cref{lem: dagger is anti-involution}. Moreover, for all $\lambda\in \Lambda_+(L)$ and $P,Q\in \Path^+_L(\lambda)$, by \cref{prp: dagger swaps PQ} we have: 
\begin{align}\label{eqn: anti-involution}
    (C^\lambda_{P,Q})^\dagger = C^\lambda_{Q,P}.
\end{align}

\paragraph{The filtration.} Let $a\in \mathcal{K}_L$ and $C^\lambda_{P,Q}$ be a cellular basis element, by \cref{prp: factorized multiplication} the following holds:
\begin{align}\label{eqn: filtered multiplication}
        aC^\lambda_{P,Q} = \sum_{P'} r_a(P,P') C^\lambda_{P',Q} \mod \mathcal K_L^{\prec \lambda},
\end{align}
where $\mathcal K_L^{\prec \lambda}:=\Span_{\C(q)}\{C^\mu_{P,Q}:P,Q\in\Path_L^+(\mu),\ \mu\prec\lambda\}$, and $r_a(P,P')\in \C(q)$. As a consequence of \eqref{eqn: filtered multiplication}, $(\mathcal K_L^{\prec \lambda})_{\lambda\in \Lambda_+(L)}$ is a filtration by two-sided ideals in $\mathcal K_L$.

\paragraph{The cell modules.} For each $\lambda\in\Lambda_+(L)$, the \emph{cell module} $\Delta_L(\lambda)$ is the $\mathcal{K}_L$-left module with basis $\{v_P : P\in \Path_L^+(\lambda)\}$, and $\mathcal K_L$-action defined by
\begin{align}\label{eqn: cell module action}
    a\cdot v_P := \sum_{P'} r_a(P,P') v_{P'}, \quad \forall a\in \mathcal K_L.
\end{align}
This indeed defines an action due to \eqref{eqn: filtered multiplication} and the associativity of multiplication in $\mathcal K_L$.

Now fix $a\in \mathcal K_L$, $\lambda\in\Lambda_+(L)$, and $v_P\in \Delta_L(\lambda)$. We give an algorithm to compute the coefficients $r_a(P,P')$. Identify
\begin{align*}
    \{v_P : P\in \Path_L^+(\lambda)\} \to \{G(P):P\in\Path_L^+(\lambda)\}, \quad v_P \mapsto G(P),
\end{align*}
and consider the concatenation of webs $aG(P)$. Let $U$ be the bottom sign string of $G(P)$. Apply the Kuperberg relations to express $aG(P)$ as a linear combination of reduced webs. By \cref{lem: basic structure theorem}, each reduced
summand is of the form $C^\mu_{P',Q}$ for some $P'\in\Path_L^+(\mu)$ and $Q\in\Path_U^+(\mu)$. Note that $C^\mu_{P',Q}$ is a web with bottom sign string $U$, whose length is not necessarily $L$, see \cref{def: pairing map C}. It should not be confused with a basis element of $\mathcal K_L$ in \eqref{eqn: bijection C}.

Since $Q$ has sign string $U$, \cref{lem: weight domination}
gives $\mu\preceq\wt(U)=\lambda$. Thus the reduced summands fall into two types:
\begin{enumerate}
    \item If $\mu=\lambda$, then
    $Q=P_U^{\mathrm{hw}}$ by \cref{lem: weight domination}. \Cref{lem: basis decomposition} gives
    \begin{align}
        C^\lambda_{P',P_U^{\mathrm{hw}}}
        =G(P')\circ\mathcal H^{U'}_U,
    \end{align}
    where $U'$ is the bottom sign string of $G(P')$. By \cref{lem: H^S_T is concatenation of H web}, $\mathcal H^{U'}_U$ is a concatenation of $H^\pm$-webs.

    \item If $\mu\prec\lambda$, then
    \cref{lem: basis decomposition} gives
    \[
        C^\mu_{P',Q}
        =G(P')\circ C^\mu_{P_M^{\mathrm{hw}},Q}\,,
    \]
    where $M$ is the bottom sign string of $G(P')$.
    By \cref{lem: growth preserves weight},
    $\wt(M)=\mu\prec\lambda$. Hence this summand factors through a sign string of weight strictly below $\lambda$, like a ``bottleneck'' of an hourglass, see \cref{fig: cellular structure of KL(q)}.
\end{enumerate}

We then discard all summands of the second type, as explained below \cref{def: cell modules} of the cell modules. The remaining linear combination reads
\[
    \sum_{P'\in\Path_L^+(\lambda)}
    \widetilde r_a(P,P')\,
    G(P')\circ\mathcal H^{U'}_U,
\]
where $\widetilde r_a(P,P')$ is the coefficient of the corresponding retained web. Finally, remove $\mathcal H^{U'}_U$, and identify $r_a(P,P')=\tilde{r}_a(P,P')$. We now give four examples of the local diagrammatic reductions used in this action.

\begin{example}
    Consider the cell module $\Delta_3(\lambda_1+\lambda_2)$. Define the paths $P_{\mathrm L}=u_{+,1}u_{+,0}u_{+,1}$ and $P_{\mathrm R}=u_{+,1}u_{+,1}u_{+,0}$. We consider the action $I^+_2\cdot G(P_{\mathrm L})$. The growth algorithm gives $G(P_{\mathrm L})=Y_-\otimes\id_+$ and $G(P_{\mathrm R})=\id_+\otimes Y_-$, where $Y_\pm$ is in \cref{fig: elementary webs}. The action by $I^+_2$ is shown in \cref{fig: cell module action example}, giving
    \begin{equation}
        I^+_2\cdot G(P_{\mathrm L})=G(P_{\mathrm R}).
    \end{equation}
\begin{figure}[htbp]
    \centering
    \scalebox{1.02}{
\tikzset{
    style/.style={#1},
    none/.style={},
    mid arrow/.style={postaction={decorate,decoration={
            markings, mark=at position 0.55 with {\arrow{stealth}}
        }}
    },
    dotted/.style={dashed}
}
\begin{tikzpicture}[
    x=1cm, y=1cm,
    web/.style={line width=0.5pt},
    every node/.style={font=\small, inner sep=0.5pt},
    labelnode/.style={inner sep=0.3pt},
    relationlabel/.style={font=\scriptsize},
    arrowlabel/.style={font=\scriptsize,fill=white,inner sep=1.2pt},
    descriptionlabel/.style={font=\scriptsize},
    htail/.style={draw=gray!75, dashed, rounded corners=1.5pt,
        fill=gray!8}
]
\begin{scope}[shift={(-5,0)}]
    \draw[gray!70,densely dotted,line width=0.5pt] (-0.92,-0.68) -- (0.92,-0.68);
\begin{scope}[shift={(0,-0.68)}]
    \draw [style=mid arrow] (0.36,-1.72) -- (0.72,0);
    \draw [style=mid arrow] (-0.36,-0.912) -- (-0.72,0);
    \draw [style=mid arrow] (-0.36,-0.912) -- (0,0);
    \draw [style=mid arrow] (-0.36,-0.912) -- (-0.36,-1.72);
\end{scope}
    \draw [style=mid arrow] (-0.72,-0.68) -- (-0.72,0);
    \draw [style=mid arrow] (0.36,-0.19) -- (0,0);
    \draw [style=mid arrow] (0,-0.68) -- (0.36,-0.49);
    \draw [style=mid arrow] (0.36,-0.19) -- (0.72,0);
    \draw [style=mid arrow] (0.72,-0.68) -- (0.36,-0.49);
    \draw [style=mid arrow] (0.36,-0.19) -- (0.36,-0.49);
    \node[descriptionlabel] at (0,-2.83) {$I^+_2\circ G(P_{\mathrm L})$};
\end{scope}
\node at (-2.5,-1.18) {$=$};
\begin{scope}
    \fill[gray!8,rounded corners=1.5pt] (-0.7,-2.52) rectangle (0.7,-1);
    \draw [style=none] (-0.36,-1.08) -- (-0.72,0);
    \draw [style=mid arrow] (0.36,-0.572) -- (0,0);
    \draw [style=mid arrow] (0.36,-0.572) -- (0.72,0);
    \draw [style=none] (0.36,-0.572) -- (0.36,-1.08);
    \draw [style=mid arrow] (-0.23,-1.74) -- (-0.36,-1.08);
    \draw [style=mid arrow] (-0.23,-1.74) -- (-0.36,-2.4);
    \draw [style=mid arrow] (-0.23,-1.74) -- (0.23,-1.74);
    \draw [style=mid arrow] (0.36,-1.08) -- (0.23,-1.74);
    \draw [style=mid arrow] (0.36,-2.4) -- (0.23,-1.74);
    \draw[gray!75,dashed,rounded corners=1.5pt] (-0.7,-2.52) rectangle (0.7,-1);
    \draw[gray!75,dashed] (-0.74,-1) -- (0.74,-1);
    \node[anchor=west,relationlabel] at (0.88,-1.76) {$H^-$-tail};
    \node[descriptionlabel] at (0,-2.83) {$G(P_{\mathrm R})\circ H^-$};
\end{scope}
\draw[->,>=stealth] (1.35,-1.18) -- (3.65,-1.18) node[midway,above,arrowlabel,align=center] {remove the bottom $H^-$};
\begin{scope}[shift={(5,0)}]
    \draw [style=mid arrow] (-0.36,-2.4) -- (-0.72,0);
    \draw [style=mid arrow] (0.36,-1.272) -- (0,0);
    \draw [style=mid arrow] (0.36,-1.272) -- (0.72,0);
    \draw [style=mid arrow] (0.36,-1.272) -- (0.36,-2.4);
    \node[descriptionlabel] at (0,-2.83) {$G(P_{\mathrm R})$};
\end{scope}
\end{tikzpicture}}
    \caption{}
    \label{fig: cell module action example}
\end{figure}
\end{example}

\begin{example}
    For an example involving a reduction by Kuperberg relations, consider the cell module $\Delta_6(3\lambda_1)$, and define three paths
    \begin{align*}
        P_{\mathrm{sq}}
        &=u_{+,1}u_{+,1}u_{+,1}u_{+,0}u_{+,1}u_{+,-1},\\
        Q_{\mathrm L}
        &=u_{+,1}u_{+,1}u_{+,1}u_{+,0}u_{+,-1}u_{+,1},\\
    Q_{\mathrm R}
    &=u_{+,1}u_{+,1}u_{+,1}u_{+,1}u_{+,0}u_{+,-1}.
\end{align*}
We consider the action $I^+_4\cdot G(P_{\mathrm{sq}})$. The web $I^+_4G(P_{\mathrm{sq}})$ contains the internal square highlighted in \cref{fig: square cell module action example}, so it resolves into two webs $G(Q_{\mathrm L})$ and $G(Q_{\mathrm R})$. The action is thus
\begin{equation}
    I^+_4\cdot G(P_{\mathrm{sq}})=G(Q_{\mathrm L})+G(Q_{\mathrm R}).
\end{equation}

\begin{figure}[htbp]
    \centering
    \scalebox{1.02}{
\tikzset{
    style/.style={#1},
    none/.style={},
    mid arrow/.style={postaction={decorate,decoration={
            markings, mark=at position 0.55 with {\arrow{stealth}}
        }}
    },
    dotted/.style={dashed}
}
\begin{tikzpicture}[
    x=1cm, y=1cm,
    web/.style={line width=0.5pt},
    every node/.style={font=\small, inner sep=0.5pt},
    labelnode/.style={inner sep=0.3pt},
    relationlabel/.style={font=\scriptsize},
    arrowlabel/.style={font=\scriptsize,fill=white,inner sep=1.2pt},
    descriptionlabel/.style={font=\scriptsize}
]
\begin{scope}[shift={(-5.25,0)}]
    \fill[gray!12] (0,-0.967) -- (0,-1.24) -- (0.31,-1.82) -- (0.62,-2.127) -- (0.93,-1.82) -- (0.93,-1.547) -- (0.62,-1.24) -- (0.93,-0.66) -- (0.62,-0.475) -- (0.31,-0.66) -- cycle;
    \draw[gray!70,densely dotted,line width=0.5pt] (-1.75,-0.66) -- (1.75,-0.66);
\begin{scope}[shift={(0,-0.66)}]
    \draw [style=none] (-1.24,-0.58) -- (-1.55,0);
    \draw [style=none] (-0.62,-0.58) -- (-0.93,0);
    \draw [style=none] (0.62,-0.58) -- (0.93,0);
    \draw [style=none] (1.24,-0.58) -- (1.55,0);
    \draw [style=mid arrow] (0,-0.307) -- (-0.31,0);
    \draw [style=mid arrow] (0,-0.307) -- (0.31,0);
    \draw [style=none] (0,-0.307) -- (0,-0.58);
    \draw [style=none] (-0.93,-1.16) -- (-1.24,-0.58);
    \draw [style=none] (-0.31,-1.16) -- (-0.62,-0.58);
    \draw [style=none] (0,-0.58) -- (0.31,-1.16);
    \draw [style=mid arrow] (0.93,-0.887) -- (0.62,-0.58);
    \draw [style=mid arrow] (0.93,-0.887) -- (1.24,-0.58);
    \draw [style=none] (0.93,-0.887) -- (0.93,-1.16);
    \draw [style=mid arrow] (-0.62,-1.74) -- (-0.93,-1.16);
    \draw [style=mid arrow] (0,-1.74) -- (-0.31,-1.16);
    \draw [style=mid arrow] (0.31,-1.16) -- (0.62,-1.467);
    \draw [style=mid arrow] (0.93,-1.16) -- (0.62,-1.467);
    \draw [style=mid arrow] (0.62,-1.74) -- (0.62,-1.467);
\end{scope}
    \draw [style=mid arrow] (-1.55,-0.66) -- (-1.55,0);
    \draw [style=mid arrow] (-0.93,-0.66) -- (-0.93,0);
    \draw [style=mid arrow] (-0.31,-0.66) -- (-0.31,0);
    \draw [style=mid arrow] (1.55,-0.66) -- (1.55,0);
    \draw [style=mid arrow] (0.62,-0.185) -- (0.31,0);
    \draw [style=mid arrow] (0.31,-0.66) -- (0.62,-0.475);
    \draw [style=mid arrow] (0.62,-0.185) -- (0.93,0);
    \draw [style=mid arrow] (0.93,-0.66) -- (0.62,-0.475);
    \draw [style=mid arrow] (0.62,-0.185) -- (0.62,-0.475);
    \draw[gray!70,dashed,line width=0.5pt] (0,-0.967) -- (0,-1.24) -- (0.31,-1.82) -- (0.62,-2.127) -- (0.93,-1.82) -- (0.93,-1.547) -- (0.62,-1.24) -- (0.93,-0.66) -- (0.62,-0.475) -- (0.31,-0.66) -- cycle;
    \node[descriptionlabel] at (0,-2.82) {$I^{+}_{4}\circ G(P_{\mathrm{sq}})$};
\end{scope}
\draw[->,>=stealth] (-3.42,-1.16) -- (-1.97,-1.16) node[midway,above,arrowlabel] {square relation};
\begin{scope}[shift={(0,0)}]
    \draw [style=none] (-1.24,-1.2) -- (-1.55,0);
    \draw [style=none] (-0.62,-1.2) -- (-0.93,0);
    \draw [style=none] (0,-1.2) -- (-0.31,0);
    \draw [style=none] (1.24,-1.2) -- (1.55,0);
    \draw [style=mid arrow] (0.62,-0.636) -- (0.31,0);
    \draw [style=mid arrow] (0.62,-0.636) -- (0.93,0);
    \draw [style=none] (0.62,-0.636) -- (0.62,-1.2);
    \draw [style=mid arrow] (-0.62,-2.4) -- (-1.24,-1.2);
    \draw [style=mid arrow] (0,-2.4) -- (-0.62,-1.2);
    \draw [style=mid arrow] (0.62,-2.4) -- (0,-1.2);
    \draw [style=mid arrow, looseness=1.05, bend right=82] (0.62,-1.2) to (1.24,-1.2);
    \node[descriptionlabel] at (0,-2.82) {$G(Q_{\mathrm R})$};
\end{scope}
\begin{scope}[shift={(5.25,0)}]
    \draw [style=none] (-1.24,-1.2) -- (-1.55,0);
    \draw [style=none] (-0.62,-1.2) -- (-0.93,0);
    \draw [style=none] (0.62,-1.2) -- (0.93,0);
    \draw [style=none] (1.24,-1.2) -- (1.55,0);
    \draw [style=mid arrow] (0,-0.636) -- (-0.31,0);
    \draw [style=mid arrow] (0,-0.636) -- (0.31,0);
    \draw [style=none] (0,-0.636) -- (0,-1.2);
    \draw [style=mid arrow] (-0.62,-2.4) -- (-1.24,-1.2);
    \draw [style=mid arrow] (0,-2.4) -- (-0.62,-1.2);
    \draw [style=mid arrow] (0.62,-2.4) -- (1.24,-1.2);
    \draw [style=mid arrow, looseness=1.05, bend right=82] (0,-1.2) to (0.62,-1.2);
    \node[descriptionlabel] at (0,-2.82) {$G(Q_{\mathrm L})$};
\end{scope}
\node at (2.625,-1.16) {$+$};
\end{tikzpicture}}
    \caption{}
    \label{fig: square cell module action example}
\end{figure}
\end{example}

\begin{example}
    An example where some webs are discarded: consider the cell module $\Delta_3(2\lambda_2)$, and define paths $P=u_{+,1}u_{-,1}u_{+,0}$, $Q=u_{+,1}u_{+,0}u_{-,1}$. We consider the action $H^-_2\cdot G(P)$. As shown in \cref{fig: partially killed cell module action}, the square relation gives $H^-_2 G(P)=G(Q)+w_{\mathrm{low}}$. The first term $G(Q)$ is kept, while the second term $w_{\mathrm{low}}$ admits a cut with weight $\lambda_1\prec2\lambda_2$, hence is discarded. We thus obtain the action
    \begin{equation}
        H^-_2\cdot G(P)=G(Q).
    \end{equation}
\begin{figure}[htbp]
    \centering
    \scalebox{1.02}{
\tikzset{
    style/.style={#1},
    none/.style={},
    mid arrow/.style={postaction={decorate,decoration={
            markings, mark=at position 0.55 with {\arrow{stealth}}
        }}
    },
    dotted/.style={dashed}
}
\begin{tikzpicture}[
    x=1cm, y=1cm,
    web/.style={line width=0.5pt},
    every node/.style={font=\small, inner sep=0.5pt},
    labelnode/.style={inner sep=0.3pt},
    relationlabel/.style={font=\scriptsize,align=center},
    arrowlabel/.style={font=\scriptsize,fill=white,inner sep=1.2pt,align=center},
    descriptionlabel/.style={font=\scriptsize,align=center}
]
\begin{scope}[shift={(-5,0)}]
    \fill[gray!12] (0.13,-0.31) -- (0.59,-0.31) -- (0.72,-0.58) -- (0.59,-1.102) -- (0.13,-1.102) -- (0,-0.58) -- cycle;
    \draw[gray!70,densely dotted,line width=0.5pt] (-0.92,-0.58) -- (0.92,-0.58);
    \draw [style=mid arrow] (-0.72,-0.58) -- (-0.72,0);
    \draw [style=mid arrow] (0.13,-0.31) -- (0,0);
    \draw [style=mid arrow] (0.13,-0.31) -- (0,-0.58);
    \draw [style=mid arrow] (0.13,-0.31) -- (0.59,-0.31);
    \draw [style=mid arrow] (0.72,0) -- (0.59,-0.31);
    \draw [style=mid arrow] (0.72,-0.58) -- (0.59,-0.31);
\begin{scope}[shift={(0,-0.58)}]
    \draw [style=none] (-0.72,-0.985) -- (-0.72,0);
    \draw [style=mid arrow] (0,0) -- (0.13,-0.522);
    \draw [style=none] (0,-0.985) -- (0.13,-0.522);
    \draw [style=mid arrow] (0.59,-0.522) -- (0.13,-0.522);
    \draw [style=mid arrow] (0.59,-0.522) -- (0.72,0);
    \draw [style=none] (0.59,-0.522) -- (0.72,-0.985);
    \draw [style=mid arrow] (0.72,-0.985) -- (0.36,-1.97);
    \draw [style=mid arrow] (-0.36,-1.507) -- (-0.72,-0.985);
    \draw [style=mid arrow] (-0.36,-1.507) -- (0,-0.985);
    \draw [style=mid arrow] (-0.36,-1.507) -- (-0.36,-1.97);
\end{scope}
    \draw[gray!65,dashed] (0.13,-0.31) -- (0.59,-0.31) -- (0.72,-0.58) -- (0.59,-1.102) -- (0.13,-1.102) -- (0,-0.58) -- cycle;
    \node[descriptionlabel] at (0,-3.08) {$H^-_2\circ G(P)$};
\end{scope}
\draw[->,>=stealth] (-3.85,-0.72) -- (-1.25,-0.72) node[midway,above,arrowlabel]{square relation};
\begin{scope}
    \draw [style=mid arrow] (0.72,0) -- (0.36,-2.55);
    \draw [style=mid arrow] (-0.36,-1.351) -- (-0.72,0);
    \draw [style=mid arrow] (-0.36,-1.351) -- (0,0);
    \draw [style=mid arrow] (-0.36,-1.351) -- (-0.36,-2.55);
    \node[relationlabel] at (0,-3.08) {$G(Q)$\\$\operatorname{wt}(-,-)=2\lambda_2$;\enspace kept};
\end{scope}
\node at (2.5,-1.18) {$+$};
\begin{scope}[shift={(5,0)}]
    \draw [style=mid arrow] (-0.72,-1.15) -- (-0.72,0);
    \draw [style=mid arrow,looseness=1.05,bend left=82] (0.72,0) to (0,0);
    \draw [style=mid arrow, looseness=1.05, bend left=82] (0,-1.15) to (0.72,-1.15);
    \draw [style=mid arrow] (-0.36,-1.88) -- (-0.72,-1.15);
    \draw [style=mid arrow] (-0.36,-1.88) -- (0,-1.15);
    \draw [style=mid arrow] (-0.36,-1.88) -- (-0.36,-2.55);
    \draw [style=mid arrow] (0.72,-1.15) -- (0.36,-2.55);
    \draw[gray!70,densely dashed] (-1.02,-0.58) -- (1.02,-0.58) node[right,relationlabel] {$(+)$};
    \node[relationlabel] at (0,-3.08) {lower-weight resolution\\$\operatorname{wt}(+)=\lambda_1\prec2\lambda_2$;\enspace discarded};
\end{scope}
\end{tikzpicture}}
    \caption{}
    \label{fig: partially killed cell module action}
\end{figure}
\end{example}

\begin{example}\label{exmp: cascade of squares}
    For an example requiring two successive square reductions, consider the cell module $\Delta_7(0)$ and define
    \begin{align*}
        P&=u_{+,1}u_{-,1}u_{+,0}u_{+,1}u_{+,-1}u_{+,-1}u_{-,-1},\\
        Q_1&=u_{+,1}u_{-,-1}u_{+,1}u_{+,0}u_{+,-1}u_{+,1}u_{-,-1},\\
        Q_2&=u_{+,1}u_{-,1}u_{+,1}u_{+,0}u_{+,-1}u_{+,-1}u_{-,-1},\\
        Q_3&=u_{+,1}u_{-,1}u_{+,0}u_{+,-1}u_{+,1}u_{+,-1}u_{-,-1}.
    \end{align*}
    We consider the action $I^+_3\cdot G(P)$.
    As shown in \cref{fig: seven site double square cell action}, the action $I^+_3G(P)$ creates a square, which resolves into two webs: $I^+_3G(P)=W+G(Q_3)$. However, the web $W$ has another square, whose two resolutions give $W=G(Q_1)+G(Q_2)$. We thus obtain
    \begin{equation}
        I^+_3\cdot G(P)=G(Q_1)+G(Q_2)+G(Q_3).
    \end{equation}
    \begin{center}
    \begin{figure}[htbp]
        \centering
        \begingroup
        \scalebox{1.02}{\input{tikz_drawings/cell_module_action_I3_seven_sites.tikz}}
        \endgroup
        \caption{}
        \label{fig: seven site double square cell action}
    \end{figure}
    \end{center}
\end{example}

\begin{remark}
The ``cascade of squares'' phenomenon in \cref{exmp: cascade of squares} has no analogue in the web basis of Temperley--Lieb cell modules, usually called link states: each $\mathrm{TL}$ generator $e_i$ sends a link state to zero or to a scalar multiple of a single link state. By contrast, the $A_2$ square relation, applied recursively, can produce several basis webs during the action of a generator of $\mathcal K_L$ on a web basis vector of its cell module.
\end{remark}

\paragraph{Bimodule decomposition.} Let $L\geq 2$. Recall $\mathscr{V}_L=(V^+\oplus V^-)^{\otimes L}$ from \eqref{eqn: def of V_L}. As an application of the cellular structure, we obtain a decomposition of $\mathscr{V}_L$ as a $(\uqslt,\mathcal{K}_L)$-bimodule.\footnote{We restrict the statement to $L\geq2$, as in \cref{def: LambdaL}. The smaller cases are elementary: $\mathcal K_0\cong\C(q)$ and $\mathcal K_1\cong\C(q)\oplus\C(q)$. Their occurring highest weights are $\{0\}$ and $\{\lambda_1,\lambda_2\}$, respectively; in particular, weight $0$ does not occur for $L=1$.}
\begin{theorem}\label{thm: bimodule decomposition}
    Let $L\geq 2$, and give $\mathscr{V}_L=(V^+\oplus V^-)^{\otimes L}$ the right $\mathcal{K}_L$-action
    \begin{align*}
        h\cdot a:=\Phi(a^\dagger)h.
    \end{align*}
    Here $\Phi$ is the isomorphism defined in \eqref{eqn: def of Phi}.
    Then there is an isomorphism of $(\uqslt,\mathcal{K}_L)$-bimodules
    \begin{align}\label{eqn: Schur-Weyl bimodule decomposition}
        \mathscr{V}_L
        \cong
        \bigoplus_{\lambda\in\Lambda_+(L)}
        L_q(\lambda)\otimes \Delta_L(\lambda)^\dagger,
    \end{align}
    where $L_q(\lambda)$ is the irreducible highest-weight $\uqslt$-module of highest weight $\lambda$, and $\Delta_L(\lambda)^\dagger$ is the right cell module with action $v\cdot a=a^\dagger v$. In particular, the cell modules
    $\Delta_L(\lambda)$ are irreducible and pairwise non-isomorphic, and they form a complete set of irreducible $\mathcal{K}_L$-modules.
\end{theorem}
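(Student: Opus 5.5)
The plan is to combine the double centralizer statement of \cref{thm: Schur-Weyl duality} with the cellular structure of \cref{thm: cellular structure}. The bridge between them is the space of highest-weight vectors of weight $\lambda$ in $\mathscr V_L$, which we identify with a space of webs.

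\textbf{Step 1 (isotypic decomposition).} Since $\mathscr V_L$ is a semisimple $\uqslt$-module, it splits into isotypic components,
\[
\mathscr V_L\cong\bigoplus_\lambda L_q(\lambda)\otimes M_\lambda, \qquad M_\lambda:=\Hom_{\uqslt}(L_q(\lambda),\mathscr V_L).
\]
By \cref{thm: Schur-Weyl duality}, $\mathcal K_L\cong\End_{\uqslt}(\mathscr V_L)\cong\prod_\lambda\End(M_\lambda)$. Hence the nonzero $M_\lambda$ are pairwise non-isomorphic simple $\mathcal K_L$-modules, and they exhaust the simple modules. The action on $M_\lambda$ is by precomposition, twisted by $\dagger$ so that it becomes a right action compatible with $h\cdot a=\Phi(a^\dagger)h$.

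\textbf{Step 2 (weight multiplicities via $\Gamma$).} Pick any sign string $U$ with $\wt(U)=\lambda$. Then $L_q(\lambda)$ is the image in $V^U$ of the top summand, which corresponds under $\Gamma$ to the clasp/projector onto highest weight $\lambda$. Therefore
\[
M_\lambda\cong\bigoplus_{S\in\Sigma_L}\Hom(L_q(\lambda),V^S)
\]
can be realised as the $\lambda$-part of $\W^S_U$ modulo webs factoring through lower weight. Concretely, I would define a map $\Delta_L(\lambda)\to M_\lambda$ (dualised appropriately) by $v_P\mapsto \Gamma(G(P))$ restricted to the highest-weight line of $V^U$, where $U$ is the bottom sign string of $G(P)$. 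To land in a single $L_q(\lambda)$ independent of $U$, I would precompose with $\mathcal H^{U}_{U_0}$ for a fixed $U_0$ with $\wt(U_0)=\lambda$; this is where \cref{lem: H^S_T is concatenation of H web} and \cref{cor: wiring construction for H} enter. By the computation of the cell action via $aG(P)$ described above (\cref{lem: basic structure theorem}, \cref{lem: basis decomposition}), this map is $\mathcal K_L$-equivariant. The discarded summands factor through a sign string of weight $\mu\prec\lambda$, and since $\lambda\not\preceq\mu$ they kill the highest-weight vector of weight $\lambda$.

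\textbf{Step 3 (counting and conclusion).} First, $\dim\mathcal K_L=\sum_\lambda|\Path_L^+(\lambda)|^2$ by the bijection \eqref{eqn: bijection C}. Second, $\dim\mathcal K_L=\sum_\lambda(\dim M_\lambda)^2$ by Step 1. Third, $\dim M_\lambda\le|\Path_L^+(\lambda)|$. For this inequality I would use \cref{thm: unitriangularity} together with \cref{lem: web in web}: the highest-weight vectors in $\mathscr V_L$ of weight $\lambda$ are spanned by images of the $G(P)$, because the standard crystal/path count gives weight-$\lambda$ highest-weight multiplicity exactly $|\Path^+_L(\lambda)|$, via the Littelmann path model for $V^\pm$. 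Combining these three facts forces $\dim M_\lambda=|\Path_L^+(\lambda)|$ for every $\lambda\in\Lambda_+(L)$, and this equals $\dim\Delta_L(\lambda)$. The nonzero equivariant map of Step 2 is then an isomorphism, since its target is simple and the dimensions agree. This gives \eqref{eqn: Schur-Weyl bimodule decomposition} and the stated consequences.

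\textbf{Main obstacle.} The hardest part is showing that the map in Step 2 is nonzero and equivariant on the nose. Nonzeroness can be checked on $v_{P^{\mathrm{hw}}_U}$, which maps to the highest-weight vector $v^{U}_{(1,\dots,1)}$. The delicate point is the compatibility of the dropped lower-weight terms with the $\mathcal H$ bottlenecks, which needs care with the ordering $\prec$.
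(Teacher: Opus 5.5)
Your outline is sound, but you reach the key identification by a different route from the paper's.

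\textbf{Comparison with the paper.} The paper proves \cref{lem: lower ideal as Hom}, that is $\widetilde\Gamma_S(\W^L_{\prec S})=\mathscr E_{S,\prec\lambda}$, in both directions. Your weight argument (lower-weight webs kill the highest-weight vector) is exactly the inclusion $\subseteq$. For $\supseteq$, the paper takes any map $V^S\to L_q(\mu)\to\mathscr V_L$ with $\mu\prec\lambda$. It factors this map through $V^R$ with $\wt(R)=\mu$, using complete reducibility, and lifts both factors to webs by fullness of $\Gamma$. This gives $\W^L_{[S]}\cong M_{L,\lambda}$ one weight at a time. It needs no dimension count, no simplicity of $M_{L,\lambda}$, and no external multiplicity formula, and it yields $\dim M_{L,\lambda}=|\Path_L^+(\lambda)|$ as an output. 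You prove only $\subseteq$ and replace $\supseteq$ by a global count plus simplicity of $M_\lambda$.

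Equivariance is not the obstacle you fear. By \eqref{eqn: boundary growth representative}, $G(P)\mathcal H^U_{U_0}=C^\lambda_{P,P^{\mathrm{hw}}_{U_0}}$. So equivariance follows from \eqref{eqn: boundary dependent action coefficients} with $S=U_0$, \cref{prp: boundary independent action coefficients}, and your weight argument. Also, $\mathcal K_L$ acts on $M_\lambda$ by postcomposition with $\Phi(a)$, not by precomposition.

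\textbf{Two repairs needed in Step 3.}
\begin{itemize}
\item \emph{The nonzeroness witness.} The vector $v_{P^{\mathrm{hw}}_U}$ exists only when $|U|=L$. For $\lambda=n\lambda_1+k\lambda_2$ with $n+k<L$, the path $P^{\mathrm{hw}}_U$ has length $n+k$ and is not in $\Path_L^+(\lambda)$. You need another witness. For example, take the cyclic web $w^{U_0}_{cyc}$ of \cref{def: cyclic path and web}; its bottom sign string is $U_0$. It is injective on $V^{U_0}$, because $\Gamma(\ccup_+)$ and $\Gamma(t_+)$ are nonzero invariants and $\Gamma(Y_\pm)$ is injective.
\item \emph{The justification of $\dim M_\lambda\le|\Path_L^+(\lambda)|$ is muddled.} Unitriangularity gives linear independence, which is a lower bound. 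The path-model multiplicity you import already gives equality, which makes your two expressions for $\dim\mathcal K_L$ superfluous.
\end{itemize}

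\textbf{A single fix for both.} Consider the composite $\W^L_{U_0}\xrightarrow{\widetilde\Gamma_{U_0}}\mathscr E_{U_0}\to M_\lambda$, where the second map is restriction to $L_q(\lambda)$. It is surjective, because $\Gamma$ is full and $L_q(\lambda)$ is a direct summand of $V^{U_0}$. Its kernel contains $\W^L_{\prec U_0}$, so you get an equivariant surjection $\W^L_{[U_0]}\twoheadrightarrow M_\lambda$ for every dominant $\lambda$. By \cref{thm: basis of cell modules}, this gives $\dim M_\lambda\le|\Path_L^+(\lambda)|$. Your two expressions for $\dim\mathcal K_L$ then force equality for every $\lambda$, so the surjection is bijective, with no Littelmann input.

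This version is a self-contained alternative to the paper's proof of $\supseteq$. It replaces the explicit factorization through $V^R$ with the global dimension identity from the cellular bijection of \cref{prp: structure theorem}.
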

The proof is postponed to \cref{sec: proof of bimodule decomposition}.

\subsection{Generators}\label{sec: generators guide}

In this section, we fix an integer $L\geq 2$, and determine a set of generators for $\mathcal{K}_L$. For a web $X$ with $k\leq L$ boundary edges on top and bottom, we define
\begin{align}
    X_i := 1_{i-1} \otimes X \otimes 1_{L-i-k+1}, \qquad i=1,\ldots, L-k+1,
\end{align}
where $1_m$ denotes the unit of $\mathcal K_m$ (and $1_0=\id_{\emptyset}$). The definition ensures that for any $w$ with $L$ top boundary vertices, $X_iw$ is obtained by stacking $X$ on top of $w$ with the leftmost boundary edge of $X$ at the $i$-th boundary edge of $w$. With the webs in \cref{fig: generating set}, we formulate the following theorem on the generators of $\mathcal K_L$, with its proof in \cref{sec: outline for FFT}.
\begin{theorem}\label{thm: generators of K_L}
    The algebra $\mathcal{K}_L$ is generated by the following set of webs:
    \begin{align}\label{eqn: generating set}
        \GenSet_L:=\{I_i^{s_1},H_i^{s_1},(\id_{s_1s_2})_i, \YL{s_1}{s_1}_j: s_1,s_2 \in \{+,-\}, i=1,\ldots,L-1, j=1,\ldots L-2 \}.
    \end{align}
\end{theorem}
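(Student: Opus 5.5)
The plan is to let $\mathcal A\subseteq\mathcal K_L$ denote the unital subalgebra generated by $\GenSet_L$ and to show that $\mathcal A$ contains every cellular basis element $C^\lambda_{P,Q}=G(P)\circ\mathcal H^S_T\circ G(Q)^\dagger$ from \eqref{eqn: practical C}. Since $C$ is a bijection onto $\WebBasis{L}$, this gives $\mathcal A=\mathcal K_L$.

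The first reductions are as follows.

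\begin{enumerate}
\item The set $\GenSet_L$ is closed under $\dagger$, up to relabelling signs: $(H^\pm)^\dagger$ and $(I^\pm)^\dagger$ are again $H$/$I$-webs, and $\YL{s}{s}^\dagger$ is $\LY{s}{s}$ with reversed orientations. The missing ones can then be recovered inside $\mathcal A$ by composing with the two-site generators, for instance by applying $I$ on the appropriate pair of strands to move the $Y$ from the left to the right. Hence it suffices to produce, within $\mathcal A$, elements of the form $G(P)\circ x$ with $x$ arbitrary.
\item The idempotents $(\id_{s_1s_2})_i$ generate all $\id_S$, for $S\in\Sigma_L$, as products. So any web $w\in\W^T_S$ can be isolated by sandwiching with $\id_T$ and $\id_S$.
\item The bottleneck $\mathcal H^S_T$ is a concatenation of $H^\pm_i$ webs (\cref{lem: H^S_T is concatenation of H web}), so it lies in $\mathcal A$.
\end{enumerate}

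By the anti-involution it therefore remains to show that each $G(P)\otimes(\text{identity on the unused part})$ lies in $\mathcal A$. Because $G(P)$ has fewer bottom strands than $L$, I would actually show that each $G(P)\circ\mathcal H$, suitably completed to an $L\to L$ web, lies in $\mathcal A$.

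The core step is an induction on the order of paths (\cref{def: order on paths}) along the growth algorithm. One direct route would use \eqref{eqn: recursion}, which writes $G(P)$ as a composition of local pieces $w\in\{H^-,Y_+,\ccup_+\}$ and their reverses. However, the cups and $Y$'s change the number of strands, so the recursion is not directly a word in $L$-strand generators.

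The plan is instead to work in the cell module picture. I would show by induction on $P\in\Path^+_L(\lambda)$ that $C^\lambda_{P,P^{\mathrm{hw}}}$-type elements can be reached from lower ones modulo $\mathcal K_L^{\prec\lambda}$, using the action algorithm from \cref{sec: cellular structure guide}. Concretely, for each growth rule applied at steps $m,m+1$ of $P$, I would choose a generator $X_m\in\{I^\pm_m,H^\pm_m,\YL{\pm}{\pm}_m\}$ such that $X_m\cdot G(P'')$ equals $G(P)$ plus lower-order terms in $\Delta_L(\lambda)$, where $P''$ is a path of lower order. The examples (\cref{fig: cell module action example}, \cref{fig: square cell module action example}) suggest that $I^\pm$ moves a $Y$ across a strand, while $H^\pm$ handles the $H^-$ rule. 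The $\ccup$ and $Y$ rules, which shorten the path, require generators that create a cup or a $Y$ pair out of straight strands. This is where $\YL{+}{+}$ is needed: it is the only generator that converts three parallel same-sign strands into a configuration containing a $Y_-$ at the top.

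With such triangularity, $\Delta_L(\lambda)$ is cyclic over $\mathcal A$ from the highest-weight vector. Combined with the filtration \eqref{eqn: filtered multiplication}, a second induction on $\lambda$ with respect to $\prec$ shows that $\mathcal A\supseteq\mathcal K_L^{\preceq\lambda}$: the lower terms are absorbed by the inductive hypothesis, starting from minimal weights, and the top cell is filled via $\dagger$ and the bottleneck.

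I expect the main obstacle to be the local analysis of reversing the shortening growth rules inside a fixed length $L$. The cell-module action can produce cascades of squares (\cref{exmp: cascade of squares}), so I would need to verify that the leading term is exactly $G(P)$ with a nonzero coefficient. These coefficients presumably involve $[2]_q$ and $[3]_q$ from digon and loop removal, which is consistent with the specialization caveat via \cref{lem: arc reversal,lem: triplet reversal}. I would first attempt a case analysis on the pair of steps at the first applicable growth position, checking that the generated extra terms are either of lower path order or lie in $\mathcal K_L^{\prec\lambda}$.
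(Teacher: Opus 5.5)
Your outer framework is sound. If each $\Delta_L(\lambda)$ is cyclic over $\mathcal A$ from some $v_{P_0}$, and some $C^\lambda_{P_0,Q_0}$ lies in $\mathcal A+\mathcal K_L^{\prec\lambda}$, then induction on $\prec$ gives $\mathcal A=\mathcal K_L$. The paper runs the same two-part scheme with exact identities, $G(P)=g\,G(P^S_{cyc})$ and $C^\lambda_{P,Q}=g\,C^\lambda_{P^S_{cyc},P^T_{cyc}}h^\dagger$, so it needs no filtration. The gap is that you supply neither of the two contentful steps, and the mechanism you sketch for cyclicity fails.

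First, for $\lambda=n\lambda_1+k\lambda_2$ with $n+k<L$ there is no highest-weight vector. A path whose states are all $1$ has length $n+k$, so no such path lies in $\Path_L^+(\lambda)$.

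Second, running the growth algorithm backwards one rule at a time does not stay inside $\Delta_L(\lambda)$. Only the $H$-rules (and the derived $I$-rule) preserve length. The $\ccup$- and $Y$-rules send $P$ to a path of length $L-2$ or $L-1$, so they yield no $P''\in\Path_L^+(\lambda)$ with $X_m\cdot v_{P''}=v_P+\cdots$. Your proposed fix, ``generators that create a cup or a $Y$ pair out of straight strands'', cannot work. A piece created from strands that run to the bottom boundary makes the web factor through a lower weight, so it vanishes in the cell module. For example, suppose $S$ has $(-,-,-)$ at positions $j,j+1,j+2$. Then $\YL{+}{+}_j\cdot\id_S$ factors through the string with $(-,+)$ in place of $(-,-,-)$. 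That string has weight $\wt(S)+\lambda_1-2\lambda_2\prec\wt(S)$, so $\YL{+}{+}_j$ kills $v_{P_S^{\mathrm{hw}}}$ in $\Delta_L(\wt(S))$.

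What is missing for cyclicity is the paper's library of exact, length-preserving local moves (\cref{sec: local surgery}). These relocate and recombine the cups, triplets and $Y$'s already present in $G(P)$:
\begin{itemize}
\item arc reversal and switching, via $H$ and $\TL{}{}$;
\item triplet reversal and switching, via $T$;
\item the $Y$-switch, via $\LY{}{}$;
\item the move $u_{s,1}u_{s,0}u_{s,0}\to u_{s,0}u_{s,1}u_{s,0}$, via $I$;
\item merging two $Y$'s, two triplets, or a triplet and a $Y$ into cups.
\end{itemize}
These must be combined with a terminating normalization (\cref{lem: active area reduction}) that writes every $G(P)$ as $g\,w^S_{cyc}$. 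The cyclic web $w^S_{cyc}$ consists of cups, then at most one triplet or one $Y$ depending on the parity of $L-n-k$, then $\id_S$. Because these identities are exact, no leading-coefficient analysis of square cascades is needed.

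Your base step, ``the top cell is filled via $\dagger$ and the bottleneck'', only covers $n+k=L$, where $C^\lambda_{P_S^{\mathrm{hw}},P_T^{\mathrm{hw}}}=\mathcal H^S_T$. In general you must show $w^S_{cyc}\,\mathcal H^S_T\,(w^T_{cyc})^\dagger\in\mathcal A$. Cups and triplets glue to $\TL{}{}$- and $T$-webs. When $L-n-k=1$, one pushes the $Y$ through $\mathcal H^S_T$ using $Y_kH_k=I_kY_{k+1}$ and rewrites the result via $\LY{}{}$-webs. This step, together with the same-sign $Y$-switch and merge identities, is where $\YL{s}{s}$ genuinely enters, not by creating $Y$'s from through-strands.
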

Let $B_L$ denote the Artin braid group on $L$ strands. The $R$-matrix of $\uqslt$ \cite{rosso_1989_pbw} gives a braid group representation on $\mathscr V_L$, whose image lies in $\End_{\uqslt}(\mathscr V_L)$.\footnote{The $\check R$-matrices are written as linear combinations of webs in \cite[Sec.\,4]{kuperberg_1996_spiders}.} The isomorphism $\Phi$ in \cref{thm: Schur-Weyl duality} induces an algebra homomorphism $\rho_R:\C(q)[B_L]\to\mathcal K_L$. We now show that $\rho_R$ is not surjective:
\begin{proposition}\label{prp: KL(q) is not quotient of braid group}
    For $L\geq3$, the subalgebra generated by $\rho_R(\C(q)[B_L])$ and the idempotents $(\id_{s_1s_2})_i$, with $s_1,s_2\in\{+,-\}$ and $i=1,\ldots,L-1$, is a proper subalgebra of $\mathcal K_L$.
\end{proposition}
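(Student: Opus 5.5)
We need an invariant that separates the subalgebra generated by the braid image and the idempotents from all of $\mathcal K_L$. The natural candidate is the $\Z/3$-grading by "charge" conservation. Assign to a sign string $S$ the charge $c(S):=\#\{+\}-\#\{-\}\bmod 3$. The weight $\wt(S)=n\lambda_1+k\lambda_2$ has $n-k\equiv c(S)$, and $n-k \bmod 3$ is the class of $\wt(S)$ in $\Lambda/Q$, where $Q$ is the root lattice. Any web in $\Web^T_S$ forces $c(S)=c(T)$, since each vertex changes charge by $0 \bmod 3$ across a cut. So this grading alone cannot separate anything, and a finer invariant is needed.

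The plan is instead to use the number of $+$ signs exactly, not merely modulo $3$. Each $\check R$-matrix acts on $V^{s_1}\otimes V^{s_2}\to V^{s_2}\otimes V^{s_1}$: it permutes the sign string and so preserves the multiset of signs. Kuperberg's formula \cite[Sec.\,4]{kuperberg_1996_spiders} writes $\check R$ on $V^{s_1}\otimes V^{s_2}$ as a combination of $\id$ or $\mathrm{cup}\circ\mathrm{cap}$ and an $I$ or $H$ web, all lying in $\bigoplus \Web^{s_2s_1}_{s_1s_2}$. The idempotents $(\id_{s_1s_2})_i$ also preserve the sign string. Hence the subalgebra $\mathcal B$ they generate is contained in
\[
\mathcal K_L^{0}:=\bigoplus_{S,T:\ \#_+(S)=\#_+(T)}\W^T_S,
\]
which is a subalgebra, because the number of $+$ signs is additive under composition.

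The key step is then to exhibit an element of $\mathcal K_L$ outside $\mathcal K_L^0$. The generator $\YL{+}{+}_1\otimes$ identities (for $L\geq3$) is exactly such an element. Reading the orientations in \cref{fig: oriented diagram operators}, the two sources with three edges each map bottom $(-,+,+)$ on the three strands to top $(+,+,+)$, or similar. Either way, the count of $+$ changes by $\pm2$ (equivalently, charge changes by $3$). Its component in some $\W^T_S$ with $\#_+(S)\neq\#_+(T)$ is a nonzero basis web, since it is non-elliptic. Thus $\YL{+}{+}_1\notin\mathcal K_L^0\supseteq\mathcal B$, and $\mathcal B$ is proper.

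The main obstacle is justifying that $\rho_R$ of each braid generator really lies in the sign-permuting sector. I would argue this via $\Phi$ and \cref{thm: Schur-Weyl duality}: the $\check R$-matrix is $P\circ R$ with $R\in \uqslt\hat\otimes\uqslt$, so it maps $V^{s_1}\otimes V^{s_2}$ into $V^{s_2}\otimes V^{s_1}$. Therefore $\Phi^{-1}$ of it lies in $\bigoplus_S \W^{\sigma_i S}_S$, where $\sigma_i$ swaps entries $i,i+1$. No explicit web formula is needed for this.
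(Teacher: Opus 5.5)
Your argument is essentially the paper's. Braidings, via $\check R=\tau\circ R$ with $R$ acting through $\uqslt\otimes\uqslt$, and the idempotents only permute the signs of a sign string, whereas $\YL{+}{+}_1$ does not, so it lies outside the subalgebra they generate.

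One correction to the key step: both trivalent vertices of $\YL{+}{+}$ are sources, so it maps bottom $(-,-,-)$ to top $(+,+,+)$, and the number of $+$ signs changes by $3$. This is consistent with your mod-$3$ charge conservation. Your stated $(-,+,+)\to(+,+,+)$, with a change of $\pm2$, would violate that same conservation law. The conclusion is unaffected.
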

\begin{proof}
    Let $s,t\in \{\pm\}$. Since $V^\pm$ are invariant under $\uqslt$, and the universal $R$-matrix acts through $\uqslt$ operators, the action preserves each $V^s\otimes V^t$. Hence the braid operator $\check R=\tau\circ R$, where $\tau$
    flips the tensor factors, maps $V^s\otimes V^t$ to
    $V^t\otimes V^s$. Thus braiding only permutes the signs of
    a sign string and preserves its weight. The embedded idempotents satisfy the same condition, which
    is preserved under sums and products. However, the nonzero generator $\YL{+}{+}_1$ changes the first three signs from $---$ to $+++$, changing the weight by $3(\lambda_1-\lambda_2)$. It therefore cannot belong to the subalgebra generated by braidings and the idempotents.
\end{proof}

\begin{remark}
    It is proved in \cite{lehrer_2006_strongly} that, for any simply laced Lie algebra $\mathfrak{g}$, the endomorphism algebra $\End_{U_q(\mathfrak{g})}(V^{\otimes L})$ is a quotient of the braid group algebra if $V$ is a strongly multiplicity free $U_q(\mathfrak{g})$-module. In the case of $\mathcal{K}_L$, the representation $V=V^+\oplus V^-$ is not strongly multiplicity free, so the result of \cite{lehrer_2006_strongly} does not apply, as witnessed by \cref{prp: KL(q) is not quotient of braid group}. This feature of $\mathcal{K}_L$ differs from the Temperley-Lieb algebra, which is generated by the R-matrix on $U_q(\mathfrak{sl}_2)$-fundamental modules and idempotents.
\end{remark}

\begin{figure}[htbp]
    \centering
    \scalebox{0.8}{\begin{tikzpicture}
	\begin{pgfonlayer}{nodelayer}
		\node (0) at (-7, 1) {};
		\node (1) at (-6.5, 0) {};
		\node (2) at (-6, 1) {};
		\node (3) at (-7, -1) {};
		\node (4) at (-5, 1) {};
		\node (5) at (-5.5, 0) {};
		\node (6) at (-6, -1) {};
		\node (7) at (-5, -1) {};
		\node at (-7, 1.25) {$\pm$};
		\node at (-5, 1.25) {$\mp$};
		\node at (-6, -1.5) {$\YL{\pm}{\mp}$};

		\node (8) at (-3, 1) {};
		\node (9) at (-1, 1) {};
		\node (10) at (-3, -1) {};
		\node (11) at (-1, -1) {};
		\node at (-3, 1.25) {$s_1$};
		\node at (-3, -1.25) {$s_2$};
		\node at (-2, -1.5) {$\TL{s_1}{s_2}$};

		\node (12) at (1, 1) {};
		\node (13) at (3, 1) {};
		\node (14) at (2, 1) {};
		\node (15) at (1, -1) {};
		\node (16) at (3, -1) {};
		\node (17) at (2, -1) {};
		\node (18) at (2, 0.25) {};
		\node (19) at (2, -0.25) {};
		\node at (1, 1.25) {$s_1$};
		\node at (1, -1.25) {$s_2$};
		\node at (2, -1.5) {$T^{s_1}_{s_2}$};

		\node (20) at (6, 1) {};
		\node (21) at (6.5, 0) {};
		\node (22) at (7, 1) {};
		\node (23) at (7, -1) {};
		\node (24) at (5, 1) {};
		\node (25) at (5.5, 0) {};
		\node (26) at (5, -1) {};
		\node (27) at (6, -1) {};
		\node at (5, 1.25) {$s_1$};
		\node at (7, 1.25) {$s_2$};
		\node at (6, -1.5) {$\LY{s_1}{s_2}$};
	\end{pgfonlayer}
	\begin{pgfonlayer}{edgelayer}
		\draw (1.center) to (0.center);
		\draw (1.center) to (2.center);
		\draw (1.center) to (3.center);
		\draw (5.center) to (4.center);
		\draw (5.center) to (7.center);
		\draw (5.center) to (6.center);
		\draw [bend right=90, looseness=1.25] (8.center) to (9.center);
		\draw [bend left=90, looseness=1.25] (10.center) to (11.center);
		\draw [bend right=90, looseness=1.25] (12.center) to (13.center);
		\draw [bend left=90, looseness=1.25] (15.center) to (16.center);
		\draw (14.center) to (18.center);
		\draw (19.center) to (17.center);
		\draw (21.center) to (20.center);
		\draw (21.center) to (22.center);
		\draw (21.center) to (23.center);
		\draw (25.center) to (24.center);
		\draw (25.center) to (27.center);
		\draw (25.center) to (26.center);
	\end{pgfonlayer}
\end{tikzpicture}}
    \caption{Auxiliary basic webs, here $s_1,s_2\in \{\pm\}$ are signs.}
    \label{fig: auxiliary basic webs}
\end{figure}

The webs in \cref{fig: auxiliary basic webs} are not needed to generate $\mathcal{K}_L$, but they are useful in future discussions. Therefore, we provide relations in \eqref{eqn: relations in KL(q)} to show that they can be expressed in terms of the generators in \eqref{eqn: generating set}. The relations are obtained by applying Kuperberg relations to the auxiliary basic webs.
\begin{equation}\label{eqn: relations in KL(q)}
    \begin{aligned}
        &\YL{\pm}{\mp}=[2]_q^{-1}(I^{\pm}_1 H^{\mp}_{2}H^{\mp}_1I^\pm_{2}-I^{\pm}_1\TL{\pm}{\mp}_{2}H^\mp_1),\\
        &\TL{\pm}{\pm}=H^\mp H^\pm - \id_{\pm\mp},\\
        &\TL{\pm}{\mp}=[2]_q^{-1}\TL{\pm}{\pm}H^\mp,\\
        &T^{\pm}_{\pm}=I^{\pm}_1 I^\mp_{2} I_1^{\pm} - I_1^{\pm},\\
        &T^{\pm}_{\mp}=\YL{\pm}{\pm} \YL{\mp}{\mp} \YL{\pm}{\pm}-\YL{\pm}{\pm},\\
        &\LY{\pm}{\pm} =I^\pm_2 \YL{\pm}{\pm} I_1^\mp-T^{\pm\pm},\\
        &\LY{\pm}{\mp} =I^{\mp}_2 H^{\mp}_1 H^{\mp}_2 - H^{\mp}_1 \TL{\pm}{\mp}_2.
    \end{aligned}
\end{equation}

\subsection{Dimension formulae}\label{sec: dimension formulae}

We now give formulae for the dimension of $\mathcal{K}_L$ and its cell modules in terms of enumeration formulae of $\mathfrak{sl}_3$-lattice paths. Let $S$ be a sign string with $\wt(S)=p\lambda_1+q\lambda_2$. Let $\lambda=x\lambda_1+y\lambda_2$ be an $\mathfrak{sl}_3$-weight. Introduce
\begin{align}
    U_{p,q}(x,y)
    &:=
    \abs{\Path_S(\lambda)}
\end{align}
which clearly depends only on $p,q$ and $x,y$. This function admits simple multinomial formulae:
\begin{align}\label{eqn: Upq multinomial}
    U_{p,q}(x,y)
    =
    \sum_{\substack{m_1+m_2+m_3=q\\m_i\geq 0\\m_i+d_i\geq 0}}
    \binom{p}{m_1+d_1,\;m_2+d_2,\;m_3+d_3}
    \binom{q}{m_1,m_2,m_3},
\end{align}
where $\binom{n}{a,b,c}:=\frac{n!}{a!b!c!}$, with $n=a+b+c$, is the multinomial coefficient, and the integer shifts $d_i$ are given by
\begin{align}\label{eqn: R coefficients}
    d_1&=\frac{p-q+2x+y}{3},&
    d_2&=\frac{p-q-x+y}{3},&
    d_3&=\frac{p-q-x-2y}{3}.
\end{align}
If $d_i\notin\Z$ for some $i$, then the right hand side of \eqref{eqn: Upq multinomial} is understood as zero. We also define
\begin{align}\label{eqn: UL coefficient}
    U_L(x,y)
    &:=\abs{\Path_L(\lambda)}=
    \sum_{p+q=L}\binom{L}{p}U_{p,q}(x,y),
\end{align}
where the second equality follows because there are $\binom{L}{p}$ sign strings of length $L$ and with $p$ plus signs.

To enumerate paths in the dominant $A_2$ Weyl chamber, we apply the reflection principle \cite{gessel_1992_random,grabiner_1993_random}. For a function $F:\Z^2\to\Z$, define its shifted Weyl alternation $\WeylAlt F$ by
\begin{align}\label{eqn: reflection operator}
    (\WeylAlt F)(x,y)
    :={}&F(x,y)
    -F(-x-2,\;x+y+1)
    -F(x+y+1,\;-y-2) \notag\\
    &+F(-x-y-3,\;x)
    +F(y,\;-x-y-3)
    -F(-y-2,\;-x-2).
\end{align}
This is the reflection principle with the usual $\rho=\lambda_1+\lambda_2$
shift. In particular, for $\lambda=x\lambda_1+y\lambda_2$,
\begin{align}\label{eqn: dominant path count}
        \abs{\Path_S^+(\lambda)}
        =
        (\WeylAlt U_{p,q})(x,y),
        \qquad
        \abs{\Path_L^+(\lambda)}
        =
        (\WeylAlt U_L)(x,y).
\end{align}

We now relate the formulae \eqref{eqn: dominant path count} to the dimensions of $\mathcal{K}_L$ and its cell modules.

\begin{proposition}\label{prp: dimension formula for KL(q)}
    The algebra $\mathcal{K}_L$ is finite-dimensional, with
    \begin{align}\label{eqn: dim KL}
        \dim \mathcal{K}_L=(\WeylAlt U_{2L})(0,0).
    \end{align}
    The first few dimensions are
    \begin{align*}
        1,\;2,\;12,\;130,\;1946,\;34776,\;694056,\ldots
    \end{align*}
    for $L=0,1,2,3,4,5,6,\ldots$.  Equivalently, this is the even-indexed
    subsequence of the numbers $\abs{\Path_n^+(0)}$, which are
    recorded as OEIS A151366 \cite{a2019_a151366}.
\end{proposition}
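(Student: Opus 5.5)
The plan is to reduce the dimension count to an enumeration of dominant excursions of length $2L$, and then apply the reflection-principle formula \eqref{eqn: dominant path count}. By \cref{def: K_L}, $\dim\mathcal K_L=|\WebBasis{L}|=\sum_{S,T\in\Sigma_L}|\Web^T_S|$. Folding and unfolding give a bijection $\Web^T_S\to\Web^{TS^\vee}_{\emptyset}$. Indeed, \cref{lem: folding is invertible} shows that $\Unfold_S$ and $\Fold_S$ are mutually inverse linear isomorphisms. Both operations only attach nested cups or caps, so by isotopy they send monomial webs to monomial webs: they create no internal faces and merely bend boundary edges.

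Next, \cref{thm: growth algorithm} identifies $\Web^{TS^\vee}_\emptyset$ with $\Path^+_{TS^\vee}(0)$. Hence
\[
\dim\mathcal K_L=\sum_{S,T\in\Sigma_L}\bigl|\Path^+_{TS^\vee}(0)\bigr|.
\]
The map $(S,T)\mapsto TS^\vee$ is a bijection $\Sigma_L\times\Sigma_L\to\Sigma_{2L}$, since $S\mapsto S^\vee$ is an involution of $\Sigma_L$. The sum therefore equals $\sum_{R\in\Sigma_{2L}}|\Path^+_R(0)|=|\Path^+_{2L}(0)|$. In particular this is finite, since there are finitely many paths of fixed length.

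Finally, \eqref{eqn: dominant path count} with $L$ replaced by $2L$ and $\lambda=0$ gives $|\Path^+_{2L}(0)|=(\WeylAlt U_{2L})(0,0)$, which is \eqref{eqn: dim KL}. The identification with the even-indexed terms of A151366 is then immediate.

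The listed values can be checked by direct enumeration for small $L$. For example, $L=1$ gives the two excursions $u_{\pm,1}u_{\mp,-1}$, so the dimension is $2$. For $L=2$, one counts dominant excursions of length $4$, which should give $12$. This count matches $\sum_\lambda|\Path^+_L(\lambda)|^2$ from the cellular bijection \eqref{eqn: bijection C}, which gives an independent consistency check.

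The main obstacle is to justify \eqref{eqn: dominant path count} itself, if it is regarded as not yet proved. To handle it, I would invoke the Gessel--Zeilberger/Grabiner reflection principle for the Weyl group $S_3$ acting on $\Lambda$ with the $\rho$-shift. This applies because the step set $\{u_{s,j}\}$ is invariant under the Weyl group: it is the union of the weights of $V^+$ and $V^-$. It also applies because a step can never jump over a wall, since adjacent weights differ by minimal vectors. The six terms of $\WeylAlt$ correspond to $w(\lambda+\rho)-\rho$, each with sign $(-1)^{\ell(w)}$. I would check these six affine images directly.
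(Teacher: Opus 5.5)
Your proposal is correct and follows the paper's proof: the paper likewise composes the folding bijection of \cref{lem: folding is invertible} with the growth-algorithm bijection of \cref{thm: growth algorithm} to get $\Web^S_T\to\Path^+_{ST^\vee}(0)$, takes the union over $S,T\in\Sigma_L$ to obtain $\Path^+_{2L}(0)$, and reads off the count from \eqref{eqn: dominant path count}. Your additional remarks fill in details the paper leaves implicit without changing the route: that folding preserves monomial webs because it only bends boundary edges, and why the reflection principle applies to this Weyl-invariant minuscule step set.
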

\begin{proof}
    The algebra $\mathcal{K}_L$ has a basis $\WebBasis{L} = \bigsqcup_{S,T\in \Sigma_L} \Web^S_T$ by its definition. For each fixed $\Web^S_T$, the folding bijection $F:\Web^{ST^\vee}_{\emptyset}\to \Web^S_T$ from \cref{lem: folding is invertible} and the growth algorithm bijection from \cref{thm: growth algorithm} together give a bijection $\Web^S_T \to \Path_{ST^\vee}^+(0)$. Taking the union over all $S,T\in \Sigma_L$ gives a bijection $\WebBasis{L} \to \Path_{2L}^+(0)$, which proves \eqref{eqn: dim KL}.
\end{proof}

\begin{proposition}\label{prp: dimension formula for VLlambda}
    Let $L\geq 0$ and $\lambda=x\lambda_1+y\lambda_2\in\Lambda_+$.  Then
    the cell module indexed by $(L,\lambda)$ has dimension
    \begin{align}\label{eqn: dim VLlambda}
        \dim \Delta_L(\lambda)
        =
        \abs{\Path_L^+(\lambda)}
        =
        (\WeylAlt U_L)(x,y).
    \end{align}
\end{proposition}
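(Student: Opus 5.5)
The first equality $\dim\Delta_L(\lambda)=\abs{\Path_L^+(\lambda)}$ holds by construction. By \cref{def: cell modules}, as recalled in \eqref{eqn: cell module action}, the cell module $\Delta_L(\lambda)$ is defined with basis $\{v_P : P\in\Path_L^+(\lambda)\}$. This is the set $M(\lambda)$ of the cell datum in \cref{thm: cellular structure}. The $v_P$ are distinct basis vectors indexed bijectively by dominant paths, so the dimension is the cardinality of this set. If one prefers the web model, the vectors are the webs $G(P)$. Their linear independence follows from \cref{cor: extended growth algorithm monomial}, which says each $G(P)$ is monomial, together with injectivity of $P\mapsto G(P)$. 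Injectivity in turn follows from \cref{lem: web in web} and the bijectivity in \cref{thm: growth algorithm}: if $G(P)=G(P')$ with the same bottom sign string $R$, then fixing $Q$ gives $G(PQ^\vee)=G(P'Q^\vee)$, hence $P=P'$. For $\lambda\notin\Lambda_+(L)$ both sides vanish by \cref{lem: weight and existence of paths}, consistent with the convention that no such cell module occurs.

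The second equality is the counting statement \eqref{eqn: dominant path count}, which I would justify by the reflection principle of \cite{gessel_1992_random,grabiner_1993_random}. First I would write $\Path_L^+(\lambda)=\bigsqcup_{S\in\Sigma_L}\Path_S^+(\lambda)$. Since $\WeylAlt$ is linear, it then suffices to prove $\abs{\Path_S^+(\lambda)}=(\WeylAlt U_{p,q})(x,y)$ for each fixed $S$ and sum using \eqref{eqn: UL coefficient}.

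For fixed $S$, the step set at position $m$ is $\{u_{s_m,j}\}$, which is the weight set of $V^{s_m}$. This set is invariant under the Weyl group $W\cong S_3$. I would then apply the standard Gessel--Zeilberger argument. Shift by $\rho=\lambda_1+\lambda_2$, so that dominant paths become paths staying in the open chamber $\rho+\Lambda_+$. Every step is a minimal lattice vector, so a path leaving the chamber must land exactly on a wall. One can then define a sign-reversing involution that reflects the initial segment up to the first wall hit through the affine reflection fixing $-\rho$. Since reflections permute the step set of each position, the reflected path still has sign string $S$. This yields
\[
\abs{\Path_S^+(\lambda)}=\sum_{w\in W}\operatorname{sgn}(w)\,\abs{\Path_S(w(\lambda+\rho)-\rho)}.
\]

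The remaining work, which I expect to be the main (bookkeeping) obstacle, is to check that the six terms of this sum match the six terms of \eqref{eqn: reflection operator}. In coordinates $(x,y)$, the dot action $w\cdot\lambda=w(\lambda+\rho)-\rho$ gives
\[
s_1\cdot(x,y)=(-x-2,x+y+1),\qquad s_2\cdot(x,y)=(x+y+1,-y-2),
\]
together with their compositions. I would verify these against the arguments listed in \eqref{eqn: reflection operator}, paying attention to the coordinate convention in which $\lambda_1,\lambda_2$ are drawn, and then use $U_{p,q}(x,y)=\abs{\Path_S(\lambda)}$.
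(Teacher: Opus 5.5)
Your proposal is correct and follows the paper's route. The first equality is just the indexing of the basis of $\Delta_L(\lambda)$ by $\Path_L^+(\lambda)$; the paper points to \cref{thm: basis of cell modules}, and your optional web-model aside also needs that result, since independence of the classes modulo $\W^L_{\prec S}$ does not follow from monomiality of the $G(P)$ alone. The second equality is the reflection-principle count \eqref{eqn: dominant path count}, which the paper only cites and you sketch, with dot-action bookkeeping that checks out against \eqref{eqn: reflection operator}. One small point: with the start fixed at $0$, the involution should reflect the segment \emph{after} the first wall contact. Reflecting the initial segment instead moves the starting point, and you would then need an extra translation and $W$-invariance step to reach your displayed formula.
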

\begin{proof}
    A basis of the cell module $\Delta_L(\lambda)$ indexed by $\Path_L^+(\lambda)$ follows from \cref{thm: basis of cell modules}.
\end{proof}

\section{Construction of the cellular structure}\label{sec: cellular structure}
In this section, we construct the cell datum of $\mathcal K_L$ for $L\geq 2$. After defining the map $C$ and the involutive anti-automorphism $\dagger$ in \cref{sec: proof of structure theorem,sec: the anti-involution dagger}, we construct the clasped web spaces, show that they are $\mathcal K_L$-modules, and determine their web bases in \cref{sec: cell modules}. In \cref{sec: alternative approach to cell modules} we construct special isomorphisms between the clasped web spaces. In \cref{sec: cellularity proof} we give the cell datum, and identify the clasped web spaces as cell modules. We prove the bimodule decomposition, \cref{thm: bimodule decomposition}, in \cref{sec: proof of bimodule decomposition}.

\subsection{The map $C$}\label{sec: proof of structure theorem} 
\begin{definition}\label{def: pairing map C}
    Let $\lambda\in \Lambda_+$, and $S,T$ be sign strings of arbitrary length. We define the map
    \begin{align}\label{eqn: definition of C}
        \begin{aligned}
            C^\lambda: \Path^+_S(\lambda) \times \Path^+_T(\lambda) &\to \Web^S_T, \\
            (P,Q) &\mapsto \Fold_T(G(PQ^\vee))=: C^\lambda_{P,Q},
        \end{aligned}
        \end{align}
    where $PQ^\vee$ is the concatenated path (see \cref{def: lattice path}), $G$ is the growth algorithm (see \cref{def: growth algorithm}), $\Web^S_T$ is the set from \eqref{eqn: web set and web space}, and $\Fold_T$ is folding (see \cref{def: folding and duality}). See the leftmost of \cref{fig: cellular basis decomposition proof} for an illustration of $\Fold_T(G(PQ^\vee))$.
\end{definition}
Since $P$ and $Q$ are dominant paths ending at weight $\lambda$, the concatenation $PQ^\vee\in \Path^+_{|P|+|Q|}(0)$ by \cref{lem: PQ-dual is an excursion}, hence $G(PQ^\vee)$ is well-defined. Moreover, $G(PQ^\vee)$ has top sign string $ST^\vee$, where $S$ is the sign string of $P$, so $\Fold_T(G(PQ^\vee))$ is well-defined. Therefore, the map $C$ is well-defined.

\begin{lemma}\label{lem: basic structure theorem}
    Let $S,T$ be sign strings. With $C^\lambda_{P,Q}$ as defined in \eqref{eqn: definition of C}, we have the following bijection:
    \begin{align}
        \bigsqcup_{\lambda \in \Lambda_+} \Path_S^+(\lambda) \times \Path_T^+(\lambda)  \to \Web^{S}_{T} \qquad
        (P,Q)  \mapsto C^\lambda_{P,Q}.
    \end{align}
\end{lemma}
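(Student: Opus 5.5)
The plan is to factor the map as a composition of three bijections. The outer two are folding and the growth algorithm; the inner one is a combinatorial bijection on paths.

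First, by \cref{lem: folding is invertible}, $\Fold_T:\W^{ST^\vee}_\emptyset\to\W^S_T$ is a linear isomorphism with inverse $\Unfold_T$. Both maps act on monomial nets by isotopy only (attaching nested cups/caps and straightening zigzags). Hence they send monomial nets to monomial nets and preserve internal faces. So $\Fold_T$ restricts to a bijection $\Web^{ST^\vee}_\emptyset\to\Web^S_T$: a basis web has no internal loop, digon or square, and folding along boundary strands creates no new internal face. Second, \cref{thm: growth algorithm} applied to the sign string $ST^\vee$ gives a bijection $G:\Path^+_{ST^\vee}(0)\to\Web^{ST^\vee}_\emptyset$.

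It therefore remains to show that concatenation
\[
\bigsqcup_{\lambda\in\Lambda_+}\Path^+_S(\lambda)\times\Path^+_T(\lambda)\to\Path^+_{ST^\vee}(0),\qquad (P,Q)\mapsto PQ^\vee,
\]
is a bijection. Well-definedness is the direction 1$\Rightarrow$2 of \cref{lem: PQ-dual is an excursion}.

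For injectivity, split $R=PQ^\vee$ after its first $|S|$ steps. This recovers $P$, and it recovers $Q^\vee$, hence $Q=(Q^\vee)^\vee$ because $(-)^\vee$ is an involution on paths. The weight is then $\lambda=\wt(P)$.

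For surjectivity, take $R\in\Path^+_{ST^\vee}(0)$ and write $R=PQ'$ with $|P|=|S|$. Put $Q:=Q'^\vee$, so that $Q$ has sign string $(T^\vee)^\vee=T$. Direction 2$\Rightarrow$1 of \cref{lem: PQ-dual is an excursion} then gives that $P$ and $Q$ are dominant with common endpoint $\lambda:=\wt(P)\in\Lambda_+$.

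Composing the three bijections, $(P,Q)\mapsto\Fold_T(G(PQ^\vee))=C^\lambda_{P,Q}$ is a bijection onto $\Web^S_T$.

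The main point needing care is the first step: that folding restricts to a bijection of basis webs, and not merely an isomorphism of spans. I would argue this directly on graphs. $\Fold_T$ and $\Unfold_T$ are inverse on isotopy classes of monomial nets, and the set of internal faces of a net is invariant under isotopy relative to the relabelled boundary, since the cups only bend boundary edges. Everything else is bookkeeping with \cref{lem: PQ-dual is an excursion}.
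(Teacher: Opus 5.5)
Your proposal is correct and follows essentially the same route as the paper: it factors the map as path concatenation $(P,Q)\mapsto PQ^\vee$ (a bijection by \cref{lem: PQ-dual is an excursion}), then the growth bijection of \cref{thm: growth algorithm} for the sign string $ST^\vee$, then folding via \cref{lem: folding is invertible}. Your explicit check that $\Fold_T$ sends basis webs to basis webs, because it creates no new internal faces, fills a step the paper leaves implicit, since it cites that lemma only at the level of the spans $\W$.
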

\begin{proof}
    We can decompose the map $C$ into a composition of three bijections:
    \begin{enumerate}
        \item \textbf{Path concatenation:} The map 
        \begin{align*}
            \bigsqcup_{\lambda\in \Lambda_+} \Path_S^+(\lambda) \times \Path_T^+(\lambda)   \to \Path_{ST^\vee}^+(0) \qquad
            (P,Q)  \mapsto PQ^\vee
        \end{align*}
        is a bijection.
        For surjectivity, let $D\in \Path_{ST^\vee}^+(0)$, by \cref{lem: PQ-dual is an excursion}, $D$ can be written as $PQ^\vee$, where $P\in \Path_S^+(\lambda)$ and $Q\in \Path_T^+(\lambda)$ for a dominant weight $\lambda$. Injectivity is clear from the \cref{def: lattice path} of path concatenation.

        \item \textbf{Growth algorithm:} The map
        \begin{align*}
            \Path_{ST^\vee}^+(0)  \to  \Web^{ST^\vee}_\emptyset \qquad
            P  \mapsto G(P)
        \end{align*}
        is a bijection by \cref{thm: growth algorithm}.
        \item \textbf{Folding:} The folding
        \begin{align*}
            \Web^{ST^\vee}_\emptyset  \to \Web^{S}_{T} \qquad
            w  \mapsto \Fold_T(w)
        \end{align*}
        is a bijection by \cref{lem: folding is invertible}. 
    \end{enumerate}
    The composition of these three bijections is exactly the map $C$, hence $C$ is a bijection as well.
\end{proof}

As a direct consequence, we obtain the following proposition.

\begin{proposition}\label{prp: structure theorem}
    Recall $\WebBasis{L}=\bigsqcup_{S,T\in\Sigma_L}\Web^S_T$ from \eqref{def: Omega}. The following map is a bijection:
    \begin{align*}
        C: \bigsqcup_{\lambda\in \Lambda_+(L)}  \Path_L^+(\lambda) \times \Path_L^+(\lambda) & \to \WebBasis{L}.
    \end{align*}
\end{proposition}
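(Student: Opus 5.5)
The plan is to obtain \cref{prp: structure theorem} by taking the disjoint union of the bijections of \cref{lem: basic structure theorem} over all pairs $(S,T)\in\Sigma_L\times\Sigma_L$, and then to identify the resulting index set with $\bigsqcup_{\lambda\in\Lambda_+(L)}\Path_L^+(\lambda)\times\Path_L^+(\lambda)$.

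First I apply \cref{lem: basic structure theorem} for each fixed pair $S,T\in\Sigma_L$. It gives a bijection
\[
\bigsqcup_{\lambda\in\Lambda_+}\Path_S^+(\lambda)\times\Path_T^+(\lambda)\to\Web^S_T .
\]
The targets $\Web^S_T$ are pairwise disjoint for distinct $(S,T)$, because a web determines its boundary sign strings. Their union over $S,T\in\Sigma_L$ is $\WebBasis{L}$ by \eqref{def: Omega}. Hence the union of these maps is a bijection from
\[
\bigsqcup_{S,T\in\Sigma_L}\ \bigsqcup_{\lambda\in\Lambda_+}\Path_S^+(\lambda)\times\Path_T^+(\lambda)
\]
onto $\WebBasis{L}$.

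Next I reorganize the domain. I interchange the disjoint unions and use $\Path_L^+(\lambda)=\bigsqcup_{S\in\Sigma_L}\Path_S^+(\lambda)$ from \cref{def: lattice path}. The domain then becomes $\bigsqcup_{\lambda\in\Lambda_+}\Path_L^+(\lambda)\times\Path_L^+(\lambda)$. On each piece the map is $(P,Q)\mapsto C^\lambda_{P,Q}$, which agrees with the map $C$ in the statement.

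The only remaining point is to cut the union over $\lambda\in\Lambda_+$ down to $\lambda\in\Lambda_+(L)$. By \cref{lem: weight and existence of paths}, for $L\geq 2$ the set $\Path_L^+(\lambda)$ is empty unless $\lambda\in\Lambda_+(L)$. So every $\lambda\notin\Lambda_+(L)$ contributes an empty summand, and dropping these summands leaves the bijection unchanged. For $L<2$ one only needs the easy half of that lemma: the weight of a dominant path of length $L$ is $n\lambda_1+k\lambda_2$ with $n+k\le L$. This half holds for any $L$.

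I expect no real obstacle; the only care needed is the bookkeeping of disjoint unions and boundary sign strings.
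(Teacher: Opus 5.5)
Your proposal is correct and matches the paper's proof: both take the union over $S,T\in\Sigma_L$ of the bijections from \cref{lem: basic structure theorem}, regroup using $\Path_L^+(\lambda)=\bigsqcup_{S\in\Sigma_L}\Path_S^+(\lambda)$, and discard the empty summands with $\lambda\notin\Lambda_+(L)$ via \cref{lem: weight and existence of paths}. Your added remarks are accurate refinements of the same argument: the sets $\Web^S_T$ are disjoint, and only the easy direction of that lemma is needed.
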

\begin{proof}
    Using $\Path_L^+(\lambda) = \bigsqcup_{S\in \Sigma_L} \Path_S^+(\lambda)$, we get a bijection by \cref{lem: basic structure theorem}:
    \begin{align}\label{eqn: large bijection}
        C: \bigsqcup_{\lambda\in \Lambda_+}  \Path_L^+(\lambda) \times \Path_L^+(\lambda) & \to \WebBasis{L}.
    \end{align}
    By \cref{lem: weight and existence of paths}, $\Path_L^+(\lambda)$ is non-empty if and only if $\lambda\in \Lambda_+(L)$. Hence, we replace the condition $\lambda\in \Lambda_+$ by $\lambda\in \Lambda_+(L)$ in \eqref{eqn: large bijection}, and thus conclude the proof.
\end{proof}

\subsection{The anti-involution $\dagger$}\label{sec: the anti-involution dagger}

In this subsection we define an involutive anti-automorphism $\dagger$ for the cellular structure of $\mathcal{K}_L$.

\begin{definition}[$\dagger$]\label{def: dagger}
    The map $\dagger$ on $\mathcal{K}_L$ is defined on webs as a reflection against the horizontal axis, followed by the reversal of all edge orientations. The map $\dagger$ is then linearly extended.
\end{definition}

\begin{lemma}\label{lem: dagger is anti-involution}
    The map $\dagger$ is an involutive anti-automorphism on $\mathcal{K}_L$.
\end{lemma}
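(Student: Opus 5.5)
We verify three properties of $\dagger$: it is well defined on $\mathcal K_L$, it is an involution, and it reverses products. The plan is to work at the level of monomial nets first, show that $\dagger$ respects the Kuperberg relations, and then pass to the quotient.

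\textbf{Well-definedness on nets and on $\mathcal K_L$.} Let $r$ denote reflection in a horizontal axis. For a monomial net $w\in\Net^T_S$, first check that $w^\dagger$ is again a monomial net.
\begin{itemize}
\item Reflection preserves planarity and trivalence, and it preserves the source/sink type of every vertex.
\item Reversing all orientations then swaps sources and sinks, so each vertex is still a source or a sink.
\end{itemize}
Next we track the boundary signs. A top vertex of $w$ becomes a bottom vertex, and its source/sink type is also swapped. By the sign conventions of \cref{def: nets}, a top sink (sign $+$) becomes a bottom source, which again has sign $+$. Hence $w^\dagger\in\Net^S_T$, with no reversal of the sign strings. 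In particular $\dagger$ maps $\WebBasis{L}$ into itself.

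\textbf{Compatibility with the relations.} Both operations are isotopy-invariant. It remains to check that the Kuperberg relations of \cref{fig: Kuperberg relations} are preserved. Under reflection plus orientation reversal:
\begin{itemize}
\item the two oriented loops are exchanged, and both evaluate to $[3]_q$;
\item the digon goes to a digon, and the strand goes to a strand with the same coefficient $-[2]_q$;
\item the square goes to a square, and the two smoothings go to the two smoothings of that square, so the relation is carried to itself.
\end{itemize}
Therefore $\dagger$ descends to a $\C(q)$-linear map on each $\W^T_S$. Since the relations are local, it also sends monomial (non-elliptic) webs to monomial webs, because faces of degree $0$, $2$, $4$ are sent to faces of the same degree. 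Thus $\dagger$ is a linear endomorphism of $\mathcal K_L$ that permutes the basis $\WebBasis{L}$.

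\textbf{Involution and anti-multiplicativity.} Applying $\dagger$ twice gives $r^2=\mathrm{id}$, and double orientation reversal is the identity, so $\dagger^2=\mathrm{id}$ and $\dagger$ is bijective. For basis webs $w_1\in\Web^{S_1}_{S_2}$ and $w_2\in\Web^{S_3}_{S_4}$, there are two cases.
\begin{itemize}
\item If $S_2\neq S_3$, then $w_1w_2=0$. Also $w_2^\dagger\in\Web^{S_4}_{S_3}$ and $w_1^\dagger\in\Web^{S_2}_{S_1}$, so $w_2^\dagger w_1^\dagger=0$ as well.
\item If $S_2=S_3$, stacking $w_1$ on $w_2$ and reflecting gives $r(w_1)$ placed below $r(w_2)$. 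Reversing orientations then yields exactly the stacking $w_2^\dagger\circ w_1^\dagger$ at the level of nets. Applying the relation-compatibility above to reduce both sides gives $(w_1w_2)^\dagger=w_2^\dagger w_1^\dagger$ in $\mathcal K_L$.
\end{itemize}
Finally, $\id_S^\dagger=\id_S$, so $1_L^\dagger=1_L$. Extending bilinearly gives the claim.

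\textbf{Main obstacle.} The only real point is the sign bookkeeping on the boundary: one must confirm that reflection combined with orientation reversal keeps the sign strings $S,T$ unchanged rather than dualizing them. Checking the local relations is routine, since $\dagger$ maps each relation picture to a relation picture of the same type.
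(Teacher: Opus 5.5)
Your proof is correct and follows the same route as the paper: reflection is involutive and reverses the order of stacking. You also supply details the paper leaves implicit, namely the boundary sign bookkeeping ($\Net^T_S\to\Net^S_T$) and the compatibility of $\dagger$ with the Kuperberg relations. One harmless slip: under $\dagger$ each oriented loop is sent to itself, not exchanged with the other, since the reflection and the orientation reversal each flip the rotational sense; this is immaterial because both loops evaluate to $[3]_q$.
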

\begin{proof}
    Reflection against the horizontal axis is clearly involutive, and reverses the order of concatenation of webs, which is the multiplication in $\mathcal{K}_L$.
\end{proof}

The next property of $\dagger$ is required by the cellular structure of $\mathcal K_L$:
\begin{proposition}\label{prp: dagger swaps PQ}
    For any $\lambda\in \Lambda_+(L)$ and $P,Q\in \Path_L^+(\lambda)$, we have
    \begin{align*}
        (C^\lambda_{P,Q})^\dagger = C^\lambda_{Q,P}.
    \end{align*}
\end{proposition}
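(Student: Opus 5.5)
The plan is to move everything to the level of invariant webs in $\Web^{\bullet}_{\emptyset}$, where the growth algorithm lives. Let $S,T\in\Sigma_L$ be the sign strings of $P$ and $Q$, so that $C^\lambda_{P,Q}=\Fold_T(G(PQ^\vee))\in\Web^S_T$ and $C^\lambda_{Q,P}=\Fold_S(G(QP^\vee))\in\Web^T_S$.

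\textbf{Step 1 (dagger versus folding).} Define an operation $\mathrm{m}$ on webs in $\W^{U}_{\emptyset}$: reflect the web in a vertical axis and then reverse all orientations. A boundary string $U=(u_1,\dots,u_n)$ becomes $(-u_n,\dots,-u_1)=U^\vee$. In particular $\mathrm{m}$ sends $\W^{ST^\vee}_{\emptyset}$ to $\W^{TS^\vee}_{\emptyset}$. I will show by a planar isotopy that
\begin{align*}
    \Fold_T(w)^\dagger=\Fold_S(\mathrm{m}(w)),\qquad w\in\W^{ST^\vee}_{\emptyset}.
\end{align*}
For this, draw $\Fold_T(w)$ with the $T$-strands bent down by $\ccap_T$ on the right, and apply the horizontal reflection. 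This puts the $T$-strands on top, coming up through a cup on the right. Then rotate the whole picture by $\pi$; by \cref{lem: duality as rotation} and isotopy invariance, this changes nothing. The result is $w$ reflected vertically, with the roles of the two groups of strands exchanged. The orientation reversal in $\dagger$ matches the one in $\mathrm{m}$. The sign bookkeeping (top sinks are $+$, bottom sources are $+$) is checked on the elementary cups and caps of \cref{fig: elementary webs}.

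\textbf{Step 2 (mirror symmetry of growth).} By Step 1 and \cref{def: pairing map C}, it suffices to show that
\begin{align*}
    \mathrm{m}(G(D))=G(D^\vee),\qquad D\in\Path^+(0),
\end{align*}
applied to $D=PQ^\vee$. This is enough because $(PQ^\vee)^\vee=QP^\vee$, and $D^\vee$ is again dominant by \cref{cor: involution preserves excursions}. Since $\mathrm{m}$ preserves the absence of loops, digons and squares, $\mathrm{m}(G(D))$ is a basis web in $\Web^{U^\vee}_{\emptyset}$, where $U$ is the sign string of $D$. By the bijection of \cref{thm: growth algorithm}, it is therefore $G(D')$ for a unique $D'\in\Path^+_{U^\vee}(0)$. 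The claim reduces to the identity $D'=D^\vee$.

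To prove $D'=D^\vee$, I would use the Khovanov--Kuperberg description of the inverse of $G$ (the proof of bijectivity in \cite{khovanov_1999_web}). There, the state $j_m$ of the $m$-th boundary edge is read off from the pair of minimal-cut depths of the two faces adjacent to that edge. These depths are measured as distances to the outer face of the disc. Vertical reflection reverses the boundary order and swaps the left and right faces of each boundary edge. Orientation reversal flips the sign of the edge, and with it the sign convention relating the depth difference to the state. Together these should send $j_m$ to $-j_m$ in the reversed position, which is exactly $J^\vee$.

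\textbf{Main obstacle.} The main obstacle is this last identification. The recursive definition of $G$ is manifestly left-to-right biased (\cref{def: growth algorithm}), so a purely inductive proof would have to show that the mirror image of each defining growth rule is again a growth rule. These mirrored rules would include the extra rules of \cref{prp: extra relations} and possibly further ones. If the depth-based characterization proves awkward to invoke, my fallback is exactly this induction on the order of \cref{def: order on paths}. Apply a growth rule at the \emph{rightmost} admissible position of $D$; this corresponds to the leftmost position of $D^\vee$. Then check, rule by rule and using the rules already listed together with the order-independence in \cref{prp: growth algorithm is well-defined}, that the mirrored local web is the one prescribed for $D^\vee$.
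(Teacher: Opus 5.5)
Your proof is correct, and its architecture is the paper's. The identity $\Fold_T(w)^\dagger=\Fold_S(\mathrm m(w))$ in your Step 1 is exactly what the paper obtains by writing $w^\dagger=(\Mir(w))^\vee$ (\cref{lem: elementary geometry}), pushing $\Mir$ through $\Fold_T$, and straightening with the zigzag identity. Your Step 2 is \cref{lem: growth algorithm symmetry}.

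The only real difference is how that symmetry lemma is proved. The paper uses your fallback: induction on the order of paths, together with the assertion (stated there without proof) that $(\Mir(w),(P^i)^\vee,(P^f)^\vee)$ is again a growth rule. That induction closes with no further rules:
\begin{itemize}
\item the $\ccup_+$ rule is its own mirror;
\item the $Y_+$ rule with $k=-1$ mirrors to its own sign-reversal;
\item the defining $H^-$ rule mirrors to the $H^-$ rule of \cref{fig: growth rules} with $k=-1$;
\item the $Y_+$ rule with $k=0$ mirrors to the sign-reversal of the $Y_+$ rule of \cref{fig: growth rules}.
\end{itemize}
Also, a growth rule may be applied at any occurrence, so insisting on the rightmost position is unnecessary. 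Your primary route through the Khovanov--Kuperberg depth readout of $G^{-1}$ is manifestly left--right symmetric and avoids this case check. Its price is importing a characterization of $G^{-1}$ that the paper never states; the paper only cites the bijection.

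There are two small slips in the write-up.
\begin{itemize}
\item Rotating the \emph{whole} picture by $\pi$ is the duality $\vee$ (\cref{lem: duality as rotation}), which changes the Hom-space, so it does not ``change nothing''. What you actually need is the isotopy rel boundary that turns only the $w$-block around. Equivalently, use the paper's factorization of $\dagger$.
\item In the depth readout, the state is simply the depth of the right face minus the depth of the left face, for either sign; you can check this on $\ccup_+$ (depths $0,1,0$) and $t_+$ (depths $0,1,1,0$). So orientation reversal changes nothing, and $j_m\mapsto -j_m$ comes from the left/right swap alone. If orientation reversal also flipped a convention, as you wrote, the two effects would cancel and you would get $j_m\mapsto j_m$.
\end{itemize}
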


The remainder of the subsection is devoted to the proof of \cref{prp: dagger swaps PQ}. 

\begin{definition}
    Let $\Mir$ be reflection of a basis web against a vertical axis, followed by reversal of all edge orientations:
    \begin{align*}
        \Mir: \Web^{S_2}_{S_1} & \to \Web^{S_2^\vee}_{S_1^\vee}.
    \end{align*}
    It is then extended $\C(q)$-linearly.
\end{definition}
It is straightforward to check several simple properties of $\Mir$:
\begin{enumerate}
    \item $\Mir$ preserves composition in $\Sp$:
    \begin{align*}
        \Mir(w_1\circ w_2)=\Mir(w_1)\circ\Mir(w_2).
    \end{align*}
    In particular, $\Mir$ is a $\mathcal{K}_L$-automorphism.
    \item The webs $\ccup_S$ and $\ccap_S$ from \cref{def: cup and cap for sign strings} satisfy
    \begin{align*}
        \Mir(\ccup_S)=\ccup_S\;,
        \qquad
        \Mir(\ccap_S)=\ccap_S.
    \end{align*}
    \item $\Mir$ reverses the monoidal product:
    \begin{align*}
        \Mir(a\otimes b)=\Mir(b)\otimes\Mir(a).
    \end{align*}
\end{enumerate}

\begin{lemma}\label{lem: elementary geometry}
    Let $w$ be a web. The map $\dagger$ factorizes as the composition of $\Mir$ and duality $\vee$:
    \begin{align*}
        w^\dagger=(\Mir(w))^\vee = \Mir(w^\vee).
    \end{align*}
\end{lemma}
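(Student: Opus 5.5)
We want $w^\dagger=\Mir(w)^\vee=\Mir(w^\vee)$. By \cref{lem: duality as rotation}, $w^\vee=O_\pi w$, the rotation by $\pi$ of the web, \emph{without} any change of orientations. A rotation by $\pi$ of the plane equals the composition of the reflection in the horizontal axis with the reflection in the vertical axis, and these two reflections commute. Since reversing all orientations is an involution that commutes with every planar isometry, we may compute on the underlying oriented graph:
\[
\Mir(w)^\vee = O_\pi\bigl(\mathrm{rev}(R_{\mathrm v} w)\bigr) = \mathrm{rev}\bigl(R_{\mathrm h}R_{\mathrm v}R_{\mathrm v}w\bigr)=\mathrm{rev}(R_{\mathrm h}w)=w^\dagger .
\]
Here $R_{\mathrm h},R_{\mathrm v}$ are the two reflections and $\mathrm{rev}$ is orientation reversal. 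The same computation with the factors in the other order gives $\Mir(w^\vee)=w^\dagger$.

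The plan has three steps.

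\textbf{Step 1.} Work with a monomial net $w$ as an embedded oriented graph, where all three maps are defined geometrically. Record that $\mathrm{rev}$ commutes with $R_{\mathrm h}$, $R_{\mathrm v}$ and $O_\pi$, and that $O_\pi=R_{\mathrm h}R_{\mathrm v}=R_{\mathrm v}R_{\mathrm h}$ as maps of the rectangle.

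\textbf{Step 2.} Check that the boundary sign strings match on both sides. A sink on the top becomes, under $R_{\mathrm h}$ followed by $\mathrm{rev}$, a source on the bottom, so its sign is $+$ in both positions. Both sides therefore lie in the same $\W$-space, namely $\W^{S_1}_{S_2}$ for $w\in\W^{S_2}_{S_1}$.

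\textbf{Step 3.} All three operations send each Kuperberg relation of \cref{fig: Kuperberg relations} to a Kuperberg relation, since loops, digons and squares are preserved by isometries and by orientation reversal. Hence they descend to the web spaces. Extending linearly then gives the identity for arbitrary $w$.

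The only delicate point is that $\vee$ is defined algebraically, with cups and caps, rather than as a rotation. This is handled entirely by \cref{lem: duality as rotation}, so I expect no real obstacle beyond the careful bookkeeping of signs in Step 2.
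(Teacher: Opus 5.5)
Your proposal is correct and follows the paper's argument. The paper also invokes \cref{lem: duality as rotation} to identify $w^\vee$ with the rotation $O_\pi w$ and then concludes by ``elementary geometry'', which is exactly your decomposition $O_\pi=R_{\mathrm h}R_{\mathrm v}=R_{\mathrm v}R_{\mathrm h}$ combined with orientation reversal commuting with isometries; your Steps 2 and 3 (boundary sign strings and compatibility with the Kuperberg relations) spell out bookkeeping that the paper leaves implicit.
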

\begin{proof}
    By \cref{lem: duality as rotation}, $w^\vee=O_{\pi}(w)$, where $O_{\pi}$ is the counter-clockwise rotation by an angle of $\pi$. The result then follows from elementary geometry.
\end{proof}

To prove the main result  of this subsection, \cref{prp: dagger swaps PQ}, we establish the following symmetry of the growth algorithm:

\begin{lemma}\label{lem: growth algorithm symmetry}
    For $P\in \Path_L^+(0)$, we have
    \begin{align*}
        \Mir(G(P)) = G(P^\vee).
    \end{align*}
\end{lemma}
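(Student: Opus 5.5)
The plan is to induct on the order of $P$ (\cref{def: order on paths}), peeling off one growth rule at a time. The key input is that the set of growth rules, the defining ones of \cref{def: defining growth rules} together with the extra ones of \cref{prp: extra relations}, is closed under the operation
\[
(w,P^i,P^f)\longmapsto(\Mir(w),(P^i)^\vee,(P^f)^\vee).
\]
The base case is $P=\emptyset$, the only weight-zero path of minimal order (observation 2 in the proof of \cref{prp: growth algorithm is well-defined}). There $G(\emptyset)=\id_\emptyset$ and $\Mir(\id_\emptyset)=\id_\emptyset=G(\emptyset^\vee)$.

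For the inductive step, let $P\in\Path_L^+(0)$ be non-trivial. Choose a defining growth rule $(w,P^i,P^f)$ applicable at some position, so $P=P_1P^iP_2$ and
\[
G(P)=(\id_{S_1}\otimes w\otimes\id_{S_2})\circ G(P_1P^fP_2).
\]
By observations 4 and 5 in the proof of \cref{prp: growth algorithm is well-defined}, the path $P_1P^fP_2$ is dominant of weight $0$ and of strictly lower order, so the induction hypothesis applies to it. Since $\Mir$ preserves composition and reverses the monoidal product, we get
\[
\Mir(G(P))=(\id_{S_2^\vee}\otimes\Mir(w)\otimes\id_{S_1^\vee})\circ G(P_2^\vee (P^f)^\vee P_1^\vee),
\]
using $(P_1P^fP_2)^\vee=P_2^\vee(P^f)^\vee P_1^\vee$. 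Next, $P^\vee=P_2^\vee(P^i)^\vee P_1^\vee$ is dominant of weight $0$ by \cref{cor: involution preserves excursions}. Hence, if $(\Mir(w),(P^i)^\vee,(P^f)^\vee)$ is a growth rule in the sense of \cref{def: growth rule}, the right-hand side above equals $G(P^\vee)$, which closes the induction.

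It remains to check the closure property, and I expect this case-by-case bookkeeping to be the main obstacle. Using $u_{s,j}^\vee=u_{-s,-j}$ together with the reversal of step order, the checks go as follows.
\begin{enumerate}
\item The $\ccup_+$ rule $(u_{-,1}u_{+,-1})$ is self-dual, and $\Mir(\ccup_+)=\ccup_+$.
\item The defining $Y_+$ rule with top $(-,1)(-,k)$ and bottom $(+,k+1)$ is sent to top $(+,-k)(+,-1)$ and bottom $(-,-k-1)$. For $k=-1$ this is the sign-reversed defining $Y$ rule with $k=-1$. For $k=0$ it is the sign-reversed version of the extra $Y_+$ rule in \cref{fig: growth rules}.
\item The defining $H^-$ rule $u_{+,1}u_{-,0}\to u_{-,0}u_{+,1}$ is sent to $u_{+,0}u_{-,-1}\to u_{-,-1}u_{+,0}$. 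This should be the $k=-1$ case of the extra $H^-$ rule in \cref{fig: growth rules}, once one checks that $\Mir(H^-)$ (vertical reflection plus orientation reversal) is again the $H^-$-web with the correct boundary orientations.
\item The same verification has to be done for the sign-reversed counterparts of all rules above.
\end{enumerate}
Each check is a small picture computation. I would carry them out explicitly, paying particular attention to the orientation conventions for $\Mir(H^\pm)$ and $\Mir(Y_\pm)$. If some mirrored rule does not appear verbatim in the two figures, the fallback is to verify it directly in the manner of \cref{prp: extra relations}: show that growing $P^\vee$ by the defining rules reproduces the same local web, which by \cref{prp: growth algorithm is well-defined} is independent of the order in which the rules are applied.
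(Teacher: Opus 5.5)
Your proposal is correct and is essentially the paper's proof: induct on the order of $P$, peel off one growth rule, and use the fact that $(\Mir(w),(P^i)^\vee,(P^f)^\vee)$ is again a growth rule. The paper only asserts this closure property, whereas you verify it for the defining rules, and your identifications check out; for instance, the mirror of the defining $H^-$ rule is the $k=-1$ rule of \cref{fig: growth rules}, and the mirror of the $k=0$ defining $Y_+$ rule is the sign-reversed extra $Y$ rule.
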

\begin{proof}
    By \cref{cor: involution preserves excursions}, $P^\vee\in \Path_L^+(0)$, and the web $G(P^\vee)$ is well-defined. We proceed by induction on the order of paths from \cref{def: order on paths}. The claimed result indeed holds for the empty path with $0$ steps. Assume that the lemma holds for all dominant paths with weight $0$, with order strictly lower than a given path $P$. We will show that it also holds for $P$, thus concluding the proof.
    
    We apply a growth rule $(w, P^i, P^f)$ to $P$. Write $P=P_1P^iP_2$, and $P'=P_1P^fP_2$. By the growth algorithm from \cref{def: growth algorithm}, we apply this growth rule to $P$ to get
    \begin{align}\label{eqn: growth on P}
        G(P) &= (\id_{S_1}\otimes w \otimes \id_{S_2}) \circ G(P').
    \end{align}
    As seen in \cref{prp: growth algorithm is well-defined}, applying growth rules lowers the order of paths, hence $P'$ has order strictly lower than $P$. Hence, by the induction hypothesis we have
\begin{align}\label{eqn: symmetry of P'}
    \Mir(G(P')) = G((P')^\vee).
\end{align}
The dual paths decompose as
\begin{align*}
    P^\vee&=(P_2)^\vee(P^i)^\vee(P_1)^\vee,
    &
    (P')^\vee&=(P_2)^\vee(P^f)^\vee(P_1)^\vee.
\end{align*}
Moreover, $(\Mir(w),(P^i)^\vee,(P^f)^\vee)$ is also a growth rule, which yields
\begin{align}\label{eqn: growth on dual P}
    G(P^\vee)= (\id_{S_2^\vee}\otimes \Mir(w) \otimes \id_{S_1^\vee}) \circ G((P')^\vee).
\end{align}
Since $\Mir$ reverses the monoidal product, it is easy to check that
\begin{align}\label{eqn: symmetry on growth rule}
    \id_{S_2^\vee}\otimes \Mir(w) \otimes \id_{S_1^\vee}=\Mir(\id_{S_1}\otimes w \otimes \id_{S_2}).
\end{align}
Hence, we have
\begin{align*}
    \Mir(G(P))
    &=\Mir(\id_{S_1}\otimes w \otimes \id_{S_2})\Mir(G(P'))\\
    &=(\id_{S_2^\vee}\otimes \Mir(w) \otimes \id_{S_1^\vee})G((P')^\vee)\\
    &=G(P^\vee),
\end{align*}
where the first equality uses \eqref{eqn: growth on P} and the fact that $\Mir$ preserves composition in $\Sp$, the second equality uses \eqref{eqn: symmetry of P'} and \eqref{eqn: symmetry on growth rule}, and the third equality follows from \eqref{eqn: growth on dual P}. This concludes the induction step and hence the proof.
\end{proof}

\begin{proof}[Proof of \Cref{prp: dagger swaps PQ}]
    Let $S,T$ be the sign strings of $P,Q$ respectively. By definition
    \begin{align}
        C^\lambda_{P,Q} = \Fold_T(G(PQ^\vee))=(\id_{S}\otimes \ccap_{T}) \circ (G(PQ^\vee) \otimes \id_{T}).
    \end{align}
    Applying $\Mir$ to both sides and using \cref{lem: growth algorithm symmetry}, we have
    \begin{align}
        \Mir(C^\lambda_{P,Q}) =& (\ccap_{T} \otimes \id_{S^\vee}) \circ (\id_{T^\vee} \otimes G(QP^\vee)).
    \end{align}
    Apply $\vee$ to both sides using the realization in \cref{cor: equivalent duality}; we get
    \begin{align}
        (\Mir(C^\lambda_{P,Q}))^\vee = (\id_{T} \otimes \ccap_{T} \otimes \ccap_{S}) \circ (\id_{T} \otimes \id_{T^\vee} \otimes G(QP^\vee) \otimes \id_{S}) \circ (\ccup_{T} \otimes \id_{S}).
    \end{align}
Applying the zig-zag relation, we get
\begin{align}
    (\Mir(C^\lambda_{P,Q}))^\vee = (\id_{T} \otimes \ccap_{S}) \circ ( G(QP^\vee) \otimes \id_{S}) = C^\lambda_{Q,P},
\end{align}
where the last equality follows from the definition of $C^\lambda_{Q,P}$. We conclude the proof by \cref{lem: elementary geometry}.
\end{proof}

\subsection{Clasped web spaces}\label{sec: cell modules}
Recall that $\Sigma_L$ denotes the set of sign strings of length $L$. For a sign string $S$ of arbitrary length, set
\begin{align}\label{eqn: web module}
    \Web^L_S:=\bigsqcup_{T\in\Sigma_L}\Web^T_S,
    \qquad
    \W^L_S:=\bigoplus_{T\in\Sigma_L}\W^T_S
    =\Span_{\C(q)}\Web^L_S.
\end{align}
Postcomposition makes $\W^L_S$ a left $\mathcal K_L$-module. For sign strings $S,T$ and any weight $\lambda\in\Lambda_+$, define
\begin{align*}
    \W^T_S(\prec\lambda)
    &:=\Span_{\C(q)}\{uv:v\in\W^M_S,\ u\in\W^T_M, M \text{ is a sign string}, \wt(M)\prec\lambda\}.
\end{align*}
Thus $\W^T_S(\prec\lambda)$ consists of linear combinations of webs that factor through an intermediate boundary $M$ of weight strictly below $\lambda$.
Define the following subspace of $\W^L_S$:
\begin{align}\label{eqn: lower weight submodule}
    \W^L_{\prec S}:=\bigoplus_{T\in\Sigma_L}\W^T_S(\prec\wt(S)).
\end{align}

Fix $\lambda\in \Lambda_+$. The next lemma shows that the subcategory of $\Sp$ with the same objects, and $\Hom$-spaces defined by $\Hom(S,T):=\W^T_S(\prec \lambda)$ is a categorical ideal:

\begin{lemma}\label{lem: ideal property}
    Let $\lambda\in \Lambda_+$. Let $S,T,U,V$ be sign strings. Then $\W^U_T \circ \W^T_S(\prec \lambda) \circ \W^S_V \subset \W^U_V(\prec \lambda)$.
\end{lemma}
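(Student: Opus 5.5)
The plan is to reduce the statement to spanning elements and then use associativity of composition in $\Sp$. Note first that $\W^T_S(\prec\lambda)$ is defined as a $\C(q)$-span. Composition in $\Sp$ is bilinear, so it suffices to prove the inclusion for elementary products $a\circ x\circ b$. Here $a\in\W^U_T$ and $b\in\W^S_V$ are arbitrary, and $x=uv$ is a spanning element of $\W^T_S(\prec\lambda)$. That is, $v\in\W^M_S$ and $u\in\W^T_M$ for some sign string $M$ with $\wt(M)\prec\lambda$.

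By associativity of composition in the category $\Sp$ we have
\begin{align*}
    a\circ(u\circ v)\circ b=(a\circ u)\circ(v\circ b).
\end{align*}
Set $u':=a\circ u\in\W^U_M$ and $v':=v\circ b\in\W^M_V$. These are again morphisms of $\Sp$, i.e.\ elements of the web spaces, possibly linear combinations after applying Kuperberg relations. They have the indicated boundary sign strings because composition only glues along the matching intermediate boundaries. The product $u'v'$ therefore factors through the same intermediate sign string $M$, with $\wt(M)\prec\lambda$. Hence $u'v'$ lies in $\W^U_V(\prec\lambda)$ by definition.

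Extending by bilinearity over the spanning sets gives the full inclusion. The same argument shows that each $\W^T_S(\prec\lambda)$ is closed under pre- and post-composition, so these subspaces form a categorical ideal.

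There is essentially no obstacle. The only point to check is that reducing $a\circ u$ or $v\circ b$ by Kuperberg relations does not disturb the factorization through $M$. This holds because the relations act within each factor separately and preserve boundary sign strings. The definition of $\W^T_S(\prec\lambda)$ allows arbitrary elements $u,v$ of the web spaces, not only basis webs, so linear combinations are harmless.
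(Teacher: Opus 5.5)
Your proposal is correct and is essentially identical to the paper's proof: by linearity reduce to $w=uv$ factoring through some $M$ with $\wt(M)\prec\lambda$, then use associativity to write $awb=(au)(vb)$ with $au\in\W^U_M$ and $vb\in\W^M_V$. Your remark that the definition allows arbitrary (non-basis) elements $u,v$ is exactly why no issue arises from reducing $au$ or $vb$ by Kuperberg relations.
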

\begin{proof}
    By linearity, it suffices to take $w=xy\in\W^T_S(\prec\lambda)$, where $x\in\W^T_M$, $y\in\W^M_S$ and $\wt(M)\prec\lambda$. For $a\in\W^U_T$ and $b\in\W^S_V$, we have $awb=(ax)(yb)$ with $ax\in\W^U_M$ and $yb\in\W^M_V$. This factorization shows that $awb\in\W^U_V(\prec\lambda)$.
\end{proof}

\begin{definition}\label{def: clasped web space}
    Let $S$ be a sign string. The \emph{clasped web space} with bottom sign string $S$ is 
    \begin{align}\label{eqn: clasped quotient}
        \W^L_{[S]}:=\W^L_S/\W^L_{\prec S}.
    \end{align}
    For $w\in\W^L_S$, write $[w]:=w+\W^L_{\prec S}$ for its class in this quotient.
\end{definition}

\begin{corollary}
    For each sign string $S$, the clasped web space $\W^L_{[S]}$ is a left $\mathcal K_L$-module.
\end{corollary}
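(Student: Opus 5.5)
The plan is to check two things: that $\W^L_S$ is a left $\mathcal K_L$-module, and that $\W^L_{\prec S}$ is a $\mathcal K_L$-submodule of it. The quotient $\W^L_{[S]}=\W^L_S/\W^L_{\prec S}$ then inherits a left $\mathcal K_L$-module structure, with action $a\cdot[w]:=[aw]$.

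For the first point, take basis webs $a\in\Web^{T'}_{T}\subset\mathcal K_L$ and $w\in\Web^{T''}_S$ with $T,T',T''\in\Sigma_L$. The action is
\[
a\cdot w=\delta_{T,T''}\,a\circ w\in\W^{T'}_S\subset\W^L_S,
\]
extended bilinearly. This is exactly the multiplication rule of $\mathcal K_L$ (\cref{def: K_L}), now with an arbitrary bottom boundary $S$. Associativity follows from associativity of composition in $\Sp$, together with the fact that the zero convention for mismatched boundaries is compatible with composition. The unit $1_L=\sum_{T}\id_T$ acts as the identity, since on $\W^T_S$ only the summand $\id_T$ contributes and it acts trivially.

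For the second point, write $\lambda:=\wt(S)$. By \eqref{eqn: lower weight submodule} we have $\W^L_{\prec S}=\bigoplus_{T\in\Sigma_L}\W^T_S(\prec\lambda)$. Take a basis web $a\in\Web^{T'}_T$ and an element $x\in\W^{T''}_S(\prec\lambda)$.
\begin{itemize}
\item If $T\neq T''$, then $a\cdot x=0$, which lies in the submodule.
\item If $T=T''$, apply \cref{lem: ideal property} with $U=T'$ and $V=S$, taking the right factor to be $\id_S\in\W^S_S$. This gives $a\circ x\circ\id_S\in\W^{T'}_S(\prec\lambda)$, which is a summand of $\W^L_{\prec S}$.
\end{itemize}
By linearity, $\mathcal K_L\cdot\W^L_{\prec S}\subset\W^L_{\prec S}$.

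Finally, the induced action on the quotient is well defined: if $[w]=[w']$, then $w-w'\in\W^L_{\prec S}$, so $a(w-w')\in\W^L_{\prec S}$ and hence $[aw]=[aw']$. The module axioms on the quotient follow from those on $\W^L_S$. None of these steps is a genuine obstacle; the only point needing care is the bookkeeping of the zero convention for mismatched sign strings, which is what allows \cref{lem: ideal property} to be applied summand by summand.
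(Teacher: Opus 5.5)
Your proposal is correct and follows the paper's argument. The paper likewise takes the left $\mathcal K_L$-module structure on $\W^L_S$ (given by postcomposition) as known, and uses \cref{lem: ideal property} to conclude that $\W^L_{\prec S}$ is stable under this action, so the quotient inherits the module structure. Your additional checks (the module axioms on $\W^L_S$ under the zero convention, and the well-definedness of $a\cdot[w]=[aw]$) just spell out steps the paper leaves implicit.
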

\begin{proof}
    Since $\W^L_S$ is a left $\mathcal K_L$-module and $\W^L_{\prec S}$ is its subspace, it suffices to show that $\W^L_{\prec S}$ is preserved by the left $\mathcal K_L$-action. By \cref{lem: ideal property}, the left $\mathcal K_L$-action on $\W^L_S$ indeed preserves $\W^L_{\prec S}$.
\end{proof}

\begin{remark}
    Let $T$ and $S$ be two sign strings. The term ``clasped web space'' was introduced in \cite{kuperberg_1996_spiders} to describe the image of a special idempotent called the clasp (a rank-two analogue of the Jones-Wenzl projector) on the space $\W^T_S$. In fact, this image can be identified with $\W^T_{[S]}:=\W^T_S/\W^T_S(\prec\lambda)$, but we omit the proof.
\end{remark}

\begin{lemma}\label{lem: isomorphism by H}
    Let $S=(s_1,\dots,s_n)$ where $s_i \neq s_{i+1}$ for some $i$, and let 
    $$\sigma_i(S)=(s_1,\dots,s_{i-1},s_{i+1},s_i,s_{i+2},\dots,s_n)$$
    be the sign string with the $i$-th and $(i+1)$-th entries permuted. Then the map 
    \begin{align}
        \theta_i^S: \W^L_S & \to \W^L_{\sigma_i(S)} \\
        w & \mapsto w \circ (\id_{(s_1,\dots,s_{i-1})} \otimes H^{s_{i+1}} \otimes \id_{(s_{i+2},\dots,s_n)})
    \end{align}
    is a morphism of $\mathcal{K}_L$-modules, where the webs $H^\pm$ are introduced in \Cref{fig: generating set}. Moreover, it descends to an isomorphism $\W^L_{[S]} \to \W^L_{[\sigma_i(S)]}$ of $\mathcal{K}_L$-modules.
\end{lemma}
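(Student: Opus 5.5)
The plan is to check three things in turn: that $\theta_i^S$ is $\mathcal K_L$-linear, that it descends to the clasped quotients, and that it has an inverse modulo lower-weight terms. Throughout, write $h_i^{s}:=\id_{(s_1,\dots,s_{i-1})}\otimes H^{s}\otimes\id_{(s_{i+2},\dots,s_n)}$, which lies in $\W^{S}_{\sigma_i(S)}$ when $s=s_{i+1}$.

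\emph{Module morphism.} The left $\mathcal K_L$-action on $\W^L_S$ is postcomposition, while $\theta_i^S$ is precomposition by $h_i^{s_{i+1}}$. By associativity of composition in $\Sp$, $a\circ(w\circ h)=(a\circ w)\circ h$ for every $a\in\mathcal K_L$, so $\theta_i^S$ is a morphism of $\mathcal K_L$-modules.

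\emph{Descent to the quotients.} Since $\sigma_i$ only permutes signs, $\wt(\sigma_i(S))=\wt(S)$ by \cref{def: weight of sign string}. Take a web $uv$ with $v\in\W^M_S$, $u\in\W^T_M$ and $\wt(M)\prec\wt(S)$. Then $\theta_i^S(uv)=u\,(v\,h_i^{s_{i+1}})$ still factors through $M$. This is exactly \cref{lem: ideal property} with $\lambda=\wt(S)$, so $\theta_i^S(\W^L_{\prec S})\subset \W^L_{\prec\sigma_i(S)}$. Hence $\theta_i^S$ induces a $\mathcal K_L$-module map $\W^L_{[S]}\to\W^L_{[\sigma_i(S)]}$.

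\emph{Isomorphism.} I will show that $\theta_i^{\sigma_i(S)}$ induces the inverse. The composite $\theta_i^{\sigma_i(S)}\circ\theta_i^S$ is precomposition by $h_i^{s_{i+1}}\circ h_i^{s_i}$. On strands $i,i+1$ this is a product of two $H$-webs of opposite types. By the relation $\TL{\pm}{\pm}=H^\mp H^\pm-\id_{\pm\mp}$ in \eqref{eqn: relations in KL(q)}, it equals the identity plus an $E$-web. The $E$-web is a cap followed by a cup on those two strands, so it factors through the sign string $S'$ obtained from $S$ by deleting the pair $\{+,-\}$. Then
\begin{align*}
\wt(S')=\wt(S)-\lambda_1-\lambda_2\prec\wt(S),
\end{align*}
because $a\lambda_1+b\lambda_2\succ(a-2)\lambda_1+(b+1)\lambda_2\succ(a-1)\lambda_1+(b-1)\lambda_2$ by the two generating relations \eqref{eqn: partial ordering}. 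Therefore the composite is the identity modulo $\W^L_{\prec S}$, and symmetrically the other composite is the identity modulo $\W^L_{\prec\sigma_i(S)}$. This proves that the induced map $\W^L_{[S]}\to\W^L_{[\sigma_i(S)]}$ is an isomorphism.

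\emph{Main obstacle.} The delicate step is the orientation bookkeeping. One must confirm that the two $H$-webs that actually compose here, namely $H^{s_{i+1}}$ followed by $H^{s_i}$ with the sign conventions of \cref{fig: generating set}, are precisely those covered by the digon/square computation behind $\TL{\pm}{\pm}=H^\mp H^\pm-\id_{\pm\mp}$. One must also confirm that the coefficient of the identity term is $1$, or at worst a nonzero scalar such as a power of $-[2]_q$, which would still give an isomorphism after rescaling. I would verify this by composing the two webs directly: the middle region between them is a square face, and one resolution of the square relation in \cref{fig: Kuperberg relations} yields the two parallel strands, while the other yields the cap-cup web.
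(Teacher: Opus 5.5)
Your proposal is correct and follows the paper's proof essentially step for step. Linearity holds because precomposition commutes with the postcomposition action. The map descends via \cref{lem: ideal property} since $\wt(\sigma_i(S))=\wt(S)$. Invertibility holds because $H^{s_{i+1}}H^{s_i}$ equals the identity plus a cap--cup web (square relation, coefficient $1$) that factors through a string of weight $\wt(S)-\lambda_1-\lambda_2\prec\wt(S)$. Your explicit chain of generating relations for that inequality and your treatment of both composites fill in details the paper leaves implicit. Your worry about the identity coefficient is already settled, since both resolutions in the square relation carry coefficient $1$.
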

\begin{proof}
    Precomposition naturally commutes with the $\mathcal{K}_L$-action on $\W^L_S$ because the latter is defined by postcomposition, so $\theta_i^S$ is a $\mathcal{K}_L$-module morphism. Set $\lambda=\wt(S)=\wt(\sigma_i(S))$. By \cref{lem: ideal property}
    $\theta_i^S(\W_S^{L}(\prec \lambda))\subset
    \W_{\sigma_i(S)}^{L}(\prec \lambda)$,
    so $\theta_i^S$ descends to a $\mathcal{K}_L$-morphism
    $\W^L_{[S]} \to \W^L_{[\sigma_i(S)]}$. It remains to check that it
    is invertible, to this end we have:
    \begin{align}\label{eqn: involutive property of theta}
        \theta_i^{\sigma_i(S)}(\theta_i^S(w)) & = w \circ (\id_{(s_1,\dots,s_{i-1})} \otimes (H^{s_{i+1}}H^{s_{i}}) \otimes \id_{(s_{i+2},\dots,s_n)}) \notag \\
        & = w + w \circ (\id_{(s_1,\dots,s_{i-1})} \otimes \TL{s_i}{-s_i} \otimes \id_{(s_{i+2},\dots,s_n)}),
    \end{align}
where the first equality is by definition of $\theta_i^S$, and the second by the Kuperberg square relation $H^{s_{i+1}}H^{s_i} = \id_{s_i} + \TL{s_i}{-s_i}$. Since the operator $\TL{s_i}{s_{i+1}}$ factorizes into $\ccup_{s_i} \circ \ccap_{s_{i+1}}$, it is straightforward to see that the second term of \eqref{eqn: involutive property of theta} factorizes through a sign string
\begin{align*}
    T=(s_1,\dots,s_{i-1},s_{i+2},\dots,s_n),
\end{align*}
where the $i$-th and $(i+1)$-th entries are removed. By definition \eqref{eqn: partial ordering} of the partial ordering on weights, we have $\wt(T) \prec \wt(S) = \lambda$. Hence, the second term of \eqref{eqn: involutive property of theta} sits in the kernel $\W^L_S(\prec \lambda)$ of the quotient map $\W^L_S \to \W^L_{[S]}$. Therefore, $\theta_i^{\sigma_i(S)}\theta_i^S=\id$ on $\W^L_{[S]}$.
\end{proof}

\begin{corollary}\label{cor: most clasped modules are isomorphic}
    Let $S,T$ be sign strings such that $\wt(S)=\wt(T)$. Then $\W^L_{[S]}\cong \W^L_{[T]}$ as $\mathcal K_L$-modules.
\end{corollary}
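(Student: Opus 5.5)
The plan is to connect $S$ and $T$ by a chain of adjacent transpositions of unequal neighbouring signs and compose the isomorphisms of \cref{lem: isomorphism by H}. By \cref{def: weight of sign string}, $\wt(S)=\lambda_1\cdot\#\{m:s_m=+\}+\lambda_2\cdot\#\{m:s_m=-\}$. Since $\lambda_1,\lambda_2$ are linearly independent, $\wt(S)=\wt(T)$ forces $S$ and $T$ to have the same number of $+$ signs and the same number of $-$ signs. In particular $|S|=|T|$, and $T$ is a rearrangement of $S$.

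First I will show that any two sign strings with the same numbers of $+$ and $-$ are related by a finite sequence of swaps $\sigma_i$, each exchanging two \emph{unequal} neighbouring entries. I will use bubble sort: move every $+$ to the left, so that both $S$ and $T$ are carried to the canonical string $(+,\dots,+,-,\dots,-)$. Each bubble-sort step exchanges an adjacent pair $(-,+)$, which is exactly the hypothesis $s_i\neq s_{i+1}$ of \cref{lem: isomorphism by H}.

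Write the resulting chains as $S=S_0,S_1,\dots,S_k$ (the canonical string) and $T=T_0,\dots,T_l=S_k$. Each step gives an isomorphism $\W^L_{[S_{j}]}\to\W^L_{[S_{j+1}]}$ of $\mathcal K_L$-modules induced by $\theta_i^{S_j}$, and similarly for the $T_j$. Composing the first chain with the inverse of the second yields the desired isomorphism $\W^L_{[S]}\cong\W^L_{[T]}$.

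The only point requiring care is that \cref{lem: isomorphism by H} applies at every intermediate step. This holds because each $S_j$ still satisfies the swap hypothesis at the chosen position, and the weight is unchanged along the chain. Consequently the quotients $\W^L_{\prec S_j}$ are all defined with respect to the same weight $\lambda=\wt(S)$, which is the situation the lemma handles. I expect no real obstacle here; the proof is essentially the observation that adjacent transpositions of distinct letters generate all rearrangements of a word with fixed letter content.
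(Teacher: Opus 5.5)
Your proposal is correct and matches the paper's proof: equal weights force equal numbers of $+$ and $-$ signs, so $S$ and $T$ are related by adjacent transpositions of unequal neighbouring signs, and composing the isomorphisms of \cref{lem: isomorphism by H} gives the result. Your bubble-sort reduction to the canonical string $(+,\dots,+,-,\dots,-)$ simply makes explicit what the paper states as ``related by a sequence of permutations.''
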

\begin{proof}
    Note that $S$ and $T$ in this case have the same number of plus and minus signs, hence they are related by a sequence of permutations. We thus apply the isomorphism in \cref{lem: isomorphism by H} accordingly to obtain the claimed isomorphism.
\end{proof}

In the remainder of the subsection, we determine a basis of $\W^L_{[S]}$. 

\begin{lemma}\label{lem: basis decomposition}
    Let $P\in \Path_T^+(\mu)$ and $Q\in \Path_S^+(\mu)$. Let $Z$ be the bottom sign string of $G(P)$. Then $C^\mu_{P,Q} = G(P) \circ C^\mu_{P_Z^{\mathrm{hw}},Q}$, where $P_Z^{\mathrm{hw}}=(Z,(1,\dots,1))$.
\end{lemma}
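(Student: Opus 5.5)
The plan is to unwind the definition $C^\mu_{P,Q}=\Fold_S(G(PQ^\vee))$ and apply \cref{lem: web in web} to the concatenated path $PQ^\vee$. Since $P\in\Path^+_T(\mu)$ and $Q\in\Path^+_S(\mu)$, \cref{lem: PQ-dual is an excursion} shows that $PQ^\vee$ is dominant with weight $0$. \Cref{lem: web in web}, applied with $R=Z$ the bottom sign string of $G(P)$, then gives
\begin{align*}
G(PQ^\vee)=(G(P)\otimes \id_{S^\vee})\circ G(P_Z^{\mathrm{hw}}Q^\vee).
\end{align*}
This factorisation is exactly the second picture in \cref{fig: relation between old and new grown web}.

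Next I apply $\Fold_S$, which by \cref{def: folding and duality} is $w\mapsto(\id_T\otimes\ccap_S)\circ(w\otimes\id_S)$. Substituting the factorisation gives
\begin{align*}
(\id_T\otimes\ccap_S)\circ(G(P)\otimes\id_{S^\vee}\otimes\id_S)\circ(G(P_Z^{\mathrm{hw}}Q^\vee)\otimes\id_S).
\end{align*}
The cap $\ccap_S$ acts only on the $S^\vee S$ strands, while $G(P)$ acts only on the $Z$ strands. By the interchange law of the strict monoidal category $\Sp$ (planar isotopy), I can slide $G(P)$ above the cap. This rewrites the expression as
\begin{align*}
G(P)\circ(\id_Z\otimes\ccap_S)\circ(G(P_Z^{\mathrm{hw}}Q^\vee)\otimes\id_S)=G(P)\circ\Fold_S(G(P_Z^{\mathrm{hw}}Q^\vee)).
\end{align*}

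Finally I identify the right-hand factor with $C^\mu_{P_Z^{\mathrm{hw}},Q}$. This needs $P_Z^{\mathrm{hw}}\in\Path^+_Z(\mu)$. Dominance holds because all of its steps are $\lambda_1$ or $\lambda_2$. Its weight is $\wt(Z)=\mu$ by \cref{lem: weight domination} together with \cref{lem: growth preserves weight}. So $C^\mu_{P_Z^{\mathrm{hw}},Q}=\Fold_S(G(P_Z^{\mathrm{hw}}Q^\vee))$ is well defined, and this completes the proof.

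The only point needing care is the bookkeeping of sign strings in the interchange step. The top of $G(P_Z^{\mathrm{hw}}Q^\vee)$ is $ZS^\vee$, and $\ccap_S\in\Web^{\emptyset}_{S^\vee S}$, so the cap matches. I do not expect a genuine obstacle: the content of the statement is already carried by \cref{lem: web in web}.
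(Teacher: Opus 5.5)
Your proof is correct and follows the same route as the paper: expand $C^\mu_{P,Q}=\Fold_S(G(PQ^\vee))$, apply \cref{lem: web in web}, and slide $G(P)$ past the cap by isotopy, which the paper does pictorially in \cref{fig: cellular basis decomposition proof}. Your explicit check that $P_Z^{\mathrm{hw}}\in\Path^+_Z(\mu)$, via \cref{lem: growth preserves weight}, supplies a detail the paper leaves implicit.
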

\begin{proof}

By definition $C^\mu_{P,Q}=\Fold_S(G(PQ^\vee))$, and the proof is illustrated in \cref{fig: cellular basis decomposition proof}.

\begin{figure}[htbp]
    \centering
    \begingroup
    \scalebox{0.8}{\begin{tikzpicture}
        \begin{scope}[shift={(0,0)}]
            \draw[thick, dashed] (-1.2, 0) rectangle (1.2, 1.2);
            \node at (0, 0.6) {$G(PQ^\vee)$};

            \draw (-0.6, 1.2) -- (-0.6, 2.6);
            \node[above] at (-0.6, 2.6) {$S_P$};

            \draw (0.6, 1.2) -- (0.6, 2.0) to[out=90, in=90, looseness=1.5] (2.2, 2.0) -- (2.2, -0.5);
            \node[above] at (1.4, 2.7) {$\ccap_{S_Q}$};
            \node[below] at (2.2, -0.5) {$S_Q$};

            \node at (0.5, -1.8) {$\Fold_{S_Q}(G(PQ^\vee))$};
        \end{scope}

        \node at (2.8, 1.2) {$=$};

        \begin{scope}[shift={(4.5,0)}]
            \draw[thick, dashed] (-1.2, 0) rectangle (1.2, 0.8);
            \node at (0, 0.4) {$G(P_Z^{\mathrm{hw}} Q^\vee)$};

            \draw[thick, dashed] (-1.2, 1.4) rectangle (0, 2.4);
            \node at (-0.6, 1.9) {$G(P)$};

            \draw (-0.6, 0.8) -- (-0.6, 1.4);

            \draw (-0.6, 2.4) -- (-0.6, 3.4);
            \node[above] at (-0.6, 3.4) {$S_P$};

            \draw (0.6, 0.8) -- (0.6, 2.8) to[out=90, in=90, looseness=1.5] (2.2, 2.8) -- (2.2, -0.5);
            \node[right] at (0.6, 1.8) {$\id_{S_Q^\vee}$};
            \node[above] at (1.4, 3.5) {$\ccap_{S_Q}$};
            \node[below] at (2.2, -0.5) {$S_Q$};

            \node at (0.5, -1.8) {Apply \cref{lem: web in web}};
        \end{scope}

        \node at (7.6, 1.2) {$=$};

        \begin{scope}[shift={(9.6,0)}]
            \draw[thick, dashed] (-1.2, 0) rectangle (1.2, 0.8);
            \node at (0, 0.4) {$G(P_Z^{\mathrm{hw}} Q^\vee)$};

            \draw[thick, dashed] (-1.2, 2.0) rectangle (0, 3.0);
            \node at (-0.6, 2.5) {$G(P)$};

            \draw (-0.6, 0.8) -- (-0.6, 2.0);

            \draw (-0.6, 3.0) -- (-0.6, 3.8);
            \node[above] at (-0.6, 3.8) {$S_P$};

            \draw (0.6, 0.8) -- (0.6, 1.1) to[out=90, in=90, looseness=1.5] (2.2, 1.1) -- (2.2, -0.5);
            \node[above] at (1.4, 1.6) {$\ccap_{S_Q}$};
            \node[below] at (2.2, -0.5) {$S_Q$};

            \node at (0.5, -1.8) {Isotopy};
        \end{scope}

        \node at (12.6, 1.2) {$=$};

        \begin{scope}[shift={(14.8,0)}]
            \draw[thick, dashed] (-1.2, 0) rectangle (1.2, 1.0);
            \node at (0, 0.4) {$C^\mu_{P_Z^{\mathrm{hw}}, Q}$};

            \draw (0, -0.5) -- (0, 0);
            \node[below] at (0, -0.5) {$S_Q$};

            \draw[thick, dashed] (-1.2, 1.4) rectangle (1.2, 2.4);
            \node at (0, 1.9) {$G(P)$};

            \draw (0, 1.0) -- (0, 1.4);

            \draw (0, 2.4) -- (0, 3.4);
            \node[above] at (0, 3.4) {$S_P$};

            \node at (0, -1.8) {$G(P) \circ C^\mu_{P_Z^{\mathrm{hw}},Q}$};
        \end{scope}

    \end{tikzpicture}}
    \endgroup
    \caption{Proof of $C^\mu_{P,Q} = G(P) \circ C^\mu_{P_Z^{\mathrm{hw}},Q}$.}
    \label{fig: cellular basis decomposition proof}
\end{figure}
\end{proof}
Recall the equivalence functor $\Gamma$ from \cref{sec: equivalence Sp to Rep} and $\Unfold$ from \cref{def: folding and duality}.
\begin{lemma}\label{lem: lower-weight tensor support}
    Let $S,T$ be sign strings, $\lambda\in\Lambda$, and
    $w\in\W^T_S(\prec\lambda)$. Write
    \begin{align}\label{eqn: unfold expansion}
        \Gamma(\Unfold_S(w))
        =\sum_{\substack{A\in\Path_T\\
                        B\in\Path_{S^\vee}}}
          \kappa_{A,B}\,v_A\otimes v_B,
    \end{align}
    where $v_A\in V^T$ and $v_B\in V^{S\vee}$ are defined in \eqref{def(eqn): vP}. Then
    \begin{align}\label{eqn: lower-weight tensor support}
        \kappa_{A,B}=0
        \qquad\text{unless}\qquad
        \wt(A)\prec\lambda.
    \end{align}
\end{lemma}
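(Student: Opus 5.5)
The plan is to reduce by linearity to a single factorized web and then read off weights through the $\uqslt$-equivariance of $\Gamma$. Take $w=uv$ with $v\in\W^M_S$, $u\in\W^T_M$ and $\wt(M)\prec\lambda$. By \cref{lem: inserting zigzag into a product},
\begin{align*}
    \Unfold_S(uv)=(u\otimes v^\vee)\circ\ccup_M .
\end{align*}
Applying the monoidal functor $\Gamma$ gives $\Gamma(\Unfold_S(w))=(\Gamma(u)\otimes\Gamma(v^\vee))\circ\Gamma(\ccup_M)$. Hence the tensor factor in $V^T$ lies in the image of the $\uqslt$-intertwiner $\Gamma(u):V^M\to V^T$. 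More precisely, expand
\begin{align*}
    \Gamma(\ccup_M)(1)=\sum_{C,D}\beta_{C,D}\,v_C\otimes v_D
\end{align*}
with $v_C\in V^M$ and $v_D\in V^{M^\vee}$. Then the element $\Gamma(\Unfold_S(w))$ equals $\sum\beta_{C,D}\,\Gamma(u)(v_C)\otimes\Gamma(v^\vee)(v_D)$.

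The second step is to bound the weights that occur in $\Gamma(u)(v_C)$. The tensor basis vector $v_C$ is a weight vector of weight $\wt(C)$, and $\wt(C)\preceq\wt(M)$ by \cref{lem: weight domination}. Since $\Gamma(u)$ is $\uqslt$-equivariant, it commutes with the Cartan part and so preserves weight spaces. Therefore $\Gamma(u)(v_C)$ lies in the $\wt(C)$-weight space of $V^T$. That weight space is spanned by the $v_A$ with $\wt(A)=\wt(C)$, because each $v^\pm_j$ is a weight vector of weight $u_{\pm,j}$ (appendix~\ref{app: fundamental reps of uqslt}). So every $A$ with $\kappa_{A,B}\neq0$ satisfies $\wt(A)=\wt(C)\preceq\wt(M)\prec\lambda$, and hence $\wt(A)\prec\lambda$ by transitivity of the order generated by \eqref{eqn: partial ordering}. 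Summing over the factorizations that span $\W^T_S(\prec\lambda)$ gives the lemma for general $w$.

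The only real obstacle is a bookkeeping check: the weight of the tensor vector $v_A$ must coincide with the path weight $\wt(A)=\sum_m u_{s_m,j_m}$. This depends on the conventions in the appendix, namely that $v^{s}_j$ has $\mathfrak{sl}_3$-weight exactly $u_{s,j}$. I would verify this directly against \cref{def: steps}. One can also cross-check it with \cref{thm: unitriangularity}, where invariants have support only on $v_Q$ with $Q\in\Path_S(0)$, which forces the weight of $v_Q$ to be $\wt(Q)$. Once this is confirmed, the argument is purely formal.
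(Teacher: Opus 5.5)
Your proposal is correct and follows the paper's proof essentially step for step: reduce by linearity to $w=uv$ factoring through $M$ with $\wt(M)\prec\lambda$, unfold via \cref{lem: inserting zigzag into a product}, then use that the intertwiner $\Gamma(u)$ preserves weight spaces together with $\wt(C)\preceq\wt(M)$ from \cref{lem: weight domination}. The bookkeeping point you flag does hold, and the paper uses it implicitly: the $K_1,K_2$ matrices in appendix~\ref{app: fundamental reps of uqslt} give each $v^{\pm}_j$ weight exactly $u_{\pm,j}$, and $\Delta(K_i)=K_i\otimes K_i$ makes weights additive, so $v_A$ has weight $\wt(A)$.
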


\begin{proof}
    By the definition of $\W^T_S(\prec\lambda)$ and
    linearity, it suffices to consider
    \[
        w=x\circ y,\qquad
        y\in\W^M_S,\quad x\in\W^T_M,\quad
        \wt(M)\prec\lambda.
    \]
    By \cref{lem: inserting zigzag into a product},
    \begin{align}\label{eqn: unfold uv}
        \Gamma(\Unfold_S(w))
        =(\Gamma(x)\otimes\Gamma(y^\vee))
          \Gamma(\ccup_M).
    \end{align}
    Since $\ccup_M$ is a nested family of $\ccup_\pm$, and $\Gamma(\ccup_\pm)$ is the coevaluation $\C(q)\to V^\pm\otimes V^\mp$ we can write
    \[
        \Gamma(\ccup_M)
        =\sum_{D\in\Path_M}v_D\otimes z_D,
        \qquad z_D\in V^{M^\vee},
    \]
    where $v_D$ is from \eqref{def(eqn): vP}. Consequently,
    \[
        \Gamma(\Unfold_S(w))
        =\sum_{D\in\Path_M}
          \Gamma(x)(v_D)\otimes\Gamma(y^\vee)(z_D).
    \]
    Since $\Gamma(x):V^M\to V^T$ is an intertwiner,
    it preserves $\mathfrak{sl}_3$-weight spaces. Thus each nonzero
    $\Gamma(x)(v_D)$ has the $\mathfrak{sl}_3$-weight equal $\wt(D)$ satisfying
    \[
        \wt(D)\preceq\wt(M)\prec\lambda,
    \]
    where the first inequality follows from
    \cref{lem: weight domination}.
    Expanding these images in the tensor basis of
    $V^T$ proves \eqref{eqn: lower-weight tensor support} and concludes the proof.
\end{proof}

\begin{proposition}\label{prp: lower-weight web basis}
    Let $S,T$ be sign strings and let $\lambda\in\Lambda_+$. A basis of $\W^T_S(\prec\lambda)$ from \eqref{eqn: lower weight submodule} is
    \begin{align*}
        \Web^T_S(\prec\lambda):=\{C^\mu_{P,Q}:P\in\Path_T^+(\mu),\ Q\in\Path_S^+(\mu),\ \mu\prec\lambda\}.
    \end{align*}
\end{proposition}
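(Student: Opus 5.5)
We show two inclusions plus linear independence. Linear independence is immediate: by \cref{lem: basic structure theorem}, the map $(P,Q)\mapsto C^\mu_{P,Q}$ is a bijection onto the basis $\Web^T_S$ of $\W^T_S$. Hence $\Web^T_S(\prec\lambda)$ is a subset of a basis, and it remains to show that its span equals $\W^T_S(\prec\lambda)$.

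\emph{Span of $\Web^T_S(\prec\lambda)$ is contained in $\W^T_S(\prec\lambda)$.} Let $\mu\prec\lambda$, $P\in\Path_T^+(\mu)$, $Q\in\Path_S^+(\mu)$, and let $Z$ be the bottom sign string of $G(P)$. By \cref{lem: basis decomposition}, $C^\mu_{P,Q}=G(P)\circ C^\mu_{P_Z^{\mathrm{hw}},Q}$, so this web factors through $Z$. By \cref{lem: growth preserves weight}, $\wt(Z)=\mu\prec\lambda$, which places $C^\mu_{P,Q}$ in $\W^T_S(\prec\lambda)$.

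\emph{Reverse inclusion.} Let $w\in\W^T_S(\prec\lambda)$ and expand it in the basis of \cref{lem: basic structure theorem}:
\[
w=\sum_{\mu}\sum_{P,Q}c_{P,Q}\,C^\mu_{P,Q}.
\]
We must show $c_{P,Q}=0$ whenever $\mu\not\prec\lambda$. Unfolding gives $\Unfold_S(C^\mu_{P,Q})=\Unfold_S\Fold_S(G(PQ^\vee))=G(PQ^\vee)$ by \cref{lem: folding is invertible}. Hence
\[
\Gamma(\Unfold_S(w))=\sum c_{P,Q}\,\Gamma(G(PQ^\vee)).
\]
By \cref{thm: unitriangularity}, each $\Gamma(G(PQ^\vee))$ equals $v_{PQ^\vee}=v_P\otimes v_{Q^\vee}$ plus lower terms in the path order.

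Suppose some coefficient with $\mu\not\prec\lambda$ is nonzero, and among all such pairs choose $(P,Q)$ with $PQ^\vee$ maximal in the lexicographic order on state strings for the fixed sign string $ST^\vee$. We then want to read off the coefficient of $v_P\otimes v_{Q^\vee}$ in $\Gamma(\Unfold_S(w))$. Here $\wt(P)=\mu\not\prec\lambda$, so \cref{lem: lower-weight tensor support} forces this coefficient to vanish.

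\emph{Main obstacle.} The coefficient of $v_{PQ^\vee}$ also receives contributions from terms with $\mu'\prec\lambda$ whose grown paths $P'Q'^\vee$ are larger than $PQ^\vee$, and whose lower-order tails hit $v_{PQ^\vee}$. This contaminates the extraction.

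To avoid this, we first subtract the part already known to lie in the ideal. Set $w'=w-\sum_{\mu\prec\lambda}c_{P,Q}C^\mu_{P,Q}$. By the first inclusion, $w'\in\W^T_S(\prec\lambda)$, and $w'$ is supported only on pairs with $\mu\not\prec\lambda$. Now take the maximal $PQ^\vee$ in the support of $w'$. By unitriangularity, the coefficient of $v_{PQ^\vee}$ in $\Gamma(\Unfold_S(w'))$ is exactly $c_{P,Q}\neq0$, because all other terms of $w'$ are strictly lower at that position. Since $\wt(P)=\mu\not\prec\lambda$, this contradicts \cref{lem: lower-weight tensor support}, where $v_A=v_P$ with $A=P\in\Path_T$. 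Therefore $w'=0$, and $w$ lies in the span of $\Web^T_S(\prec\lambda)$.

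Along the way we need to check the identification $v_{PQ^\vee}=v_P\otimes v_{Q^\vee}$ with $Q^\vee\in\Path_{S^\vee}$; this follows directly from \eqref{def(eqn): vP} and the concatenation rule.
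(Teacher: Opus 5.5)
Your proof is correct and follows essentially the same route as the paper. Linear independence comes from being a subset of the Kuperberg basis. Inclusion comes from the factorization through $Z$ given by \cref{lem: basis decomposition}. Spanning is a maximal-$PQ^\vee$ unitriangularity argument combined with \cref{lem: lower-weight tensor support}. Forming $w'$ is equivalent to what the paper does, namely applying \cref{lem: lower-weight tensor support} separately to $w$ and to each $C^\mu_{P,Q}$ with $\mu\prec\lambda$. That same lemma shows the contamination you flag as the ``main obstacle'' is in fact zero. One small slip: the relevant sign string is $TS^\vee$, not $ST^\vee$.
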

\begin{proof}
    Every web $C^\mu_{P,Q}\in\Web^T_S(\prec\lambda)$ belongs to $\W^T_S(\prec\lambda)$. Indeed, \cref{lem: basis decomposition} gives a factorization $C^\mu_{P,Q}=G(P)\circ C^\mu_{P_Z^{\mathrm{hw}},Q}$, where $Z$ is the bottom boundary of $G(P)$. By \cref{lem: growth preserves weight}, $\wt(Z)=\wt(P)=\mu\prec\lambda$. This is a factorization of the form defining $\W^T_S(\prec\lambda)$. The set $\Web^T_S(\prec\lambda)$ is linearly independent in $\W^T_S(\prec\lambda)$, because it is a subset of the monomial web basis $\Web^T_S$ from \cref{lem: basic structure theorem}.

    Now we prove spanning. Let $w\in\W^T_S(\prec\lambda)\subset \W^T_S$, and expand it in basis $W^T_S$ from \cref{lem: basic structure theorem}:
\begin{align}\label{eqn: expansion of w}
    w =
    \sum_{\substack{\mu\in\Lambda_+\\ \mu\prec\lambda}}
    \sum_{\substack{P\in\Path_T^+(\mu)\\
                    Q\in\Path_S^+(\mu)}}
    \gamma_{P,Q}C^\mu_{P,Q}
    \quad+
    \sum_{\substack{\eta\in\Lambda_+\\ \eta\not\prec\lambda}}
    \sum_{\substack{P\in\Path_T^+(\eta)\\
                    Q\in\Path_S^+(\eta)}}
    \varepsilon_{P,Q}C^\eta_{P,Q}.
\end{align}
    Here $\eta \not\prec \lambda$ means either $\lambda\preceq\eta$, or they are incomparable.
    \textbf{Claim:} $\varepsilon_{P,Q}=0$ for all $(P,Q)$ in the second sum in \eqref{eqn: expansion of w}. This claim proves spanning.

    By \cref{def: pairing map C,lem: folding is invertible}, for all $(\eta,P,Q)$ appearing in \eqref{eqn: expansion of w}, $\Unfold_S(C^\eta_{P,Q})=G(PQ^\vee)$. By \cref{lem: PQ-dual is an excursion}, $PQ^\vee\in \Path^+_{TS^\vee}(0)$, hence \cref{thm: unitriangularity} applied to $\Gamma(\Unfold_S(C^\eta_{P,Q}))$ gives
    \begin{align}\label{triangularity of summand}
        \Gamma(\Unfold_S(C^\eta_{P,Q}))
        =v_P\otimes v_{Q^\vee}
        +\sum_{\substack{R\in\Path_{TS^\vee}(0)\\
                        R<PQ^\vee}}
          \alpha_Rv_R,
    \end{align}
    where $\alpha_R\in \N[q,q^{-1}]$ are scalars. We prove the claim by contradiction. Assume there are coefficients $\varepsilon_{P,Q}\neq0$. Among all such pairs $(P,Q)$, choose $(P_0,Q_0)$ for which $P_0Q_0^\vee$ is largest in the lexicographical order
    of \cref{def: order on paths}. Since the lexicographical order is a total order on $\Path^+_{TS^\vee}(0)$, there is a unique choice of $(P_0,Q_0)$.

    Apply $\Gamma\circ\Unfold_S$ to \eqref{eqn: expansion of w}, we get an identity in $\Inv_{\uqslt}(V^T\otimes V^{S^\vee})$:
    \begin{align}\label{eqn: unfolded expansion of w}
    \Gamma\circ\Unfold_S(w) =
    \sum_{\substack{\mu\in\Lambda_+\\ \mu\prec\lambda}}
    \sum_{\substack{P\in\Path_T^+(\mu)\\
                    Q\in\Path_S^+(\mu)}}
    \gamma_{P,Q}\Gamma\circ\Unfold_S(C^\mu_{P,Q})
    \quad+
    \sum_{\substack{\eta\in\Lambda_+\\ \eta\not\prec\lambda}}
    \sum_{\substack{P\in\Path_T^+(\eta)\\
                    Q\in\Path_S^+(\eta)}}
    \varepsilon_{P,Q}\Gamma\circ\Unfold_S(C^\eta_{P,Q}).
\end{align}
    We consider the coefficient of $v_{P_0}\otimes v_{Q_0^\vee}\in V^T\otimes V^{S^\vee}$ on both sides: 
    \begin{itemize}
        \item On the right hand side, this coefficient is exactly $\varepsilon_{P_0,Q_0}$. Indeed, the first sum in \eqref{eqn: unfolded expansion of w} contributes zero to the coefficient, because $C^\mu_{P,Q}\in \W^T_S(\prec\lambda)$ as we have shown, and \cref{lem: lower-weight tensor support} shows that $\Gamma\circ\Unfold_S(C^\mu_{P,Q})$ contributes zero to $v_{P_0}\otimes v_{Q_0^\vee}$ since $\wt(P_0)\not\prec\lambda$. Maximality of $P_0Q_0^\vee$ and \eqref{triangularity of summand} then show that the second sum contributes $\varepsilon_{P_0,Q_0}\neq0$ to the coefficient.

        \item On the left hand side, since $w\in \W^T_S(\prec \lambda)$ and $\wt(P_0)\not\prec\lambda$, \cref{lem: lower-weight tensor support} shows that this coefficient is zero. This contradicts equation \eqref{eqn: unfolded expansion of w}.
    \end{itemize}
    Finally, the contradiction shows that the claim holds, which gives spanning.
\end{proof}

Recall that $P^{\mathrm{hw}}_S$ is defined by \eqref{eqn: highest-weight path} and $\Path^+_L(\lambda)$ by \cref{def: lattice path}. 
\begin{proposition}\label{thm: basis of cell modules}
    Let $S$ be a sign string with weight $\lambda=\wt(S)$.  A basis of $\W^L_{[S]}$ defined in \eqref{eqn: clasped quotient} is
    \begin{align}\label{eqn: clasped web basis}
        \{[C^\lambda_{P,P_S^{\mathrm{hw}}}]:P\in\Path_L^+(\lambda)\},
    \end{align}
    where $C^\lambda_{P,P^{\mathrm{hw}}_S}\in \W^L_{S}$ is from \cref{def: pairing map C}.
\end{proposition}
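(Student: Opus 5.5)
The plan is to compute $\W^L_{[S]}$ directly from the monomial basis of $\W^L_S$ and the basis of the lower-weight subspace found in \cref{prp: lower-weight web basis}. By \cref{lem: basic structure theorem}, applied to each $T\in\Sigma_L$ with bottom string $S$, the space $\W^L_S=\bigoplus_{T}\W^T_S$ has basis
\[
\{C^\mu_{P,Q}: \mu\in\Lambda_+,\ P\in\Path_T^+(\mu),\ Q\in\Path_S^+(\mu),\ T\in\Sigma_L\}.
\]
By \cref{prp: lower-weight web basis} with $\lambda=\wt(S)$, the subspace $\W^L_{\prec S}=\bigoplus_T\W^T_S(\prec\wt(S))$ has as basis exactly the subset of these webs with $\mu\prec\lambda$. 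A quotient of a vector space by the span of a subset of a basis has, as basis, the classes of the complementary basis elements. So $\W^L_{[S]}$ has basis $\{[C^\mu_{P,Q}]\}$, indexed by triples with $\mu\not\prec\lambda$.

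The remaining step is to identify these complementary triples. Let $Q\in\Path_S^+(\mu)$. Then \cref{lem: weight domination} gives $\mu=\wt(Q)\preceq\wt(S)=\lambda$, with equality if and only if $Q=P_S^{\mathrm{hw}}$. Hence $\mu\not\prec\lambda$ forces $\mu=\lambda$ and $Q=P_S^{\mathrm{hw}}$. Conversely, $P^{\mathrm{hw}}_S$ is indeed dominant: all its steps are $\lambda_1$ or $\lambda_2$, so it stays in $\Lambda_+$. Therefore the complementary basis elements are exactly the webs $C^\lambda_{P,P_S^{\mathrm{hw}}}$ with $P\in\Path_T^+(\lambda)$ and $T\in\Sigma_L$.

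Finally, $\bigsqcup_{T\in\Sigma_L}\Path_T^+(\lambda)=\Path_L^+(\lambda)$ by \cref{def: lattice path}, so this index set is $\Path_L^+(\lambda)$, and the basis is the one in \eqref{eqn: clasped web basis}. I do not expect a real obstacle, since the heavy lifting, namely spanning of the lower-weight subspace by a sub-basis, is already done in \cref{prp: lower-weight web basis}. The one point to check is that the partial order is used consistently: "not $\prec\lambda$" combined with "$\preceq\lambda$" gives equality. This holds because $\prec$ is the strict part of $\preceq$.
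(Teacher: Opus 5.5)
Your proposal is correct and follows the paper's proof essentially step for step. Both take the monomial basis of $\W^L_S$ from the structure lemma and the sub-basis of $\W^L_{\prec S}$ from the lower-weight basis proposition, then use weight domination to force $\mu=\lambda$ and $Q=P_S^{\mathrm{hw}}$ on the complementary basis elements. Your two extra remarks, that $P_S^{\mathrm{hw}}$ is dominant and that $\prec$ is the strict part of the dominance order, are details the paper leaves implicit.
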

\begin{proof}
    The basis of $\W^L_S$ is $\{C^\mu_{P,Q}: \mu \in \Lambda_+(L), \mu \preceq \wt(S), P\in\Path_L^+(\mu),Q\in\Path_S^+(\mu)\}$. Here the constraint $\mu\preceq \wt(S)$ follows from \cref{lem: weight domination}, and guarantees the existence of the path $Q$. By \cref{prp: lower-weight web basis} with $\lambda=\wt(S)$, $\W^L_{\prec S}$ has a basis $\{C^\mu_{P,Q}: \mu \in \Lambda_+(L), \mu \prec \lambda, P\in\Path_L^+(\mu),Q\in\Path_S^+(\mu)\}$. For every remaining element, \cref{lem: weight domination} gives $\mu\preceq\wt(S)=\lambda$, so $\mu=\lambda$ and $Q=P_S^{\mathrm{hw}}$. The remaining basis classes are therefore \eqref{eqn: clasped web basis}.
\end{proof}

For each sign string $S$ with $\wt(S)=\lambda$, \cref{thm: basis of cell modules} gives a basis for the $\mathcal K_L$-module $\W^L_{[S]}$. Under this basis, for every $a\in\mathcal K_L$ and $P\in\Path_L^+(\lambda)$, we define the coefficients $r_{a,S}(P,P')\in\C(q)$ that satisfy
\begin{align}\label{eqn: boundary dependent action coefficients}
    aC^\lambda_{P,P_S^{\mathrm{hw}}}
    \equiv
    \sum_{P'\in\Path_L^+(\lambda)}
    r_{a,S}(P,P')C^\lambda_{P',P_S^{\mathrm{hw}}}
    \mod\W^L_{\prec S}.
\end{align}

\subsection{Action coefficients}\label{sec: alternative approach to cell modules}

The goal of this subsection is to show that the action coefficients in \eqref{eqn: boundary dependent action coefficients} do not depend on the sign string $S$. Hence we define
\begin{align}\label{eqn: boundary independent action coefficients}
    r_a(P,P'):=r_{a,S}(P,P'),
    \qquad P,P'\in\Path_L^+(\lambda),
\end{align}
where $\lambda=\wt(S)$. It eventually allows the identification of $\W^L_{[S]}$ as cell modules of $\mathcal K_L$.

\begin{definition}\label{def: H^S_T}
    Let $S,T$ be two sign strings such that $\wt(S)=\wt(T)$. We define $\mathcal{H}^S_{T}\in \Web^S_{T}$ by
    \begin{align}
    \mathcal{H}^S_{T} := C^{\wt(S)}_{P_S^{\mathrm{hw}}, P_T^{\mathrm{hw}}}.
    \end{align}
    Here $C^\lambda_{P,Q}$ is defined in \cref{def: pairing map C}, and $P_S^{\mathrm{hw}}$ from \eqref{eqn: highest-weight path}. See \cref{exp: H^S_T} for an example of $\mathcal H^S_T$.
\end{definition}

The main results of this subsection are \cref{thm: explicit form of H-isomorphism,prp: boundary independent action coefficients}:

\begin{proposition}\label{thm: explicit form of H-isomorphism}
    Let $\wt(S)=\wt(T)=\lambda$. There is a $\mathcal K_L$-module isomorphism
    \begin{align}\label{eqn: connector isomorphism}
        \theta_{S,T}:\W^L_{[S]}&\xrightarrow{\ \cong\ }\W^L_{[T]},
        & [w]&\longmapsto[w\mathcal H^S_T].
    \end{align}
    Moreover, for every $P\in\Path_L^+(\lambda)$,
    \begin{align}\label{eqn: H transport on basis}
        \theta_{S,T}\bigl([C^\lambda_{P,P_S^{\mathrm{hw}}}]\bigr)
        =[C^\lambda_{P,P_T^{\mathrm{hw}}}].
    \end{align}
\end{proposition}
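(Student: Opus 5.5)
The plan has four steps: show $\theta_{S,T}$ is a well-defined module map, establish the transport formula \eqref{eqn: H transport on basis}, reduce it to a statement about bottlenecks only, and deduce bijectivity from the bases.

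\textbf{Well-definedness.} Right multiplication by $\mathcal H^S_T\in\Web^S_T$ commutes with the left $\mathcal K_L$-action, since that action is postcomposition. By \cref{lem: ideal property} it sends $\W^L_{\prec S}=\bigoplus_{U}\W^U_S(\prec\lambda)$ into $\W^L_{\prec T}$, because $\wt(S)=\wt(T)=\lambda$. Hence $\theta_{S,T}$ is a well-defined $\mathcal K_L$-module map. Once \eqref{eqn: H transport on basis} is proved, $\theta_{S,T}$ maps the basis of \cref{thm: basis of cell modules} for $\W^L_{[S]}$ bijectively onto the corresponding basis of $\W^L_{[T]}$. It is therefore an isomorphism, and the whole statement reduces to \eqref{eqn: H transport on basis}.

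\textbf{Reduction to bottlenecks.} By \cref{lem: basis decomposition}, $C^\lambda_{P,P_S^{\mathrm{hw}}}=G(P)\circ C^\lambda_{P_Z^{\mathrm{hw}},P_S^{\mathrm{hw}}}=G(P)\circ\mathcal H^Z_S$, where $Z$ is the bottom sign string of $G(P)$ and $\wt(Z)=\lambda$ by \cref{lem: growth preserves weight}. Likewise $C^\lambda_{P,P_T^{\mathrm{hw}}}=G(P)\mathcal H^Z_T$. It therefore suffices to prove the bottleneck identity
\[
\mathcal H^Z_S\,\mathcal H^S_T\equiv \mathcal H^Z_T \mod \W^T_Z(\prec\lambda),
\]
for all $Z,S,T$ of weight $\lambda$. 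Given this identity, left multiplication by $G(P)$ and \cref{lem: ideal property} finish the argument.

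\textbf{The bottleneck identity.} Expand the product $\mathcal H^Z_S\mathcal H^S_T\in\W^T_Z$ in the basis of \cref{lem: basic structure theorem}. Every summand $C^\mu_{A,B}$ has $A\in\Path^+_Z(\mu)$, so $\mu\preceq\wt(Z)=\lambda$ by \cref{lem: weight domination}. Summands with $\mu\prec\lambda$ lie in $\W^T_Z(\prec\lambda)$ by \cref{prp: lower-weight web basis}. The only basis web with $\mu=\lambda$ is $C^\lambda_{P^{\mathrm{hw}}_Z,P^{\mathrm{hw}}_T}=\mathcal H^Z_T$. Hence
\[
\mathcal H^Z_S\mathcal H^S_T\equiv c\,\mathcal H^Z_T \mod \W^T_Z(\prec\lambda)
\]
for some scalar $c\in\C(q)$, and it remains to show $c=1$.

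\textbf{Computing $c$ (the main obstacle).} I would apply $\Gamma\circ\Unfold_Z$ and read off the coefficient of $v_{P^{\mathrm{hw}}_T}\otimes v_{(P^{\mathrm{hw}}_Z)^\vee}$, the extremal term given by \cref{thm: unitriangularity}. By \cref{lem: lower-weight tensor support}, terms in $\W^T_Z(\prec\lambda)$ contribute nothing to it, since $\wt(P^{\mathrm{hw}}_T)=\lambda$. By \cref{thm: unitriangularity} the right-hand side contributes exactly $c$. The difficulty is the left-hand side, i.e.\ the highest-weight matrix coefficient of $\Gamma(\mathcal H^Z_S)\Gamma(\mathcal H^S_T)$. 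For this I would use that $\Gamma(\mathcal H^S_T)$ sends the highest-weight vector $v_{P^{\mathrm{hw}}_T}$ (of weight $\lambda$, with a one-dimensional weight space) to $v_{P^{\mathrm{hw}}_S}$ with coefficient $1$; this again follows from unitriangularity after folding. Composing the two maps then gives $c=1$.

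If that highest-weight normalization turns out to be delicate, my fallback is a combinatorial route. Using the explicit description of $\mathcal H^S_T$ as a concatenation of $H^\pm$-webs (\cref{cor: wiring construction for H}), I would reduce the product via \cref{lem: isomorphism by H}: each relation $H^{s'}H^{s}=\id+\TL{s}{-s}$ holds modulo lower weight. Proceeding inductively along a reduced word of transpositions, this shows the composite of bottlenecks equals the bottleneck modulo $\W(\prec\lambda)$.
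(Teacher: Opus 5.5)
Your proposal is correct. Its skeleton matches the paper's proof together with the first step of \cref{lem: H connector composition}: well-definedness via \cref{lem: ideal property}, the reduction $C^\lambda_{P,P_S^{\mathrm{hw}}}=G(P)\mathcal H^Z_S$, and the expansion $\mathcal H^Z_S\mathcal H^S_T=c\,\mathcal H^Z_T+\ell$ with $\ell$ of lower weight. You genuinely differ in how you pin down $c$ and how you get invertibility.

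The paper stays diagrammatic. It uses the factorization of bottlenecks into $H^\pm$-webs (\cref{lem: H^S_T is concatenation of H web}) and the square relation to get $F^\dagger F\equiv H^\dagger H\equiv\id_T$ modulo lower weight, hence $c^2=1$. It then excludes $c=-1$ by positivity of the Kuperberg coefficients at $q=-1$, and inverts $\theta_{S,T}$ factor by factor via \cref{lem: isomorphism by H}.

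You instead read $c$ off a highest-weight matrix coefficient, and this works. Evaluate $\Gamma(F)=c\,\Gamma(\mathcal H^Z_T)+\Gamma(\ell)$ on $v_{P_T^{\mathrm{hw}}}$.
\begin{itemize}
\item $\Gamma(\ell)v_{P_T^{\mathrm{hw}}}=0$, because $\ell$ is a sum of webs factoring through sign strings $M$ with $\wt(M)\prec\lambda$, and such $V^M$ have no vectors of weight $\lambda$.
\item $\Gamma(\mathcal H^S_T)v_{P_T^{\mathrm{hw}}}=v_{P_S^{\mathrm{hw}}}$. After folding, only the leading term $v_{P_S^{\mathrm{hw}}}\otimes v_{(P_T^{\mathrm{hw}})^\vee}$ of $\Gamma(G(P_S^{\mathrm{hw}}(P_T^{\mathrm{hw}})^\vee))$ pairs nontrivially with $v_{P_T^{\mathrm{hw}}}$ under the nested cap, with factors $\eva_\pm(v^\mp_{-1}\otimes v^\pm_1)=1$.
\end{itemize}
Composing gives $c=1$ directly. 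Bijectivity then comes for free from the two bases of \cref{thm: basis of cell modules}.

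What each route buys: yours avoids \cref{lem: H^S_T is concatenation of H web}, the $c^2=1$ step and the somewhat ad hoc specialization $q=-1$. In exchange it relies on Khovanov--Kuperberg unitriangularity and on the normalization of $\Gamma(\ccup_\pm)$ and $\Gamma(\ccap_\pm)$; that normalization is forced by the zigzag identities, so this is harmless. The paper's argument never touches $\Gamma$, and it shows along the way that each $H$-factor is invertible modulo lower weight.

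Two small remarks. First, the product $\mathcal H^Z_S\mathcal H^S_T$ lies in $\W^Z_T$, not $\W^T_Z$. So if you unfold, unfold along $T$ and read the coefficient of $v_{P_Z^{\mathrm{hw}}}\otimes v_{(P_T^{\mathrm{hw}})^\vee}$, or simply skip unfolding as above. Second, your combinatorial fallback is unnecessary. As sketched, it would still need an argument that cancelling pairs of $H$-factors in the composite wiring can be made adjacent.
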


\begin{proposition}\label{prp: boundary independent action coefficients}
    Let $\lambda\in\Lambda_+(L)$, and let $S,T$ be sign strings with $\wt(S)=\wt(T)=\lambda$. For every $a\in\mathcal K_L$ and $P,P'\in\Path_L^+(\lambda)$, we have
    \begin{align*}
        r_{a,S}(P,P')=r_{a,T}(P,P').
    \end{align*}
\end{proposition}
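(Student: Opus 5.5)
The plan is to deduce the statement directly from \cref{thm: explicit form of H-isomorphism}, taking that proposition as given. Since $\theta_{S,T}$ is a $\mathcal K_L$-module isomorphism $\W^L_{[S]}\to\W^L_{[T]}$, it commutes with the action of every $a\in\mathcal K_L$. By \eqref{eqn: H transport on basis}, it also sends the basis \eqref{eqn: clasped web basis} of $\W^L_{[S]}$ to the corresponding basis of $\W^L_{[T]}$ from \cref{thm: basis of cell modules}, and this correspondence is indexed by the same set $\Path_L^+(\lambda)$.

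The steps are as follows. Fix $a\in\mathcal K_L$ and $P\in\Path_L^+(\lambda)$. By \eqref{eqn: boundary dependent action coefficients}, in $\W^L_{[S]}$ we have
\[
a\cdot[C^\lambda_{P,P_S^{\mathrm{hw}}}]=\sum_{P'}r_{a,S}(P,P')[C^\lambda_{P',P_S^{\mathrm{hw}}}].
\]
Apply $\theta_{S,T}$ to both sides.

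On the left-hand side, $\mathcal K_L$-linearity and \eqref{eqn: H transport on basis} give
\[
\theta_{S,T}\bigl(a\cdot[C^\lambda_{P,P_S^{\mathrm{hw}}}]\bigr)=a\cdot[C^\lambda_{P,P_T^{\mathrm{hw}}}],
\]
which by \eqref{eqn: boundary dependent action coefficients} for $T$ equals $\sum_{P'}r_{a,T}(P,P')[C^\lambda_{P',P_T^{\mathrm{hw}}}]$.

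On the right-hand side, $\C(q)$-linearity and \eqref{eqn: H transport on basis} give $\sum_{P'}r_{a,S}(P,P')[C^\lambda_{P',P_T^{\mathrm{hw}}}]$.

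Since $\{[C^\lambda_{P',P_T^{\mathrm{hw}}}]\}_{P'}$ is a basis by \cref{thm: basis of cell modules}, comparing coefficients yields $r_{a,S}(P,P')=r_{a,T}(P,P')$.

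This deduction is formal. The real difficulty lies in \cref{thm: explicit form of H-isomorphism}, and in particular in \eqref{eqn: H transport on basis}, which says $[C^\lambda_{P,P_S^{\mathrm{hw}}}\mathcal H^S_T]=[C^\lambda_{P,P_T^{\mathrm{hw}}}]$. If that has to be established as part of this argument, I would proceed as follows. First use \cref{lem: basis decomposition} to write $C^\lambda_{P,P_S^{\mathrm{hw}}}=G(P)\circ\mathcal H^{Z}_S$, where $Z$ is the bottom string of $G(P)$. This reduces the claim to a composition law $\mathcal H^{Z}_S\mathcal H^S_T\equiv\mathcal H^{Z}_T$ modulo $\W_T^{Z}(\prec\lambda)$.

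To prove that composition law, I would realize each $\mathcal H$ as a product of adjacent $H^\pm$ swaps, one for each transposition carrying one sign string to the other. Then I would use the square relation $H^{\mp}H^{\pm}=\id+\TL{\pm}{\mp}$ from the proof of \cref{lem: isomorphism by H}, whose correction term is lower-weight, to cancel redundant swaps modulo $\W(\prec\lambda)$. After cancellation, any two reduced swap words of the same permutation would need to agree in the quotient; for this I would invoke a braid-type relation between the $H$ webs, again holding modulo lower-weight terms.

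The space $\mathrm{Hom}$ between clasped objects of highest weight $\lambda$ is one-dimensional modulo $\W(\prec\lambda)$, by \cref{thm: basis of cell modules} with $L$ replaced by $|Z|$ and only $P_Z^{\mathrm{hw}}$ surviving. So the composition can only differ from $\mathcal H^{Z}_T$ by a scalar, and checking that the scalar is $1$ via the leading tensor coefficient from \cref{thm: unitriangularity} is the step I expect to be the main obstacle.
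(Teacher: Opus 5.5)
Your deduction is the paper's own proof of this proposition: apply the $\mathcal K_L$-linear isomorphism $\theta_{S,T}$ from \cref{thm: explicit form of H-isomorphism} to \eqref{eqn: boundary dependent action coefficients}, use \eqref{eqn: H transport on basis} to transport the basis, and compare coefficients in the basis given by \cref{thm: basis of cell modules}. Your optional sketch of the transport identity also agrees with \cref{lem: H connector composition}, in that one-dimensionality of the relevant quotient reduces everything to a scalar $c$, so the braid-type relations are not needed. The paper fixes $c$ by showing $c^2=1$ and then using positivity at $q=-1$; your leading-coefficient check works just as well. Each $\Gamma(H^\pm)$ sends $v^\pm_1\otimes v^\mp_1$ to $v^\mp_1\otimes v^\pm_1$ with coefficient $1$, and the lower-weight correction annihilates the $\lambda$-weight vector $v_{P_T^{\mathrm{hw}}}$, so evaluating both sides on $v_{P_T^{\mathrm{hw}}}$ gives $c=1$ directly.
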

This proposition justifies the definition of $r_a$ in \eqref{eqn: boundary independent action coefficients}. We prove \cref{thm: explicit form of H-isomorphism,prp: boundary independent action coefficients} at the end of the subsection, before which we establish several technical lemmata. 

Recall the duality of web $\vee$ from \cref{def: folding and duality}, and define the web
\begin{align}\label{eqn: definition of Lambda web}
    \Ydual^{\mp}:=(Y_{\pm})^\vee.
\end{align}

Recall \cref{def: growth rule} for growth rules, and \cref{def: steps} the definition of the steps $u_{s,j}$, $s\in \pm$ and $j\in \{-1,0,1\}$.
\begin{lemma}\label{lem: E-lemma}
    The triples $(\Ydual^{\pm}, u_{\pm,0}, u_{\mp,-1}u_{\mp,1})$ are growth rules.
\end{lemma}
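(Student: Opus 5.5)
The plan is to verify \cref{def: growth rule} directly. I treat the sign $+$; the other sign follows by reversing all orientations and signs, which maps the defining growth rules to each other. By definition $\Ydual^{+}=(Y_-)^\vee\in\W^{+}_{--}$, so its top sign string $(+)$ matches $u_{+,0}$ and its bottom sign string $(-,-)$ matches $u_{-,-1}u_{-,1}$. Weights agree: $-\lambda_1+\lambda_2=u_{+,0}$.

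\textbf{Setup.} Let $P=P_1u_{+,0}P_2\in\Path^+(\lambda)$ and $N:=P_1u_{-,-1}u_{-,1}P_2$. First I check that $N$ is dominant. If $P_1$ ends at $a\lambda_1+b\lambda_2$, dominance of $P$ after the step $u_{+,0}$ forces $a\geq1$. Hence the intermediate point $(a-1)\lambda_1+b\lambda_2$ of $N$ is dominant, and the remaining points of $N$ coincide with those of $P$. The goal is then
\[
G(P)=(\id_{S_1}\otimes\Ydual^{+}\otimes\id_{S_2})\circ G(N).
\]
I prove this by induction on the order of $P$ (\cref{def: order on paths}), using order-independence from \cref{prp: growth algorithm is well-defined}. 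Let $m$ be the first position in $P$ where a state $1$ is followed by a state $<1$; it exists because $P$ is not highest weight, and $m<|P_1|+1$ since the first step of a dominant path has state $1$.

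\textbf{Case 1: the first applicable rule lies strictly inside $P_1$.} This means positions $m,m+1$ are both within $P_1$. The same defining rule applies at the same positions of $N$, since $N$ agrees with $P$ there. I apply it to both paths. By induction on the lower-order path $P_1^{\prime}u_{+,0}P_2$, and since the two local pieces act on disjoint strands, the identity follows.

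\textbf{Case 2: $P_1$ is a highest-weight path ending with $u_{s,1}$, and the rule fires on $u_{s,1}u_{+,0}$.}
\begin{itemize}
\item If $s=+$, the rule on $P$ is $Y_-$ (with $k=0$), producing $u_{-,1}$. In $N$ the segment $u_{+,1}u_{-,-1}$ admits the $\ccup_-$ rule, also producing $u_{-,1}$. So it suffices to check one local identity: $Y_-$ equals $\Ydual^{+}$ composed with $\ccup_-$ on the appropriate strands. This holds by isotopy, because $\Ydual^{+}$ is the $\pi$-rotation of $Y_-$ (\cref{lem: duality as rotation}) and bending one leg of it back gives $Y_-$ (\cref{lem: zigzag relations}).
\item If $s=-$, the rule on $P$ is $H^+$, turning $u_{-,1}u_{+,0}$ into $u_{+,0}u_{-,1}$. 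The step $u_{+,0}$ moves one place to the left, giving a lower-order path to which induction applies. In $N$ the segment $u_{-,1}u_{-,-1}$ admits $Y_+$ (with $k=-1$), producing $u_{+,0}u_{-,1}$. Applying the induction hypothesis to this lower-order path as well, both sides reduce to webs of the form $X\circ G(\cdots u_{-,-1}u_{-,1}u_{-,1}\cdots)$. It then remains to compare two local webs with boundary $(-,-,-)\to(-,+)$. One is $H^+$ stacked on $\Ydual^{+}\otimes\id_-$; the other is $\Ydual^{+}$ placed on the appropriate strands above $Y_+\otimes\id_-$. Both are trees with two trivalent vertices and the same boundary connectivity, so they are isotopic.
\end{itemize}

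\textbf{Main obstacle.} I expect the bookkeeping in the case $s=-$ to be the hardest part. There one must check that the two intermediate paths really coincide after the two applications of the induction hypothesis. One must also confirm that the two local trees have exactly matching sign conventions: top equals initial path, bottom equals final path, with $Y_\pm$ oriented as in \cref{fig: elementary webs}. If the two trees are not literally isotopic, I would fall back on a direct digon or square computation. Alternatively, I would compare their images under $\Gamma$ on the highest-weight vector and use that the relevant $\Hom$-space is one-dimensional.
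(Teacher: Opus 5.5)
Your argument is correct and is essentially the paper's proof. Your Case~1 induction re-derives inline the reduction to $P_1=P^{\mathrm{hw}}_S$ that the paper obtains from \cref{lem: web in web}, and your Case~2 uses the same two local isotopies, $Y_-=(\id_+\otimes\Ydual^{+})\circ(\ccup_+\otimes\id_-)$ and $H^+=(\id_-\otimes\Ydual^{+})\circ(Y_+\otimes\id_-)$.

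Two small remarks. First, the cup rule on $u_{+,1}u_{-,-1}$ is $\ccup_+$, since its top sign string is $(+,-)$, not $\ccup_-$. Second, for $s=-$ you do not need the induction hypothesis: $P$ (via $H^+$) and $N$ (via $Y_+$) already grow to the same path $P^{\mathrm{hw}}_{S'}u_{+,0}u_{-,1}P_2$, so the two-vertex identity for $H^+$ finishes the case directly. The webs you compare after invoking the induction hypothesis in fact have three trivalent vertices, not two.
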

\begin{proof}
Let $P=P_1 u_{+,0} P_2$ and $P'=P_1 u_{-,-1} u_{-,1} P_2$ be two dominant paths. It is easy to check that $\wt(P)=\wt(P')$. We prove the identity
\begin{align}\label{eqn: attach Lambda}
    G(P)= (\id_{|P_1|}  \otimes \Ydual^{+} \otimes \id_{|P_2|}) G(P').
\end{align}
Let $\lambda=\wt(P_1)=:n\lambda_1+k\lambda_2$. By dominance of $P$, we have 
$\wt(P_1)+u_{+,0}=\lambda+(\lambda_2-\lambda_1)\in \Lambda_+$, hence $n\geq 1$. In particular, the web $G(P_1)$ has a non-empty bottom sign string $S$. By \cref{lem: web in web} it suffices to prove the relation \eqref{eqn: attach Lambda} in the case $P_1=P_S^{\mathrm{hw}}$. Write $S=S's$ where $s$ is the last sign in $S$ and $S'$ is the rest of the sign string. We prove \eqref{eqn: attach Lambda} case by case according to the value of $s$:

    \textbf{Case $s=-$:} Let $P'':=P_{S'}^{\mathrm{hw}} u_{+,0} u_{-,1}P_2$, it follows from the growth algorithm that
    \begin{align*}
        G(P)=G(P_{S'}^{\mathrm{hw}} u_{-,1} u_{+,0} P_2)=& (\id_{S'} \otimes H^+ \otimes \id_{|P_2|}) \circ G(P''),\\
        G(P')=G(P_{S'}^{\mathrm{hw}} u_{-,1} u_{-,-1} u_{-,1} P_2)=& (\id_{S'} \otimes Y_{+} \otimes \id_{|P_2|}) \circ G(P''),
    \end{align*}
    and the desired relation follows from identity \eqref{eqn: identity 1 in E} illustrated in \cref{fig: relations in E lemma}:
    \begin{align}\label{eqn: identity 1 in E}
        H^+ = (\id_- \otimes \Ydual^{+}) \circ (Y_{+} \otimes \id_-).
    \end{align}
    \textbf{Case $s=+$:} Let $P'':=P_{S'}^{\mathrm{hw}} u_{-,1} P_2$, it follows from the growth algorithm that
    \begin{align*}
        G(P)=G(P_{S'}^{\mathrm{hw}} u_{+,1} u_{+,0} P_2)=& (\id_{S'} \otimes Y_{-} \otimes \id_{|P_2|}) \circ G(P''),\\
        G(P')=G(P_{S'}^{\mathrm{hw}} u_{+,1} u_{-,-1} u_{-,1} P_2)=& (\id_{S'} \otimes \ccup_{+} \otimes \id_- \otimes \id_{|P_2|}) \circ G(P'').
    \end{align*}
    so the desired relation follows from identity \eqref{eqn: identity 2 in E} illustrated in \cref{fig: relations in E lemma}:
    \begin{align}\label{eqn: identity 2 in E}
         Y_{-} = (\id_{+} \otimes \Ydual^{+}) \circ (\ccup_{+} \otimes \id_-).
    \end{align}
    \begin{figure}[htbp]
        \centering
        \begingroup
\tikzset{
    style/.style={#1},
    none/.style={},
    mid arrow/.style={postaction={decorate,decoration={
            markings, mark=at position 0.55 with {\arrow{stealth}}
        }}
    },
    webline/.style={line width=0.55pt,line cap=round,line join=round},
    cutline/.style={dashed,gray,line width=0.35pt},
    labelnode/.style={font=\scriptsize,inner sep=0.4pt},
    piecelabel/.style={font=\tiny,inner sep=0.5pt,fill=white}
}
\begin{tikzpicture}[
    x=1cm, y=1cm,
    every node/.style={font=\scriptsize,inner sep=0.6pt}
]


\begin{scope}[shift={(-3.2,0)}]
    \node at (0,1.28) {$H^{+}$};
    \node[labelnode] at (-0.65,1.02) {$-$};
    \node[labelnode] at (0.65,1.02) {$+$};
    \node[labelnode] at (-0.65,-1.02) {$+$};
    \node[labelnode] at (0.65,-1.02) {$-$};

    \coordinate (hy) at (-0.28,0);
    \coordinate (hl) at (0.28,0);

    \draw[style=mid arrow,webline] (-0.65,0.9) -- (hy);
    \draw[style=mid arrow,webline] (-0.65,-0.9) -- (hy);
    \draw[style=mid arrow,webline] (hl) -- (hy);
    \draw[style=mid arrow,webline] (hl) -- (0.65,0.9);
    \draw[style=mid arrow,webline] (hl) -- (0.65,-0.9);
\end{scope}

\node at (-1.35,0) {$=$};

\begin{scope}[shift={(1.55,0)}]
    \node[labelnode] at (-0.8,1.02) {$-$};
    \node[labelnode] at (0.8,1.02) {$+$};
    \node[labelnode] at (-0.8,-1.02) {$+$};
    \node[labelnode] at (0.8,-1.02) {$-$};

    \draw[cutline] (-1.15,0) -- (1.15,0);
    \node[piecelabel] at (0,1.35)
        {$\mathrm{id}_{-}\otimes\Ydual^{+}$};
    \node[piecelabel] at (0,-1.25)
        {$Y_{+}\otimes\mathrm{id}_{-}$};

    \coordinate (b1) at (-0.8,0);
    \coordinate (b2) at (0,0);
    \coordinate (b3) at (0.8,0);
    \coordinate (yplus) at (-0.42,-0.55);
    \coordinate (lambdaH) at (0.42,0.55);

    \node[labelnode] at (-0.8,-0.14) {$-$};
    \node[labelnode] at (0,-0.14) {$-$};
    \node[labelnode] at (0.8,-0.14) {$-$};

    \draw[style=mid arrow,webline] (-0.8,0.9) -- (b1);
    \draw[style=mid arrow,webline] (lambdaH) -- (0.8,0.9);
    \draw[style=mid arrow,webline] (lambdaH) -- (b2);
    \draw[style=mid arrow,webline] (lambdaH) -- (b3);

    \draw[style=mid arrow,webline] (b1) -- (yplus);
    \draw[style=mid arrow,webline] (b2) -- (yplus);
    \draw[style=mid arrow,webline] (-0.8,-0.9) -- (yplus);
    \draw[style=mid arrow,webline] (b3) -- (0.8,-0.9);
\end{scope}


\begin{scope}[shift={(-3.2,-3.3)}]
    \node at (0,1.28) {$Y_{-}$};
    \node[labelnode] at (-0.55,1.02) {$+$};
    \node[labelnode] at (0.55,1.02) {$+$};
    \node[labelnode] at (0,-1.02) {$-$};

    \coordinate (ym) at (0,0);
    \draw[style=mid arrow,webline] (ym) -- (-0.55,0.9);
    \draw[style=mid arrow,webline] (ym) -- (0.55,0.9);
    \draw[style=mid arrow,webline] (ym) -- (0,-0.9);
\end{scope}

\node at (-1.35,-3.3) {$=$};

\begin{scope}[shift={(1.55,-3.3)}]
    \node[labelnode] at (-0.75,1.02) {$+$};
    \node[labelnode] at (0.75,1.02) {$+$};
    \node[labelnode] at (0.75,-1.02) {$-$};

    \draw[cutline] (-1.15,0) -- (1.15,0);
    \node[piecelabel] at (0,1.25)
        {$\mathrm{id}_{+}\otimes\Ydual^{+}$};
    \node[piecelabel] at (0,-1.25)
        {$\mathrm{cup}_{+}\otimes\mathrm{id}_{-}$};

    \coordinate (c1) at (-0.75,0);
    \coordinate (c2) at (0,0);
    \coordinate (c3) at (0.75,0);
    \coordinate (lambdaY) at (0.42,0.55);

    \node[labelnode] at (-0.75,-0.14) {$+$};
    \node[labelnode] at (0,-0.14) {$-$};
    \node[labelnode] at (0.75,-0.14) {$-$};

    \draw[style=mid arrow,webline] (c1) -- (-0.75,0.9);
    \draw[style=mid arrow,webline] (lambdaY) -- (0.75,0.9);
    \draw[style=mid arrow,webline] (lambdaY) -- (c2);
    \draw[style=mid arrow,webline] (lambdaY) -- (c3);

    \draw[style=mid arrow,webline,looseness=1.25,bend left=88] (c2) to (c1);
    \draw[style=mid arrow,webline] (c3) -- (0.75,-0.9);
\end{scope}

\end{tikzpicture}
        \endgroup
        \caption{The identities \eqref{eqn: identity 1 in E} and \eqref{eqn: identity 2 in E}.}
        \label{fig: relations in E lemma}
    \end{figure}
\end{proof}
Recall the webs $I^\pm$ defined in \cref{fig: generating set}. 
\begin{proposition}\label{prp: I-rule}
    The triples $(I^\pm,(u_{\pm,1}u_{\pm,-1}), (u_{\pm,-1}u_{\pm,1}))$ are growth rules.
\end{proposition}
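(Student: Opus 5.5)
The plan is to follow the proof of \cref{lem: E-lemma} closely. I treat the $+$ case; the $-$ case follows by reversing all orientations and signs. Let $P=P_1u_{+,1}u_{+,-1}P_2$ and $P'=P_1u_{+,-1}u_{+,1}P_2$ be dominant paths, so that $\wt(P)=\wt(P')$. The goal is
\[
G(P)=(\id_{|P_1|}\otimes I^+\otimes\id_{|P_2|})\circ G(P').
\]

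First, the prefix $P_1$ can be reduced to a highest-weight path. Write $\wt(P_1)=n\lambda_1+k\lambda_2$. Dominance of $P'$ at the step $u_{+,-1}=-\lambda_2$ forces $k\ge1$, so the bottom sign string $S$ of $G(P_1)$ is non-empty. By \cref{lem: web in web}, replacing $P_1$ by $P^{\mathrm{hw}}_S$ changes both sides by the same factor $G(P_1)\otimes\id$. It therefore suffices to treat $P_1=P_S^{\mathrm{hw}}$. Write $S=S's$, where $s$ is the last sign of $S$, and split into two cases according to $s$.

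\textbf{Case $s=-$.} In $P'=P^{\mathrm{hw}}_{S'}u_{-,1}u_{+,-1}u_{+,1}P_2$, the pair $u_{-,1}u_{+,-1}$ is a $\ccup_+$ rule. Applying it gives $G(P')=(\id_{S'}\otimes\ccup_+\otimes\id_+\otimes\id_{|P_2|})\circ G(R)$ with $R=P^{\mathrm{hw}}_{S'}u_{+,1}P_2$.
In $P$, the pair $u_{+,1}u_{+,-1}$ is the sign-reversed $Y_+$ rule with $k=-1$, i.e.\ a $Y_-$ rule, which produces $u_{-,0}$. The resulting pair $u_{-,1}u_{-,0}$ is then a $Y_+$ rule, which produces $u_{+,1}$. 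This gives
\[
G(P)=(\id_{S'}\otimes(\id_-\otimes Y_-)\circ Y_+\otimes\id_{|P_2|})\circ G(R).
\]
The claim therefore reduces to the local identity
\[
(\id_-\otimes Y_-)\circ Y_+=(\id_-\otimes I^+)\circ(\ccup_+\otimes\id_+)
\]
of webs from $(+)$ to $(-,+,+)$. Both sides are the same tree with two trivalent vertices, so they agree by planar isotopy.

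\textbf{Case $s=+$.} Here I apply growth rules to both paths until they reach a common path. In $P=P^{\mathrm{hw}}_{S'}u_{+,1}u_{+,1}u_{+,-1}P_2$, first apply the $Y_-$ rule to the last pair, producing $u_{-,0}$. Then apply the $H^-$ rule to $u_{+,1}u_{-,0}$, reaching
\[
R=P^{\mathrm{hw}}_{S'}u_{-,0}u_{+,1}P_2 .
\]
In $P'=P^{\mathrm{hw}}_{S'}u_{+,1}u_{+,-1}u_{+,1}P_2$, a single $Y_-$ rule on the first pair already gives $R$. Hence
\[
G(P)=\bigl(\id_{S'}\otimes(\id_+\otimes Y_-)\circ H^-\otimes\id\bigr)\circ G(R),
\qquad
G(P')=(\id_{S'}\otimes Y_-\otimes\id_+\otimes\id)\circ G(R).
\]
The claim then reduces to
\[
(\id_+\otimes Y_-)\circ H^-=(I^+\otimes\id_+)\circ(Y_-\otimes\id_+)
\]
as webs $(-,+)\to(+,+,+)$. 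Both sides are trees with three trivalent vertices and the same boundary connectivity, so again they agree by isotopy. No digon or square reduction is needed in either case.

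Two points require care. One must check that every intermediate path, in particular $R$, is dominant, using \cref{lem: dominance}. One must also check that the growth rules used are genuinely the defining ones, or those of \cref{prp: extra relations}, with the correct state indices. The main obstacle I expect is the bookkeeping in the case $s=+$: choosing rule applications on $P$ and $P'$ that land on the same path, and then confirming the orientations in the local isotopy identity. I expect this to be routine once the pictures are drawn.
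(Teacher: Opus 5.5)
Your reduction to $P_1=P^{\mathrm{hw}}_S$ and the case $s=-$ are fine, up to one label. The cup produced by $u_{-,1}u_{+,-1}$ has top signs $(-,+)$, so in the body's convention $G(P_c^\pm)=\ccup_\pm$ it is $\ccup_-$; the figure of defining growth rules mislabels it.

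The case $s=+$, however, rests on a false identity. You need $I^+$ at positions $|P_1|+1,|P_1|+2$. Since $P_1=P^{\mathrm{hw}}_{S'}u_{+,1}$, these are strands $2,3$ of your local window, not strands $1,2$. As written, $(I^+\otimes\id_+)\circ(Y_-\otimes\id_+)$ feeds both upper legs of $Y_-$ into the sink of $I^+$ and forms a digon. It therefore equals $-[2]_q\,(Y_-\otimes\id_+)$, which is not the tree $(\id_+\otimes Y_-)\circ H^-$, so your isotopy claim fails for it. The correct identity is
\[
(\id_+\otimes Y_-)\circ H^-=(\id_+\otimes I^+)\circ(Y_-\otimes\id_+),
\]
and for this one your argument does work. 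On both sides, a source is joined to the bottom-left point, the top-left point and a sink. That sink is joined to the bottom-right point and to a second source, which is joined to the two remaining top points.

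With this fix your proof is correct, but it is longer than the paper's. The paper applies the defining rule $(Y_{-s},u_{s,1}u_{s,-1},u_{-s,0})$ to $P$ to reach $P''=P_1u_{-s,0}P_2$. It then uses the already proved rule $(\Ydual^{-s},u_{-s,0},u_{s,-1}u_{s,1})$ of \cref{lem: E-lemma} to reach $P'$, and concludes from $I^s=Y_{-s}\circ\Ydual^{-s}$.

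Your two local identities are exactly the sign-reversed versions of \eqref{eqn: identity 1 in E} and \eqref{eqn: identity 2 in E}, with $\id\otimes Y_-$ stacked on top. So you are re-running the proof of \cref{lem: E-lemma}: the same reduction via \cref{lem: web in web} and the same split on the last sign of $S$. Citing that lemma and factoring $I^s$ reduces the argument to two lines.
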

\begin{proof}
    Let $P=P_1u_{s,1}u_{s,-1}P_2$ and $P'=P_1u_{s,-1}u_{s,1}P_2$ be dominant, and set $P''=P_1u_{-s,0}P_2$. Apply the growth rule $(Y_{-s}, (u_{s,1}u_{s,-1}),u_{-s,0})$ from \cref{fig: defining growth rules} and \cref{lem: E-lemma} give, respectively,
    \begin{align}\label{eqn: I-rule 1}
        G(P)&=(\id\otimes Y_{-s}\otimes\id)G(P''),\\
        G(P'')&=(\id\otimes\Ydual^{-s}\otimes\id)G(P').
        \label{eqn: I-rule 2}
    \end{align}
    Substitution and $I^s=Y_{-s}\circ\Ydual^{-s}$ therefore yield
    \begin{align}\label{eqn: attach I}
        G(P)=(\id_{|P_1|}\otimes I^s\otimes\id_{|P_2|})G(P'),
    \end{align}
    as required.
\end{proof}

\begin{lemma}\label{lem: practical C}
    Let $\lambda \in \Lambda_+(L)$, $P,Q\in \Path_L^+(\lambda)$, and $S,T$ be the bottom sign strings of $G(P)$ and $G(Q)$ respectively. Then
    \begin{equation}
        C^\lambda_{P,Q}=G(P) \circ \mathcal{H}^S_T \circ G(Q)^\dagger.
    \end{equation} 
\end{lemma}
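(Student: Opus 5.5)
We want $C^\lambda_{P,Q}=G(P)\circ\mathcal H^S_T\circ G(Q)^\dagger$. The plan is to peel off $G(P)$ on the top, then use $\dagger$ to peel off $G(Q)$ on the bottom. Each step uses results already established.

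\textbf{Step 1: peel off the top.} By \cref{lem: basis decomposition}, applied with $Z=S$ the bottom sign string of $G(P)$, we get
\[
C^\lambda_{P,Q}=G(P)\circ C^\lambda_{P_S^{\mathrm{hw}},Q}.
\]
Here $P_S^{\mathrm{hw}}\in\Path_S^+(\lambda)$ by \cref{lem: growth preserves weight} and dominance of highest-weight paths, so the right-hand side is defined. The object $C^\lambda_{P_S^{\mathrm{hw}},Q}$ now lives in $\W^S_{U}$, where $U$ is the sign string of $Q$. This uses the general-length version of \cref{def: pairing map C}.

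\textbf{Step 2: peel off the bottom via $\dagger$.} Apply \cref{prp: dagger swaps PQ} to $C^\lambda_{P_S^{\mathrm{hw}},Q}$. There is a caveat: that proposition is stated for paths of common length $L$. Its proof, however, only uses \cref{lem: growth algorithm symmetry}, the zigzag identities and \cref{lem: elementary geometry}, none of which need equal lengths. So I will first remark that it holds verbatim for $P\in\Path^+_{S'}(\lambda)$ and $Q\in\Path^+_{T'}(\lambda)$ with arbitrary sign strings $S',T'$. This gives
\[
C^\lambda_{P_S^{\mathrm{hw}},Q}=\bigl(C^\lambda_{Q,P_S^{\mathrm{hw}}}\bigr)^\dagger.
\]
Applying \cref{lem: basis decomposition} again, now with $T$ the bottom sign string of $G(Q)$, yields
\[
C^\lambda_{Q,P_S^{\mathrm{hw}}}=G(Q)\circ C^\lambda_{P_T^{\mathrm{hw}},P_S^{\mathrm{hw}}}.
\]
Since $\dagger$ is an anti-homomorphism for composition of webs (\cref{lem: dagger is anti-involution}, which holds in $\Sp$ generally), we obtain
\[
C^\lambda_{P_S^{\mathrm{hw}},Q}=\bigl(C^\lambda_{P_T^{\mathrm{hw}},P_S^{\mathrm{hw}}}\bigr)^\dagger\circ G(Q)^\dagger.
\]

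\textbf{Step 3: identify the middle factor.} A third use of the general-length \cref{prp: dagger swaps PQ} gives
\[
\bigl(C^\lambda_{P_T^{\mathrm{hw}},P_S^{\mathrm{hw}}}\bigr)^\dagger=C^\lambda_{P_S^{\mathrm{hw}},P_T^{\mathrm{hw}}}=\mathcal H^S_T,
\]
by \cref{def: H^S_T}, noting $\wt(S)=\wt(T)=\lambda$. Combining the three steps gives the claim.

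The main obstacle is the length caveat in Step 2. I would check that every ingredient in the proof of \cref{prp: dagger swaps PQ} (folding, $\Mir$, $\vee$) is defined for arbitrary $S,T$. I would also check that \cref{lem: growth algorithm symmetry} is applied only to $PQ^\vee\in\Path^+(0)$, which is of arbitrary length anyway.
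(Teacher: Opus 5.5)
Your proposal is correct and follows the paper's proof step for step. You peel off $G(P)$ with the basis-decomposition lemma, then use the $\dagger$-swap proposition and the basis-decomposition lemma again to extract $G(Q)^\dagger$. Finally you identify the middle factor $\bigl(C^\lambda_{P_T^{\mathrm{hw}},P_S^{\mathrm{hw}}}\bigr)^\dagger$ with $\mathcal H^S_T$.

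The paper applies the $\dagger$-swap proposition without comment to paths whose sign strings need not have length $L$, such as $P_S^{\mathrm{hw}}$ of length $|S|$. Your remark that this is needed is correct, and so is your justification that its proof never uses a common length.
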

\begin{proof}
    Applying \cref{lem: basis decomposition} and \cref{prp: dagger swaps PQ}, we get 
\begin{align*}
        C^\lambda_{P,Q} = G(P) \circ C^{\lambda}_{P_S^{\mathrm{hw}}, Q}=G(P) \circ (C^{\lambda}_{Q,P_S^{\mathrm{hw}}})^\dagger
        =G(P) \circ (G(Q) \circ C^{\lambda}_{{P_T^{\mathrm{hw}},P_S^{\mathrm{hw}}}})^\dagger =G(P) \circ \mathcal{H}^S_T \circ G(Q)^\dagger.
\end{align*}
\end{proof}

\begin{lemma}\label{lem: H^S_T is concatenation of H web}
    The web $\mathcal{H}^S_{T}$ from \cref{def: H^S_T} is a concatenation of $H^\pm$-webs defined in \cref{fig: generating set}.
\end{lemma}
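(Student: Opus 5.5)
I will unwind the definition $\mathcal H^S_T=C^{\lambda}_{P_S^{\mathrm{hw}},P_T^{\mathrm{hw}}}=\Fold_T\bigl(G(P_S^{\mathrm{hw}}(P_T^{\mathrm{hw}})^\vee)\bigr)$ and run the growth algorithm on the concatenated path explicitly. The path $P_S^{\mathrm{hw}}$ consists only of steps $u_{\pm,1}$. The path $(P_T^{\mathrm{hw}})^\vee$ has sign string $T^\vee$ and all states equal to $-1$, i.e.\ only steps $u_{\pm,-1}$. The aim is to show that the growth algorithm on $D:=P_S^{\mathrm{hw}}(P_T^{\mathrm{hw}})^\vee$ has the following structure. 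It first applies a sequence of $H^\pm$ rules from \cref{fig: growth rules} that commute $-1$-steps leftward past $+1$-steps of opposite sign. It then finishes with pure cup closures $\ccup_\pm$ pairing matched steps. After folding along $T$, these cups become identity strands via the zigzag identities of \cref{lem: zigzag relations}. What remains is a stack of $H^\pm$ webs between $T$ and $S$.

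I will argue by induction on the number of inversions, i.e.\ the minimal number of adjacent transpositions $\sigma_i$ turning $T$ into $S$; this is finite since $\wt(S)=\wt(T)$ forces equal sign counts. In the base case $S=T$, the path $D=P_S^{\mathrm{hw}}(P_S^{\mathrm{hw}})^\vee$ is closed entirely by the rule $(\ccup_\pm,u_{\mp,1}u_{\pm,-1},\emptyset)$. Applying it innermost first gives $G(D)=\ccup_S$, so $\Fold_S(\ccup_S)=\id_S$ by \cref{lem: zigzag relations}. This is an empty concatenation of $H$-webs.

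For the inductive step I will use the factorization $\mathcal H^S_T=\mathcal H^S_{T'}\circ(\id\otimes H^{t}\otimes\id)$, where $T'=\sigma_i(T)$ and the transposition reduces the inversion count. To prove it, I plan to compare $G$ on $P_S^{\mathrm{hw}}(P_T^{\mathrm{hw}})^\vee$ and on $P_S^{\mathrm{hw}}(P_{T'}^{\mathrm{hw}})^\vee$. The two dual paths differ by the swap of two adjacent $-1$-steps of opposite signs, say $u_{+,-1}u_{-,-1}$ versus $u_{-,-1}u_{+,-1}$.

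I do not have a growth rule relating these two paths directly. I therefore expect to insert an intermediate path with a $0$-step. The first candidate is the rule $(H^-,u_{+,0}u_{-,-1},u_{-,-1}u_{+,1})$ from \cref{fig: growth rules}, together with \cref{lem: E-lemma} to pass between $u_{\pm,0}$ and pairs $u_{\mp,-1}u_{\mp,1}$. Composing these should produce the $H$-web relating the two highest-weight-type paths. Alternatively, I can take the dagger/rotation picture: apply \cref{lem: web in web} to isolate the right portion, and identify the extra piece with $H^\pm$ through the identities \eqref{eqn: identity 1 in E}–\eqref{eqn: identity 2 in E}. After folding, the swap will sit at the bottom as a factor $H^{t}$ on strands $i,i+1$, provided folding commutes with attaching a web on the folded strands. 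That commutation follows from isotopy together with \cref{lem: duality as rotation}, since a web below $\ccap_T$ becomes its dual, and $(H^\pm)^\vee$ is again an $H$-web.

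The main obstacle is this local step: verifying with correct signs and orientations that the inserted $0$-step detours are legitimate dominant paths, and that the resulting webs are exactly $H^\pm$ with no digon or square appearing. Dominance has to be checked because the intermediate weight shifts by $\lambda_2-\lambda_1$. If the direct argument fails, I will instead characterize $\mathcal H^S_T$ as the unique basis web in $\Web^S_T$ whose $\Gamma$-image has leading term $v_{P_S^{\mathrm{hw}}}\otimes v_{(P_T^{\mathrm{hw}})^\vee}$, using \cref{thm: unitriangularity}. I would then show that a suitable reduced word in $H^\pm$, for instance the one from \cref{lem: isomorphism by H}, is monomial, meaning it has no squares for a reduced word, and has that leading term.
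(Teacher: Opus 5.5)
\textbf{The inductive step is not proved.} Your setup is fine: unfolding $\mathcal H^S_T$ to $G(D)$ with $D=P_S^{\mathrm{hw}}(P_T^{\mathrm{hw}})^\vee$, the base case $S=T$, and the reduction to $\mathcal H^S_T=\mathcal H^S_{T'}\circ(\id\otimes H\otimes\id)$ (which is in fact true) are all correct. But the step that carries the induction is left open, and the mechanism you propose for it cannot work as a local rule. The swap of adjacent $-1$-steps $u_{+,-1}u_{-,-1}\leftrightarrow u_{-,-1}u_{+,-1}$ is not a growth rule in the sense of \cref{def: growth rule}, in either direction, because the direction of the $H$-relation depends on context:
\begin{itemize}
\item With prefix $u_{+,1}u_{-,1}$ one has $G(u_{+,1}u_{-,1}u_{-,-1}u_{+,-1})=(\id_{+-}\otimes H^+)\,G(u_{+,1}u_{-,1}u_{+,-1}u_{-,-1})$, a split triplet versus nested cups.
\item With prefix $u_{-,1}u_{+,1}$, the same $H^+$ applied to $G(u_{-,1}u_{+,1}u_{+,-1}u_{-,-1})$ creates a square. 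It returns $G(u_{-,1}u_{+,1}u_{-,-1}u_{+,-1})$ plus an extra $\ccup\otimes\ccup$ term.
\end{itemize}
So any proof of your factorization must use which of $T,T'$ has more inversions relative to $S$. The $0$-step detours you list are context-free rules and do not supply this.

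Your description of the growth process is also inaccurate. At the junction of $P_S^{\mathrm{hw}}$ and $(P_T^{\mathrm{hw}})^\vee$ the pair is $u_{s,1}u_{-t,-1}$. This is a cup if $s=t$, and a same-sign pair $u_{-t,1}u_{-t,-1}$ if $s=-t$. No $H$-rule from \cref{fig: defining growth rules} or \cref{fig: growth rules} applies here, since each of those has a $0$-step in $P^i$.

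\textbf{The missing idea is the paper's route:} peel off the last letter $t_n$ of $T$, and move a $-1$-step past $+1$-steps rather than past another $-1$-step.
\begin{itemize}
\item Let $M$ be the last occurrence of $t_n$ in $S$. In $D$, the step $u_{-t_n,-1}$ directly follows $u_{s_n,1},\dots,u_{s_{M+1},1}$, all of which have sign $-t_n$.
\item The genuine growth rule of \cref{prp: I-rule} therefore moves $u_{-t_n,-1}$ left past these steps, producing $I$-webs, not $H$-webs.
\item The cup rule then closes $u_{t_n,1}u_{-t_n,-1}$ at position $M$.
\item After folding, the cup slides through the $I$'s by the isotopy $I_{k+1}\circ\ccup_k=H_k\circ\ccup_{k+1}$. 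This turns each $I$ into an $H$, and the cup itself becomes $\id_{t_n}$.
\end{itemize}
This gives $\mathcal H^S_T=H_M\cdots H_{n-1}\circ(\mathcal H^{\widehat S}_{\widehat T}\otimes\id_{t_n})$, and induction on $|S|$ closes; dominance of the intermediate paths must also be checked.

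Your fallback via \cref{thm: unitriangularity} is a legitimate alternative route. As written, though, it only asserts its essential point: that the chosen reduced $H$-word has no square face. That needs an actual argument, for instance a face count on the wiring diagram. You would also need to observe that $D$ is the lexicographic maximum of $\Path_{ST^\vee}(0)$, so that the leading-term condition singles out $\mathcal H^S_T$.
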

\begin{proof}
    We do induction on $n=|S|=|T|$. For $n=0$, the web is the empty identity; for $n=1$, equal weights force $S=T$ and the web is $\id_S$. Both are products of zero $H^\pm$-webs.

    Suppose $n\geq2$, write $S=(s_1,\ldots,s_n)$ and $T=(t_1,\ldots,t_n)$, and let $M$ be the last occurrence of $t_n$ in $S$. Such an occurrence exists because $S$ and $T$ have the same numbers of each sign. Let $\widehat S$ be the sign string obtained by removing $s_M$ from $S$, and similarly define $\widehat T$ by removing $t_n$ from $T$. In the path 
    \begin{align*}
        P_S^{\mathrm{hw}}(P_T^{\mathrm{hw}})^\vee
        =u_{s_1,1}\cdots u_{s_n,1}
        u_{-t_n,-1}\cdots u_{-t_1,-1},
    \end{align*}
    move $u_{-t_n,-1}$ left past the steps with indices $n,\ldots,M+1$ using \cref{prp: I-rule}, then apply the cup rule at $M$. These steps have sign $-t_n$, by the choice of $M$. The intermediate paths remain dominant: their changed vertices have weight at least $\wt(s_1,\ldots,s_M)-\lambda_{t_n}$ in fundamental-weight coordinates, where $\lambda_+=\lambda_1$ and $\lambda_-=\lambda_2$. This weight is dominant because the prefix contains $s_M=t_n$.

    Fold the $T^\vee$ boundary. The local isotopy $I_{k+1}\circ(\ccup)_k=H_k\circ(\ccup)_{k+1}$ moves the resulting cup to the last position, where folding turns it into $\id_{t_n}$. Here we omit the signs of $I^\pm$, $H^\pm$ and $\ccup_\pm$ for simplicity of notations. Thus
    \begin{align}\label{eqn: induction for H^S_T}
        \mathcal H^S_T
        =H_M H_{M+1}\cdots H_{n-1}
        \circ(\mathcal H^{\widehat S}_{\widehat T}\otimes\id_{t_n}).
    \end{align}
    Here identities on unaffected strands are understood, and each $H_k$ has the orientations required to move the final $t_n$ strand left to position $M$. If $M=n$, the product is empty. The induction hypothesis proves the claim. The same recursion gives $\mathcal H^S_S=\id_S$.
\end{proof}

As a corollary of the induction relation \eqref{eqn: induction for H^S_T}, we get a convenient diagrammatic construction of $\mathcal{H}^S_{T}$:
\begin{corollary}\label{cor: wiring construction for H}
    The web $\mathcal{H}^S_T$ has the following direct diagrammatic construction. Place the entries of $S$ in order along the top boundary and those of $T$ in order along the bottom boundary. For each $\varepsilon\in\{+,-\}$, join the $k$-th occurrence of $\varepsilon$ in $S$ to the $k$-th occurrence of $\varepsilon$ in $T$ by a line segment. Segments carrying the same sign do not cross. Replace every crossing, which is therefore between a $+$-segment and a $-$-segment, by the oriented $H^\pm$-web with the same four local boundary signs. The resulting web is $\mathcal{H}^S_T$.
\end{corollary}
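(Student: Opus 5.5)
We prove \cref{cor: wiring construction for H} by induction on $n=|S|=|T|$, unwinding the recursion \eqref{eqn: induction for H^S_T} from \cref{lem: H^S_T is concatenation of H web}. Write $D(S,T)$ for the web produced by the wiring recipe. The claim is $\mathcal H^S_T=D(S,T)$. For $n\le 1$ both sides are $\id_S$: equal weight forces $S=T$, there are no crossings, and $\mathcal H^S_S=\id_S$ by the same lemma. Before starting the induction we check that the recipe is well defined up to isotopy. Two segments of the same sign join the $k$-th and $l$-th occurrences on top to the $k$-th and $l$-th occurrences on the bottom in the same order, so they can be drawn without crossing. Segments can be drawn so that any two cross at most once, and the crossing pattern is then determined by the relative orders of the endpoints. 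Each crossing of a $+$-segment with a $-$-segment has local boundary signs of the form $(s,-s)$ on one side and $(-s,s)$ on the other, which is exactly the boundary of $H^{\pm}$.

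For the inductive step, let $M$ be the position of the last occurrence of $t_n$ in $S$, as in the lemma. In $D(S,T)$ the bottom-right endpoint $t_n$ is then joined to $s_M$. This holds because $t_n$ is the last occurrence of its sign in $T$, and both strings contain the same number of occurrences of that sign. The entries $s_{M+1},\dots,s_n$ all have sign $-t_n$, and their segments go to the last $n-M$ occurrences of $-t_n$ in $T$. Those occurrences all lie to the left of position $n$. Hence the $t_n$-segment crosses exactly these $n-M$ segments and no others.

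We now isotope $D(S,T)$ so that these crossings sit at the top, in the order $H_M H_{M+1}\cdots H_{n-1}$. Concretely, route the $t_n$-segment vertically upward from position $n$ and then leftward just below the top boundary until it reaches $s_M$. The remaining segments form exactly $D(\widehat S,\widehat T)\otimes\id_{t_n}$ below, where $\widehat S$ and $\widehat T$ are as in the lemma. Relabeling positions after deleting $s_M$ preserves the order of same-sign occurrences, so the remaining wiring is precisely the recipe applied to $(\widehat S,\widehat T)$. The induction hypothesis gives $D(\widehat S,\widehat T)=\mathcal H^{\widehat S}_{\widehat T}$, and comparing with \eqref{eqn: induction for H^S_T} yields $D(S,T)=\mathcal H^S_T$.

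The main obstacle is confirming that the $H_k$ in \eqref{eqn: induction for H^S_T} carry exactly the orientations of the corresponding crossings in $D(S,T)$. This requires tracking the signs as the $t_n$ strand moves past each $-t_n$ strand, and it is forced because the four boundary signs of each $H$ are determined. A second point is that isotopy of planar graphs relative to the boundary does not change the web. This means the particular drawing chosen for $D(S,T)$ is irrelevant.
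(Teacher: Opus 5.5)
Your proof is correct and follows the paper's approach: the paper presents this corollary as the unwinding of the recursion \eqref{eqn: induction for H^S_T}, and your induction carries that out, isolating the $n-M$ crossings of the $t_n$-segment as the top layer $H_M H_{M+1}\cdots H_{n-1}$ over $D(\widehat S,\widehat T)\otimes\id_{t_n}$. One point needs tightening: your claim that the drawing is irrelevant rests on "the crossing pattern is determined", but knowing which pairs cross does not in general fix a drawing up to isotopy, since three mutually crossing segments admit two non-isotopic arrangements. Here that cannot happen, because any three segments include two of the same sign, so along each segment the crossings occur in a forced order; this is also what makes your rerouting of the $t_n$-segment a genuine isotopy.
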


\begin{example}\label{exp: H^S_T}
    Let $S=(+,+,-,-)$ and $T=(-,-,+,+)$. The web $\mathcal H^S_T$ is constructed in the following diagram.
    \begin{center}
        \begin{tikzpicture}[x=1cm,y=1cm,line width=0.75pt,line cap=round]
    \def\Hcell#1#2{%
        \begin{scope}[shift={(#1,#2)}]
            \draw[style=mid arrow] (0.38,-0.45) -- (0,0);
            \draw[style=mid arrow] (0.38,-0.45) -- (0,-0.9);
            \draw[style=mid arrow] (0.38,-0.45) -- (0.62,-0.45);
            \draw[style=mid arrow] (1,0) -- (0.62,-0.45);
            \draw[style=mid arrow] (1,-0.9) -- (0.62,-0.45);
            \fill (0.38,-0.45) circle (1.2pt);
            \fill (0.62,-0.45) circle (1.2pt);
        \end{scope}%
    }

    \begin{scope}
        \node at (1.5,3.05) {\small order-preserving matching};
        \node at (-0.35,2.4) {$S$};
        \node at (-0.35,-0.3) {$T$};
        \foreach \x/\s in {0/+,1/+,2/-,3/-}
            \node at (\x,2.68) {$\s$};
        \foreach \x/\s in {0/-,1/-,2/+,3/+}
            \node at (\x,-0.58) {$\s$};

        \draw[blue!70!black,style=mid arrow] (2,-0.3) -- (0,2.4);
        \draw[blue!70!black,style=mid arrow] (3,-0.3) -- (1,2.4);
        \draw[red!75!black,style=mid arrow] (2,2.4) -- (0,-0.3);
        \draw[red!75!black,style=mid arrow] (3,2.4) -- (1,-0.3);
    \end{scope}

    \node at (4.15,1.05) {$\rightsquigarrow$};

    \begin{scope}[xshift=5.3cm]
        \node at (1.5,3.05) {\small replace each crossing by $H^-$};
        \node at (-0.35,2.4) {$S$};
        \node at (-0.35,-0.3) {$T$};
        \foreach \x/\s in {0/+,1/+,2/-,3/-}
            \node at (\x,2.68) {$\s$};
        \foreach \x/\s in {0/-,1/-,2/+,3/+}
            \node at (\x,-0.58) {$\s$};

        \draw[style=mid arrow] (0,1.5) -- (0,2.4);
        \Hcell{1}{2.4}
        \draw[style=mid arrow] (3,2.4) -- (3,1.5);

        \Hcell{0}{1.5}
        \Hcell{2}{1.5}

        \draw[style=mid arrow] (0,0.6) -- (0,-0.3);
        \Hcell{1}{0.6}
        \draw[style=mid arrow] (3,-0.3) -- (3,0.6);
    \end{scope}
\end{tikzpicture}
    \end{center}
\end{example}

\begin{lemma}\label{lem: H connector composition}
    Let $S,T,V$ be sign strings of the same weight $\lambda$ and length $l$. Then
    \begin{align}\label{eqn: composition of H connectors}
        \mathcal H^V_S\mathcal H^S_T-\mathcal H^V_T\in\W^l_{\prec T}.
    \end{align}
\end{lemma}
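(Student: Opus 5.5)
The plan is to expand the product in the web basis and show that, modulo $\W^l_{\prec T}$, only one basis web can appear, namely $\mathcal H^V_T$. Then I pin down its coefficient with a scalar invariant $\chi$ that reads off the action on highest-weight vectors.

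\emph{Step 1 (reduction to one coefficient).} By \cref{lem: basic structure theorem}, $\mathcal H^V_S\mathcal H^S_T\in\W^V_T$ expands in the basis $\{C^\mu_{P,Q}: P\in\Path^+_V(\mu),\,Q\in\Path^+_T(\mu)\}$. Since $Q$ has sign string $T$, \cref{lem: weight domination} gives $\mu\preceq\wt(T)=\lambda$. The terms with $\mu\prec\lambda$ lie in $\W^V_T(\prec\lambda)\subset\W^l_{\prec T}$ by \cref{prp: lower-weight web basis}. If $\mu=\lambda$, then \cref{lem: weight domination} forces $P=P_V^{\mathrm{hw}}$ and $Q=P_T^{\mathrm{hw}}$, so the term is $C^\lambda_{P_V^{\mathrm{hw}},P_T^{\mathrm{hw}}}=\mathcal H^V_T$. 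Hence
\[
\mathcal H^V_S\mathcal H^S_T\equiv c\,\mathcal H^V_T \mod \W^l_{\prec T}
\]
for some $c\in\C(q)$, and it remains to prove $c=1$.

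\emph{Step 2 (a multiplicative functional).} For a sign string $U$ with $\wt(U)=\lambda$, the weight-$\lambda$ subspace of $V^U$ is one-dimensional and spanned by $v_{P_U^{\mathrm{hw}}}$. This holds because the tensor basis vectors $v_A$ have weight $\wt(A)$, and \cref{lem: weight domination} shows $\wt(A)=\lambda$ only for $A=P_U^{\mathrm{hw}}$. Every intertwiner $V^U\to V^{U'}$ with $\wt(U)=\wt(U')=\lambda$ preserves weight spaces. So for $w\in\W^{U'}_U$ I can define $\chi(w)\in\C(q)$ by
\[
\Gamma(w)\,v_{P_U^{\mathrm{hw}}}=\chi(w)\,v_{P_{U'}^{\mathrm{hw}}}.
\]
By functoriality of $\Gamma$, $\chi$ is multiplicative: $\chi(xy)=\chi(x)\chi(y)$. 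It also vanishes on $\W^{U'}_U(\prec\lambda)$, because such webs factor through some $V^M$ with $\wt(M)\prec\lambda$, and $V^M$ has no nonzero weight-$\lambda$ vectors (again by \cref{lem: weight domination}). Applying $\chi$ to the congruence in Step 1 therefore gives
\[
\chi(\mathcal H^V_S)\,\chi(\mathcal H^S_T)=c\,\chi(\mathcal H^V_T).
\]

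\emph{Step 3 (normalization, the main obstacle).} It then suffices to show $\chi(\mathcal H^S_T)=1$ for all $S,T$ of weight $\lambda$. By \cref{lem: H^S_T is concatenation of H web}, $\mathcal H^S_T$ is a composite of webs $\id\otimes H^\pm\otimes\id$, each of which maps between highest-weight paths of equal weight. Multiplicativity of $\chi$ therefore reduces the claim to $\Gamma(H^\pm)(v^{\pm}_1\otimes v^{\mp}_1)=v^{\mp}_1\otimes v^{\pm}_1$.

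This is where conventions matter, and I expect it to be the delicate step. I would first check it directly from the explicit intertwiners $\mathcal Y_\pm$ in the appendix, using $H^\pm=(\id\otimes\Ydual)\circ(Y\otimes\id)$ as in \eqref{eqn: identity 1 in E}. As a cross-check, I would use \cref{thm: unitriangularity}: $\Gamma(G(P_S^{\mathrm{hw}}(P_T^{\mathrm{hw}})^\vee))$ has leading coefficient $1$ on $v_{P_S^{\mathrm{hw}}}\otimes v_{(P_T^{\mathrm{hw}})^\vee}$. Folding with $\ccap_T$ should then give $\chi(\mathcal H^S_T)=1$, provided the coevaluation/evaluation pairing of the highest-weight vector $v_{P_T^{\mathrm{hw}}}$ against $v_{(P_T^{\mathrm{hw}})^\vee}$ is normalized to $1$ (equivalently, that the zigzag identities hold with unit scalars).

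Once $\chi\equiv1$ on all the webs $\mathcal H$, the displayed identity in Step 2 gives $c=1$, which proves \eqref{eqn: composition of H connectors}.
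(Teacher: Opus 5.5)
Your proof is correct. Step 1 is exactly the paper's first step; the two arguments differ only in how they fix the scalar $c$.

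\textbf{The paper's route.} It stays inside the web calculus. It writes $F=\mathcal H^V_S\mathcal H^S_T$ and $\mathcal H^V_T$ as products of $H^\pm$-webs and cancels reflected factors with the square relation. This gives $F^\dagger F\equiv(\mathcal H^V_T)^\dagger\mathcal H^V_T\equiv\id_T$ modulo $\W^T_T(\prec\lambda)$, hence $c^2=1$. It then excludes $c=-1$ by a positivity argument at $q=-1$, where the loop, digon and square coefficients become $3,2,1$.

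\textbf{Your route.} Your highest-weight character $\chi$ computes $c$ directly. It avoids both the $\dagger$-cancellation and the specialization trick, at the cost of using $\Gamma$ and explicit normalizations. It also says a little more: $\chi$ is a multiplicative functional vanishing on the lower ideal, which is why $c=1$ in the first place.

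The normalization you flagged as delicate does hold with the paper's conventions. Write $H^+=(\Ydual^-\otimes\id_+)\circ(\id_+\otimes Y_-)$. Only the term $z^{-1}v^+_1\otimes v^+_0\otimes v^+_1$ of $(\id\otimes\mathcal Y_-)(v^+_1\otimes v^-_1)$ survives. Then \eqref{eqn: lambda} gives $z^{-1}\cdot z\,v^-_1\otimes v^+_1$, and the same holds for $H^-$.

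Your cross-check via \cref{thm: unitriangularity} also works, and is cleaner. In $\Gamma(\mathcal H^S_T)v_{P_T^{\mathrm{hw}}}$, the nested caps of $\ccap_T$ pair $v_{P_T^{\mathrm{hw}}}$ nontrivially only with $v_{(P_T^{\mathrm{hw}})^\vee}$. Each cap contributes $\eva_\pm(v^\mp_{-1}\otimes v^\pm_1)=1$ by \eqref{eqn: eva}. The coefficient of $v_{P_S^{\mathrm{hw}}}\otimes v_{(P_T^{\mathrm{hw}})^\vee}$ in $\Gamma(G(P_S^{\mathrm{hw}}(P_T^{\mathrm{hw}})^\vee))$ is the leading coefficient $1$. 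So $\chi(\mathcal H^S_T)=1$ follows without \cref{lem: H^S_T is concatenation of H web}.
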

\begin{proof}
    Set $F=\mathcal H^V_S\mathcal H^S_T$ and $H=\mathcal H^V_T$. By \cref{thm: basis of cell modules,lem: weight domination}, $[H]$ is the unique basis vector of$\W^l_{[T]}$ with top sign string $V$. Hence
    \begin{align}\label{eqn: FH relation}
        F=cH+\ell,\qquad c\in\C(q),\quad\ell\in\W^V_T(\prec\lambda).
    \end{align}
    By \cref{lem: H^S_T is concatenation of H web}, both $F$ and $H$ are concatenations of $H^\pm$-webs. Successively canceling their reflected elementary factors, using the square relation in \eqref{eqn: relations in KL(q)} and \cref{lem: ideal property}, gives
    \begin{align*}
        F^\dagger F\equiv H^\dagger H\equiv\id_T
        \mod\W^T_T(\prec\lambda).
    \end{align*}
    By \eqref{eqn: FH relation}, 
    \begin{align*}
        [\id_T]=[F^\dagger F]=c^2[H^\dagger H]=c^2[\id_T],
    \end{align*}
    where $[w]\in \W^l_{[T]}=\W^l_T/\W^T_T(\prec\lambda)$ as in \cref{def: clasped web space}. Since $\mathcal H^T_T=\id_T$ by \eqref{eqn: induction for H^S_T}, its class is nonzero by \cref{thm: basis of cell modules}; thus $c=\pm1$.

    Furthermore, $c=c(q)$ is a Laurant polynomial in $q$, because the loop, digon, and square reduction coefficients in \cref{fig: Kuperberg relations} are Laurant polynomials in $q$. At $q=-1$, the loop, digon, and square coefficients are $3$, $2$, and $1$, respectively. Therefore $c(-1)\geq0$, excluding $c=-1$. On the other hand, since $c=\pm 1$, we conclude that $c=1$, and
    \begin{align*}
        F-H\in\W^V_T(\prec\lambda)\subseteq\W^l_{\prec T}.
    \end{align*}
\end{proof}

\begin{proof}[Proof of \cref{thm: explicit form of H-isomorphism}]
    By \cref{lem: H^S_T is concatenation of H web}, $\mathcal H^S_T$ is a concatenation of $H^\pm$-webs. By \cref{lem: isomorphism by H}, $\theta_{S,T}$, which is precomposition of $H^\pm$ webs, is an isomorphism $\W^L_{[S]} \to \W^L_{[T]}$. 

    Let $V$ be the bottom boundary of $G(P)$. By \cref{lem: basis decomposition,def: H^S_T},
    \begin{align}\label{eqn: boundary growth representative}
    C^\lambda_{P,P_S^{\mathrm{hw}}}=G(P)\mathcal H^V_S.
    \end{align}
    Consequently, by \cref{lem: H connector composition,lem: ideal property},
    \begin{align*}
        C^\lambda_{P,P_S^{\mathrm{hw}}}\mathcal H^S_T
        -C^\lambda_{P,P_T^{\mathrm{hw}}}
        &=G(P)\bigl(\mathcal H^V_S\mathcal H^S_T-\mathcal H^V_T\bigr)
        \in\W^L_{\prec T},
    \end{align*}
    proving \eqref{eqn: H transport on basis}.
\end{proof}

\begin{proof}[Proof of \Cref{prp: boundary independent action coefficients}]
    Let $S,T$ have weight $\lambda$. By \cref{thm: explicit form of H-isomorphism}, the map $\theta_{S,T}:\W^L_{[S]}\to\W^L_{[T]}$ is $\mathcal K_L$-linear and takes $[C^\lambda_{P,P_S^{\mathrm{hw}}}]$ to $[C^\lambda_{P,P_T^{\mathrm{hw}}}]$ for each $P$. Applying this map to \eqref{eqn: boundary dependent action coefficients} gives
    \begin{align*}
        a[C^\lambda_{P,P_T^{\mathrm{hw}}}]
        &=\theta_{S,T}\bigl(a[C^\lambda_{P,P_S^{\mathrm{hw}}}]\bigr)\\
        &=\sum_{P'\in\Path_L^+(\lambda)}r_{a,S}(P,P')\,
        \theta_{S,T}\bigl([C^\lambda_{P',P_S^{\mathrm{hw}}}]\bigr)\\
        &=\sum_{P'\in\Path_L^+(\lambda)}r_{a,S}(P,P')
        [C^\lambda_{P',P_T^{\mathrm{hw}}}],
    \end{align*}
    hence $r_{a,S}=r_{a,T}$ by definition of the latter in \eqref{eqn: boundary dependent action coefficients}.
\end{proof}

\subsection{Cellular structure of $\mathcal K_L$}\label{sec: cellularity proof}

For $\lambda\in\Lambda_+(L)$, set
\begin{align}\label{eqn: lower-weight algebra ideal}
    \mathcal K_L^{\prec \lambda}
    =\Span_{\C(q)}\{C^\mu_{P,Q}:P,Q\in\Path_L^+(\mu),\ \mu\prec\lambda\}.
\end{align}

Recall $C^\lambda_{P,Q}$ from \cref{def: pairing map C} and the action coefficients $r_a(P,P')$ from \eqref{eqn: boundary independent action coefficients}.
\begin{proposition}\label{prp: factorized multiplication}
    For $a\in\mathcal K_L$, $\lambda\in\Lambda_+(L)$, and $P,Q\in\Path_L^+(\lambda)$, the coefficients $r_a$ in \eqref{eqn: boundary independent action coefficients} satisfy
    \begin{align*}
        aC^\lambda_{P,Q}
        \equiv\sum_{P'\in\Path_L^+(\lambda)}r_a(P,P')C^\lambda_{P',Q}
        \mod\mathcal K_L^{\prec\lambda}.
    \end{align*}
\end{proposition}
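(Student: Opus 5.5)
We reduce the statement to the clasped web space computation \eqref{eqn: boundary dependent action coefficients}, via the factorization of \cref{lem: basis decomposition} applied on the bottom side. Let $S$ be the sign string of $Q$ and $U$ the bottom sign string of $G(Q)$, so $\wt(U)=\lambda$ by \cref{lem: growth preserves weight}. By \cref{prp: dagger swaps PQ} and \cref{lem: basis decomposition},
\begin{align*}
    C^\lambda_{P,Q}=(C^\lambda_{Q,P})^\dagger=\bigl(G(Q)\,C^\lambda_{P_U^{\mathrm{hw}},P}\bigr)^\dagger=C^\lambda_{P,P_U^{\mathrm{hw}}}\circ G(Q)^\dagger .
\end{align*}
Hence $aC^\lambda_{P,Q}=\bigl(aC^\lambda_{P,P_U^{\mathrm{hw}}}\bigr)\circ G(Q)^\dagger$. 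The whole computation therefore happens on the element $C^\lambda_{P,P_U^{\mathrm{hw}}}\in\W^L_U$, and afterwards we right-multiply by the fixed web $G(Q)^\dagger$.

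By \eqref{eqn: boundary dependent action coefficients} and \cref{prp: boundary independent action coefficients},
\begin{align*}
    aC^\lambda_{P,P_U^{\mathrm{hw}}}=\sum_{P'}r_a(P,P')\,C^\lambda_{P',P_U^{\mathrm{hw}}}+x,\qquad x\in\W^L_{\prec U}=\bigoplus_{T\in\Sigma_L}\W^T_U(\prec\lambda).
\end{align*}
Composing with $G(Q)^\dagger$ and applying the same identity backwards gives $C^\lambda_{P',P_U^{\mathrm{hw}}}G(Q)^\dagger=C^\lambda_{P',Q}$. This produces the main sum.

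It remains to show that $x\,G(Q)^\dagger\in\mathcal K_L^{\prec\lambda}$. By \cref{lem: ideal property}, $x\,G(Q)^\dagger\in\bigoplus_{T}\W^T_S(\prec\lambda)$. By \cref{prp: lower-weight web basis}, this space is spanned by the webs $C^\mu_{P'',Q''}$ with $\mu\prec\lambda$, $P''\in\Path^+_T(\mu)$ and $Q''\in\Path^+_S(\mu)$. Since $|T|=|S|=L$, these are exactly the spanning elements of $\mathcal K_L^{\prec\lambda}$ in \eqref{eqn: lower-weight algebra ideal}, which proves the claim.

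The only delicate point is the first displayed identity. The argument applies $\dagger$ to webs whose top and bottom lengths may differ, namely to $\W^S_U$ rather than to $\mathcal K_L$. So we must check that \cref{prp: dagger swaps PQ} and \cref{lem: dagger is anti-involution} hold verbatim for arbitrary sign strings. Both proofs used only the properties of $\Mir$ and $\vee$ and \cref{lem: growth algorithm symmetry}, none of which depends on the lengths. Alternatively, \cref{lem: practical C} already supplies the factorization $C^\lambda_{P,Q}=G(P)\mathcal H^{V}_{U}G(Q)^\dagger$ together with $C^\lambda_{P,P_U^{\mathrm{hw}}}=G(P)\mathcal H^V_U$ from \eqref{eqn: boundary growth representative}, which yields the identity directly.
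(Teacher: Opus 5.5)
Your proposal is correct and follows the paper's proof essentially step for step. You factor $C^\lambda_{P,Q}=C^\lambda_{P,P_U^{\mathrm{hw}}}G(Q)^\dagger$ via \cref{prp: dagger swaps PQ} and \cref{lem: basis decomposition}, apply the clasped-space coefficients together with \cref{prp: boundary independent action coefficients}, and push the error term into $\mathcal K_L^{\prec\lambda}$ using \cref{lem: ideal property} and \cref{prp: lower-weight web basis}, exactly as the paper does.

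Your observation that the $\dagger$-swap is applied to webs whose top and bottom boundaries have unequal length is a point the paper uses without comment, and your justification of it is sound. Note, however, that the ``alternative'' route via \cref{lem: practical C} is not independent: that lemma's proof relies on the same unequal-length extension of \cref{prp: dagger swaps PQ}.
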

\begin{proof}
    Let $U$ be the bottom sign string of $G(Q)$, by \cref{lem: growth preserves weight} $\wt(U)=\lambda$. Apply \cref{prp: dagger swaps PQ} and then \cref{lem: basis decomposition} for every $R\in\Path_L^+(\lambda)$,
    \begin{align}\label{eqn: decomp}
        C^\lambda_{R,Q}=C^\lambda_{R,P_U^{\mathrm{hw}}}G(Q)^\dagger.
    \end{align}
    Using the defining equation \eqref{eqn: boundary dependent action coefficients} for $r_{a,U}(P,P')$ and the equality $r_{a,U}(P,P')=r_a(P,P')$ from \cref{prp: boundary independent action coefficients}, we obtain
    \begin{align*}
        E:=aC^\lambda_{P,P_U^{\mathrm{hw}}}
        -\sum_{P'\in\Path_L^+(\lambda)}r_a(P,P')C^\lambda_{P',P_U^{\mathrm{hw}}}
        \in\W^L_{\prec U}.
    \end{align*}
    Let $S_Q$ be the sign string of $Q$. Since $G(Q)^\dagger\in\W^U_{S_Q}$, \cref{lem: ideal property} and \eqref{eqn: lower weight submodule} give
    \begin{align*}
        EG(Q)^\dagger\in\bigoplus_{T\in\Sigma_L}\W^T_{S_Q}(\prec\lambda)
        \subseteq\mathcal K_L^{\prec\lambda}.
    \end{align*}
    The inclusion follows from \cref{prp: lower-weight web basis}. Together with \eqref{eqn: decomp}, this concludes the proof.
\end{proof}

Now we are ready to give the cell datum of $\mathcal{K}_L$, in the sense of \cref{def: cellular algebra}.

\begin{theorem}\label{thm: cellular structure}
    Let $L\geq 2$. The algebra $\mathcal{K}_L$ is a cellular algebra with cell datum $(\Lambda_+(L),M,C,\dagger)$, where
    \begin{enumerate}
        \item $\Lambda_+(L)$ is defined in \cref{def: LambdaL}, and is partially ordered by  $\preceq$ from \eqref{eqn: partial ordering}.
        \item For each $\lambda\in \Lambda_+(L)$, $M(\lambda)=\Path_L^+(\lambda)$ is the set of dominant lattice paths from \cref{def: lattice path}. 
        \item $C$ is defined in \cref{def: pairing map C}.
        \item $\dagger$ is defined in \cref{def: dagger}.
    \end{enumerate}
\end{theorem}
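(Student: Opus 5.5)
The plan is to check the axioms of \cref{def: cellular algebra} one at a time for the datum $(\Lambda_+(L),M,C,\dagger)$. Almost every axiom is an assembled consequence of results already proved in \cref{sec: cellular structure}.

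\emph{Conditions \ref{cond: poset} and \ref{cond: finiteness}.} First, $\Lambda_+(L)$ is finite, since it is contained in $\{n\lambda_1+k\lambda_2: 0\le n,k\le L\}$. Second, $\prec$ restricts to a partial order on it: the generating relations \eqref{eqn: partial ordering} each subtract a positive root, and expressed in fundamental-weight coordinates they strictly decrease the linear functional $2a+b$ (resp. $a+2b$) up to a sign check. So the transitive closure is antisymmetric and irreflexive. Third, each $M(\lambda)=\Path_L^+(\lambda)$ is finite because there are at most $6^L$ paths of length $L$. I would only need to verify the sign check for the monotone functional; for instance, $\langle\cdot,\rho\rangle$ computed in the inner-product normalization strictly decreases under subtracting a positive root.

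\emph{Conditions \ref{cond: C} and \ref{cond: dagger}.} Injectivity of $C$, and the fact that its image is a basis, is exactly \cref{prp: structure theorem}: $C$ is a bijection onto $\WebBasis{L}$, which is a basis of $\mathcal K_L$ by \cref{def: K_L}. That $\dagger$ is an algebra anti-automorphism is \cref{lem: dagger is anti-involution}. One point needs care there. The multiplication in $\mathcal K_L$ is stacking followed by Kuperberg reduction, so we need that reflection plus orientation reversal maps the relations of \cref{fig: Kuperberg relations} to relations of the same form. The loop, digon and square are each symmetric under this operation, so $\dagger$ is well defined on the quotient and reverses products. The boundary-mismatch zero convention is also preserved, since $\dagger$ swaps top and bottom sign strings consistently. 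The identity $(C^\lambda_{P,Q})^\dagger=C^\lambda_{Q,P}$ is \cref{prp: dagger swaps PQ}.

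\emph{The multiplication axiom \eqref{eqn: factorized multiplication}.} First, the subspace $A(\prec\lambda)$ of \cref{def: cellular algebra} coincides with $\mathcal K_L^{\prec\lambda}$ of \eqref{eqn: lower-weight algebra ideal}, since both are spans of $C^\mu_{P,Q}$ with $\mu\prec\lambda$ and $\mu\in\Lambda_+(L)$. Then the congruence is \cref{prp: factorized multiplication}. Its essential feature is that the coefficients $r_a(P,P')$ do not depend on $Q$. This holds because they are defined through the fixed clasped web space $\W^L_{[U]}$ and are made independent of the choice of $U$ (with $\wt(U)=\lambda$) by \cref{prp: boundary independent action coefficients}. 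So the independence of $Q$ is inherited from independence of the bottleneck string.

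I expect no real obstacle, because the substantive work (the bijection, the symmetry of the growth algorithm under $\Mir$, the boundary independence via $\mathcal H^S_T$) is already done. The only step deserving a careful sentence is the one tying \cref{prp: factorized multiplication} to the axiom. For different $Q$ the bottom strings $U$ of $G(Q)$ may differ, so one must invoke \cref{prp: boundary independent action coefficients} to use a single $r_a$ for all $Q\in\Path_L^+(\lambda)$. One must also note that the error term $EG(Q)^\dagger$ lands in $\mathcal K_L^{\prec\lambda}$ rather than merely in the larger $\W(\prec\lambda)$. Both facts are contained in the proof of \cref{prp: factorized multiplication} via \cref{prp: lower-weight web basis}, so I would cite them explicitly and conclude.
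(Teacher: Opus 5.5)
Your proposal is correct and follows the paper's proof, which assembles exactly \cref{prp: structure theorem}, \cref{lem: dagger is anti-involution}, \cref{prp: dagger swaps PQ} and \cref{prp: factorized multiplication}. Your extra checks (finiteness, antisymmetry of $\prec$, compatibility of $\dagger$ with the Kuperberg relations, and $A(\prec\lambda)=\mathcal K_L^{\prec\lambda}$) are harmless additions that the paper leaves implicit. One small correction for antisymmetry: the monotone functional to use is $a+b=(\cdot,\rho)$, which drops by $1$ under each generating relation, whereas $2a+b$ and $a+2b$ each stay unchanged under one of the two relations.
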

\begin{proof}
    All properties of a cell datum follow from \cref{prp: structure theorem}, \cref{lem: dagger is anti-involution}, \cref{prp: dagger swaps PQ} and \cref{prp: factorized multiplication}.
\end{proof}

\begin{definition}\label{def: cell modules}
    Let $\lambda\in\Lambda_+(L)$. The cell module $\Delta_L(\lambda)$ is the left $\mathcal K_L$-module with basis $\{v_P:P\in\Path_L^+(\lambda)\}$, and action defined by
    \begin{align}\label{eqn: constructed cell module action}
        a\cdot v_P:=\sum_{P'\in\Path_L^+(\lambda)}r_a(P,P')v_{P'},
        \qquad a\in\mathcal K_L.
    \end{align}
\end{definition}
Since the coefficients $r_a$ are defined as the action coefficients of $\mathcal K_L$ on the clasped web space $\W^L_{[S]}$, see \eqref{eqn: boundary independent action coefficients}, $\Delta_L(\lambda)$ is indeed a $\mathcal K_L$-module, isomorphic to $\W^L_{[S]}$ for any sign string $S$ with $\wt(S)=\lambda$.

\subsection{Proof of bimodule decomposition}\label{sec: proof of bimodule decomposition}
We end this section with the proof of \cref{thm: bimodule decomposition}, which is an application of the cellular structure constructed in the previous subsections.

Recall the $\uqslt$-module $\mathscr{V}_L$ from
\eqref{eqn: def of V_L}.
Let $\Xi_L$ be the set of dominant weights $\mu$ for which $L_q(\mu)$ occurs in $\mathscr V_L$.
As a finite-dimensional semisimple $\uqslt$-module, it has an isotypic decomposition
    \begin{align}\label{eqn: isotypic decomposition for bimodule}
        \mathscr{V}_L
        \cong
        \bigoplus_{\mu\in\Xi_L}
        L_q(\mu)\otimes M_{L,\mu},
        \qquad
        M_{L,\mu}
        :=
        \Hom_{\uqslt}\bigl(L_q(\mu),\mathscr{V}_L\bigr).
    \end{align}
    The isomorphism from the direct sum to $\mathscr V_L$ sends $x\otimes f$ from each direct summand to $f(x)$, where $x\in L_q(\mu)$ and $f\in M_{L,\mu}$.
    By \cref{thm: Schur-Weyl duality}, there is a left $\mathcal{K}_L$-action $\Phi$ on $\mathscr{V}_L$ that centralizes the $\uqslt$-action. Hence, for each $\mu\in\Xi_L$, $\mathcal{K}_L$ acts on $M_{L,\mu}$ by the
    left $\mathcal K_L$-action
    \begin{align}\label{eqn: left action on multiplicity space}
        a\cdot f:=\Phi(a)\circ f.
    \end{align}
    Now fix a dominant weight $\lambda$ and a sign string $S$ with $\wt(S)=\lambda$. The $\uqslt$-module $V^S$ decomposes as a direct sum \cite[Thm.\,10.1.7]{chari_pressley_1994}:
    \begin{align}\label{eqn: top and lower decomposition of VS}
        V^S=L_q(\lambda)\oplus V^S_{\prec\lambda},
    \end{align}
    where $L_q(\lambda)$ is the highest weight summand with multiplicity one, and $V^S_{\prec\lambda}$ is the sum of the lower weight
    components \cite[Chap.\,10,Sec.\,5.2,Prp.\,3]{procesi2006lie}.  Set
    components \cite[Chap.\,10,Sec.\,5.2,Prp.\,3]{procesi2006lie}.  Set
    \begin{align*}
        \mathscr E_S
        :=
        \Hom_{\uqslt}(V^S,\mathscr{V}_L),\quad
        \mathscr E_{S,\prec\lambda}
        :=
        \{f\in\mathscr E_S:
        \left.f\right|_{L_q(\lambda)}=0\}.
    \end{align*}
    By \eqref{eqn: top and lower decomposition of VS}, restriction to $V^S_{\prec \lambda}$
    identifies $\mathscr E_{S,\prec\lambda}$ with
    $\Hom_{\uqslt}(V^S_{\prec\lambda},\mathscr{V}_L)$ as $\mathcal K_L$-left modules, endowed with the $\mathcal K_L$-action \eqref{eqn: left action on multiplicity space}. The subspace $\mathscr E_{S,\prec\lambda}$ is obviously stable under this action, so it defines a left $\mathcal{K}_L$-action on the quotient
    \begin{align*}
        \mathscr E_S/\mathscr E_{S,\prec\lambda} \cong \Hom_{\uqslt}(L_q(\lambda),\mathscr{V}_L),
    \end{align*}
    and therefore induces an
    isomorphism of left $\mathcal{K}_L$-modules
    \begin{align}\label{eqn: multiplicity space as a quotient}
        \mathscr E_S/\mathscr E_{S,\prec\lambda}
        &\xrightarrow{\ \cong\ }M_{L,\lambda},&
        [f]&\longmapsto\left.f\right|_{L_q(\lambda)}.
    \end{align}

    Recall the $\mathcal{K}_L$-module $\W^L_S$ from \eqref{eqn: web module}. We define the $\C(q)$-linear isomorphism $\widetilde{\Gamma}_S$ using the equivalence $\Gamma:\Sp \to \Rep$ and the inclusion $e_T:V^T \hookrightarrow\mathscr{V}_L$ from \eqref{eqn: inclusion-projection}:
    \begin{equation}\label{eqn: Gamma tilde isomorphism}
        \begin{aligned}
        \widetilde\Gamma_S:\W^L_S
        =\bigoplus_{T\in\Sigma_L}\W^T_S
        &\xrightarrow{\ \cong\ }\mathscr E_S, \\
        w\in\W^T_S
        &\longmapsto e_T\circ\Gamma(w).
        \end{aligned}
    \end{equation}
    Moreover, functoriality of $\Gamma$ and the definition of $\Phi$ give
    \begin{align*}
        \widetilde\Gamma_S(a\cdot w)
        =
        \Phi(a)\circ\widetilde\Gamma_S(w)
        =
        a\cdot\widetilde\Gamma_S(w), \quad \forall a\in \mathcal K_L.
    \end{align*}
    Thus $\widetilde\Gamma_S:\W^L_S \to \mathscr{E}_S$ is an isomorphism of left
    $\mathcal{K}_L$-modules. 

    By \cref{def: cell modules}, $\W^L_{[S]}
        =
        \W^L_S/\W^L_{\prec S}
        \cong\Delta_L(\lambda)$, where $\W^L_{\prec S}$ is the $\mathcal{K}_L$-submodule of $\W^L_S$ defined in \eqref{eqn: lower weight submodule}. We state the following lemma, proved at the end of this subsection.
\begin{lemma}\label{lem: lower ideal as Hom}
    Let $S$ be a sign string with $\wt(S)=\lambda$.  The $\mathcal{K}_L$-isomorphism
    $\widetilde\Gamma_S$ maps
    $\W^L_{\prec S}$ onto $\mathscr E_{S,\prec\lambda}$; that is,
    \begin{align}\label{eqn: lower ideal as Hom}
        \widetilde\Gamma_S\bigl(\W^L_{\prec S}\bigr)
        =
        \mathscr E_{S,\prec\lambda}.
    \end{align}
\end{lemma}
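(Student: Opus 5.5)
We prove the two inclusions separately.

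\textbf{Inclusion $\subseteq$.} Take a generator $w=x\circ y$ of $\W^T_S(\prec\lambda)$, with $y\in\W^M_S$, $x\in\W^T_M$ and $\wt(M)\prec\lambda$. Then $\widetilde\Gamma_S(w)=e_T\circ\Gamma(x)\circ\Gamma(y)$, so it suffices to show that $\Gamma(y)$ vanishes on the summand $L_q(\lambda)\subset V^S$. The map $\Gamma(y)$ is an intertwiner into $V^M$. Every weight of $V^M$ is $\preceq\wt(M)$ by \cref{lem: weight domination}, since $V^M$ is spanned by the vectors $v_D$ with $D\in\Path_M$. Hence $\lambda\not\preceq\wt(M)$: otherwise $\lambda\preceq\wt(M)\prec\lambda$, contradicting antisymmetry of the partial order. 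So $V^M$ has no vector of weight $\lambda$, and $L_q(\lambda)$ cannot occur in $V^M$. By Schur's lemma $\Gamma(y)|_{L_q(\lambda)}=0$, and therefore $\widetilde\Gamma_S(w)\in\mathscr E_{S,\prec\lambda}$.

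\textbf{Inclusion $\supseteq$.} Here I will compare dimensions, using that $\widetilde\Gamma_S$ is injective. By \cref{thm: basis of cell modules}, the quotient $\W^L_S/\W^L_{\prec S}$ has dimension $|\Path_L^+(\lambda)|$. By \eqref{eqn: multiplicity space as a quotient}, $\mathscr E_S/\mathscr E_{S,\prec\lambda}\cong M_{L,\lambda}$, and $\dim M_{L,\lambda}$ is the multiplicity of $L_q(\lambda)$ in $\mathscr V_L$. Since $\widetilde\Gamma_S$ is an isomorphism that maps $\W^L_{\prec S}$ into $\mathscr E_{S,\prec\lambda}$, equality holds exactly when
\begin{equation*}
\dim M_{L,\lambda}\le|\Path_L^+(\lambda)| .
\end{equation*}
The reverse inequality comes for free: $\widetilde\Gamma_S$ induces a surjection $\W^L_{[S]}\to M_{L,\lambda}$.

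\textbf{The obstacle: bounding the multiplicity.} The main work is showing that the multiplicity of $L_q(\lambda)$ in $\mathscr V_L$ is at most $|\Path_L^+(\lambda)|$, without circularly using \cref{thm: bimodule decomposition}. I will compute the multiplicity via invariants. For each $T\in\Sigma_L$, choose any sign string $R$ with $\wt(R)=\lambda$. Then
\begin{equation*}
\dim\Hom(V^R,V^T)=\sum_\mu m_\mu(V^R)\,m_\mu(V^T)\ \ge\ m_\lambda(V^T),
\end{equation*}
since $m_\lambda(V^R)=1$. This bound alone is too weak. Instead I will argue by counting over all $L$ simultaneously with the full web basis. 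By \cref{thm: Schur-Weyl duality} and semisimplicity,
\begin{equation*}
\dim\mathcal K_L=\sum_\mu(\dim M_{L,\mu})^2 ,
\end{equation*}
while \cref{prp: structure theorem} gives
\begin{equation*}
\dim\mathcal K_L=\sum_{\mu\in\Lambda_+(L)}|\Path_L^+(\mu)|^2 .
\end{equation*}
Combining these with the lower bounds $\dim M_{L,\mu}\ge|\Path_L^+(\mu)|$, which follow from the surjections above for every $\mu\in\Lambda_+(L)$ (each such $\mu$ is the weight of some $S$ with $|S|\le L$, and the argument runs with $\W^L_S$ for arbitrary length $|S|$), forces equality for each $\mu$. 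One must also check that no $\mu\notin\Lambda_+(L)$ occurs in $\mathscr V_L$. This holds because every weight of $\mathscr V_L$ is bounded by $\wt(T)$ for some $T\in\Sigma_L$, and the dominant such weights lie in $\Lambda_+(L)$ (compare \cref{lem: weight and existence of paths}). Equality of the multiplicities then yields the dimension match, hence \eqref{eqn: lower ideal as Hom}.
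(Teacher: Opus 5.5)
Your proof of $\subseteq$ is the paper's argument: since $\wt(M)\prec\lambda$, the module $V^M$ has no weight-$\lambda$ vector, so $L_q(\lambda)$ is killed. Your proof of $\supseteq$ is correct in substance, but every inequality in its middle part points the wrong way.

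Because $\widetilde\Gamma_S$ is an isomorphism and $\widetilde\Gamma_S(\W^L_{\prec S})\subseteq\mathscr E_{S,\prec\lambda}$, the induced map $\W^L_{[S]}\to M_{L,\lambda}$ is surjective. That gives $\dim M_{L,\lambda}\le|\Path_L^+(\lambda)|$, which is the free direction. Equality of the two subspaces is equivalent to the reverse bound $\dim M_{L,\lambda}\ge|\Path_L^+(\lambda)|$. So the statement ``equality holds exactly when $\dim M_{L,\lambda}\le|\Path_L^+(\lambda)|$'' is backwards, and the surjections do not supply the ``lower bounds'' you invoke at the end.

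Fortunately the sum-of-squares squeeze works with the true inequalities. For each dominant $\mu=a\lambda_1+b\lambda_2$, take $S$ with $a$ plus signs and $b$ minus signs. The surjection then gives $\dim M_{L,\mu}\le|\Path_L^+(\mu)|$, and the right side is $0$ for $\mu\notin\Lambda_+(L)$, which makes your separate weight check unnecessary. Combined with $\sum_\mu(\dim M_{L,\mu})^2=\dim\mathcal K_L=\sum_\mu|\Path_L^+(\mu)|^2$, these termwise upper bounds force termwise equality. That yields the needed $\dim\W^L_{\prec S}=\dim\mathscr E_{S,\prec\lambda}$. You should rewrite the argument with the directions corrected.

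With that fix, your route differs from the paper's. The paper proves $\supseteq$ directly:
\begin{itemize}
\item it writes $f\in\mathscr E_{S,\prec\lambda}$ as a sum of maps $V^S\to L_q(\mu)\to\mathscr V_L$ with $\mu\prec\lambda$;
\item it realizes $L_q(\mu)$ as the multiplicity-one top summand of some $V^R$ with $\wt(R)=\mu$;
\item it uses fullness of $\Gamma$ to lift both factors to webs, so $f$ visibly factors through a lower-weight boundary.
\end{itemize}
That argument is constructive and needs only Kuperberg's equivalence, complete reducibility, and multiplicity one of the top summand. None of the path combinatorics enters.

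Your route is non-constructive and rests on the whole combinatorial apparatus: the bijection $C$ of \cref{prp: structure theorem}, the clasped basis of \cref{thm: basis of cell modules} (and through it \cref{thm: unitriangularity}), and semisimplicity from \cref{thm: Schur-Weyl duality}. In exchange it gives $\dim M_{L,\mu}=|\Path_L^+(\mu)|$ for every $\mu$ at once. That is the numerical content of \cref{thm: bimodule decomposition}, obtained by the standard squeeze for semisimple cellular algebras.
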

    By \cref{lem: lower ideal as Hom},
    $\widetilde\Gamma_S$ descends to an isomorphism from $\W^L_{[S]}$ to $\mathscr E_S/\mathscr E_{S,\prec\lambda}$. Using $\W^L_{[S]}\cong\Delta_L(\lambda)$ and \eqref{eqn: multiplicity space as a quotient}, we obtain an isomorphism of left $\mathcal{K}_L$-modules
    \begin{align}\label{eqn: cell module as multiplicity space}
        \Delta_L(\lambda)
        \cong 
        M_{L,\lambda}.
    \end{align}
    Recall from \cref{def: cell modules} that $\Delta_L(\lambda)$ has a basis indexed by $\Path_L^+(\lambda)$. Hence
    \cref{lem: weight and existence of paths} and
    \eqref{eqn: cell module as multiplicity space} show that
    $M_{L,\lambda}\neq0$ precisely when
    $\lambda\in\Lambda_+(L)$.  Thus $\Xi_L=\Lambda_+(L)$ in \eqref{eqn: isotypic decomposition for bimodule}.

    Finally, recall $\dagger$ from \cref{def: dagger}, we define the right $\mathcal{K}_L$-action on $\mathscr{V}_L$
    by:
    \begin{align*}
        h\cdot a:=a^\dagger\cdot h=\Phi(a^\dagger)h.
    \end{align*}
    Under the isomorphism \eqref{eqn: isotypic decomposition for bimodule}, this replaces each left multiplicity space $M_{L,\lambda}$ by the right module $(M_{L,\lambda})^\dagger$, which is $\Delta_L(\lambda)^\dagger$ by \eqref{eqn: cell module as multiplicity space}. Hence \eqref{eqn: isotypic decomposition for bimodule} yields an isomorphism of $(\uqslt,\mathcal{K}_L)$-bimodules
    \begin{align*}
        \mathscr{V}_L
        \cong
        \bigoplus_{\lambda\in\Lambda_+(L)}
        L_q(\lambda)\otimes(M_{L,\lambda})^\dagger
        \cong
        \bigoplus_{\lambda\in\Lambda_+(L)}
        L_q(\lambda)\otimes\Delta_L(\lambda)^\dagger,
    \end{align*}
    which is \eqref{eqn: Schur-Weyl bimodule decomposition}. This concludes the proof of \cref{thm: bimodule decomposition}.

\begin{proof}[Proof of \Cref{lem: lower ideal as Hom}]
    By \eqref{eqn: lower weight submodule},
    $\W^L_{\prec S}$ is spanned
    by webs which factor through a sign string $R$ with
    $\wt(R)\prec\lambda$.  Under $\widetilde\Gamma_S$, such a web becomes
    a morphism which factors as
    \begin{align*}
        V^S\longrightarrow V^R\longrightarrow\mathscr{V}_L.
    \end{align*}
    Every irreducible summand of $V^R$ has highest weight
    $\nu\preceq\wt(R)\prec\lambda$.  Hence
    \begin{align*}
        \Hom_{\uqslt}(L_q(\lambda),V^R)=0,
    \end{align*}
    so the resulting morphism vanishes on $L_q(\lambda)$.  It therefore
    lies in $\mathscr E_{S,\prec\lambda}$, proving the inclusion
    ``$\subseteq$'' in \eqref{eqn: lower ideal as Hom}.

    To prove the converse inclusion, by additivity it suffices to consider morphisms in $\mathscr E_{S, \prec \lambda}$ of the form
    \begin{align*}
        V^S\xrightarrow{\pi}L_q(\mu)
        \xrightarrow{g}\mathscr{V}_L,
        \qquad \mu\prec\lambda,
    \end{align*}
    where $\pi$ is the projection onto one irreducible summand of
    $V^S_{\prec\lambda}$.  Choose a sign string $R$ with $\wt(R)=\mu$.
    The module $L_q(\mu)$ occurs in $V^R$ as its multiplicity-one
    highest-weight summand, so complete reducibility provides maps
    \begin{align*}
        \iota_\mu:L_q(\mu)\longrightarrow V^R,
        \qquad
        p_\mu:V^R\longrightarrow L_q(\mu),
        \qquad
        p_\mu\circ\iota_\mu=\id_{L_q(\mu)}.
    \end{align*}
    Hence
    \begin{align*}
        g\circ\pi
        =
        (g\circ p_\mu)\circ(\iota_\mu\circ\pi)
        :V^S\longrightarrow V^R\longrightarrow\mathscr{V}_L.
    \end{align*}
    Since $\Gamma:\Sp\to\Rep$ is an equivalence, it maps surjectively onto the $\Hom$-spaces. Hence there exist $w_1\in\W^R_S$ and $w_2\in\W^L_R$ such that
    \begin{align*}
        \Gamma(w_1)=\iota_\mu\circ\pi,
        \qquad
        \widetilde\Gamma_R(w_2)=g\circ p_\mu.
    \end{align*}
    Since $\wt(R)=\mu\prec\lambda$, their composite
    $w_2w_1$ belongs to $\W^L_{\prec S}$ and is mapped to $g\circ\pi$. This proves the reverse inclusion in
    \eqref{eqn: lower ideal as Hom}.
\end{proof}

\section{The determination of generators of $\mathcal K_L$}\label{sec: generators of K_L}

We prove \cref{thm: generators of K_L}. 

\subsection{An outline of the strategy}\label{sec: outline for FFT}

We outline the crucial steps of the proof to \cref{thm: generators of K_L} in this subsection, which are then fulfilled in the subsequent subsections. We define the following paths:
\begin{align*}
    P_c^{\pm}:=u_{\pm,1}u_{\mp,-1}, \qquad P_t^{\pm}:=u_{\pm,1}u_{\pm,0}u_{\pm,-1}, \qquad P^{\pm}_Y:=u_{\mp,1}u_{\mp,0}.
\end{align*}
Clearly the paths are dominant, and through the growth algorithm, they correspond to the following webs:
\begin{center}
    \begin{tikzpicture}[scale=0.9]
        \begin{scope}[shift={(0,0)}]
            \node[above] at (-0.6, 1) {$+$};
            \node[above] at (0.6, 1) {$-$};
            
            \draw[style=mid arrow] (0.6, 1) to[out=-90, in=-90, looseness=1.5] (-0.6, 1);
            
            \node at (0, -0.8) {$G(P_c^+) = \ccup_+$};
        \end{scope}
        
        \begin{scope}[shift={(4.5,0)}]
            \node[above] at (-0.8, 1) {$+$};
            \node[above] at (0, 1) {$+$};
            \node[above] at (0.8, 1) {$+$};
            
            \coordinate (V) at (0, 0.2);
            
            \draw[style=mid arrow] (V) -- (-0.8, 1);
            \draw[style=mid arrow] (V) -- (0, 1);
            \draw[style=mid arrow] (V) -- (0.8, 1);
            
            \node at (0, -0.8) {$G(P_t^+) =: t_+$};
        \end{scope}
        
        \begin{scope}[shift={(9,0)}]
            \node[above] at (-0.6, 1) {$-$};
            \node[above] at (0.6, 1) {$-$};
            
            \node[below] at (0, 0) {$+$};
            
            \coordinate (V2) at (0, 0.6);
            
            \draw[style=mid arrow] (-0.6, 1) -- (V2);
            \draw[style=mid arrow] (0.6, 1) -- (V2);
            \draw[style=mid arrow] (0,0) -- (V2);
            
            \node at (0, -0.8) {$G(P_Y^+) = Y_+$};
        \end{scope}
    \end{tikzpicture}
\end{center}

\begin{definition}\label{def: cyclic path and web}
    Let $S=(s_1,\dots,s_n)$ be a sign string such that $|S|\leq L$. We define the path $P^S_{cyc}\in \Path_L^+(\wt(S))$ as follows:
    \begin{align}\label{eqn: cyclic path}
        P^S_{cyc}:=\begin{aligned}
            \begin{cases}
                (P_c^+)^{(L-|S|)/2}P_S^{\mathrm{hw}} & L-|S|\in 2\N, \\
                P_Y^{s_1} P_{(s_2,\dots,s_n)}^{\mathrm{hw}} & L-|S|=1, \\
                (P_c^+)^{(L-|S|-3)/2} P_t^+ P_S^{\mathrm{hw}}, & L-|S|\in 2\N+3.
            \end{cases}
        \end{aligned}
    \end{align}
    We define the \textup{cyclic web} as the monomial web $w^S_{cyc}:=G(P^S_{cyc})$. 
\end{definition}

Evidently, the cyclic web $w^S_{cyc}$ is given by:
\begin{align}\label{eqn: cyclic web}
        w^S_{cyc}:=\begin{aligned}
            \begin{cases}
                (\ccup_{+})^{\otimes (L-|S|)/2} \otimes \id_{S} & L-|S|\in 2\N, \\
                Y_{s_1} \otimes \id_{(s_2,\dots s_n)} & L-|S|=1, \\
                (\ccup_{+})^{\otimes (L-|S|-3)/2} \otimes t_+ \otimes \id_S, & L-|S|\in 2\N+3.
            \end{cases}
        \end{aligned}
\end{align}

The cyclic webs are so named because they are $\Alg_{\C(q)}(\GenSet_L)$-cyclic vectors in the cell modules. Specifically, we have:

\begin{proposition}\label{prp: cyclic vector is indeed cyclic}
    Let $P\in \Path_L^+(\lambda)$, let $S$ be the bottom sign string of $G(P)$. Then there exists $g\in \Alg_{\C(q)}(\GenSet_L)$ such that
    \begin{align*}
        G(P) = g G(P^S_{cyc}).
    \end{align*}
\end{proposition}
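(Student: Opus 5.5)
We argue by induction on the order of paths from \cref{def: order on paths}, restricted to the finite set of dominant paths $P\in\Path_L^+(\lambda)$ whose growth web $G(P)$ has a fixed bottom sign string $S$. The goal is an explicit sequence of moves, each realized by left multiplication by an element of $\Alg_{\C(q)}(\GenSet_L)$, that takes $G(P^S_{cyc})$ to $G(P)$. Equivalently, we show that every $G(P)$ can be pushed down to $G(P^S_{cyc})$ by growth-type moves whose web pieces are \emph{invertible up to generators}. The basic tool is the following observation. If $(w,P^i,P^f)$ is a growth rule and $w$ is (up to identities on the remaining strands) a word in $\GenSet_L$, then \eqref{eqn: recursion} gives $G(P_1P^iP_2)=w_{|P_1|+1}\,G(P_1P^fP_2)$. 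The bottom sign string is unchanged, since the recursion only stacks webs on top.

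The rules usable directly are these. The $I^\pm$-rule of \cref{prp: I-rule} swaps $u_{\pm,1}u_{\pm,-1}\leftrightarrow u_{\pm,-1}u_{\pm,1}$. The $H^\pm$-rules of \cref{fig: defining growth rules,fig: growth rules} swap $u_{+,1}u_{-,0}\to u_{-,0}u_{+,1}$ and its companions. We also use the two-step composites of $Y$ and $\Ydual$ rules built from \cref{lem: E-lemma} together with the defining $Y_\pm$-rule, realized through $\YL{\pm}{\pm}$ and the auxiliary webs of \eqref{eqn: relations in KL(q)}, which are expressible in $\GenSet_L$. The key step is a normal-form argument for paths. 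Every $P\in\Path_L^+(\lambda)$ with bottom string $S$ should be reducible, by these local moves, to the canonical path $P^S_{cyc}$. In that path all ``non-highest'' activity is packed at the left into cups $P_c^+$, a single $P_Y$ or $P_t^+$, followed by $P^{\mathrm{hw}}_S$.

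Concretely, the plan is to read $P=(T,J)$ and bubble the steps with states $0$ and $-1$ leftwards using $I$- and $H$-moves, which preserve dominance by \cref{lem: dominance}-type arguments. Once a state $-1$ step sits adjacent to a matching $1$ step, it forms a $P_c^\pm$ or a $P_t^\pm$ segment at the far left. The sign of each left cup must then be normalized to $P_c^+$. This is done by reversing a cup $\ccup_-$ into $\ccup_+$, and reversing a triple $t_-$ into $t_+$, using webs such as $\TL{\pm}{\mp}$ and $T^\pm_\mp$. These are exactly the arc-reversal and triplet-reversal statements (\cref{lem: arc reversal,lem: triplet reversal}) mentioned in the introduction, and they require $[2]_q,[3]_q$ to be invertible. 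I would prove them separately as lemmas: $\ccup_+$ is a scalar multiple of $\TL{+}{-}$ applied to $\ccup_-$, with the digon contributing $-[2]_q$, and similarly $t_+$ is obtained from $t_-$ via $\YL{+}{+}$ composed with a triangle. Two further normalizations are needed. Leftover pairs of cups merge into a triple, or vice versa, by the parity rule $L-|S|$, and the $Y$ case handles $L-|S|=1$. Since each move is applied on top, the result is $G(P)=g\,G(P^S_{cyc})$ with $g$ a product of the corresponding local webs.

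The main obstacle is showing that the bubbling procedure terminates at precisely $P^S_{cyc}$ while staying inside dominant paths with the fixed bottom string $S$. The moves must be oriented correctly (from $P^f$ to $P^i$, i.e.\ increasing order), and the case analysis over adjacent sign/state pairs must cover every configuration that is not yet in normal form. I would organize this as a secondary induction on the position of the last non-highest-weight step of the ``prefix'' part of $P$. In each case I would exhibit a rule from \cref{fig: defining growth rules,fig: growth rules}, \cref{lem: E-lemma} or \cref{prp: I-rule} that moves it one site left, and then check the $\mathrm{wt}$ bookkeeping via \cref{lem: growth preserves weight}.
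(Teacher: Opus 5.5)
\textbf{There is a genuine gap.} Your first step, normalizing with $H$- and $I$-moves in the order-lowering direction, agrees with the paper. The rest of the procedure does not reach $P^S_{cyc}$, because the moves that do the real work are missing.

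An $I$- or $H$-move can only slide a $0$- or $-1$-step leftwards until it meets an opposite-sign $1$-step, where only the cup rule applies, or a same-sign $1$-step, where only a $Y$-rule applies. The cup, $Y$ and $\Ydual$ rules change the number of strands, so they are not moves by elements of $\mathcal K_L$. Hence your claim that in every case some rule from \cref{fig: defining growth rules,fig: growth rules}, \cref{lem: E-lemma} or \cref{prp: I-rule} moves things one site left is false. Two concrete obstructions:
\begin{itemize}
\item Take $P=u_{+,1}u_{+,1}u_{-,-1}$, so $S=(+)$. No $I$- or $H$-move applies, and the cup sits at sites $2,3$, not ``at the far left''. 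Reaching $P^S_{cyc}=u_{+,1}u_{-,-1}u_{+,1}$ requires the arc-switching identity $\id_+\otimes\ccup_+=\TL{+}{-}_2(\ccup_+\otimes\id_+)$ of \cref{lem: arc switching}. Triplets need the analogue \cref{lem: triplet switching}.
\item Take $P=u_{+,1}u_{+,0}u_{+,1}u_{+,0}=P_Y^-P_Y^-$, so $S=(-,-)$ and $L-|S|=2$. Again nothing applies, and one needs $Y_-\otimes Y_-=\LY{+}{+}_2(\ccup_+\otimes\id_{--})$ from \cref{lem: merge Y}. So $Y$-pieces are not confined to the case $L-|S|=1$.
\end{itemize}
Your ``pairs of cups merge into a triple'' cannot hold either: two cups have four top endpoints and a triplet has three. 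The correct merges are two triplets into three cups, a triplet and a $P_Y$ into two cups plus a $1$-step, and two $P_Y$'s into a cup plus two $1$-steps (\cref{lem: triplet merge,lem: triplet-Y merge,lem: merge Y}).

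\textbf{What the paper supplies instead.} The paper splits $P$ into cup stack, triplet stack, active area, $Y$ stack and $1$ stack (\cref{def: active area}). After the $H/I$ normalization it examines the \emph{last} $1$-step of the active area:
\begin{itemize}
\item If it is followed by an opposite-sign $-1$-step, the resulting cup is moved into the cup stack by \cref{lem: arc switching}.
\item If it is followed by $u_{s,0}u_{s,-1}$, the triplet is moved into the triplet stack by \cref{lem: triplet switching}.
\item If it is followed by $u_{s,0}u_{s,0}$, the $1$-step is pushed to the \emph{right} by $I^s\otimes\id_s$ (\cref{lem: YH-EY}).
\item If it ends the active area, it is carried past the $Y$ stack by \cref{lem: Y-switch}, which uses $\LY{s}{s'}$ and the digon relation.
\end{itemize}
The induction is on the length of the active area (\cref{lem: active area reduction}), not on the order of \cref{def: order on paths}. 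That order only compares paths with the same sign string. Arc reversal, the $Y$-switch and all the merges change the sign string, so your induction is not well-founded for the moves you actually need.

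A small slip as well: $\TL{+}{-}\ccup_-=[3]_q\,\ccup_+$ comes from a loop. The digon factor $-[2]_q$ belongs to the paper's version $\ccup_\pm=-[2]_q^{-1}H^\mp\ccup_\mp$.
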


The proof of \cref{prp: cyclic vector is indeed cyclic} takes two steps: First, in \cref{sec: local surgery} we identify many new growth rules. Then in \cref{sec: cyclic property}, using these growth rules we provide an algorithm that determines for any $P\in \Path_L^+(\lambda)$, an element $g_P\in \Alg_{\C(q)}(\GenSet_L)$ such that $G(P)=g G(P^S_{cyc})$. We finally show that the map $C$ glues two cyclic webs into an element in $\Alg_{\C(q)}(\GenSet_L)$:
The proof of \cref{prp: cyclic vector is indeed cyclic} takes two steps: First, in \cref{sec: local surgery} we identify many new growth rules. Then in \cref{sec: cyclic property}, using these growth rules we provide an algorithm that determines for any $P\in \Path_L^+(\lambda)$, an element $g_P\in \Alg_{\C(q)}(\GenSet_L)$ such that $G(P)=g G(P^S_{cyc})$. We finally show that the map $C$ glues two cyclic webs into an element in $\Alg_{\C(q)}(\GenSet_L)$:

\begin{proposition}\label{prp: basic web in the cyclic subalgebra}
    Let $S,T$ be sign strings such that their lengths $|S|\leq L$, $|T|\leq L$, with weights $\lambda=\wt(S)=\wt(T)$. Let $P^S_{cyc}$ and $P^T_{cyc}$ be the cyclic paths associated to $S$ and $T$, respectively. Then $C^\lambda_{P^S_{cyc}, P^T_{cyc}} \in \Alg_{\C(q)}(\GenSet_L)$.
\end{proposition}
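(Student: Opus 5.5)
By \cref{lem: practical C}, $C^\lambda_{P^S_{cyc},P^T_{cyc}} = w^S_{cyc}\circ\mathcal H^{S}_{T}\circ (w^T_{cyc})^\dagger$, since $S$ and $T$ are the bottom sign strings of $w^S_{cyc}$ and $w^T_{cyc}$. The plan is to factor this web into three pieces, each visibly in $\Alg_{\C(q)}(\GenSet_L)$. Because $\wt(S)=\wt(T)$, the strings $S$ and $T$ have the same numbers of $+$ and $-$, so $|S|=|T|=:n$. By \eqref{eqn: cyclic web}, $w^S_{cyc}=X\otimes\id_S$ and $w^T_{cyc}=X'\otimes\id_T$, where $X,X'$ live on the first $L-n$ sites and have empty (or one-site) bottom. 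In the first and third cases of \eqref{eqn: cyclic path}, $X=X'$ is $\ccup_+^{\otimes k}$, respectively $\ccup_+^{\otimes k}\otimes t_+$. In the second case $X=Y_{s_1}$ and $X'=Y_{t_1}$, and then $S,T$ are shortened by one site. By isotopy (interchange law),
\begin{align*}
    C^\lambda_{P^S_{cyc},P^T_{cyc}}=\bigl((X X'^\dagger)\otimes \id_{S}\bigr)\circ\bigl(\id_{X'^\dagger\text{-top}}\otimes \mathcal H^S_T\bigr),
\end{align*}
where in the second case the first strand of $\mathcal H^S_T$ is absorbed into the $Y$-piece. It then suffices to show three things: each factor, padded with $1_m$ on the remaining sites, lies in $\Alg_{\C(q)}(\GenSet_L)$; and the boundary signs can be fixed. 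The signs are fixed by multiplying with products of the idempotents $(\id_{s_1s_2})_i$, which yields $\id_S$ and $\id_T$ on the last $n$ sites.

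For the $\mathcal H$-factor, \cref{lem: H^S_T is concatenation of H web} (or \cref{cor: wiring construction for H}) writes $\mathcal H^S_T$ as an explicit word in the $H^\pm$-webs. Shifting indices by $L-n$ and inserting idempotents to select the correct orientations, each such $H^\pm$ becomes a generator $H^\pm_i$. For the cup part, $\ccup_+\circ\ccup_+^\dagger$ on two sites is an $\TL{\pm}{\pm}$-type web. By \eqref{eqn: relations in KL(q)}, $\TL{\pm}{\pm}=H^\mp H^\pm-\id_{\pm\mp}$, so $(\ccup_+\ccup_+^\dagger)^{\otimes k}$ is a product of commuting generator words at positions $1,3,\dots$. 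In the second case, $Y_{s_1}Y_{t_1}^\dagger$ is a two-site web through a single strand. When $s_1=t_1$ this is $I^{\mp}$, a generator. When $s_1\neq t_1$ it is a web between $(s_1,s_1)$-type and $(t_1,t_1)$-type boundaries through one strand; I would obtain it from $I$ composed with $\TL{}{}$-type webs, using \eqref{eqn: relations in KL(q)} together with the digon relation (coefficient $-[2]_q$, invertible over $\C(q)$).

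The main obstacle is the third case, the factor $t_+ t_+^\dagger$ on three sites, which passes through the empty boundary. My first attempt is to write $t_+=(Y_-\otimes\id_+)\circ\ccup_-$ and hence
\begin{align*}
    t_+t_+^\dagger=(Y_-\otimes\id_+)\circ(\ccup_-\ccup_-^\dagger)\circ(Y_-^\dagger\otimes\id_+).
\end{align*}
I would then realize the conjugation by $Y_-$ through $I^+_1=Y_-Y_-^\dagger$. Concretely, I would compute $I^+_1\,\TL{-}{-}\!{}_2\,I^+_1$ (or a similar word, possibly using $\YL{+}{+}$ or $T^\pm_\pm$ from \eqref{eqn: relations in KL(q)}) and reduce it by Kuperberg relations. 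The expected outcome is a digon multiple of $t_+t_+^\dagger$ plus lower terms that are already known to be words in generators. If that fails, I would instead use $\YL{-}{-}\,\YL{+}{+}$, which factors through a sign string of length three. Finally, the factors $\ccup_+^{\otimes k}$ commute with the $t_+$-block, and the product of all factors reproduces the web above, completing the argument.
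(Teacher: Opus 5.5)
Your treatment of the cases $L-|S|\in2\N$ and $L-|S|\in2\N+3$ matches the paper. There $X=X'$ has empty bottom, so $C^\lambda_{P^S_{cyc},P^T_{cyc}}=(XX^\dagger)\otimes\mathcal H^S_T$, with $\ccup_+\ccup_+^\dagger=\TL{+}{+}$ and $t_+t_+^\dagger=T^+_+$. The triplet factor is not the obstacle you expect: one square reduction gives $I^+_1I^+_2I^+_1=T^+_+ + I^+_1$ (cf.\ \eqref{eqn: relations in KL(q)}). By contrast, your word $I^+_1\TL{-}{-}_2I^+_1$ is zero, because $I^+_1$ needs a $+$ on strand $2$ while $\TL{-}{-}_2$ puts a $-$ there. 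The subcase $L-|S|=1$, $s_1=t_1$ is also fine: the first strand of $\mathcal H^S_T$ is then vertical, and $Y_{s_1}Y_{s_1}^\dagger=I^{-s_1}$.

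The genuine gap is the case $L-|S|=1$ with $s_1\neq t_1$, which is exactly the case the paper's proof is about. Your factorization fails there. By \cref{cor: wiring construction for H}, the strand of $\mathcal H^S_T$ starting at top position $1$ ends at the first occurrence of $s_1$ in $T$, which is not position $1$. So it crosses other strands and cannot be absorbed into the $Y$-piece. Moreover, $Y_{s_1}Y_{t_1}^\dagger$ is zero, since $\dagger$ preserves boundary signs and $Y_{t_1}^\dagger$ has top sign $t_1\neq s_1$.

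More fundamentally, every web you propose for this case ($I^\pm$, $H^\pm$, $\TL{}{}$-type webs, idempotents, digon reductions) preserves the number of $+$ signs. But $C^\lambda_{P^S_{cyc},P^T_{cyc}}$ has bottom $(-t_1,-t_1,t_2,\dots,t_n)$ and top $(-s_1,-s_1,s_2,\dots,s_n)$, and their numbers of $+$ signs differ by $3$ when $s_1\neq t_1$. By the argument of \cref{prp: KL(q) is not quotient of braid group}, the generators $\YL{\pm}{\pm}$ are therefore unavoidable here, and your plan never produces them.

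The missing step is the paper's. First push the top $Y$ down through the $H$-webs of $\mathcal H^S_T$ using $(Y_{-s}\otimes\id_s)\circ H^s=I^s_1\circ(\id_s\otimes Y_{-s})$ from \eqref{eqn: YH-EY relation}; each step leaves $I$ and $H$ generators above. Move the crossings not involving that strand out of the way. What remains is one $Y$ and one $\Ydual$ at different positions, separated by identity strands. Then \eqref{eqn: Y switch} brings them together at the cost of $\LY{}{}$ factors. The adjacent pair is an $\LY{}{}$ (or $\YL{}{}$) web, which lies in $\Alg_{\C(q)}(\GenSet_L)$ by \eqref{eqn: relations in KL(q)}; this is where $\YL{\pm}{\pm}$ enters.
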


Based on \cref{prp: cyclic vector is indeed cyclic} and \cref{prp: basic web in the cyclic subalgebra}, we can prove \cref{thm: generators of K_L}:

\begin{proof}[Proof of \Cref{thm: generators of K_L}]
    Let $C^\lambda_{P,Q}$ be a basis element in $\mathcal{K}_L$, where $\lambda\in \Lambda_+(L)$ and $P,Q\in \Path_L^+(\lambda)$. Let $S$, $T$ be the bottom sign strings of $G(P)$ and $G(Q)$ respectively. By \cref{prp: cyclic vector is indeed cyclic}, there exist $g,h\in \Alg_{\C(q)}(\GenSet_L)$ such that
    \begin{align}\label{eqn: decomposition for GP and GQ}       
        G(P) = g G(P^S_{cyc}), \qquad G(Q) = h G(P^T_{cyc}).
    \end{align}
    Now consider $C^\lambda_{P,Q}$:
    \begin{align*}
        C^\lambda_{P,Q} = C^\lambda_{P,P_T^{\mathrm{hw}}} G(Q)^\dagger =C^\lambda_{P,P_T^{\mathrm{hw}}} (h G(P^T_{cyc}))^\dagger = C^\lambda_{P,P_T^{\mathrm{hw}}} (G(P^T_{cyc}))^\dagger h^\dagger = C^\lambda_{P,P^T_{cyc}} h^\dagger,
    \end{align*}
where the first equality uses \cref{lem: basis decomposition}, the second equality uses \eqref{eqn: decomposition for GP and GQ}, the third equality uses the fact that $\dagger$ is an anti-involution, and the last equality uses \cref{lem: basis decomposition} again. A similar treatment of $P$ gives
    \begin{align*}
        C^\lambda_{P,Q} =g C^\lambda_{P^{S}_{cyc},P^T_{cyc}} h^\dagger \in \Alg_{\C(q)}(\GenSet_L),
    \end{align*}
where the last inclusion uses \cref{prp: basic web in the cyclic subalgebra}, and the fact that $\Alg_{\C(q)}(\GenSet_L)$ is closed under the anti-involution $\dagger$.
\end{proof}

\subsection{Some growth rules}\label{sec: local surgery}

Recall \cref{def: growth algorithm} for the growth algorithm and \cref{def: growth rule} of (general) growth rules. 

\begin{lemma}\label{lem: arc reversal}
    For each sign $s\in\{+,-\}$, the triple $(-[2]_q^{-1}H^{-s},P_c^s,P_c^{-s})$ is a growth rule. Equivalently, if the paths
    \begin{align*}
        P=P_1 P_c^\pm P_2,\qquad P'=P_1 P_c^\mp P_2
    \end{align*}
    are dominant, for some paths $P_1$ and $P_2$, then $G(P)=g G(P')$ for $g=-[2]_q^{-1}H_{|P_1|+1}^{\mp}\in \Alg_{\C(q)}(\GenSet_L)$.
\end{lemma}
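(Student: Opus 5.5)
The plan is to reduce both $G(P)$ and $G(P')$ to the same smaller web by the defining cup rule, and then compare the two cups through a single local digon relation. Write $P=P_1P_c^{s}P_2$ and $P'=P_1P_c^{-s}P_2$. Both $P_c^{s}=u_{s,1}u_{-s,-1}$ and $P_c^{-s}=u_{-s,1}u_{s,-1}$ are initial paths of defining cup rules. These are the rule of \cref{fig: defining growth rules} and its sign-reversed copy, and their webs are the two cups $G(P_c^{\pm})$ drawn in \cref{sec: outline for FFT}. Since the two steps of $P_c^{\pm}$ are negatives of each other, the path $P_1P_2$ visits only vertices already visited by $P$, so it is dominant. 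By \cref{prp: growth algorithm is well-defined} we may apply the cup rule at these two steps first. This gives
\begin{align*}
G(P)=(\id_{S_1}\otimes G(P_c^{s})\otimes\id_{S_2})\circ G(P_1P_2),\qquad
G(P')=(\id_{S_1}\otimes G(P_c^{-s})\otimes\id_{S_2})\circ G(P_1P_2),
\end{align*}
where $S_1,S_2$ are the sign strings of $P_1,P_2$.

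It therefore suffices to prove the local identity
\begin{align*}
H^{-s}\circ G(P_c^{-s})=-[2]_q\, G(P_c^{s})
\end{align*}
in $\W^{(s,-s)}_{\emptyset}$. Tensoring with identities and composing with $G(P_1P_2)$ then gives $H^{-s}_{|P_1|+1}G(P')=-[2]_q G(P)$. Dividing by $[2]_q$, which is invertible in $\C(q)$, yields the claim with $g=-[2]_q^{-1}H^{-s}_{|P_1|+1}$. This $g$ lies in $\Alg_{\C(q)}(\GenSet_L)$ because $H^{\pm}_i\in\GenSet_L$.

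To prove the local identity, stack the $H$-web on top of the cup. The two bottom legs of $H$ are joined by the cup. Together with the middle edge of $H$ they bound an internal face with exactly two trivalent vertices, which is a digon. One vertex of the digon is a source and the other a sink, and the remaining two legs go to the top boundary. The digon relation of \cref{fig: Kuperberg relations} replaces this configuration by $-[2]_q$ times a single strand joining the two top boundary points, which is a cup. This cup has top signs $(s,-s)$, so it is $G(P_c^{s})$.

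The only real obstacle is orientation bookkeeping. One must check that the $H$-web with bottom signs $(-s,s)$ and top signs $(s,-s)$ is $H^{-s}$ in the convention of \cref{fig: generating set}, and that the cup it meets is the oriented cup $G(P_c^{-s})$. I would verify this for $s=+$ directly from the pictures, and obtain $s=-$ by reversing all orientations and signs.
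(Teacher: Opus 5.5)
Your proposal is correct and follows essentially the same route as the paper: apply the cup rule first (using order-independence from \cref{prp: growth algorithm is well-defined}) to reduce both $G(P)$ and $G(P')$ to $G(P_1P_2)$. Then compare the two cups through the digon relation $H^{-s}\circ\ccup_{-s}=-[2]_q\,\ccup_{s}$, which is the paper's identity $\ccup_\pm=-[2]_q^{-1}H^\mp\ccup_\mp$, and your orientation bookkeeping ($H^{-s}$ has bottom signs $(-s,s)$ and top signs $(s,-s)$) agrees with the paper's conventions.
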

\begin{proof}
Due to \cref{prp: growth algorithm is well-defined}(2),  
it follows from the growth algorithm that
    \begin{align*}
        G(P) & = (\id_{|P_1|} \otimes \ccup_{\pm} \otimes \id_{|P_2|}) \circ G(P_1P_2),\\
        G(P') & = (\id_{|P_1|} \otimes \ccup_{\mp} \otimes \id_{|P_2|}) \circ G(P_1P_2).
    \end{align*}
    On the other hand, by the Kuperberg digon relation we have $\ccup_{\pm}=-[2]_q^{-1}H^\mp \ccup_{\mp}$, illustrated in \cref{fig: arc-reversal}.
    \begin{figure}[htbp]
        \centering
        \scalebox{0.95}{\begin{tikzpicture}
	\begin{pgfonlayer}{nodelayer}
		\node [style=none] (9) at (1, -1) {};
		\node [style=none] (10) at (3, -1) {};
		\node [style=none] (11) at (1, 1) {};
		\node [style=none] (12) at (1.5, 0) {};
		\node [style=none] (13) at (2.5, 0) {};
		\node [style=none] (14) at (3, 1) {};
		\node [style=none] (15) at (0, 0) {$=-[2]_q^{-1}$};
		\node [style=none] (16) at (-3, -1) {};
		\node [style=none] (17) at (-1, -1) {};
		\node [style=none] (20) at (-3, 1) {};
		\node [style=none] (21) at (-1, 1) {};
		\node [style=none] (22) at (-5, -1) {};
		\node [style=none] (23) at (5, -1) {};
	\end{pgfonlayer}
	\begin{pgfonlayer}{edgelayer}
		\draw [style=mid arrow, bend right=90, looseness=1.25] (9.center) to (10.center);
		\draw [style=mid arrow] (10.center) to (13.center);
		\draw [style=mid arrow] (12.center) to (13.center);
		\draw [style=mid arrow] (14.center) to (13.center);
		\draw [style=mid arrow] (12.center) to (11.center);
		\draw [style=mid arrow] (12.center) to (9.center);
		\draw [style=mid arrow, bend left=90, looseness=1.25] (17.center) to (16.center);
		\draw [style=mid arrow] (21.center) to (17.center);
		\draw [style=mid arrow] (16.center) to (20.center);
		\draw[gray!65,densely dotted,line width=0.6pt]
			(0.65,-1) -- (3.35,-1);
	\end{pgfonlayer}
\end{tikzpicture}}
        \caption{Diagram identity $\ccup_+=-[2]_q^{-1}H^-\ccup_-$.}
        \label{fig: arc-reversal}
    \end{figure}
    Combining the above identities gives the desired result.
\end{proof}

The cyclic webs have $\ccup$ webs as their leftmost parts. The next lemma shows that, if a general web $G(P)$ contains $\ccup$ as a piece, then we can move the $\ccup$ to the leftmost part of $G(P)$, at the cost of an extra factor in $\Alg_{\C(q)}(\GenSet_L)$.

\begin{lemma}\label{lem: arc switching}
Let $s_1,s_2$ be signs, and $u=u_{s_1,j}$ be a step from \cref{def: steps}. Then there exists a web $g_{s_1,s_2}\in\Alg_{\C(q)}(\GenSet_3)$ such that $(g_{s_1,s_2},uP_c^{s_2},P_c^{s_2}u)$ is a growth rule. Equivalently, if the paths
    \begin{align*}
        P=P_1 u P_c^{s_2} P_2,\qquad P'=P_1 P_c^{s_2} u P_2
    \end{align*}
    are dominant, for some paths $P_1$ and $P_2$, then
    \begin{align*}
        G(P)= (\id_{|P_1|} \otimes g_{s_1,s_2} \otimes \id_{|P_2|}) G(P').
    \end{align*}
\end{lemma}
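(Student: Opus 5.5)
The idea is to peel the arc off both paths with the same $\ccup$ growth rule, so that $G(P)$ and $G(P')$ differ only by where a single cup is attached on top of the common web $G(P_1uP_2)$. One then moves the cup with a zigzag, which costs a factor $[3]_q^{-1}$ from a closed loop.

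\textbf{Step 1 (common ancestor).} Since $P_c^{s_2}=u_{s_2,1}u_{-s_2,-1}$ returns to its starting vertex, the path $P_1uP_2$ visits a subset of the vertices of $P$. Hence it is dominant, and so is $P'$. In both $P$ and $P'$ the two steps of $P_c^{s_2}$ admit the defining $\ccup_{s_2}$ rule of \cref{fig: defining growth rules}. By the order-independence in \cref{prp: growth algorithm is well-defined} we may apply this rule first. With $n=|P_1|$ and $\id$ on the remaining strands understood, this gives
\begin{align*}
G(P)=(\id_{n}\otimes\id_{s_1}\otimes\ccup_{s_2}\otimes\id)\,G(P_1uP_2),\qquad G(P')=(\id_{n}\otimes\ccup_{s_2}\otimes\id_{s_1}\otimes\id)\,G(P_1uP_2).
\end{align*}
The claim therefore reduces to a purely local identity on three strands: find $g_{s_1,s_2}\in\Alg_{\C(q)}(\GenSet_3)$ with
\begin{align*}
\id_{s_1}\otimes\ccup_{s_2}=g_{s_1,s_2}\circ(\ccup_{s_2}\otimes\id_{s_1}).
\end{align*}

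\textbf{Step 2 (local identity).} Take
\begin{align*}
g_{s_1,s_2}:=[3]_q^{-1}\,(\id_{s_1}\otimes\ccup_{s_2})\circ(\ccap\otimes\id_{s_1}),
\end{align*}
where $\ccap$ is the cap that closes $\ccup_{s_2}$, with the sign fixed by the orientation of $\ccup_{s_2}$. Precomposing with $\ccup_{s_2}\otimes\id_{s_1}$ produces a closed loop, which evaluates to $[3]_q$ by the loop relation. This leaves exactly $\id_{s_1}\otimes\ccup_{s_2}$, as required.

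\textbf{Step 3 (membership in $\Alg_{\C(q)}(\GenSet_3)$).} It remains to show that this web lies in the algebra generated by $\GenSet_3$. By the zigzag identities of \cref{lem: zigzag relations}, the web $(\id\otimes\ccup)\circ(\ccap\otimes\id)$ equals the product $(E)_2\,(E)_1$. Here $(E)_1$ is a cup–cap web on strands $1,2$ and $(E)_2$ is one on strands $2,3$; their signs are chosen so that the middle boundaries match, and the inner cap–cup pair straightens by isotopy. Each auxiliary web $\TL{\cdot}{\cdot}$ is a word in the generators $H^\pm$ and $\id_{s_1s_2}$ by \eqref{eqn: relations in KL(q)}. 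Therefore $g_{s_1,s_2}$ is $[3]_q^{-1}$ times a product of generators.

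\textbf{Main obstacle.} The step I expect to be most delicate is the sign bookkeeping in Step 3. In the four cases $s_1=\pm s_2$, one must check that the correct $\TL{a}{b}$ appear and that their middle boundaries match. In particular, when $s_1=-s_2$ the middle sign string changes orientation, and one may need the version $\TL{\pm}{\mp}=[2]_q^{-1}\TL{\pm}{\pm}H^\mp$ instead of $\TL{\pm}{\pm}$. Every coefficient that appears is a power of $[2]_q^{-1}$ or $[3]_q^{-1}$, consistent with the specialization remarks in the introduction.
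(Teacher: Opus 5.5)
Your proof is correct. Its first step, peeling the arc off both $P$ and $P'$ with the $\ccup$ growth rule to reduce to a local three-strand identity, is exactly what the paper does. The difference is in the local identity.

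The paper treats $s_1=s_2=s$ by the pure zigzag identity $\id_s\otimes\ccup_s=\TL{s}{-s}_2\circ(\ccup_s\otimes\id_s)$, with no scalar. It reduces $s_1=-s_2$ to this case by applying \cref{lem: arc reversal} before and after, via the digon relation, at a cost of $-[2]_q^{-1}H^\mp$ each time.

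You instead use a uniform ``erase and redraw'' element. Capping off the old cup produces a loop worth $[3]_q$, and you then draw the new cup. This handles all four sign cases at once without arc reversal. The price is inverting $[3]_q$. That is harmless over $\C(q)$. For specializations, the lemma taken alone would now need $[3]_{q_0}\neq0$, but the overall generator theorem already requires this through \cref{lem: triplet reversal}. For $s_1=s_2$ your loop step is redundant: $[3]_q^{-1}\TL{s}{s}_1$ fixes $\ccup_s\otimes\id_s$, and dropping it recovers the paper's $g_{s,s}=\TL{s}{-s}_2$.

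The sign bookkeeping you flag as the main obstacle is in fact forced. The middle boundary must be $(s_1,-s_1,s_1)$, so the web factors as $\TL{s_2}{-s_1}_2\,\TL{s_1}{s_2}_1$. Consequently a mixed-sign factor $\TL{\pm}{\mp}$ appears in every case, not only when $s_1=-s_2$. This is harmless, since all four $\TL{a}{b}$ lie in $\Alg_{\C(q)}(\GenSet_3)$ by \eqref{eqn: relations in KL(q)}.
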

\begin{proof}
    We may assume without loss of generality that $s_1=s_2$. Indeed, if $s_1=-s_2$, we apply \cref{lem: arc reversal}. Now write $s=s_1=s_2$. By the growth algorithm, we have
    \begin{align*}
        G(P) & = (\id_{|P_1|} \otimes \id_{s}\otimes  \ccup_{s} \otimes \id_{|P_2|}) \circ G(P_1uP_2),\\
        G(P') & = (\id_{|P_1|} \otimes \ccup_{s} \otimes \id_{s} \otimes \id_{|P_2|}) \circ G(P_1uP_2).
    \end{align*}
    Let $g_{s,s}=\TL{s}{-s}_{2}$, where $\TL{s}{-s}$ is defined in \cref{fig: auxiliary basic webs}. The desired identity follows from
    \begin{align}\label{eqn: arc switching}
        \id_{s}\otimes\ccup_{s} = \TL{s}{-s}_2 \circ (\ccup_{s}\otimes\id_{s})
    \end{align}
    which follows by isotopy and is illustrated in \cref{fig: arc-switching}.
    \begin{figure}[htbp]
        \centering
        \scalebox{1.05}{\begin{tikzpicture}
	\begin{pgfonlayer}{nodelayer}
		\node [style=none] (0) at (0, 0) {$=$};
		\node [style=none] (3) at (-3, 0.5) {};
		\node [style=none] (4) at (-3, -0.75) {};
		\node [style=none] (5) at (-2, 0.5) {};
		\node [style=none] (6) at (-1, 0.5) {};
		\node [style=none] (7) at (1, 0) {};
		\node [style=none] (8) at (2, 0) {};
		\node [style=none] (9) at (3, 0) {};
		\node [style=none] (10) at (3, -1) {};
		\node [style=none] (11) at (2, 1) {};
		\node [style=none] (12) at (3, 1) {};
		\node [style=none] (13) at (1, 1) {};
	\end{pgfonlayer}
	\begin{pgfonlayer}{edgelayer}
		\draw (3.center) to (4.center);
		\draw [looseness=1.75, bend right=90] (5.center) to (6.center);
		\draw [looseness=1.75, bend right=90] (7.center) to (8.center);
		\draw [looseness=1.5, bend right=90] (11.center) to (12.center);
		\draw [looseness=1.5, bend left=90] (8.center) to (9.center);
		\draw (9.center) to (10.center);
		\draw (13.center) to (7.center);
		\draw[gray!65,densely dotted,line width=0.6pt]
			(0.65,0) -- (3.35,0);
	\end{pgfonlayer}
\end{tikzpicture}}
        \caption{Illustration of the identity \eqref{eqn: arc switching}.}
        \label{fig: arc-switching}
    \end{figure}
\end{proof}

Similar to \cref{lem: arc reversal} and \cref{lem: arc switching}, one may move a \textit{triplet web} $t_\pm$ to the leftmost part of a web, or reverse its orientation, at the cost of stacking an extra web $T$ from~\cref{fig: auxiliary basic webs}.

\begin{lemma}\label{lem: triplet reversal}
    For each sign $s\in\{+,-\}$, the triple $(-[2]_q^{-1}[3]_q^{-1}T^s_{-s},P_t^s,P_t^{-s})$ is a growth rule. Equivalently, if $P=P_1 P_t^+ P_2$ and $P'=P_1 P_t^- P_2$ are dominant, for some paths $P_1$ and $P_2$, then $G(P)=gG(P')$ for $g=-[2]_q^{-1}[3]_q^{-1}(T^+_{-})_{|P_1|+1}\in \Alg_{\C(q)}(\GenSet_L)$.
\end{lemma}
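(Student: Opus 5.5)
The plan follows the proof of \cref{lem: arc reversal}. First, express both $G(P)$ and $G(P')$ as a local web stacked on the common web $G(P_1P_2)$. For $P_t^s=u_{s,1}u_{s,0}u_{s,-1}$, the growth algorithm gives a canonical route. The defining rule $(Y_{-s},u_{s,1}u_{s,0},u_{-s,1})$ (the sign-reversed version of the $Y_+$ rule with $k=0$) turns $u_{s,1}u_{s,0}$ into $u_{-s,1}$. The cup rule then removes $u_{-s,1}u_{s,-1}$. By \cref{prp: growth algorithm is well-defined}(2) we may apply these rules first, since the rules touching $P_t^s$ are disjoint from or compatible with any rules needed elsewhere. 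The result is
\begin{align*}
    G(P)=(\id_{|P_1|}\otimes t_s\otimes \id_{|P_2|})\circ G(P_1P_2),
\end{align*}
where $t_s=(Y_{-s}\otimes\id_s)\circ\ccup_{-s}$ is the triplet web: a single source (for $s=+$) with three outgoing edges to the top. Dominance of $P_1P_2$ follows from the dominance of $P$, because $P_t^s$ is a closed triangle loop returning to $\wt(P_1)$. The same computation gives $G(P')=(\id_{|P_1|}\otimes t_{-s}\otimes\id_{|P_2|})\circ G(P_1P_2)$, where $t_{-s}$ is the sink triplet.

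It therefore suffices to prove the local identity
\begin{align*}
    t_s=-[2]_q^{-1}[3]_q^{-1}\,T^s_{-s}\circ t_{-s}
\end{align*}
in $\W^{sss}_{\emptyset}$. Here $T^s_{-s}$ is the auxiliary web from \cref{fig: auxiliary basic webs}. With boundary signs $(s,s,s)$ on top and $(-s,-s,-s)$ on the bottom, it consists of a cap joining the outer two top strands, a cup joining the outer two bottom strands, and a middle vertical edge broken into two pieces ending at the cup and cap. Orientation-consistently, the two middle half-edges attach to trivalent vertices located on the cap and cup arcs. Thus $T^s_{-s}$ is $t_s\circ \overline{t}$, a triplet on top composed with a dual triplet on the bottom.

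Stacking $T^s_{-s}$ on $t_{-s}$ joins the bottom dual triplet of $T^s_{-s}$ to $t_{-s}$ along three edges. This produces a theta graph: two trivalent vertices joined by three edges. Since $\Sp$ is a closed graph calculus, this closed component evaluates to a scalar multiplying the remaining $t_s$. The theta evaluation uses the digon relation once, contributing $-[2]_q$, which leaves a loop contributing $[3]_q$. Hence
\begin{align*}
    T^s_{-s}\, t_{-s}=-[2]_q[3]_q\, t_s,
\end{align*}
and the claimed coefficient follows after inversion. The inversion requires $[2]_q,[3]_q\neq0$, consistent with the paper's remark on specialization. Finally, $T^{+}_{-}$ lies in $\Alg_{\C(q)}(\GenSet_L)$ by the relation $T^{\pm}_{\mp}=\YL{\pm}{\pm}\YL{\mp}{\mp}\YL{\pm}{\pm}-\YL{\pm}{\pm}$ in \eqref{eqn: relations in KL(q)}. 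This yields the stated $g$, placed at position $|P_1|+1$ and tensored with identities.

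The main obstacle is the first step, namely confirming that $G(P)$ really factors as $t_s$ stacked on $G(P_1P_2)$. Once the rules on $P_t^s$ fire, the result is fixed by order-independence. The subtlety is that the first defining rule requires the step preceding $u_{s,0}$ to be exactly $u_{s,1}$, which holds here. I would also double-check the internal geometry of $T^s_{-s}$, namely that its middle strand attaches to the cup and cap arcs, so that the composite is genuinely a theta graph and not a configuration containing a square.
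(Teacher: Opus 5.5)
Your proposal is correct and follows the paper's proof. In both, $G(P)$ and $G(P')$ are factored as the triplet webs $t_{s}$ and $t_{-s}$ stacked on $G(P_1P_2)$, and then the local identity $t_s=-[2]_q^{-1}[3]_q^{-1}\,T^s_{-s}\circ t_{-s}$ is checked. Your explicit theta-graph evaluation, $-[2]_q[3]_q$ from one digon and one loop, supplies the detail that the paper compresses into ``the Kuperberg relations give''.
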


\begin{proof}
    Suppose first that $s_1=s_2=s$. By the growth algorithm,
    \begin{align*}
        G(P)
        &=
        (\id_{|P_1|}\otimes\id_s\otimes t_s
        \otimes\id_{|P_2|})
        \circ G(P_1uP_2),\\
        G(P')
        &=
        (\id_{|P_1|}\otimes t_s\otimes\id_s
        \otimes\id_{|P_2|})
        \circ G(P_1uP_2).
    \end{align*}
    The Kuperberg relations give
    \begin{align}\label{eqn: triplet reversal}
        t_+ = -[2]_q^{-1}[3]_q^{-1} T^+_- \circ t_-,
    \end{align}
    and the orientation-reversed identity gives the other triple. The identity \eqref{eqn: triplet reversal} is illustrated in \cref{fig: triplet-reversal}.
    \begin{figure}[htbp]
        \centering
        \scalebox{1.05}{\begin{tikzpicture}
	\begin{pgfonlayer}{nodelayer}
		\node [style=none] (0) at (1.25, 1) {};
		\node [style=none] (1) at (2.25, 1) {};
		\node [style=none] (2) at (3.25, 1) {};
		\node [style=none] (3) at (1.25, -1) {};
		\node [style=none] (4) at (2.25, -1) {};
		\node [style=none] (5) at (3.25, -1) {};
		\node [style=none] (6) at (2.25, 0.25) {};
		\node [style=none] (7) at (2.25, -0.25) {};
		\node [style=none] (8) at (1.25, -1) {};
		\node [style=none] (9) at (2.25, -1.75) {};
		\node [style=none] (10) at (2.25, -1) {};
		\node [style=none] (11) at (3.25, -1) {};
		\node [style=none] (12) at (-3.25, 0) {};
		\node [style=none] (13) at (-2.25, 0) {};
		\node [style=none] (14) at (-1.25, 0) {};
		\node [style=none] (15) at (-2.25, -0.75) {};
		\node [style=none] (16) at (0, 0) {$=-[2]_q^{-1}[3]_q^{-1}$};
	\end{pgfonlayer}
	\begin{pgfonlayer}{edgelayer}
		\draw [style=mid arrow, bend left, looseness=1.25] (6.center) to (0.center);
		\draw [style=mid arrow] (6.center) to (1.center);
		\draw [style=mid arrow, bend right] (6.center) to (2.center);
		\draw [style=mid arrow, bend right, looseness=1.25] (8.center) to (9.center);
		\draw [style=mid arrow] (10.center) to (9.center);
		\draw [style=mid arrow, bend left] (11.center) to (9.center);
		\draw [style=mid arrow, bend right=45] (7.center) to (3.center);
		\draw [style=mid arrow] (7.center) to (4.center);
		\draw [style=mid arrow, bend left=45] (7.center) to (5.center);
		\draw [style=mid arrow, bend left, looseness=1.25] (15.center) to (12.center);
		\draw [style=mid arrow] (15.center) to (13.center);
		\draw [style=mid arrow, bend right] (15.center) to (14.center);
		\draw[gray!65,densely dotted,line width=0.6pt]
			(0.85,-1) -- (3.65,-1);
	\end{pgfonlayer}
\end{tikzpicture}}
        \caption{Illustration of the identity \eqref{eqn: triplet reversal}.}
        \label{fig: triplet-reversal}
    \end{figure}
\end{proof}

\begin{lemma}\label{lem: triplet switching}
    For each step $u$, there exists a web $g_u\in\Alg_{\C(q)}(\GenSet_4)$ such that $(g_u,uP_t^+,P_t^+u)$ is a growth rule.  Equivalently, if $P=P_1uP_t^+ P_2$ and $P'=P_1  P_t^+ u P_2$ are dominant, for some paths $P_1$ and $P_2$, then $G(P)= (\id_{|P_1|} \otimes g_{u} \otimes \id_{|P_2|}) G(P')$.
    For each step $u$, there exists a web $g_u\in\Alg_{\C(q)}(\GenSet_4)$ such that $(g_u,uP_t^+,P_t^+u)$ is a growth rule.  Equivalently, if $P=P_1uP_t^+ P_2$ and $P'=P_1  P_t^+ u P_2$ are dominant paths, for some paths $P_1$ and $P_2$, then $G(P)= (\id_{|P_1|} \otimes g_{u} \otimes \id_{|P_2|}) G(P')$.
\end{lemma}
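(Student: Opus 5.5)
Following the template of \cref{lem: arc switching}, I will first use the growth algorithm to peel off the triplet on both sides, and then relate the two placements of $t_+$ by a local web identity on four strands. Write $u=u_{s,j}$. The path $P_t^+=u_{+,1}u_{+,0}u_{+,-1}$ is a closed excursion at the weight reached before it, so by \cref{lem: web in web} and the order-independence in \cref{prp: growth algorithm is well-defined}(2) we can grow $P_t^+$ first, in both $P$ and $P'$. By \cref{def: growth algorithm}, applying the $Y_-$ rule and then the $\Ydual$-type rule of \cref{lem: E-lemma} (equivalently, $t_+=G(P_t^+)$) gives
\begin{align*}
    G(P)&=(\id_{|P_1|}\otimes\id_s\otimes t_+\otimes\id_{|P_2|})\circ G(P_1uP_2),\\
    G(P')&=(\id_{|P_1|}\otimes t_+\otimes\id_s\otimes\id_{|P_2|})\circ G(P_1uP_2).
\end{align*}
Both webs are grown from the same path $P_1uP_2$, which is dominant since $P_t^+$ returns to its starting weight. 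The claim is therefore reduced to finding $g_u\in\Alg_{\C(q)}(\GenSet_4)$ with
\[
    \id_s\otimes t_+=g_u\circ(t_+\otimes\id_s),
\]
a purely local identity in $\W^{s+++}_{s}$. This identity depends only on $s$, not on $j$.

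For $s=+$ I would write $g_u$ directly. Take the web with four strands at the bottom and top in which the bottom triple source $t_+$ is capped off and re-emitted shifted by one position, and the lone $+$ strand crosses over to the left. This is the analogue of $\TL{s}{-s}_2$ in \eqref{eqn: arc switching}. By isotopy it equals $(\id_+\otimes t_+)\circ\tau$, where $\tau$ absorbs $t_+\otimes\id_+$ to $\id_+$. Concretely, I expect a composite such as $\YL{+}{+}$-type pieces stacked with an $I^-$ or $H$ factor, for example $\YL{+}{+}_2\circ(\Ydual\mathrm{Y})_1$. Its membership in $\Alg_{\C(q)}(\GenSet_4)$ then follows from the relations \eqref{eqn: relations in KL(q)}, which express $T^{\pm}_{\mp}$, $\LY{\pm}{\pm}$, $\TL{\pm}{\mp}$ and the like through $\GenSet$.

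For $s=-$ I would instead first turn the triplet around and then move it. Reverse it with \cref{lem: triplet reversal} to $t_-$. Convert the adjacent $-$ strand using the $I$/$H$ rules, namely \cref{prp: I-rule} and \cref{lem: arc reversal}, so that the configuration matches the same-sign case. Then reverse the triplet back. Each step contributes a factor in $\Alg_{\C(q)}(\GenSet_4)$, possibly with scalars $[2]_q^{-1}$ and $[3]_q^{-1}$.

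The main obstacle is exhibiting the explicit local web for $g_u$ and checking that it reduces exactly to $(\id_s\otimes t_+)$ after precomposition, with no lower-weight leftovers. Unlike the cup case, this is not a pure isotopy: the square and digon relations will generate extra terms. My first attempt would be to use a $T$-type web from \cref{fig: auxiliary basic webs} built with a $\YL{}{}$ piece. I would evaluate $g_u(t_+\otimes\id_s)$ with digon and square reductions, and if extra terms appear, cancel them by subtracting suitable words in $\GenSet_4$, in the same way $T^{\pm}_{\pm}=I_1I_2I_1-I_1$ was obtained.
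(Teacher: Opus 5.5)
Your reduction to the local identity $\id_s\otimes t_+=g_u\circ(t_+\otimes\id_s)$ in $\W^{s+++}_{s}$ is correct, and it is exactly how the paper begins. The gap is the identity itself: you never exhibit $g_u$, and you never show that it lies in $\Alg_{\C(q)}(\GenSet_4)$.

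Your one concrete candidate, $\YL{+}{+}_2\circ(\Ydual\mathrm{Y})_1$, cannot act on $t_+\otimes\id_+$ at all. The web $\YL{+}{+}$ has bottom signs $(-,-,-)$, while strand $4$ carries $+$. Membership also does not simply ``follow from \eqref{eqn: relations in KL(q)}'', since the web you describe is not one of the webs listed there.

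The missing step is a one-line digon computation, with no squares and no correction terms. Put $T^s_s$ on strands $2,3,4$ above $t_s\otimes\id_s$. Two legs of the triplet vertex then run into the bottom vertex of $T^s_s$ and bound a digon. The lone strand enters that same vertex. Collapsing the digon reroutes the lone strand to position $1$, so $(T^s_s)_2(t_s\otimes\id_s)=-[2]_q(\id_s\otimes t_s)$, i.e.\ $g=-[2]_q^{-1}(T^s_s)_2$. A single square reduction of $I_1I_2I_1$ gives $T^s_s=I^s_1I^s_2I^s_1-I^s_1$, so $g$ is a word in $\GenSet$.

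Your geometric web can be rescued. It consists of a sink on bottom strands $1,2,3$, a strand from bottom $4$ to top $1$, and a source on top $2,3,4$. It does satisfy the identity up to the theta value $-[2]_q[3]_q$. By the same digon argument it equals $-[2]_q^{-1}T_2T_1$, which puts it in the algebra. But that rescue needs exactly the observation above, and that observation is the real content of the lemma.

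For $u$ of sign $-$, the middle step of your plan fails. No web changes the sign of a lone strand, since $\W^+_-=0$. Moreover, \cref{prp: I-rule} and \cref{lem: arc reversal} are statements about pairs of path steps, not about an isolated identity strand. The step is also unnecessary. After reversing $t_+$ to $t_-$ with \cref{lem: triplet reversal}, the configuration $t_-\otimes\id_-$ already has matching signs. You then apply the orientation-reversed switch $\id_-\otimes t_-=-[2]_q^{-1}(T^-_-)_2(t_-\otimes\id_-)$, and finally reverse the triplet back on strands $2,3,4$ using $T^+_-$. This is what the paper means by ``the triplet-reversal relation when the sign of $u$ is reversed''.
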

\begin{proof}
    Applying the growth rule for the triplet piece gives
    \begin{align*}
        G(P) & = \id_{|P_1|}\otimes \id_u \otimes t_+ \otimes \id_{|P_2|}\circ G(P_1uP_2),\\
        G(P') & = \id_{|P_1|}\otimes t_+ \otimes \id_{u} \otimes \id_{|P_2|}\circ G(P_1uP_2).
    \end{align*}
    The local web identity
    \begin{align}\label{eqn: triplet switching}
        (\id_u\otimes t_+)=g_u\circ (t_+\otimes \id_u)
    \end{align}
    follows from isotopy and the triplet-reversal relation when the sign of $u$ is reversed. This identity is illustrated in \cref{fig: triplet-switching}, and proves the auxiliary rule.
    \begin{figure}[htbp]
        \centering
        \scalebox{0.88}{\begin{tikzpicture}
	\begin{pgfonlayer}{nodelayer}
		\node [style=none] (0) at (1, 0) {};
		\node [style=none] (1) at (2, 0) {};
		\node [style=none] (3) at (4, 0) {};
		\node [style=none] (4) at (4, -2) {};
		\node [style=none] (5) at (2, -1) {};
		\node [style=none] (6) at (3, 0) {};
		\node [style=none] (7) at (0, 0) {$=-[2]_q^{-1}$};
		\node [style=none] (8) at (-3, 1) {};
		\node [style=none] (9) at (-2, 1) {};
		\node [style=none] (10) at (-2, 0.25) {};
		\node [style=none] (11) at (-1, 1) {};
		\node [style=none] (12) at (-4, 1) {};
		\node [style=none] (13) at (-4, -1) {};
		\node [style=none] (14) at (3, 0.75) {};
		\node [style=none] (15) at (1, 2) {};
		\node [style=none] (16) at (2, 2) {};
		\node [style=none] (17) at (3, 2) {};
		\node [style=none] (18) at (3, 1.25) {};
		\node [style=none] (19) at (4, 2) {};
	\end{pgfonlayer}
	\begin{pgfonlayer}{edgelayer}
		\draw [bend left=45] (5.center) to (0.center);
		\draw (5.center) to (1.center);
		\draw [bend right=45] (5.center) to (6.center);
		\draw (3.center) to (4.center);
		\draw [bend left=45] (10.center) to (8.center);
		\draw (10.center) to (9.center);
		\draw [bend right=45] (10.center) to (11.center);
		\draw (12.center) to (13.center);
		\draw [looseness=1.25, bend right=45] (14.center) to (1.center);
		\draw (14.center) to (6.center);
		\draw (14.center) to (14.center);
		\draw [looseness=1.25, bend left=45] (14.center) to (3.center);
		\draw [bend left=45] (18.center) to (16.center);
		\draw (18.center) to (17.center);
		\draw [bend right=45] (18.center) to (19.center);
		\draw (15.center) to (0.center);
		\draw[gray!65,densely dotted,line width=0.6pt]
			(0.6,0) -- (4.4,0);
	\end{pgfonlayer}
\end{tikzpicture}}
        \caption{Illustration of the identity \eqref{eqn: triplet switching}.}
        \label{fig: triplet-switching}
    \end{figure}
\end{proof}

\begin{lemma}\label{lem: Y-switch}
    For signs $s,s'\in\{+,-\}$, the triple $(-[2]_q^{-1}\LY{s}{s'},u_{s,1}P_Y^{s'},P_Y^s u_{s',1})$ is a growth rule.  Equivalently, if the paths
    \begin{align*}
        P=P_1 u_{s,1} P^{s'}_Y P_2, \qquad P'=P_1 P^{s}_Y u_{s',1} P_2
    \end{align*}
    are dominant, for some paths $P_1$ and $P_2$, then $G(P)=gG(P')$ for $g=[2]_q^{-1}\LY{s}{s'}_{|P_1|+1}$.
\end{lemma}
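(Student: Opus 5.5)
The plan follows the template of \cref{lem: arc switching,lem: triplet switching}. First reduce both $G(P)$ and $G(P')$ to a common path with the growth algorithm, then prove a single local web identity.

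\textbf{Step 1: common reduction.} In $P=P_1u_{s,1}P_Y^{s'}P_2=P_1u_{s,1}u_{-s',1}u_{-s',0}P_2$, the last two steps $u_{-s',1}u_{-s',0}$ admit the defining $Y$-rule with $k=0$ (or its orientation reversal). This yields
\[
G(P)=(\id_{|P_1|}\otimes\id_s\otimes Y_{s'}\otimes\id_{|P_2|})\circ G(P_1u_{s,1}u_{s',1}P_2).
\]
Similarly, in $P'=P_1u_{-s,1}u_{-s,0}u_{s',1}P_2$ the first two steps collapse by the same rule, giving
\[
G(P')=(\id_{|P_1|}\otimes Y_s\otimes\id_{s'}\otimes\id_{|P_2|})\circ G(P_1u_{s,1}u_{s',1}P_2).
\]
By \cref{prp: growth algorithm is well-defined}(2), these choices are legitimate. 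The intermediate path $P_1u_{s,1}u_{s',1}P_2$ is dominant, since it takes only highest steps in the middle. Its dominance can be checked directly, or it follows from \cref{lem: dominance}.

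\textbf{Step 2: the local identity.} It now suffices to show
\[
\id_s\otimes Y_{s'}=-[2]_q^{-1}\,\LY{s}{s'}\circ(Y_s\otimes\id_{s'})
\]
as webs from $(s,s')$ to the three-strand string. The sign in front should be fixed by this computation; note the discrepancy between the triple and the displayed $g$ in the statement. Here $\LY{s}{s'}$ is the auxiliary web of \cref{fig: auxiliary basic webs}: a $\Ydual$ vertex on the two left strands followed by a $Y$ on the two right strands, as drawn. Stack $\LY{s}{s'}$ on top of $Y_s\otimes\id_{s'}$. The $\Ydual$ vertex of $\LY{s}{s'}$ caps the two output legs of $Y_s$, which creates a digon on the leftmost strand. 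The digon relation of \cref{fig: Kuperberg relations} replaces it by $-[2]_q$ times a single strand. What remains is, up to isotopy, a single strand of sign $s$ beside a $Y_{s'}$ on the right. Dividing by $-[2]_q$ gives the identity, and tensoring with identities on $P_1,P_2$ finishes the proof.

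\textbf{Main obstacle.} The main difficulty is bookkeeping of orientations and signs in Step 2. One must confirm that for every $(s,s')$ the composite really contains a digon, and not, say, a square. This depends on the precise picture of $\LY{s}{s'}$ and on which strand the $\Ydual$ attaches to. If instead a square appears, the square relation would produce two terms, and one would have to show that the extra term vanishes or is itself captured by the growth rule. I would check the four cases by drawing them, using \cref{lem: E-lemma} ($\Ydual$ as a growth rule) as a consistency check.
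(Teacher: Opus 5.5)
Your proposal is correct and follows essentially the same route as the paper. You reduce both $G(P)$ and $G(P')$ by the defining $Y$-rule to $G(P_1u_{s,1}u_{s',1}P_2)$, then verify the local identity $\id_s\otimes Y_{s'}=-[2]_q^{-1}\LY{s}{s'}\circ(Y_s\otimes\id_{s'})$ by a single digon reduction. No square can arise, since the $\Ydual$ vertex always caps the two legs of $Y_s$ whatever $s'$ is. You are also right that the coefficient is $-[2]_q^{-1}$, so the displayed $g$ in the statement is missing a minus sign.
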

\begin{proof}
    The desired identity follows from the next identity, verified directly using the Kuperberg relations, illustrated in \cref{fig: Y-switch}:
    \begin{align}\label{eqn: Y switch}
        \id_{s} \otimes Y_{s'} = -[2]_q^{-1} \LY{s}{s'} \circ (Y_{s} \otimes \id_{s'}).
    \end{align}
    \begin{figure}[htbp]
        \centering
        \scalebox{0.88}{\begin{tikzpicture}
	\begin{pgfonlayer}{nodelayer}
		\node [style=none] (0) at (0, 0) {$=-[2]_q^{-1}$};
		\node [style=none] (18) at (-4, 1) {};
		\node [style=none] (19) at (-4, -1) {};
		\node [style=none] (20) at (-3, 1) {};
		\node [style=none] (21) at (-1, 1) {};
		\node [style=none] (22) at (-2, 0.25) {};
		\node [style=none] (23) at (-2, -1) {};
		\node [style=none] (24) at (1, 0) {};
		\node [style=none] (25) at (3, 0) {};
		\node [style=none] (26) at (2, -0.75) {};
		\node [style=none] (27) at (2, -2) {};
		\node [style=none] (28) at (4, 0) {};
		\node [style=none] (29) at (4, -2) {};
		\node [style=none] (30) at (2, 0.75) {};
		\node [style=none] (31) at (2, 2) {};
		\node [style=none] (32) at (4, 1.25) {};
		\node [style=none] (33) at (3, 2) {};
		\node [style=none] (34) at (5, 2) {};
	\end{pgfonlayer}
	\begin{pgfonlayer}{edgelayer}
		\draw (22.center) to (23.center);
		\draw (22.center) to (21.center);
		\draw (22.center) to (20.center);
		\draw (18.center) to (19.center);
		\draw (26.center) to (27.center);
		\draw (26.center) to (25.center);
		\draw (26.center) to (24.center);
		\draw (28.center) to (29.center);
		\draw (30.center) to (24.center);
		\draw (30.center) to (25.center);
		\draw (30.center) to (31.center);
		\draw (33.center) to (32.center);
		\draw (34.center) to (32.center);
		\draw (32.center) to (28.center);
		\draw[gray!65,densely dotted,line width=0.6pt]
			(0.6,0) -- (4.4,0);
	\end{pgfonlayer}
\end{tikzpicture}}
        \caption{Illustration of the identity \eqref{eqn: Y switch}.}
        \label{fig: Y-switch}
    \end{figure}
\end{proof}

\begin{lemma}\label{lem: merge Y}
    For signs $s,s'\in\{+,-\}$, the triple $(\LY{-s}{-s'},P_Y^sP_Y^{s'},P_c^{-s}u_{s,1}u_{s',1})$ is a growth rule.  Equivalently, if the paths
    \begin{align*}
        P=P_1 P^{s}_Y P^{s'}_Y P_2, \qquad P'=P_1 P_c^{-s} u_{s,1} u_{s',1} P_2
    \end{align*}
    are dominant, for some paths $P_1$ and $P_2$, then $G(P)=gG(P')$ for $g=\LY{-s}{-s'}_{|P_1|+2}\in \Alg_{\C(q)}(\GenSet_L)$.
    are dominant, for some paths $P_1$ and $P_2$, then $G(P)=gG(P')$ for $g=\LY{-s}{-s'}_{|P_1|+2}\in \Alg_{\C(q)}(\GenSet_L)$.
\end{lemma}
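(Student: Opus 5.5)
Following the pattern of \cref{lem: Y-switch,lem: arc switching}, I will reduce the statement to a single local web identity on three strands and then lift it to arbitrary $P_1,P_2$.

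\emph{Expanding both sides.} Write $P=P_1P_Y^sP_Y^{s'}P_2$. The two steps of $P_Y^{s}=u_{-s,1}u_{-s,0}$ form the initial path of the defining growth rule with web $Y_s$, and likewise for $P_Y^{s'}$. These two applications act on disjoint pairs of steps, so by \cref{prp: growth algorithm is well-defined}(2) both can be applied, in either order. This gives
\begin{align*}
    G(P)=(\id_{|P_1|}\otimes Y_s\otimes Y_{s'}\otimes\id_{|P_2|})\circ G(P_1u_{s,1}u_{s',1}P_2).
\end{align*}
On the other side, $P'=P_1P_c^{-s}u_{s,1}u_{s',1}P_2$ contains $P_c^{-s}=u_{-s,1}u_{s,-1}$, which is the initial path of the cup rule with web $\ccup_{-s}$ and final path empty. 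Hence
\begin{align*}
    G(P')=(\id_{|P_1|}\otimes\ccup_{-s}\otimes\id_{s}\otimes\id_{s'}\otimes\id_{|P_2|})\circ G(P_1u_{s,1}u_{s',1}P_2).
\end{align*}
Both expressions are built on the same web $G(P_1u_{s,1}u_{s',1}P_2)$. This path is dominant, since it is obtained from $P$ by applying growth rules, which preserve dominance by \cref{lem: dominance}.

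\emph{Local identity.} Comparing the two expansions, it suffices to prove
\begin{align*}
    Y_s\otimes Y_{s'}=(\id_{-s}\otimes\LY{-s}{-s'})\circ(\ccup_{-s}\otimes\id_s\otimes\id_{s'}),
\end{align*}
where $\LY{-s}{-s'}$ sits on the three rightmost strands. This is exactly the placement $g=\LY{-s}{-s'}_{|P_1|+2}$ in the statement.

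The identity should follow from isotopy alone. The web $\LY{-s}{-s'}$ consists of a $\Ydual$ vertex attached to the strand $s$ and a $Y$ vertex attached to the strand $s'$. Capping off the leftmost leg of the $\Ydual$ vertex with the cup $\ccup_{-s}$ and straightening the zigzag, as in \cref{lem: zigzag relations}, turns $\Ydual^{-s}$ into the vertex $Y_s$ (compare \eqref{eqn: identity 2 in E}). The $Y_{s'}$ vertex is left untouched. No loop, digon or square appears, so the coefficient is exactly $1$, unlike the factor $-[2]_q^{-1}$ in \cref{lem: Y-switch}.

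\emph{Main obstacle.} The hard part is the orientation bookkeeping. One has to check that the boundary signs of $\LY{-s}{-s'}$ in \cref{fig: auxiliary basic webs} are compatible on the bottom with $(\mp,s,s')$ coming from $\ccup_{-s}$, and on top with $Y_s\otimes Y_{s'}$, for all four choices of $(s,s')$. I will verify the case $s=s'=+$ by drawing the diagrams. The remaining three cases then follow by global orientation reversal together with the sign-relabelling symmetry of \cref{fig: auxiliary basic webs}.

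Finally, $\LY{-s}{-s'}\in\Alg_{\C(q)}(\GenSet_L)$ by the relations \eqref{eqn: relations in KL(q)}. So $g=\LY{-s}{-s'}_{|P_1|+2}$ lies in the generated subalgebra, as the statement requires.
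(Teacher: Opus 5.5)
Your proposal is correct and follows the paper's proof. You expand $G(P)$ and $G(P')$ through the two $Y$ rules and the cup rule down to the common web $G(P_1u_{s,1}u_{s',1}P_2)$, and conclude from the isotopy identity $Y_s\otimes Y_{s'}=\LY{-s}{-s'}_2\circ(\ccup_{-s}\otimes\id_{ss'})$; the paper simply states and illustrates that identity, and your choice of placement on the three rightmost strands is right.

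For the orientation case analysis you flagged, no separate symmetry argument is needed. By the interchange law, $Y_{s'}$ is a passive tensor factor, so all four sign choices reduce to \eqref{eqn: identity 2 in E} and its orientation reversal.
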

\begin{proof}
    This follows from the identity
    \begin{align}\label{eqn: merge Y}
        Y_{s} \otimes Y_{s'} =  \LY{-s}{-s'}_{2} \circ (\ccup_{-s} \otimes \id_{ss'})
    \end{align}
    which is illustrated in \cref{fig: Y-merge surgery}.
    \begin{figure}[htbp]
        \centering
        \scalebox{0.82}{\begin{tikzpicture}
	\begin{pgfonlayer}{nodelayer}
		\node [style=none] (0) at (-6, 1) {};
		\node [style=none] (1) at (-5, 0.25) {};
		\node [style=none] (2) at (-4, 1) {};
		\node [style=none] (3) at (-5, -1) {};
		\node [style=none] (4) at (-3, 1) {};
		\node [style=none] (5) at (-1, 1) {};
		\node [style=none] (6) at (-2, 0.25) {};
		\node [style=none] (7) at (-2, -1) {};
		\node [style=none] (8) at (0, 0) {$=$};
		\node [style=none] (9) at (1, 0) {};
		\node [style=none] (10) at (2.5, 0) {};
		\node [style=none] (11) at (4.5, 0) {};
		\node [style=none] (12) at (6, 0) {};
		\node [style=none] (13) at (6, 1.25) {};
		\node [style=none] (14) at (5, 2) {};
		\node [style=none] (15) at (7, 2) {};
		\node [style=none] (16) at (3.5, 0.75) {};
		\node [style=none] (17) at (3.5, 2) {};
		\node [style=none] (18) at (6, -2) {};
		\node [style=none] (19) at (1, 2) {};
		\node [style=none] (20) at (4.5, -2) {};
	\end{pgfonlayer}
	\begin{pgfonlayer}{edgelayer}
		\draw (0.center) to (1.center);
		\draw (2.center) to (1.center);
		\draw (1.center) to (3.center);
		\draw (4.center) to (6.center);
		\draw (5.center) to (6.center);
		\draw (6.center) to (7.center);
		\draw (12.center) to (18.center);
		\draw [bend right=90, looseness=1.25] (9.center) to (10.center);
		\draw (16.center) to (10.center);
		\draw (16.center) to (11.center);
		\draw (17.center) to (16.center);
		\draw (19.center) to (9.center);
		\draw (14.center) to (13.center);
		\draw (15.center) to (13.center);
		\draw (13.center) to (12.center);
		\draw (11.center) to (20.center);
		\draw[gray!65,densely dotted,line width=0.6pt]
			(0.55,0) -- (6.45,0);
	\end{pgfonlayer}
\end{tikzpicture}}
        \caption{Illustration of the identity \eqref{eqn: merge Y}.}
        \label{fig: Y-merge surgery}
    \end{figure}
\end{proof}

\begin{lemma}\label{lem: YH-EY}
    For each sign $s\in\{+,-\}$, the triple
    \[
        (I^s\otimes\id_s,\ u_{s,1}u_{s,0}u_{s,0},\ u_{s,0}u_{s,1}u_{s,0})
    \]
    is a growth rule. Equivalently, if the paths
    \begin{align*}
        P=P_1 u_{s,1} u_{s,0} u_{s,0} P_2, \qquad P'=P_1 u_{s,0} u_{s,1} u_{s,0} P^s_Y P_2
    \end{align*}
    are dominant, for some paths $P_1$ and $P_2$, then $G(P)= I^s_{|P_1|+1} G(P')$.
    are dominant, for some paths $P_1$ and $P_2$, then $G(P)= I^s_{|P_1|+1} G(P')$.
\end{lemma}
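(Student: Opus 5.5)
The plan is to reduce both $G(P)$ and $G(P')$, using only defining growth rules, to one common path $Q$, and then to compare the two resulting local webs directly. I treat $s=+$; the case $s=-$ follows by reversing all orientations and signs. I read the second path in the statement as $P'=P_1u_{+,0}u_{+,1}u_{+,0}P_2$; the trailing $P^s_Y$ appears to be a typo, since the triple only involves three steps. Set $k=|P_1|$.

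\textbf{Step 1: reducing $G(P)$.} Reversing orientations in the defining rule $(Y_+,u_{-,1}u_{-,0},u_{+,1})$ gives the defining rule $(Y_-,u_{+,1}u_{+,0},u_{-,1})$. Applying it at steps $k+1,k+2$ of $P$ gives
\[
G(P)=(\id_{|P_1|}\otimes Y_-\otimes\id_+\otimes\id_{|P_2|})\,G(P_1u_{-,1}u_{+,0}P_2).
\]
Reversing orientations in the $H^-$ rule gives $(H^+,u_{-,1}u_{+,0},u_{+,0}u_{-,1})$. Applying it to the new path yields
\[
G(P)=\bigl(\id\otimes (Y_-\otimes\id_+)\circ H^+\otimes\id\bigr)\,G(Q),\qquad Q:=P_1u_{+,0}u_{-,1}P_2 .
\]
By \cref{prp: growth algorithm is well-defined} (order-independence) together with the dominance lemma \cref{lem: dominance}, every intermediate path here is dominant, so each step is legitimate.

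\textbf{Step 2: reducing $G(P')$.} In $P'$, applying the same $Y_-$ rule at steps $k+2,k+3$ sends $u_{+,1}u_{+,0}$ to $u_{-,1}$. This gives
\[
G(P')=(\id\otimes\id_+\otimes Y_-\otimes\id)\,G(Q),
\]
with the same path $Q$ as in Step 1. The two rule applications in Steps 1 and 2 act on the same three-site window. Because both expressions are composed with the same $G(Q)$, the lemma follows once I establish the local three-strand identity
\[
(Y_-\otimes\id_+)\circ H^+ \;=\; (I^+\otimes\id_+)\circ(\id_+\otimes Y_-).
\]
Both sides are maps $(+,-)\to(+,+,+)$, consistent with the boundary sign strings in Steps 1 and 2.

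\textbf{Step 3: the local identity (main obstacle).} Each side is a trivalent tree with three internal vertices, namely two sources and one sink, whose boundary signs are $(+,+,+)$ on top and $(+,-)$ on the bottom. I would check directly that the two embedded graphs are planar isotopic relative to the boundary. As a guide, one can write $H^+$ and $I^+=Y_-\circ\Ydual^-$ in terms of $Y_\pm$, $\Ydual^\pm$ and cups/caps, as in \eqref{eqn: identity 1 in E}, and compare the two decompositions. If the pictures fail to be isotopic, for instance because of a mismatch in which top strand attaches to the sink, then I would fall back on the Kuperberg relations of \cref{fig: Kuperberg relations}. Composing both sides with suitable caps and using \cref{thm: equivalence of Sp and Rep} reduces the comparison to a one-dimensional invariant space, where the two sides can be matched by computing a single coefficient. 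I expect getting the orientations and boundary conventions of $H^+$ and $I^+$ exactly right to be the only delicate point of the argument.
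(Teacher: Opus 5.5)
Your proposal is correct and follows the paper's proof. Both $G(P)$ and $G(P')$ are reduced by defining rules to the common path $P_1u_{s,0}u_{-s,1}P_2$, and the lemma then reduces to the local identity $(Y_{-s}\otimes\id_s)\circ H^s=(I^s\otimes\id_s)\circ(\id_s\otimes Y_{-s})$. That identity holds by planar isotopy, since both sides are the same trivalent tree, so your fallback to the Kuperberg relations is unnecessary; you are also right that the trailing $P^s_Y$ in $P'$ is a typo.
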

\begin{proof}
    By applying growth rules from \cref{fig: defining growth rules} and \cref{fig: growth rules},
    \begin{align}\label{eqn: growth temporary}
        \begin{aligned}
            G(P)=& \id_{|P_1|} \otimes ((Y_{-s} \otimes  \id_{s}) \circ H^s ) \otimes \id_{|P_2|} \circ G(P_1u_{s,0} u_{-s,1}P_2),\\
            G(P')=& \id_{|P_1|} \otimes (\id_{s} \otimes Y_{-s}) \otimes \id_{|P_2|} \circ G(P_1 u_{s,0} u_{-s,1} P_2).
        \end{aligned}
    \end{align}
On the other hand, we have the following relation
\begin{align}\label{eqn: YH-EY relation}
    (Y_{-s} \otimes  \id_{s}) \circ H^s = I^s_1 \circ (\id_{s} \otimes Y_{-s})
\end{align}
illustrated in \cref{fig: YH-EY}.
\begin{figure}[htbp]
    \centering
    \scalebox{0.86}{\begin{tikzpicture}
	\begin{pgfonlayer}{nodelayer}
		\node [style=none] (0) at (-4, 2) {};
		\node [style=none] (1) at (-2, 2) {};
		\node [style=none] (2) at (-3, 1.25) {};
		\node [style=none] (3) at (-3, 0) {};
		\node [style=none] (4) at (-1, 2) {};
		\node [style=none] (5) at (-1, 0) {};
		\node [style=none] (6) at (-2.5, -1) {};
		\node [style=none] (7) at (-1.5, -1) {};
		\node [style=none] (8) at (-3, -2) {};
		\node [style=none] (9) at (-1, -2) {};
		\node [style=none] (10) at (0, 0) {$=$};
		\node [style=none] (11) at (1, 2) {};
		\node [style=none] (12) at (2, 1.5) {};
		\node [style=none] (13) at (2, 0.5) {};
		\node [style=none] (14) at (3, 2) {};
		\node [style=none] (15) at (1, 0) {};
		\node [style=none] (16) at (3, 0) {};
		\node [style=none] (17) at (5, 2) {};
		\node [style=none] (18) at (5, 0) {};
		\node [style=none] (19) at (4, -0.75) {};
		\node [style=none] (20) at (4, -2) {};
		\node [style=none] (21) at (1, -2) {};
	\end{pgfonlayer}
	\begin{pgfonlayer}{edgelayer}
		\draw [style=mid arrow] (2.center) to (0.center);
		\draw [style=mid arrow] (2.center) to (1.center);
		\draw [style=mid arrow] (2.center) to (3.center);
		\draw [style=mid arrow] (5.center) to (4.center);
		\draw [style=mid arrow] (7.center) to (5.center);
		\draw [style=mid arrow] (7.center) to (6.center);
		\draw [style=mid arrow] (7.center) to (9.center);
		\draw [style=mid arrow] (3.center) to (6.center);
		\draw [style=mid arrow] (8.center) to (6.center);
		\draw [style=mid arrow] (15.center) to (13.center);
		\draw [style=mid arrow] (16.center) to (13.center);
		\draw [style=mid arrow] (12.center) to (13.center);
		\draw [style=mid arrow] (12.center) to (14.center);
		\draw [style=mid arrow] (12.center) to (11.center);
		\draw [style=mid arrow] (19.center) to (16.center);
		\draw [style=mid arrow] (19.center) to (18.center);
		\draw [style=mid arrow] (18.center) to (17.center);
		\draw [style=mid arrow] (21.center) to (15.center);
		\draw [style=mid arrow] (19.center) to (20.center);
		\draw[gray!65,densely dotted,line width=0.6pt]
			(-3.35,0) -- (-0.65,0);
		\draw[gray!65,densely dotted,line width=0.6pt]
			(0.65,0) -- (5.35,0);
	\end{pgfonlayer}
\end{tikzpicture}}
    \caption{Illustration of \eqref{eqn: YH-EY relation}.}
    \label{fig: YH-EY}
\end{figure}
Combining \eqref{eqn: growth temporary} and \eqref{eqn: YH-EY relation}, we get the desired identity.
\end{proof}

\begin{lemma}\label{lem: triplet merge}
    For signs $s_1,s_2\in\{+,-\}$, there exists a web $g_{s_1,s_2}\in\Alg_{\C(q)}(\GenSet_6)$ such that $$(g_{s_1,s_2},P_t^{s_1}P_t^{s_2},P_c^+P_c^+P_c^+)$$ is a growth rule. Equivalently, if the paths
    \begin{align*}
        P=P_1P_t^{s_1}P_t^{s_2}P_2, \qquad P'=P_1P_c^+ P_c^+ P_c^+ P_2
    \end{align*}
    are dominant, for some paths $P_1$ and $P_2$, then $G(P)=gG(P')$ for some $g_{s_1,s_2}\in \Alg_{\C(q)}(\GenSet_L)$.
\end{lemma}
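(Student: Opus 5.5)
The plan follows the pattern of \cref{lem: arc switching,lem: merge Y}. First I would compute both sides through the growth algorithm, then reduce to a single local web identity on six strands. The middle segment is only ever rewritten locally, so by \cref{lem: web in web} (and \cref{prp: growth algorithm is well-defined}(2), which lets the growth on $P_1$ and $P_2$ be done first) it suffices to treat $P_1=P^{\mathrm{hw}}_R$ and $P_2$ arbitrary.

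\textbf{Step 1: compute both sides.} In $P'$ each $P_c^+$ is grown by the cup rule, so
\[
G(P')=(\id_{|P_1|}\otimes\ccup_+\otimes\ccup_+\otimes\ccup_+\otimes\id_{|P_2|})\circ G(P_1P_2).
\]
In $P$ the two triplet segments $P_t^{s_1}P_t^{s_2}$ each grow to a triplet web. First $u_{s,1}u_{s,0}$ becomes $Y_{-s}$ via the rule $Y$ with $k=0$. Then $u_{-s,1}u_{s,-1}$ becomes a cup. Together these give $t_{s}=(Y_{-s}\otimes\id)\circ\ccup$, i.e.\ $G(P_t^{s})=t_s$ placed over $G(P_1P_2)$. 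Hence
\[
G(P)=(\id_{|P_1|}\otimes t_{s_1}\otimes t_{s_2}\otimes\id_{|P_2|})\circ G(P_1P_2).
\]
I should check that $P_1P_2$ is dominant and that the growth of the middle segments commutes with everything else. This holds since the weight of each middle segment is $0$ and its vertices stay dominant, as argued in \cref{lem: E-lemma}.

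\textbf{Step 2: reduce to a local identity.} It remains to find $g_{s_1,s_2}\in\Alg_{\C(q)}(\GenSet_6)$ with
\[
t_{s_1}\otimes t_{s_2}=g_{s_1,s_2}\circ(\ccup_+\otimes\ccup_+\otimes\ccup_+).
\]
By \cref{lem: triplet reversal} I may first reduce to $s_1=s_2=+$, at the cost of factors $-[2]_q^{-1}[3]_q^{-1}(T^+_-)_1$ or $(T^+_-)_4$. These lie in the algebra by \eqref{eqn: relations in KL(q)}.

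\textbf{Step 3: the case $t_+\otimes t_+$.} This web is $\Fold$-equivalent to two sources. My plan is to build it from three nested or adjacent cups by vertex-creating operations. First apply $H$-type moves: $H^\mp\circ\ccup_\mp=-[2]_q\ccup_\pm$ by \cref{lem: arc reversal}, while $H$ applied across two different cups produces an $H$-shaped web joining them. Next, a $\YL{+}{+}_j$ generator applied on strands spanning the cups should create the pair of sources. Concretely, I would try
\[
g=\YL{+}{+}_{1}\,\YL{+}{+}_{4}\,X,
\]
where $X$ is a product of $I$, $H$ and $\TL{\cdot}{\cdot}$ factors arranging the six signs to $(-,-,-,-,-,-)$. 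I would verify by the Kuperberg relations that $\YL{+}{+}$ applied to $(---)$ coming from suitably joined cups yields $t_+$, up to a lower correction. Any correction term would be shown either to vanish or to be itself of the form $h\circ(\ccup_+)^{\otimes3}$ with $h$ in the algebra, and then subtracted.

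\textbf{Main obstacle.} The hard part is Step 3: finding an explicit web $X$ that converts three cups into a state on which $\YL{+}{+}\otimes\YL{+}{+}$ gives exactly $t_+\otimes t_+$. The sign strings must match, cups have $+-$ type ends, and the Kuperberg reductions must not introduce unwanted squares. I would first try producing the needed $(---)$ triples via $(\TL{+}{-})$ and $H^-$ moves on each cup, then check the resulting digons and squares by direct reduction, using nonvanishing of $[2]_q$ and $[3]_q$ to divide out scalars.
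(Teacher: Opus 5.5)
Your Steps~1 and~2 are sound: both paths grow to $(\id\otimes t_{s_1}\otimes t_{s_2}\otimes\id)\circ G(P_1P_2)$ and $(\id\otimes\ccup_+^{\otimes 3}\otimes\id)\circ G(P_1P_2)$. The lemma is therefore equivalent to a local identity $t_{s_1}\otimes t_{s_2}=g\circ\ccup_+^{\otimes 3}$, and \cref{lem: triplet reversal} reduces it to $s_1=s_2=+$. (To rewrite $t_-$ in terms of $t_+$ you need $T^-_+$, not $T^+_-$.)

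However, Step~3 is the entire content of the lemma, and you have not produced it. Worse, the ansatz you propose cannot work. Each of $I^\pm$, $H^\pm$ and $\TL{s_1}{s_2}$ has the same number of $+$ and of $-$ signs on its top and bottom boundaries. Hence any product $X$ of them sends the top string $(+,-,+,-,+,-)$ of $\ccup_+^{\otimes3}$ only to strings with three signs of each kind, never to $(-,-,-,-,-,-)$. Since $\YL{+}{+}_1\YL{+}{+}_4=\YL{+}{+}\otimes\YL{+}{+}$ has bottom string $(-,-,-,-,-,-)$, your $g$ gives $g\circ\ccup_+^{\otimes3}=0$.

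More generally, compare $\wt(+,-,+,-,+,-)=3\lambda_1+3\lambda_2$ with $\wt(+,+,+,+,+,+)=6\lambda_1$. Every word contributing to $g\circ\ccup_+^{\otimes 3}$ must contain exactly one more factor of type $(-,-,-)\to(+,+,+)$ than of the reverse type, not two. Also, the identity behind your hope that ``$\YL{+}{+}$ applied to $(-,-,-)$ yields $t_+$'' is $\YL{+}{+}\circ t_-=-[2]_q\,t_+$ (one digon reduction). This only trades $t_+\otimes t_+$ for $t_-\otimes t_-$, which is the same problem with orientations reversed, so that line is circular.

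The missing idea is that no new six-strand identity is needed: you can compose growth rules already proved, passing through dominant intermediate paths. For $s_1=s_2=+$ the chain is as follows.
\begin{enumerate}
\item \cref{lem: triplet switching} with $u=u_{+,-1}$ gives $P_t^+P_t^+\to u_{+,1}u_{+,0}\,P_t^+\,u_{+,-1}$.
\item The first four steps are now $P_Y^-P_Y^-$, which \cref{lem: merge Y} turns into $P_c^+u_{-,1}u_{-,1}$. This is where the single sign-changing generator $\LY{+}{+}$ enters. What remains is the nested configuration $P_c^+\,u_{-,1}\,P_c^-\,u_{+,-1}$.
\item \cref{lem: arc switching} in its same-sign case, $u_{-,1}P_c^-\to P_c^-u_{-,1}$, gives $P_c^+P_c^-P_c^-$.
\item Two applications of \cref{lem: arc reversal} give $P_c^+P_c^+P_c^+$.
\end{enumerate}
All intermediate vertices have the form $\mu+\nu$ with $\mu=\wt(P_1)$ and $\nu\in\{0,\lambda_1,\lambda_2,\lambda_1+\lambda_2,2\lambda_2\}$, so dominance is preserved. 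The product of the local webs of these rules is the required $g_{+,+}$; this is the paper's argument.
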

\begin{proof}
    Write
    \begin{align*}
        &P_t^{s_1}P_t^{s_2}  \\
        \longrightarrow& u_{s_1,1} u_{s_1,0} P_t^{s_2} u_{s_1,-1} \\
        \longrightarrow& P_c^{-s_1} u_{s_1,1} u_{s_2,1} u_{s_2,-1} u_{s_1,-1} \\
        \longrightarrow& P_c^+ P_c^+ P_c^+.
    \end{align*}
where the first arrow uses \cref{lem: triplet switching}, the second arrow uses \cref{lem: merge Y} on the first four steps, and the last arrow uses \cref{lem: arc reversal} and \cref{lem: arc switching}. Composing these auxiliary rules gives the required web $g_{s_1,s_2}$.
\end{proof}

\begin{lemma}\label{lem: triplet-Y merge}
    For each sign $s\in\{+,-\}$, the triple
    \[
        (\LY{s}{-s}_3\circ\TL{s}{-s}_2,\ P_t^sP_Y^s,\ P_c^+P_c^+u_{s,1})
    \]
    is a growth rule. Equivalently, if the paths
    \begin{align*}
        P=P_1 P_t^s P_Y^s P_2, \qquad P'=P_1 P_c^+ P_c^+ u_{s,1} P_2
    \end{align*}
    are dominant, for some paths $P_1$ and $P_2$, then $G(P)=gG(P')$ for $g= \LY{s}{-s}_{|P_1|+3} \circ \TL{s}{-s}_{|P_1|+2}$.
    are dominant, for some paths $P_1$ and $P_2$, then $G(P)=gG(P')$ for $g= \LY{s}{-s}_{|P_1|+3} \circ \TL{s}{-s}_{|P_1|+2}$.
\end{lemma}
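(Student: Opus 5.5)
The plan follows the pattern of \cref{lem: merge Y} and \cref{lem: YH-EY}. First I will compute both sides with the growth algorithm, using the same sub-path $P_1P''P_2$ for each. Then I will reduce the claim to a local web identity on three strands, which I will check with isotopy and the Kuperberg relations.

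\textbf{Step 1: grow $G(P)$.} The steps of $P_t^sP_Y^s$ are $u_{s,1}u_{s,0}u_{s,-1}u_{-s,1}u_{-s,0}$. The pair $u_{-s,1}u_{-s,0}$ is a $Y$-growth rule from \cref{fig: defining growth rules} (the orientation-reversed $Y$ rule with $k=0$). It replaces these two steps by the single step $u_{s,1}$ and contributes $Y_s$ on the last two strands. The prefix $u_{s,1}u_{s,0}u_{s,-1}$ grows by definition into $t_s$ on the first three strands. By \cref{prp: growth algorithm is well-defined}(2) the order of application does not matter, so
\begin{align*}
    G(P)=\bigl(\id_{|P_1|}\otimes t_s\otimes Y_s\otimes\id_{|P_2|}\bigr)\circ G(P_1u_{s,1}P_2).
\end{align*}

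\textbf{Step 2: grow $G(P')$.} Each $P_c^+$ is a cup rule, so
\begin{align*}
    G(P')=\bigl(\id_{|P_1|}\otimes\ccup_+\otimes\ccup_+\otimes\id_s\otimes\id_{|P_2|}\bigr)\circ G(P_1u_{s,1}P_2).
\end{align*}
Dominance of both paths is assumed, so both computations are legitimate.

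\textbf{Step 3: the local identity.} It remains to show
\begin{align*}
    t_s\otimes Y_s=\LY{s}{-s}_3\circ\TL{s}{-s}_2\circ(\ccup_+\otimes\ccup_+\otimes\id_s)
\end{align*}
as webs with bottom boundary $(s)$ and top boundary of length five. The intended mechanism is as follows.
\begin{itemize}
\item The operator $\TL{s}{-s}_2$ acts on the inner two endpoints of the two adjacent cups. It caps them off and reattaches a cup, so the bottom part becomes a nested cup configuration. This uses only isotopy, as in \eqref{eqn: arc switching}.
\item The operator $\LY{s}{-s}_3$ then acts on strands $3,4,5$. Its two trivalent vertices, one attached to a cup end and one to the through-strand $\id_s$, produce a web with two sources or sinks joined through the cups.
\item An isotopy straightening the cups should then identify the result with a triplet vertex on strands $1$--$3$ next to a $Y$ vertex on strands $4,5$ fed by the bottom strand.
\end{itemize}
I expect no loop, digon or square to appear, because the claimed $g$ carries no scalar. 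If a face does appear, I would resolve it by the Kuperberg relations and check that the extra terms vanish or cancel.

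The main obstacle is the orientation and sign bookkeeping in Step 3. I must check that the signs on $\TL{s}{-s}$ and $\LY{s}{-s}$ are compatible with the boundary signs $(+,-,+,-,s)$ of the cup configuration. I also need $\ccup_+$ versus $\ccup_{-s}$ to match; if they do not, I would insert \cref{lem: arc reversal} to flip a cup, which would introduce a factor $-[2]_q^{-1}H$. If the planar picture does not close up directly, my fallback is to mimic \cref{lem: triplet merge}: first move the $Y$ past the triplet using \cref{lem: triplet switching}, then merge using \cref{lem: merge Y}, and finally read off the resulting local web.
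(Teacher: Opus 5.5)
Your Steps 1 and 2 are correct and match the paper. Both sides grow down to $G(P_1u_{s,1}P_2)$, so the lemma reduces to the local identity \eqref{eqn: merge triplet and Y}, which the paper simply asserts from a picture. The gap is Step 3. It carries all the content, and you leave it as an expectation plus contingency plans. The sign bookkeeping you postpone does not come out as you anticipate.

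\textbf{The case $s=+$.} Here the check is pure isotopy.
\begin{itemize}
\item $\ccup_+\otimes\ccup_+\otimes\id_+$ has top signs $(+,-,+,-,+)$. The cap--cup $\TL{+}{-}_2$ with bottom signs $(-,+)$ acts on strands $2,3$. It turns the two cups into an arc joining strands $1,4$, nested around a new cup on strands $2,3$.
\item The only orientation of $\LY{}{}$ compatible with the resulting signs $(-,-,+)$ on strands $3,4,5$ is $\Ydual^{+}\otimes Y_+$.
\item Its $\Ydual^{+}$ vertex absorbs the right ends of \emph{both} nested arcs, not a single cup end. It therefore becomes a source with legs to top positions $1,2,3$, which is $t_+$.
\item The $Y_+$ vertex on the through strand gives $Y_+$.
\end{itemize}
The composite is a planar forest with the same boundary connectivity as $t_+\otimes Y_+$. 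This is the actual reason no loop, digon or square appears and no scalar arises. Contrary to your description, the two vertices are not joined to each other.

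\textbf{The case $s=-$.} Here the identity, and the lemma as literally stated, cannot hold. The element $g$ acts trivially on strand $|P_1|+1$. On that strand $P$ carries sign $s=-$ (the first step of $P_t^-$), while $P'$ carries $+$ (the first step of $P_c^+$). So $G(P)$ and $gG(P')$ have different top sign strings. The consistent version uses $\ccup_s\otimes\ccup_s$, i.e.\ $P'=P_1P_c^sP_c^su_{s,1}P_2$. For $s=-$ this is just the orientation reversal of the $s=+$ computation.

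Your proposed remedies point the wrong way:
\begin{itemize}
\item Replacing $\ccup_+$ by $\ccup_{-s}$ would break exactly the case $s=+$, which is the one that works.
\item Inserting \cref{lem: arc reversal} multiplies by factors $-[2]_q^{-1}H$, so it cannot produce the stated $g$. It is, however, a harmless extra step where the lemma is used in \cref{prp: cyclic vector is indeed cyclic}, since the cyclic paths use $P_c^+$.
\end{itemize}
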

\begin{proof}
    This follows from the identity
    \begin{align}\label{eqn: merge triplet and Y}
        t_s \otimes Y_s = \LY{s}{-s}_3 \circ \TL{s}{-s}_2 \circ (\ccup_+ \otimes \ccup_+ \otimes \id_{s}),
    \end{align}
    illustrated in \cref{fig: triplet-Y surgery}.
    \begin{figure}[htbp]
        \centering
        \scalebox{0.82}{\begin{tikzpicture}
	\begin{pgfonlayer}{nodelayer}
		\node [style=none] (0) at (-6.5, 1.25) {};
		\node [style=none] (1) at (-5, 1.25) {};
		\node [style=none] (2) at (-3.5, 1.25) {};
		\node [style=none] (3) at (-5, 0) {};
		\node [style=none] (4) at (-2.25, 1.25) {};
		\node [style=none] (5) at (-1.5, -1) {};
		\node [style=none] (6) at (0, 0) {$=$};
		\node [style=none] (7) at (1, -0.5) {};
		\node [style=none] (8) at (2, -0.5) {};
		\node [style=none] (9) at (3, -0.5) {};
		\node [style=none] (10) at (4, -0.5) {};
		\node [style=none] (11) at (5, -0.5) {};
		\node [style=none] (12) at (5, -2.5) {};
		\node [style=none] (13) at (2, 0.5) {};
		\node [style=none] (14) at (3, 0.5) {};
		\node [style=none] (15) at (5, 0.5) {};
		\node [style=none] (16) at (4, 0.5) {};
		\node [style=none] (17) at (3, 2.5) {};
		\node [style=none] (18) at (4, 2.5) {};
		\node [style=none] (19) at (5, 2.5) {};
		\node [style=none] (20) at (2, 2.5) {};
		\node [style=none] (21) at (1, 2.5) {};
		\node [style=none] (22) at (4.5, 1.5) {};
		\node [style=none] (23) at (3.5, 1.5) {};
		\node [style=none] (24) at (-1.5, 0.5) {};
		\node [style=none] (25) at (-0.75, 1.25) {};
	\end{pgfonlayer}
	\begin{pgfonlayer}{edgelayer}
		\draw [style=mid arrow, bend left=45] (3.center) to (0.center);
		\draw [style=mid arrow] (3.center) to (1.center);
		\draw [style=mid arrow, bend right=45] (3.center) to (2.center);
		\draw [style=mid arrow, bend left=90, looseness=1.50] (8.center) to (7.center);
		\draw [style=mid arrow, bend left=90, looseness=1.50] (10.center) to (9.center);
		\draw [style=mid arrow, bend right=90, looseness=1.25] (9.center) to (8.center);
		\draw [style=mid arrow, bend left=90, looseness=1.25] (14.center) to (13.center);
		\draw [style=mid arrow] (12.center) to (15.center);
		\draw [style=mid arrow] (15.center) to (22.center);
		\draw [style=mid arrow] (19.center) to (22.center);
		\draw [style=mid arrow] (18.center) to (22.center);
		\draw [style=mid arrow] (23.center) to (14.center);
		\draw [style=mid arrow] (23.center) to (16.center);
		\draw [style=mid arrow] (23.center) to (17.center);
		\draw [style=mid arrow] (16.center) to (10.center);
		\draw [style=mid arrow] (13.center) to (20.center);
		\draw [style=mid arrow] (7.center) to (21.center);
		\draw [style=mid arrow] (5.center) to (24.center);
		\draw [style=mid arrow] (4.center) to (24.center);
		\draw [style=mid arrow] (25.center) to (24.center);
		\draw[gray!65,densely dotted,line width=0.6pt]
			(0.65,-0.5) -- (5.35,-0.5);
		\draw[gray!65,densely dotted,line width=0.6pt]
			(0.65,0.5) -- (5.35,0.5);
	\end{pgfonlayer}
\end{tikzpicture}}
        \caption{Illustration of the identity \eqref{eqn: merge triplet and Y}.}
        \label{fig: triplet-Y surgery}
    \end{figure}
\end{proof}

\subsection{Cyclicity of cell modules}\label{sec: cyclic property}
In this subsection we prove \cref{prp: cyclic vector is indeed cyclic}. Specifically, we construct an algorithm that determines, for each $P\in \Path_L^+(\lambda)$, an element $g_P\in \Alg_{\C(q)}(\GenSet_L)$ such that $G(P)=g_P G(P^S_{cyc})$. We now construct this algorithm.

\begin{definition}\label{def: active area}
    Let $P$ be a dominant path encoded by $(S,J)$. The \textup{cup stack} of $P$ consists of the initial $2k$ steps of $P$ that form a concatenation of $k$ pieces of $P_c^\pm$, for the maximal possible $k\geq 0$. The \textup{triplet stack} of $P$ consists of the subsequent $3l$ steps that form a concatenation of $l$ pieces of $P_t^\pm$, with $l$ being maximal. The \textup{1 stack} of $P$ is the final $m$ steps that are all $1$-steps, for maximal possible $m\geq 0$. The \textup{Y stack} of $P$ consists of the $2n$ steps before the 1 stack that form a concatenation of $n$ pieces of $P_Y^\pm$, $n$ being maximal. The \textup{active area} of $P$ consists of the remaining steps in between the triplet stack and the Y stack.
\end{definition}

\begin{example}\label{ex: path decomposition matrix}
Below is an example of a path of length $L=15$ partitioned into these five zones. The top row represents the sign string $S$, and the bottom row represents the state string $J$:
    \begin{equation}
        \begin{matrix}
            S = \\[0.5em]
            J = 
        \end{matrix}
        \quad
        \underbrace{
        \begin{matrix}
            + & - & + & - \\[0.5em]
            1 & -1 & 1 & -1
        \end{matrix}
        }_{\text{Cup stack}}
        \quad\ \ 
        \underbrace{
        \begin{matrix}
            + & + & + \\[0.5em]
            1 & 0 & -1
        \end{matrix}
        }_{\text{Triplet stack}}
        \quad\ \ 
        \underbrace{
        \begin{matrix}
            - & - & - & - \\[0.5em]
            1 & 1 & 0 & 0
        \end{matrix}
        }_{\text{Active area}}
        \quad\ \ 
        \underbrace{
        \begin{matrix}
            + & + \\[0.5em]
            1 & 0
        \end{matrix}
        }_{\text{Y stack}}
        \quad\ \ 
        \underbrace{
        \begin{matrix}
            - & +  \\[0.5em]
            1 & 1 
        \end{matrix}
        }_{\text{1 stack}}
    \end{equation}
    
\end{example}

\begin{lemma}\label{lem: active area reduction}
    Let $P$ be a dominant path with a non-empty active area. Then there exists an element $g\in \Alg_{\C(q)}(\GenSet_L)$ such that $G(P)=g G(P')$, where $P'$ has an active area with strictly smaller length than that of $P$.
\end{lemma}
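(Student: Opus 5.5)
The plan is to peel off one elementary piece of the active area at a time and transport it into one of the neighbouring stacks. Each transport will use one of the growth rules of \cref{sec: local surgery}, or a defining growth rule whose web is itself a generator, such as $H^\pm$ or $I^\pm$ (\cref{prp: I-rule}). Each such rule has the form $G(P)=gG(P')$ with $g\in\Alg_{\C(q)}(\GenSet_L)$, so composing finitely many of them yields the required $g$.

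Write $P=P_{\mathrm{cup}}P_{\mathrm{tri}}AP_YP_1$ with $A$ the non-empty active area. Consider the leftmost position $m$ inside $A$ at which a defining growth rule of \cref{fig: defining growth rules}, or one of the rules of \cref{fig: growth rules}, applies. If $A$ had no non-$1$ step, then $A$ would consist of $1$-steps only, and maximality of the $1$ stack and the Y stack would make $A$ empty, a contradiction. Together with the argument in the proof of \cref{prp: growth algorithm is well-defined}, this shows that such an $m$ exists.

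We then split into cases according to the local pattern at $m$.

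\textbf{Cup pattern} $u_{s,1}u_{-s,-1}=P_c^{s}$. Apply \cref{lem: arc switching} repeatedly to move this $P_c^s$ leftward, one step at a time, past the remaining steps of $A$ preceding it and past the triplet stack. Each triplet stack piece is crossed by three single-step switches, which leave the triplet pieces contiguous. Use \cref{lem: arc reversal} to fix the sign if needed. The cup then lands at the end of the cup stack, and $A$ loses two steps.

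\textbf{Triplet pattern} $P_t^{\pm}$. Move it leftward with \cref{lem: triplet switching} until it joins the triplet stack, using \cref{lem: triplet reversal} for orientation. This removes three steps from $A$.

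\textbf{Y pattern} $P_Y^{s}$ followed only by Y pieces and $1$-steps. Use \cref{lem: Y-switch} to move it past intervening $1$-steps toward the Y stack. If it meets another $P_Y$ or a triplet, use \cref{lem: merge Y} or \cref{lem: triplet-Y merge} instead; these convert the pair into cups plus $1$-steps, and the cups are then handled by the first case.

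\textbf{Remaining patterns} $u_{\pm,1}u_{\mp,0}$. Apply the defining rule with web $H^{\mp}$, which is a generator. This replaces the pattern by $u_{\mp,0}u_{\pm,1}$ and pushes the $0$-step left. The rules \cref{lem: E-lemma}, \cref{prp: I-rule} and \cref{lem: YH-EY}, whose webs $\Ydual$ and $I^s$ are expressible through generators, then split a $0$-step into a cup-plus-$1$ or a Y shape. This reduces to the previous cases.

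The length of $A$ strictly decreases once a piece has been absorbed into a stack. Intermediate moves do not increase $|A|$, since they only permute steps, or replace one step by two with one of them immediately absorbed. One could also measure progress by lexicographic order as in \cref{prp: growth algorithm is well-defined}.

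I expect the main obstacle to be the bookkeeping in the last case. There one must check that every intermediate path stays dominant, which is the hypothesis of each lemma in \cref{sec: local surgery}, and that the final path genuinely has a shorter active area rather than one that is merely rearranged. For dominance I would argue as in the proof of \cref{lem: H^S_T is concatenation of H web}. Each local move changes only the weight at one or two vertices, replacing $p_{k-1}+u+u'$-type intermediates by $p_{k-1}+u'$. These are dominant because the prefix up to $m$ is dominant and ends in a $1$-step, so the relevant fundamental-weight coordinate is positive. For strict decrease, I would first try the potential $(|A|,\,J_A)$, ordered lexicographically, and verify case by case that it strictly decreases.
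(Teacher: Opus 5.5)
Your proposal has a genuine gap. Locating the \emph{leftmost} position in the active area $A$ where some growth rule applies gives no control over the surrounding steps, and your fallback case uses a move that does not live in $\mathcal K_L$. Concretely:
\begin{enumerate}
\item Such a position need not exist. The path $u_{+,1}u_{+,1}u_{-,1}u_{-,0}$ is dominant (it visits $0,\lambda_1,2\lambda_1,2\lambda_1+\lambda_2,3\lambda_1$). Its $Y$ stack is $u_{-,1}u_{-,0}$, and its active area $u_{+,1}u_{+,1}$ consists of $1$-steps only, contrary to your maximality claim. No rule applies inside it.
\item When the position exists, your cases are not exhaustive. The path $u_{+,1}u_{+,1}u_{+,0}u_{+,0}u_{+,-1}$ is dominant, all four stacks are empty, and the leftmost applicable rule is the $Y_-$-rule at steps $2$--$3$. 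But $u_{+,1}u_{+,0}u_{+,0}$ is not a triplet, this $P_Y^-$ is not followed only by $Y$ pieces and $1$-steps, and the pattern is not of the form $u_{\pm,1}u_{\mp,0}$.
\item Your last case invokes \cref{lem: E-lemma}, but $\Ydual^{\pm}$ is a web from two strands to one. So $\id\otimes\Ydual^\pm\otimes\id$ is not an element of $\mathcal K_L$, let alone of $\Alg_{\C(q)}(\GenSet_L)$, and the rule turns a path of length $L$ into one of length $L+1$, leaving $\Path^+_L$. Only the composite $I^s=Y_{-s}\circ\Ydual^{-s}$ of \cref{prp: I-rule} is a legitimate length-preserving move.
\item A smaller point: \cref{lem: Y-switch} moves a $1$-step to the right past a $P_Y$ piece. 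It does not move a $P_Y$ piece rightward past $1$-steps, as your $Y$-pattern case describes.
\end{enumerate}

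The missing idea is to normalise first and then focus on the \emph{last} $1$-step of $A$. The $H^\pm$-rules and \cref{prp: I-rule} are length-preserving, use generators, strictly lower the state string lexicographically, and never lengthen $A$. So one may assume $A$ contains none of $u_{\pm,1}u_{\mp,0}$, $u_{\pm,0}u_{\mp,0}$, $u_{\pm,0}u_{\mp,-1}$, $u_{\pm,1}u_{\pm,-1}$. Every step of $A$ after its last $1$-step is a $0$- or $-1$-step, so these exclusions force exactly four situations:
\begin{enumerate}
\item The $1$-step ends $A$. Then \cref{lem: Y-switch} pushes it through the $Y$ stack into the $1$ stack.
\item It is followed by an opposite-sign $-1$-step. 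This is a cup, moved left by \cref{lem: arc switching}.
\item It is followed by same-sign $0,-1$. This is a triplet, moved left by \cref{lem: triplet switching}.
\item It is followed by same-sign $0,0$. Then \cref{lem: YH-EY} moves the $1$-step one place right, which can happen only finitely often.
\end{enumerate}
On your two examples this works as follows. The first needs two applications of \cref{lem: Y-switch}, ending at $u_{-,1}u_{-,0}u_{+,1}u_{+,1}$. The second needs one application of \cref{lem: YH-EY} followed by two of \cref{lem: triplet switching}, ending at $P_t^+P_Y^-$. Both end with empty active area. Every move is by an element of $\Alg_{\C(q)}(\GenSet_L)$, and every intermediate path stays in $\Path^+_L$.
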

\begin{proof}
    Recall the growth rules with a web $H^\pm$ involved from \cref{fig: defining growth rules}, and \cref{prp: I-rule}. Note that the $H^\pm$-webs and the $I$-webs are in the generating set $\GenSet_L$, hence by applying these rules, we assume that the active area of $P$ satisfies:
    \begin{enumerate}
        \item [\textbf{a}.] No two consecutive steps in $P$ take the form $u_{\pm,1} u_{\mp,0}$, $u_{\pm,0} u_{\mp,0}$ or $u_{\pm,0} u_{\mp,-1}$; otherwise, an $H^\pm$-web growth rule can be applied.
        \item [\textbf{b}.] No consecutive steps in $P$ take the form $u_{\pm,1} u_{\pm,-1}$; otherwise, \cref{prp: I-rule} can be applied.
    \end{enumerate}
    Now consider the last $1$-step in the active area of $P$. Based on the next step in the active area, there are three cases to be considered:
    \begin{enumerate}
        \item There is no such next step: The last $1$-step is also the last step in the active area, so it is followed by the Y-stack, in which case we apply \cref{lem: Y-switch} to move this $1$-step into the $1$-stack, so the active area of the resulting path is smaller by one step.
        \item The next step in the active area is a $-1$-step. By assumption \textbf{b}, this $-1$-step must have a different sign from the $1$-step. In this case the two steps together form $P_c^\pm$, and we can apply \cref{lem: arc switching} to move this $P_c^\pm$ to the leftmost into the cup stack, so the active area of the resulting path is smaller by two steps.
        \item The next step in the active area is a $0$-step. By assumption \textbf{a}, this $0$-step must take the same sign as the $1$-step. The following step is still in the active area, otherwise the last $1$-step and the subsequent $0$-step should have been in the Y-stack, contradicting the maximality in \cref{def: active area}. We further divide into two cases based on this following step:
        \begin{enumerate}
            \item [3.1.] The following step is a $-1$-step, which takes the same sign as the $1$-step by assumption \textbf{a}. In this case, the three steps together form a $P_t^\pm$, and we can apply \cref{lem: triplet switching} to move this $P_t^\pm$ into the triplet-stack, so the active area of the resulting path is reduced by three steps.
            \item [3.2.] The following step is a $0$-step, which still has the same sign by assumption \textbf{a}. In this case the three steps together form a substring $u_{s,1} u_{s,0} u_{s,0}$, and we can apply \cref{lem: YH-EY} to move the last $1$-step in the active area one step to the right. Since the active area is finite, the last $1$-step in it can only move finitely many times to the right. Eventually we reach one of the previous two cases that reduce the active area.
        \end{enumerate}
    \end{enumerate}
\end{proof}

\begin{proof}[Proof of \Cref{prp: cyclic vector is indeed cyclic}]
    Let $P\in \Path_L^+(\lambda)$. Since the active area of $P$ has finite length, by \cref{lem: active area reduction}, we can apply a sequence of elements in $\Alg_{\C(q)}(\GenSet_L)$ to $G(P)$ and get a web $G(P')$ where $P'$ has an empty active area. Such a path $P'$ could differ from a cyclic path $P^S_{cyc}$ by containing more than one copy of $P_c^\pm$ and $P_Y^{\pm}$ in the cup stack and the Y stack. But \cref{lem: triplet merge}, \cref{lem: merge Y} and \cref{lem: triplet-Y merge} allow us to reduce to having at most one copy of $P_t^\pm$ and $P_Y^{\pm}$, so $P'$ is a cyclic path. Hence we have shown that there exists $g\in \Alg_{\C(q)}(\GenSet_L)$ such that $G(P)=g G(P^S_{cyc})$ for some sign string $S$, as desired.
\end{proof}

\subsection{Proof of \Cref{prp: basic web in the cyclic subalgebra}}\label{sec: proof of generators}

By \cref{lem: basis decomposition}, we may write
    \begin{align*}
        C^\lambda_{P^S_{cyc},P^T_{cyc}} = w^S_{cyc} \circ \mathcal{H}^S_T \circ (w^T_{cyc})^\dagger,
    \end{align*}
Note that $\ccup \circ (\ccup)^\dagger=\TL{}{}$ for appropriate signs, and $t_+ \circ (t_+)^\dagger=T$ for appropriate signs. Hence, to show that $C^\lambda_{P^S_{cyc},P^T_{cyc}} \in \Alg_{\C(q)}(\GenSet_L)$, it suffices to show that
\begin{align}\label{eqn: Y-Lambda}
    &(Y \otimes \id) \circ \mathcal{H}^S_T \circ (Y \otimes \id)^\dagger \notag \\
    =&(Y \otimes \id) \circ \mathcal{H}^S_T \circ (\Ydual \otimes \id)\in \Alg_{\C(q)}(\GenSet_L).
\end{align}
By repeatedly applying the commutation relation between $Y$ and $\mathcal{H}$ 
\begin{align}
    Y_k \circ H_k = I_k \circ Y_{k+1},
\end{align}
which is evident from \cref{fig: YH-EY}, \eqref{eqn: Y-Lambda} boils down to showing that
\begin{align}\label{eqn: Y-Lambda 2}
    Y \otimes \id \otimes \Ydual \otimes \id \in \Alg_{\C(q)}(\GenSet_L),
\end{align}
which is true by applying \eqref{eqn: Y switch}, which shows that the left hand side of \eqref{eqn: Y-Lambda 2} is a composition of $\LY{}{}$ and $\LY{}{}$, both of which are in $\Alg_{\C(q)}(\GenSet_L)$. We therefore conclude the proof.

\newpage
\begin{appendices}

\section{$\uqslt$ and its representations}\label{app: quantum group conventions}

\subsection{The quantum algebra $\uqslt$}\label{app: quantum sl3}
In what follows, $q$ is a formal variable. Let $\alpha:=\left(\begin{smallmatrix}
    2 & -1 \\ -1 & 2\end{smallmatrix}\right)$ be the Cartan matrix of $\mathfrak{sl}_3$. The quantum group $\uqslt$ is a $\C(q)$-algebra generated by $E_i$, $F_i$, $K_i^{\pm 1}$ for $1\leq i \leq 2$, subject to the following defining relations:
\begin{subequations}
\begin{align}
    K_iK_i^{-1} &= K_i^{-1}K_i = 1 \,, \\
    K_iK_j &= K_jK_i \,, \\
    K_jE_i &= q^{\alpha_{ij}}E_iK_j \,, \\
    K_jF_i &= q^{-\alpha_{ij}}F_iK_j \,, \\
    [E_i,F_j] &= \delta_{ij}\frac{K_i-K_i^{-1}}{q-q^{-1}} \,, \\
    [2]_q E_iE_jE_i &= E_i^2E_j+E_jE_i^2 \,, \quad |i-j|=1 \,, \\
    [2]_q F_iF_jF_i &= F_i^2F_j+F_jF_i^2 \,, \ \quad |i-j|=1,
\end{align}
\end{subequations}
where the quantum integers $[n]_q$ are defined in \eqref{eqn: quantum integers}.

The algebra $\uqslt$ is a Hopf algebra with coproduct
\begin{align}\label{eqn: coproduct}
    \Delta(E_i)=E_i\otimes 1 + K_i^{-1}\otimes E_i \,, \qquad \Delta(F_i)=F_i\otimes K_i + 1\otimes F_i \,, \qquad \Delta(K_i)=K_i\otimes K_i \,,
\end{align}
 antipode
\begin{align}
    S(E_i)=-K_iE_i\,, \qquad S(F_i)=-F_iK_i^{-1} \,, \qquad S(K_i)=K_i^{-1} \,,
\end{align}
and counit
\begin{align}
    \epsilon(E_i)=0 \,, \qquad \epsilon(F_i)=0 \,, \qquad \epsilon(K_i)=1 \,.
\end{align}

\subsection{Fundamental representations of $\uqslt$}\label{app: fundamental reps of uqslt}

Let $\alpha_1,\alpha_2$ denote the simple roots of $\mathfrak{sl}_3$, normalized by the $\mathfrak{sl}_3$-Cartan matrix $(\alpha_i,\alpha_j)=\alpha_{ij}$. The two fundamental weights $\lambda_1,\lambda_2$ satisfy $(\alpha_i,\lambda_j)=\delta_{ij}$. Let $V^+$ be the fundamental $\uqslt$-representation with weight $\lambda_1$, with basis $\{v^+_1,v^+_0,v^+_{-1}\}$. The action of $\uqslt$ on $V^+$ is given by the following matrices:

\begin{align*}
    K_1=\begin{pmatrix}
        q & 0 & 0 \\
        0 & q^{-1} & 0 \\
        0 & 0 & 1
\end{pmatrix}
, \qquad
E_1=\begin{pmatrix}
        0 & 1 & 0 \\
        0 & 0 & 0 \\
        0 & 0 & 0
\end{pmatrix}
, \qquad
    F_1=\begin{pmatrix}
        0 & 0 & 0 \\
        1 & 0 & 0 \\
        0 & 0 & 0
\end{pmatrix},
\end{align*}
\begin{align*}
    K_2=\begin{pmatrix}
        1 & 0 & 0 \\
        0 & q & 0 \\
        0 & 0 & q^{-1}
\end{pmatrix}
, \qquad
    E_2=\begin{pmatrix}
        0 & 0 & 0 \\
        0 & 0 & 1 \\
        0 & 0 & 0
\end{pmatrix}
, \qquad
    F_2=\begin{pmatrix}
        0 & 0 & 0 \\
        0 & 0 & 0 \\
        0 & 1 & 0
\end{pmatrix}.
\end{align*}

Let $V^-:=(V^+)^*$ be the vector space dual of $V^+$. The antipode of $\uqslt$ induces a representation of $\uqslt$ on $V^-$ by setting $hf(\cdot):=f(S(h) \cdot)$ for $h\in \uqslt$ and $f\in V^-$. We choose a basis of $V^-$ by setting
 \begin{align*}
    v^-_i:=(-q)^{i}(v^+_{-i})^* \,, \quad i=1,0,-1,
 \end{align*}
where $(v^+_i)^*$ satisfies $(v^+_i)^*(v^+_j)=\delta_{ij}$. In this basis, the action of $\uqslt$ on $V^-$ is given by

\begin{align*}
    K_1 = \begin{pmatrix}
        1 & 0 & 0 \\
        0 & q & 0 \\
        0 & 0 & q^{-1}
\end{pmatrix}, \qquad
    E_1 = \begin{pmatrix}
        0 & 0 & 0 \\
        0 & 0 & 1 \\
        0 & 0 & 0
\end{pmatrix}, \qquad
    F_1 = \begin{pmatrix}
        0 & 0 & 0 \\
        0 & 0 & 0 \\
        0 & 1 & 0
\end{pmatrix},
\end{align*}
\begin{align*}
    K_2 = \begin{pmatrix}
        q & 0 & 0 \\
        0 & q^{-1} & 0 \\
        0 & 0 & 1
\end{pmatrix}, \qquad
    E_2 = \begin{pmatrix}
        0 & 1 & 0 \\
        0 & 0 & 0 \\
        0 & 0 & 0
\end{pmatrix}, \qquad
    F_2 = \begin{pmatrix}
        0 & 0 & 0 \\
        1 & 0 & 0 \\
        0 & 0 & 0
\end{pmatrix},
\end{align*}
so $V^-$ is the fundamental $\uqslt$-representation with weight $\lambda_2$.

\subsection{The image of $\Gamma$ for small webs}\label{app: intertwiners Gamma}
We give the definition of $\id_{V^\pm},\eva_{\pm},\coev_{\pm}$ and $\mathcal Y_{\pm}$ that are used in \cref{sec: equivalence Sp to Rep} in the definition of $\Gamma:\Sp\to \Rep$.

Firstly, we define $\id_{V^\pm}$ as the identities on $V^\pm$, which are obviously morphisms in $\Rep$. In what follows we let $z:=-q$. Recall that $\uqslt$ has a \textit{pivotal element} $g=K_1^2K_2^2$, which satisfies
\begin{align*}
    \Delta(g)&=g\otimes g, \\
    S^2(h)&=ghg^{-1}, \quad \forall h\in \uqslt.
\end{align*}
Using this, we define the morphisms
\begin{equation*}
\begin{alignedat}{3}
\eva_V :&\; V^* \otimes V \to \C(q),
&\quad& f\otimes v &\ \mapsto \ & f(v), \\
\coev_V :&\; \C(q) \to V \otimes V^*,
&\quad& 1 &\ \mapsto\ & \sum_i v_i \otimes v_i^*,\\
\widetilde{\eva}_V :&\; V \otimes V^* \to \C(q),
&\quad& v\otimes f &\ \mapsto\ & f(gv), \\
\widetilde{\coev}_V :&\; \C(q) \to V^* \otimes V,
&\quad& 1 &\ \mapsto\ & \sum_i v_i^* \otimes g^{-1}v_i,
\end{alignedat}
\end{equation*}
which are morphisms in $\Rep$ by definition of the pivotal element. We define $\coev_\pm:=z^{-1}\coev_{V^\pm}$ and $\eva_\pm:=z\cdot\eva_{V^\pm}$, hence they are also morphisms in $\Rep$. In the basis $\{v^\pm_{1},v^\pm_{0},v^\pm_{-1}\}$ they read
\begin{align}\label{eqn: coev}
    \coev_\pm: 1 \mapsto v_1^{\pm} \otimes v_{-1}^{\mp}+z^{-1}v_0^{\pm} \otimes v_0^{\mp}+z^{-2}v_{-1}^{\pm} \otimes v_1^{\mp},
\end{align}
and
\begin{align}\label{eqn: eva}
    \eva_{\pm}(v_1^{\mp} \otimes v_{-1}^{\pm})  = z^2, \qquad
    \eva_{\pm}(v_0^{\mp} \otimes v_0^{\pm}) = z, \qquad
    \eva_{\pm}(v_{-1}^{\mp} \otimes v_1^{\pm})  = 1,
\end{align}
and all other values on basis vectors are zero. 

\vspace{2ex}

We define the morphisms $\mathcal{Y}_{-}^{++}: V^- \to V^+ \otimes V^+$ in $\Rep$ by
\begin{align}\label{eqn: Y}
\mathcal{Y}_{-}^{++} v_1^{-} & =v_1^{+} \otimes v_0^{+}+z^{-1}v_0^{+} \otimes v_1^{+} \notag \\
\mathcal{Y}_{-}^{++} v_0^{-} & =v_1^{+} \otimes v_{-1}^{+}+z^{-1} v_{-1}^{+} \otimes v_1^{+} \\
\mathcal{Y}_{-}^{++} v_{-1}^{-} & =v_0^{+} \otimes v_{-1}^{+}+z^{-1} v_{-1}^{+} \otimes v_0^{+}\notag
\end{align}
\begin{align}
\mathcal{Y}_{+}^{--} v_1^{+} & =v_1^{-} \otimes v_0^{-}+z^{-1}v_0^{-} \otimes v_1^{-} \notag \\
\mathcal{Y}_{+}^{--} v_0^{+} & =v_1^{-} \otimes v_{-1}^{-}+z^{-1} v_{-1}^{-} \otimes v_1^{-} \\
\mathcal{Y}_{+}^{--} v_{-1}^{+} & =v_0^{-} \otimes v_{-1}^{-}+z^{-1} v_{-1}^{-} \otimes v_0^{-}.
\end{align}
It is easy to verify that they are also morphisms in $\Rep$.

With $\id_{V^\pm},\eva_{\pm},\coev_{\pm}$ and $\mathcal Y_{\pm}$ defined, it is a simple exercise to check that $\Gamma$, defined by \eqref{eqn: Gamma object}--\eqref{eqn: Gamma-morphism}, preserves the three Kuperberg relations \cref{fig: Kuperberg relations} and the zigzag identities \eqref{eqn: zigzag in Sp} in $\Sp$; hence it fulfills the conditions of \cref{thm: equivalence of Sp and Rep}, so $\Gamma$ is a well-defined equivalence. For example, the zigzag identities are checked by the equations
\begin{align*}
    (\eva_{\pm}\otimes\id_{V^{\mp}})\circ(\id_{V^{\mp}}\otimes\coev_{\pm})&=\id_{V^{\mp}},\\
    (\id_{V^{\pm}}\otimes\eva_{\pm})\circ(\coev_{\pm}\otimes\id_{V^{\pm}})&=\id_{V^{\pm}}.
\end{align*}

For the square relation, consider \cref{fig: square verification Gamma}. The square on the left-hand side is the concatenation of $\ccap$ and $Y$ webs:
\begin{figure}[htbp]
    \centering
    \begingroup
    \scalebox{0.8}{\begin{tikzpicture}[scale=0.65, line width=0.7pt]
    \fill[gray!12]
        (0.7,0.7) -- (1.5,1.2)
        to[out=90,in=90] (2.5,1.2) -- (3.3,0.7)
        -- (4,1.2) -- (4,1.8) -- (3.3,2.3)
        -- (2.5,2.8) to[out=90,in=90] (1.5,2.8)
        -- (0.7,2.3) -- (0,1.8) -- (0,1.2) -- cycle;
    \draw[mid arrow] (0,0) -- (0.7,0.7);
    \draw[mid arrow] (3.3,0.7) -- (4,0);
    \draw[mid arrow] (3.3,0.7) -- (2.5,1.2);
    \draw[mid arrow] (2.5,1.2) to[out=90,in=90] (1.5,1.2);
    \draw[mid arrow] (1.5,1.2) -- (0.7,0.7);
    \draw[mid arrow] (0.7,2.3) -- (0,1.8) -- (0,1.2) -- (0.7,0.7);
    \draw[mid arrow] (3.3,0.7) -- (4,1.2) -- (4,1.8) -- (3.3,2.3);
    \draw[mid arrow] (0.7,2.3) -- (1.5,2.8);
    \draw[mid arrow] (1.5,2.8) to[out=90,in=90] (2.5,2.8);
    \draw[mid arrow] (2.5,2.8) -- (3.3,2.3);
    \draw[mid arrow] (0.7,2.3) -- (0,2.8) -- (0,3.8);
    \draw[mid arrow] (4,3.8) -- (4,2.8) -- (3.3,2.3);
    \foreach \x/\y in {0.7/0.7,3.3/0.7,0.7/2.3,3.3/2.3}
        \fill (\x,\y) circle (2pt);
    \node[below] at (0,0) {$+$};
    \node[below] at (4,0) {$-$};
    \node[above] at (0,3.8) {$+$};
    \node[above] at (4,3.8) {$-$};
    \node at (5.1,1.9) {$=$};
    \begin{scope}[shift={(6.2,0)}]
        \draw[mid arrow] (0,0) -- (0,3.8);
        \draw[mid arrow] (2,3.8) -- (2,0);
        \node[below] at (0,0) {$+$};
        \node[below] at (2,0) {$-$};
        \node[above] at (0,3.8) {$+$};
        \node[above] at (2,3.8) {$-$};
    \end{scope}
    \node at (9.3,1.9) {$+$};
    \begin{scope}[shift={(10.4,0)}]
        \draw[mid arrow] (0,0) to[out=90,in=90] (2,0);
        \draw[mid arrow] (2,3.8) to[out=270,in=270] (0,3.8);
        \node[below] at (0,0) {$+$};
        \node[below] at (2,0) {$-$};
        \node[above] at (0,3.8) {$+$};
        \node[above] at (2,3.8) {$-$};
    \end{scope}
\end{tikzpicture}}
    \endgroup
    \caption{The square relation.}
    \label{fig: square verification Gamma}
\end{figure}

Applying $\Gamma$ to \cref{fig: square verification Gamma}, we obtain: 
\begin{align}\label{eqn: square verification Gamma}
    & (\id_{V^{\pm}}\otimes\eva_{\mp}\otimes\id_{V^{\mp}})
    \circ(\mathcal Y_{\mp}\otimes\mathcal Y_{\pm}) \circ(\id_{V^{\mp}}\otimes\eva_{\pm}\otimes\id_{V^{\pm}})
    \circ(\mathcal Y_{\pm}\otimes\mathcal Y_{\mp})\notag\\
    =&\id_{V^{\pm}}\otimes\id_{V^{\mp}}+\coev_{\pm}\circ\eva_{\mp}.
\end{align}
Substituting \eqref{eqn: coev}--\eqref{eqn: Y} verifies \eqref{eqn: square verification Gamma} on each basis vector $v_i^{\pm}\otimes v_j^{\mp}$, hence $\Gamma$ indeed preserves the square relation. The loop and digon Kuperberg relations are verified similarly.

\subsection{Matrix forms of generators of $\mathcal K_L$}\label{app: matrix form of generators}
This subsection records the matrix form of the generators of $\mathcal K_L$.
For a two-strand web $X$ with bottom sign string $S$ and top sign string $T$, the corresponding operator on $\mathscr V_2=(V^+\oplus V^-)^{\otimes 2}$ has a $36\times36$ matrix. By \eqref{eqn: def of Phi}, it is
\begin{align*}
    \Phi(X)=e_T\circ\Gamma(X)\circ p_S,
\end{align*}
where $p_S$ projects onto the $V^S$-summand and $e_T$ includes the $V^T$-summand. Hence the only potentially nonzero block is the $(T,S)$-block $\Gamma(X)$, of size $9\times9$. We give these blocks below; the corresponding generators of $\mathcal K_L$ act on the indicated consecutive sites and as the identity on the remaining sites. For the three-strand generator $\YL{s_1}{s_2}$, the full local matrix has size $216\times216$; its $27\times27$ block is given at the end of this subsection as a tensor product.

Recall the web $\Ydual^{\pm}$ defined in \cref{eqn: definition of Lambda web}. Note that $I^\pm=Y_{\mp}\circ \Ydual^{\mp}$ and $H^\pm=(\Ydual^{\mp}\otimes \id_\pm)\circ (\id_{\pm}\otimes Y_{\mp})$. Note also $\YL{s_1}{s_2}=Y\otimes\Ydual$. Since $\Gamma(Y_{\pm})=\mathcal Y_{\pm}$ has matrix form given in \eqref{eqn: Y}, it remains to compute the matrix form of $\Gamma(\Ydual^\pm)$. By its definition, we give the following formula:
\begin{align}\label{eqn: lambda}
    \Ydual^{-}(v_1^{+}\otimes v_0^{+})&=zv_1^{-},
    &\Ydual^{-}(v_0^{+}\otimes v_1^{+})&=v_1^{-},\\
    \Ydual^{-}(v_1^{+}\otimes v_{-1}^{+})&=zv_0^{-},
    &\Ydual^{-}(v_{-1}^{+}\otimes v_1^{+})&=v_0^{-},\\
    \Ydual^{-}(v_0^{+}\otimes v_{-1}^{+})&=zv_{-1}^{-},
    &\Ydual^{-}(v_{-1}^{+}\otimes v_0^{+})&=v_{-1}^{-},
\end{align}
and the matrix form of $\Gamma(\Ydual^{+})$ is obtained by reversing all the signs in \eqref{eqn: lambda}.

Now we give the matrix form of $\Gamma(I^\pm)$ and $\Gamma(H^\pm)$. 
The morphism $\Gamma(I^+): V^+ \otimes V^+ \to V^+ \otimes V^+$ takes the following matrix form in the basis $\{v^{+}_{i} \otimes v^{+}_{j} \st i,j=1,0,-1\}$, ordered lexicographically (i.e. $v^+_1 \otimes v^+_1$ is the first basis vector, $v^+_1 \otimes v^+_0$ is the second, and so on):

\begin{equation}\label{mat: spider generator}
    \Gamma(I^+)=\left[
        \begin{NiceMatrix}
        0 & & & & & & & & \\
        & z& & 1& & & & & \\
        & & z& & & & 1& & \\
        & 1 & & z^{-1}& & & & & \\
        & & & & 0& & & & \\
        & & & & & z& & 1& \\
        & & 1 & & & & z^{-1}& & \\
        & & & & & 1& &z^{-1}& \\
        & & & & & & & & 0\\
    \end{NiceMatrix}
    \right]
\end{equation}

The morphism $\Gamma(H^+): V^+ \otimes V^- \to V^- \otimes V^+$ takes the following matrix form in the basis $\{v^{\pm}_{i} \otimes v^{\mp}_{j} \st i,j=1,0,-1\}$, ordered lexicographically (i.e. $v^+_1 \otimes v^-_1$ is the first basis vector, $v^+_1 \otimes v^-_0$ is the second, and so on):

\begin{equation}\label{H matrix}
\Gamma(H^{+})=\left[
    \begin{NiceMatrix}
    1 & & & & & & & & \\
    & & & 1& & & & & \\
    & & z& & 1& & & & \\
    & 1 & & & & & & & \\
    & & 1& & & & 1& & \\
    & & & & & & & 1& \\
    & & & & 1 & & z^{-1}& & \\
    & & & & & 1 & & & \\
    & & & & & & & & 1\\
\end{NiceMatrix}
\right]
\end{equation}

For completeness, we also give the matrix form of the idempotent generator. For $s_1,s_2\in\{+,-\}$, $\Gamma(\id_{s_1s_2})=\id_{V^{s_1}\otimes V^{s_2}}=I_9$. Its image under $\Phi$ vanishes on the summands $V^{s_3}\otimes V^{s_4}$ whenever $(s_1,s_2)\neq (s_3,s_4)$.

Finally, the generator 
$\YL{s_1}{s_2}=Y\otimes\Ydual$ on three consecutive tensor factors is the tensor product of two maps:
\begin{align}
    \Gamma\!\left(\YL{s_1}{s_2}\right)
    =
    \Gamma(Y)\otimes \Gamma(\Ydual).
\end{align}
The matrix forms of $\Gamma(Y)=\mathcal Y_\pm$ and
$\Gamma(\Ydual)=\overline{\mathcal Y}_\pm$ are given in \eqref{eqn: Y}--\eqref{eqn: lambda}. The matrix form of $\YL{s_1}{s_2}$ is thus obtained by taking the tensor product of these matrices.

\section{Well-definedness of the growth algorithm}\label{sec: well-definedness of the growth algorithm}

These arguments were sketched in \cite{khovanov_1999_web} in the $\lambda=0$ case. We do not claim originality, but would like to provide details for the sake of completeness and convenience of the reader.

\begin{lemma}\label{lem: dominance}
    If $P=P_1P^iP_2\in \Path^+(\lambda)$ and $(w,P^i,P^f)$ is a growth rule, then $P'=P_1P^fP_2\in \Path^+(\lambda)$.
\end{lemma}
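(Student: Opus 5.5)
The plan is a direct check. Weight preservation and dominance are separate questions, so I treat them separately. Weight is easy: for each defining rule one checks $\wt(P^i)=\wt(P^f)$ directly from \cref{def: steps}. For example, $u_{+,1}+u_{-,0}=\lambda_1+(\lambda_1-\lambda_2)=u_{-,0}+u_{+,1}$ for $H^-$, and $u_{-,1}+u_{+,-1}=\lambda_2-\lambda_2=0$ for $\ccup_+$. For $Y_+$ with $k=-1,0$ one compares $u_{-,1}+u_{-,k}$ with $u_{+,k+1}$. Hence $\wt(P')=\wt(P)=\lambda$.

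For dominance, I first observe that $P'$ and $P$ share the prefix $P_1$. They also visit the same points along $P_2$, shifted by $\wt(P^f)-\wt(P^i)=0$. So only the intermediate vertices created inside $P^f$ need checking. Put $\mu=\wt(P_1)=a\lambda_1+b\lambda_2\in\Lambda_+$.

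\begin{enumerate}
\item For the $\ccup_+$ rule, $P^f$ is empty, so there is nothing to check.
\item For the $Y_+$ rule, $P^f$ has length one and its endpoint is the endpoint of $P^i$, so again nothing needs checking.
\item The only genuine case is the $H^-$ rule. Here the intermediate vertex of $P^i$ is $\mu+\lambda_1$, while that of $P^f$ is $\mu+u_{-,0}=(a+1)\lambda_1+(b-1)\lambda_2$. This is dominant iff $b\ge1$.
\end{enumerate}

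To get $b\geq 1$ in the $H^-$ case, I use that $P$ is dominant. The endpoint of $P^i$ is $\mu+\lambda_1+u_{-,0}=(a+2)\lambda_1+(b-1)\lambda_2$, and it lies in $\Lambda_+$, so $b\ge1$. The four sign-reversed rules follow by the symmetry swapping $\lambda_1\leftrightarrow\lambda_2$, which preserves $\Lambda_+$.

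If the lemma is meant for the general growth rules of \cref{def: growth rule}, the same vertex-by-vertex check applies to the rules in \cref{fig: growth rules}. For the $H^-$ rule there with $P^i=u_{+,0}u_{-,k}$ and $P^f=u_{-,-1}u_{+,k+1}$, the new vertex is $\mu-\lambda_1$. Dominance then needs $a\ge1$, which follows from dominance of $\mu+u_{+,0}$. The $Y_+$ rule $u_{-,0}u_{-,-1}\to u_{+,-1}$ is handled like item 2. The main obstacle is purely bookkeeping of coordinates; each case reduces to reading off a coordinate inequality from a vertex of $P$ lying in $\Lambda_+$.
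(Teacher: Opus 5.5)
Your proposal is correct and matches the paper's proof: you check $\wt(P^i)=\wt(P^f)$ rule by rule, then note that only the interior vertices of $P^f$ are new. That check is vacuous for $|P^f|\le 1$, and for the $H^-$ rule it reduces to $b\ge 1$, read off from the dominant vertex $(a+2)\lambda_1+(b-1)\lambda_2$ of $P$. Your explicit handling of the sign-reversed rules via the $\lambda_1\leftrightarrow\lambda_2$ symmetry, and of the additional rules in the second figure of growth rules, covers exactly the cases the paper dismisses as ``checked in the same way.''
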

\begin{proof}
    That $\wt(P)=\wt(P')$ follows by checking for each growth rule $(w,P^i,P^f)$, $\wt(P^i)=\wt(P^f)$. We then observe that the weight of paths is additive under concatenation, so $\wt(P)=\wt(P_1)+\wt(P^i)+\wt(P_2)$ and $\wt(P')=\wt(P_1)+\wt(P^f)+\wt(P_2)$.

    Now we check that $P'$ is dominant. Since $P$ is dominant by assumption, all its vertices lie in $\Lambda_+$. Outside the local replacement, the vertices of $P_1P^fP_2$ agree with vertices of $P_1P^iP_2$: before the replacement this is immediate, and after the replacement it follows from $\wt(P^i)=\wt(P^f)$. Thus the only vertices which require attention are the proper intermediate vertices of $P^f$. If $|P^f|\leq 1$, there are no such vertices. If $|P^f|=2$, there is exactly one such vertex, and the following local picture illustrates the check.

    Consider the growth rule $P^i=(u_{+,1}u_{-,0})$ and $P^f=(u_{-,0}u_{+,1})$. Let $\mu=\wt(P_1)=n\lambda_1+k\lambda_2$, $n,k\geq 0$. The local parts of $P$ and $P'$ have the same endpoint, but they pass through different intermediate weights:
    \begin{center}
    \begin{tikzpicture}[
        scale=0.8,
        x={(1cm,0cm)},
        y={(0.55cm,0.8cm)},
        every node/.style={font=\scriptsize}
    ]
        \coordinate (mu) at (0,0);
        \coordinate (oldmid) at (1,0);
        \coordinate (newmid) at (1,-1);
        \coordinate (end) at (2,-1);

        \fill (mu) circle (1.6pt) node[below left] {$\mu$};
        \fill (oldmid) circle (1.6pt) node[above] {$\mu+u_{+,1}$};
        \fill (newmid) circle (1.6pt) node[below] {$\mu+u_{-,0}$};
        \fill (end) circle (1.6pt) node[right] {$\mu+u_{+,1}+u_{-,0}$};

        \draw[blue!70!black, thick, style=mid arrow] (mu) -- (oldmid);
        \draw[blue!70!black, thick, style=mid arrow] (oldmid) -- (end);
        \draw[red!70!black, thick, dashed, style=mid arrow] (mu) -- (newmid);
        \draw[red!70!black, thick, dashed, style=mid arrow] (newmid) -- (end);

        \node[blue!70!black] at (0.65,0.35) {$P^i$};
        \node[red!70!black] at (0.65,-0.75) {$P^f$};
    \end{tikzpicture}
    \end{center}
    The weight $\mu'$ of $P'$ that is not visited by $P$ is obtained by adding the first step of $P^f$ to $\mu$:
    \begin{align*}
    \mu' = \mu + u_{-,0} = (n+1)\lambda_1+(k-1)\lambda_2.
    \end{align*}
    Thus $\mu'\in \Lambda_+$ if $k-1\geq 0$. By dominance of $P$, we know that
    \begin{align*}
        \wt(P_1)+\wt(P^i) = \mu+ u_{+,1} +u_{-,0} = (n+2)\lambda_1 + (k-1)\lambda_2 \in \Lambda_+,
    \end{align*}
    in particular $k-1\geq 0$ by definition of $\Lambda_+$. Therefore $\mu' \in \Lambda_+$ as well. The remaining growth rules with $|P^f|=2$ are checked in the same way: the only new intermediate vertex has any potentially negative coordinate controlled by a vertex already visited by the dominant path $P$.
\end{proof}

\begin{proof}[Proof of \Cref{prp: extra relations}]
    To show that the rules in \cref{fig: growth rules} are indeed growth rules, we consider an algorithm $G'$ with both the defining growth rules in \cref{fig: defining growth rules} and \cref{fig: growth rules}. We claim that it suffices to show that, for all $P\in\Path^+(\lambda)$, the resulting web $G'(P)$ does not depend on the choice of growth rules. Indeed, since we have the defining rules in this algorithm, \cref{prp: growth algorithm is well-defined} implies that if $G'(P)$ is well-defined, then $G'(P)=G(P)$ because we may decide to use only the defining rules in the growth process of $G'$.

    The claim is proved by contradiction: Let $P$ be the minimally ordered path in $\Path^+(\lambda)$ such that $G'(P)$ is inconsistently defined, so there must be two growth rules $r_1,r_2$ applicable to $P$, such that different choices of growth rules yield different results. The paths obtained by applying $r_1$ and $r_2$ to $P$ have strictly lower order than $P$, so by the inductive hypothesis, the resulting webs are well-defined.

    Consider the steps of $P$ that are involved in the growth rules $r_1,r_2$: The steps for the two rules must overlap, otherwise we can apply the two growth rules in either order and get the same result, contradicting the assumption that $G'(P)$ is inconsistently defined. On the other hand, the steps for the two rules cannot be the same, since there is at most one growth rule applicable to a fixed pair of steps. Therefore the steps for the two growth rules $r_1,r_2$ must overlap by one step, giving a finite list of possibilities:
\begin{center}\begin{tabular}{c|c|c}
    involved steps \\
    \hline
    $(u_{\pm,1}u_{\mp,0}u_{\pm,0})$\\
    $(u_{\pm,1}u_{\pm,0}u_{\mp,0})$\\
    $(u_{\pm,1}u_{\pm,0}u_{\pm,-1})$\\
    $(u_{\pm,1}u_{\mp,0}u_{\pm,-1})$\\
    $(u_{\pm,1}u_{\pm,0}u_{\mp,-1})$\\
    $(u_{\pm,1}u_{\mp,0}u_{\mp,-1})$\\
    $(u_{\pm,0}u_{\mp,0}u_{\pm,0})$\\
    $(u_{\pm,0}u_{\mp,0}u_{\pm,-1})$\\
    $(u_{\pm,0}u_{\mp,0}u_{\mp,-1})$
\end{tabular}\end{center}

\begin{figure}[htbp]
    \centering
    \scalebox{0.8}{\begin{tikzpicture}
	\begin{pgfonlayer}{nodelayer}
		\node [style=none] (3) at (-1, 3) {$+,1$};
		\node [style=none] (4) at (0, 3) {$+,0$};
		\node [style=none] (5) at (1, 3) {$-,-1$};
		\node [style=none] (6) at (-3.5, 0.25) {};
		\node [style=none] (7) at (-2.5, 0.25) {};
		\node [style=none] (8) at (-1.5, 0.25) {};
		\node [style=none] (9) at (-1, 2.25) {};
		\node [style=none] (10) at (-2.25, 1) {};
		\node [style=none] (11) at (-3, -0.25) {};
		\node [style=none] (12) at (-3, -0.75) {};
		\node [style=none] (13) at (-1.5, -0.75) {};
		\node [style=none] (14) at (1.75, 0.25) {};
		\node [style=none] (15) at (2.75, 0.25) {};
		\node [style=none] (16) at (3.75, 0.25) {};
		\node [style=none] (17) at (3, -0.25) {};
		\node [style=none] (18) at (3.5, -0.25) {};
		\node [style=none] (19) at (2.75, -0.75) {};
		\node [style=none] (20) at (3.75, -0.75) {};
		\node [style=none] (21) at (1.75, -0.75) {};
		\node [style=none] (22) at (1, 2.25) {};
		\node [style=none] (23) at (2.25, 1) {};
		\node [style=none] (24) at (3.75, -1.25) {$+,0$};
		\node [style=none] (25) at (2.75, -1.25) {$-,-1$};
		\node [style=none] (26) at (1.75, -1.25) {$+,1$};
		\node [style=none] (27) at (-3, -1.25) {$-,1$};
		\node [style=none] (28) at (-2.25, 2) {$r_1$};
		\node [style=none] (29) at (2.25, 2) {$r_2$};
		\node [style=none] (30) at (1.75, -3.5) {};
		\node [style=none] (31) at (2.75, -3.5) {};
		\node [style=none] (32) at (3.75, -3.5) {};
		\node [style=none] (33) at (3, -4) {};
		\node [style=none] (34) at (3.5, -4) {};
		\node [style=none] (35) at (2.75, -4.5) {};
		\node [style=none] (36) at (3.75, -4.5) {};
		\node [style=none] (37) at (1.75, -4.5) {};
		\node [style=none] (39) at (-1.5, -1.25) {$-,-1$};
		\node [style=none] (40) at (3.75, -5.75) {};
		\node [style=none] (41) at (3.75, -6.25) {$+,0$};
		\node [style=none] (42) at (-3.5, -3.5) {};
		\node [style=none] (43) at (-2.5, -3.5) {};
		\node [style=none] (44) at (-1.5, -3.5) {};
		\node [style=none] (45) at (-3, -4) {};
		\node [style=none] (46) at (-3, -4.5) {};
		\node [style=none] (47) at (-1.5, -4.5) {};
		\node [style=none] (48) at (-2.25, -5) {};
		\node [style=none] (49) at (-2.25, -5.75) {};
		\node [style=none] (50) at (-2.25, -6.25) {$+,0$};
		\node [style=none] (51) at (-2.5, -1.75) {};
		\node [style=none] (52) at (-2.5, -3) {};
		\node [style=none] (53) at (3, -1.75) {};
		\node [style=none] (54) at (3, -3) {};
		\node [style=none] (55) at (-3, -2.5) {$r_1'$};
		\node [style=none] (56) at (3.5, -2.5) {$r_2'$};
		\node [style=none] (57) at (0.25, -4.5) {$=$};
	\end{pgfonlayer}
	\begin{pgfonlayer}{edgelayer}
		\draw [style=mid arrow] (11.center) to (6.center);
		\draw [style=mid arrow] (11.center) to (7.center);
		\draw [style=mid arrow] (11.center) to (12.center);
		\draw [style=mid arrow] (8.center) to (13.center);
		\draw [style=mid arrow] (21.center) to (14.center);
		\draw [style=mid arrow] (17.center) to (15.center);
		\draw [style=mid arrow] (17.center) to (18.center);
		\draw [style=mid arrow] (17.center) to (19.center);
		\draw [style=mid arrow] (16.center) to (18.center);
		\draw [style=mid arrow] (20.center) to (18.center);
		\draw [style=mid arrow] (37.center) to (30.center);
		\draw [style=mid arrow] (33.center) to (31.center);
		\draw [style=mid arrow] (33.center) to (34.center);
		\draw [style=mid arrow] (33.center) to (35.center);
		\draw [style=mid arrow] (32.center) to (34.center);
		\draw [style=mid arrow] (36.center) to (34.center);
		\draw [style=mid arrow, bend left=90, looseness=1.25] (35.center) to (37.center);
		\draw [style=mid arrow] (40.center) to (36.center);
		\draw [style=mid arrow] (45.center) to (42.center);
		\draw [style=mid arrow] (45.center) to (43.center);
		\draw [style=mid arrow] (45.center) to (46.center);
		\draw [style=mid arrow] (44.center) to (47.center);
		\draw [style=mid arrow] (46.center) to (48.center);
		\draw [style=mid arrow] (47.center) to (48.center);
		\draw [style=mid arrow] (49.center) to (48.center);
		\draw [style=dotted] (9.center) to (10.center);
		\draw [style=dotted] (22.center) to (23.center);
		\draw [style=dotted] (51.center) to (52.center);
		\draw [style=dotted] (53.center) to (54.center);
	\end{pgfonlayer}
\end{tikzpicture}}
    \caption{Different choices of growth rules end up with the same web.}
    \label{fig: diamond}
\end{figure}

In each case, a growth rule is applicable to either the first two steps or the last two steps. However, no inconsistency arises because whichever growth rule is applied first, a suitable choice of complementary growth rule can be applied to bring the two choices back to the same web, which completes the proof. We demonstrate the existence of the complementary growth rules via an explicit example: the sequence $( u_{+,1}u_{+,0}u_{-,-1} )$. On the first two steps, the applicable growth rule is 
\begin{align*}
    r_1=(Y_{-}, (u_{+,1}u_{+,0}), (u_{-,1})),
\end{align*}
and on the last two steps, the applicable growth rule is 
\begin{align*}
    r_2=(H^{-}, (u_{+,0}u_{-,-1}), (u_{-,-1}u_{+,0}))
\end{align*}
The complementary growth rules are
\begin{align*}
    r_1'&=(Y_{+}, (u_{-,1}u_{-,0}), (u_{+,1})), \\
    r_2'&=(\ccup_{+}, ((u_{+,0}u_{-,-1}), (u_{-,-1}u_{+,0})),\emptyset).
\end{align*}
Applying either $r_1$ and $r_1'$, or $r_2$ and $r_2'$ gives the same web (by isotopy invariance) as shown in \cref{fig: diamond}. The remaining cases are similar and are omitted.
\end{proof}

\clearpage
\section{Cell modules for $\mathcal K_4$}\label{app: cell module basis vectors}

In this appendix, we list basis vectors for all the cell modules of $\mathcal K_4$. These modules $\Delta_4(\lambda)$ are indexed by $\lambda \in \Lambda_+(4)$. We write $\lambda=(x,y)$, and list cell modules with $x\geq y$. Bases for $\Delta_4(x,y)$ with $x<y$ are obtained by reversing the orientations of all webs in the basis for $\Delta_4(y,x)$.

The basis vectors are organized in rows and columns. To the left of each row, we indicate the top sign string of the vectors in this row.
The catalogue in \cref{fig: cell modules K4 plate} is arranged over two pages, with all webs shown at the same scale.

\begingroup
\captionsetup[subfigure]{font=small,skip=6pt,position=top}

\begin{figure}[!ht]
    \centering
    \begin{subfigure}[t]{0.38\linewidth}
        \centering
        \caption{$\Delta_4(0,0)$}
        \label{fig: cell module 4 00}
        \scalebox{0.92}{
\tikzset{
    style/.style={#1},
    none/.style={},
    mid arrow/.style={postaction={decorate,decoration={
            markings, mark=at position 0.55 with {\arrow{stealth}}
        }}
    },
    dotted/.style={dashed}
}
\begin{tikzpicture}[
    x=1cm, y=1cm,
    every node/.style={font=\tiny, inner sep=0.5pt},
    labelnode/.style={font=\tiny, inner sep=0.3pt}
]
\node[anchor=east] at (-1.3, -0.47) {$S=++--$};
\begin{scope}[shift={(0,0)}]
    \node at (0,0.32) {$P_{1}$};
    \node[labelnode] at (-0.72,0.12) {$ \scriptscriptstyle +,1 $};
    \node[labelnode] at (-0.24,0.12) {$ \scriptscriptstyle +,1 $};
    \node[labelnode] at (0.24,0.12) {$ \scriptscriptstyle -,-1 $};
    \node[labelnode] at (0.72,0.12) {$ \scriptscriptstyle -,-1 $};
    \draw [style=none] (-0.24,-0.28) -- (-0.72,0);
    \draw [style=none] (0.72,0) -- (0.24,-0.28);
    \draw [style=mid arrow, looseness=1.05, bend left=82] (0.24,0) to (-0.24,0);
    \draw [style=mid arrow, looseness=1.05, bend left=82] (0.24,-0.28) to (-0.24,-0.28);
\end{scope}
\begin{scope}[shift={(1.95,0)}]
    \node at (0,0.32) {$P_{2}$};
    \node[labelnode] at (-0.72,0.12) {$ \scriptscriptstyle +,1 $};
    \node[labelnode] at (-0.24,0.12) {$ \scriptscriptstyle +,0 $};
    \node[labelnode] at (0.24,0.12) {$ \scriptscriptstyle -,0 $};
    \node[labelnode] at (0.72,0.12) {$ \scriptscriptstyle -,-1 $};
    \draw [style=none] (0.24,0) -- (0,-0.28);
    \draw [style=none] (0.72,0) -- (0.48,-0.28);
    \draw [style=mid arrow] (-0.48,-0.148) -- (-0.72,0);
    \draw [style=mid arrow] (-0.48,-0.148) -- (-0.24,0);
    \draw [style=none] (-0.48,-0.148) -- (-0.48,-0.28);
    \draw [style=none] (0.48,-0.28) -- (0.24,-0.56);
    \draw [style=mid arrow] (-0.48,-0.28) -- (-0.24,-0.428);
    \draw [style=mid arrow] (0,-0.28) -- (-0.24,-0.428);
    \draw [style=none] (-0.24,-0.56) -- (-0.24,-0.428);
    \draw [style=mid arrow, looseness=1.05, bend left=82] (0.24,-0.56) to (-0.24,-0.56);
\end{scope}
\node[anchor=east] at (-1.3, -1.89) {$S=+-+-$};
\begin{scope}[shift={(0,-1.42)}]
    \node at (0,0.32) {$P_{3}$};
    \node[labelnode] at (-0.72,0.12) {$ \scriptscriptstyle +,1 $};
    \node[labelnode] at (-0.24,0.12) {$ \scriptscriptstyle -,1 $};
    \node[labelnode] at (0.24,0.12) {$ \scriptscriptstyle +,-1 $};
    \node[labelnode] at (0.72,0.12) {$ \scriptscriptstyle -,-1 $};
    \draw [style=none] (-0.24,-0.28) -- (-0.72,0);
    \draw [style=none] (0.72,0) -- (0.24,-0.28);
    \draw [style=mid arrow, looseness=1.05, bend right=82] (-0.24,0) to (0.24,0);
    \draw [style=mid arrow, looseness=1.05, bend left=82] (0.24,-0.28) to (-0.24,-0.28);
\end{scope}
\begin{scope}[shift={(1.95,-1.42)}]
    \node at (0,0.32) {$P_{4}$};
    \node[labelnode] at (-0.72,0.12) {$ \scriptscriptstyle +,1 $};
    \node[labelnode] at (-0.24,0.12) {$ \scriptscriptstyle -,-1 $};
    \node[labelnode] at (0.24,0.12) {$ \scriptscriptstyle +,1 $};
    \node[labelnode] at (0.72,0.12) {$ \scriptscriptstyle -,-1 $};
    \draw [style=none] (-0.24,-0.28) -- (0.24,0);
    \draw [style=none] (0.72,0) -- (0.24,-0.28);
    \draw [style=mid arrow, looseness=1.05, bend left=82] (-0.24,0) to (-0.72,0);
    \draw [style=mid arrow, looseness=1.05, bend left=82] (0.24,-0.28) to (-0.24,-0.28);
\end{scope}
\node[anchor=east] at (-1.3, -3.31) {$S=+--+$};
\begin{scope}[shift={(0,-2.84)}]
    \node at (0,0.32) {$P_{5}$};
    \node[labelnode] at (-0.72,0.12) {$ \scriptscriptstyle +,1 $};
    \node[labelnode] at (-0.24,0.12) {$ \scriptscriptstyle -,1 $};
    \node[labelnode] at (0.24,0.12) {$ \scriptscriptstyle -,-1 $};
    \node[labelnode] at (0.72,0.12) {$ \scriptscriptstyle +,-1 $};
    \draw [style=none] (-0.48,-0.28) -- (-0.72,0);
    \draw [style=none] (0.48,-0.28) -- (0.72,0);
    \draw [style=mid arrow] (-0.24,0) -- (0,-0.148);
    \draw [style=mid arrow] (0.24,0) -- (0,-0.148);
    \draw [style=none] (0,-0.28) -- (0,-0.148);
    \draw [style=none] (0.24,-0.56) -- (0.48,-0.28);
    \draw [style=mid arrow] (-0.24,-0.428) -- (-0.48,-0.28);
    \draw [style=mid arrow] (-0.24,-0.428) -- (0,-0.28);
    \draw [style=none] (-0.24,-0.428) -- (-0.24,-0.56);
    \draw [style=mid arrow, looseness=1.05, bend right=82] (-0.24,-0.56) to (0.24,-0.56);
\end{scope}
\begin{scope}[shift={(1.95,-2.84)}]
    \node at (0,0.32) {$P_{6}$};
    \node[labelnode] at (-0.72,0.12) {$ \scriptscriptstyle +,1 $};
    \node[labelnode] at (-0.24,0.12) {$ \scriptscriptstyle -,-1 $};
    \node[labelnode] at (0.24,0.12) {$ \scriptscriptstyle -,1 $};
    \node[labelnode] at (0.72,0.12) {$ \scriptscriptstyle +,-1 $};
    \draw [style=none] (0.24,0) -- (-0.24,-0.28);
    \draw [style=none] (0.24,-0.28) -- (0.72,0);
    \draw [style=mid arrow, looseness=1.05, bend left=82] (-0.24,0) to (-0.72,0);
    \draw [style=mid arrow, looseness=1.05, bend right=82] (-0.24,-0.28) to (0.24,-0.28);
\end{scope}
\node[anchor=east] at (-1.3, -4.73) {$S=-++-$};
\begin{scope}[shift={(0,-4.26)}]
    \node at (0,0.32) {$P_{7}$};
    \node[labelnode] at (-0.72,0.12) {$ \scriptscriptstyle -,1 $};
    \node[labelnode] at (-0.24,0.12) {$ \scriptscriptstyle +,1 $};
    \node[labelnode] at (0.24,0.12) {$ \scriptscriptstyle +,-1 $};
    \node[labelnode] at (0.72,0.12) {$ \scriptscriptstyle -,-1 $};
    \draw [style=none] (-0.72,0) -- (-0.48,-0.28);
    \draw [style=none] (0.72,0) -- (0.48,-0.28);
    \draw [style=mid arrow] (0,-0.148) -- (-0.24,0);
    \draw [style=mid arrow] (0,-0.148) -- (0.24,0);
    \draw [style=none] (0,-0.148) -- (0,-0.28);
    \draw [style=none] (0.48,-0.28) -- (0.24,-0.56);
    \draw [style=mid arrow] (-0.48,-0.28) -- (-0.24,-0.428);
    \draw [style=mid arrow] (0,-0.28) -- (-0.24,-0.428);
    \draw [style=none] (-0.24,-0.56) -- (-0.24,-0.428);
    \draw [style=mid arrow, looseness=1.05, bend left=82] (0.24,-0.56) to (-0.24,-0.56);
\end{scope}
\begin{scope}[shift={(1.95,-4.26)}]
    \node at (0,0.32) {$P_{8}$};
    \node[labelnode] at (-0.72,0.12) {$ \scriptscriptstyle -,1 $};
    \node[labelnode] at (-0.24,0.12) {$ \scriptscriptstyle +,-1 $};
    \node[labelnode] at (0.24,0.12) {$ \scriptscriptstyle +,1 $};
    \node[labelnode] at (0.72,0.12) {$ \scriptscriptstyle -,-1 $};
    \draw [style=none] (-0.24,-0.28) -- (0.24,0);
    \draw [style=none] (0.72,0) -- (0.24,-0.28);
    \draw [style=mid arrow, looseness=1.05, bend right=82] (-0.72,0) to (-0.24,0);
    \draw [style=mid arrow, looseness=1.05, bend left=82] (0.24,-0.28) to (-0.24,-0.28);
\end{scope}
\node[anchor=east] at (-1.3, -6.15) {$S=-+-+$};
\begin{scope}[shift={(0,-5.68)}]
    \node at (0,0.32) {$P_{9}$};
    \node[labelnode] at (-0.72,0.12) {$ \scriptscriptstyle -,1 $};
    \node[labelnode] at (-0.24,0.12) {$ \scriptscriptstyle +,1 $};
    \node[labelnode] at (0.24,0.12) {$ \scriptscriptstyle -,-1 $};
    \node[labelnode] at (0.72,0.12) {$ \scriptscriptstyle +,-1 $};
    \draw [style=none] (-0.72,0) -- (-0.24,-0.28);
    \draw [style=none] (0.24,-0.28) -- (0.72,0);
    \draw [style=mid arrow, looseness=1.05, bend left=82] (0.24,0) to (-0.24,0);
    \draw [style=mid arrow, looseness=1.05, bend right=82] (-0.24,-0.28) to (0.24,-0.28);
\end{scope}
\begin{scope}[shift={(1.95,-5.68)}]
    \node at (0,0.32) {$P_{10}$};
    \node[labelnode] at (-0.72,0.12) {$ \scriptscriptstyle -,1 $};
    \node[labelnode] at (-0.24,0.12) {$ \scriptscriptstyle +,-1 $};
    \node[labelnode] at (0.24,0.12) {$ \scriptscriptstyle -,1 $};
    \node[labelnode] at (0.72,0.12) {$ \scriptscriptstyle +,-1 $};
    \draw [style=none] (0.24,0) -- (-0.24,-0.28);
    \draw [style=none] (0.24,-0.28) -- (0.72,0);
    \draw [style=mid arrow, looseness=1.05, bend right=82] (-0.72,0) to (-0.24,0);
    \draw [style=mid arrow, looseness=1.05, bend right=82] (-0.24,-0.28) to (0.24,-0.28);
\end{scope}
\node[anchor=east] at (-1.3, -7.57) {$S=--++$};
\begin{scope}[shift={(0,-7.1)}]
    \node at (0,0.32) {$P_{11}$};
    \node[labelnode] at (-0.72,0.12) {$ \scriptscriptstyle -,1 $};
    \node[labelnode] at (-0.24,0.12) {$ \scriptscriptstyle -,1 $};
    \node[labelnode] at (0.24,0.12) {$ \scriptscriptstyle +,-1 $};
    \node[labelnode] at (0.72,0.12) {$ \scriptscriptstyle +,-1 $};
    \draw [style=none] (-0.72,0) -- (-0.24,-0.28);
    \draw [style=none] (0.24,-0.28) -- (0.72,0);
    \draw [style=mid arrow, looseness=1.05, bend right=82] (-0.24,0) to (0.24,0);
    \draw [style=mid arrow, looseness=1.05, bend right=82] (-0.24,-0.28) to (0.24,-0.28);
\end{scope}
\begin{scope}[shift={(1.95,-7.1)}]
    \node at (0,0.32) {$P_{12}$};
    \node[labelnode] at (-0.72,0.12) {$ \scriptscriptstyle -,1 $};
    \node[labelnode] at (-0.24,0.12) {$ \scriptscriptstyle -,0 $};
    \node[labelnode] at (0.24,0.12) {$ \scriptscriptstyle +,0 $};
    \node[labelnode] at (0.72,0.12) {$ \scriptscriptstyle +,-1 $};
    \draw [style=none] (0,-0.28) -- (0.24,0);
    \draw [style=none] (0.48,-0.28) -- (0.72,0);
    \draw [style=mid arrow] (-0.72,0) -- (-0.48,-0.148);
    \draw [style=mid arrow] (-0.24,0) -- (-0.48,-0.148);
    \draw [style=none] (-0.48,-0.28) -- (-0.48,-0.148);
    \draw [style=none] (0.24,-0.56) -- (0.48,-0.28);
    \draw [style=mid arrow] (-0.24,-0.428) -- (-0.48,-0.28);
    \draw [style=mid arrow] (-0.24,-0.428) -- (0,-0.28);
    \draw [style=none] (-0.24,-0.428) -- (-0.24,-0.56);
    \draw [style=mid arrow, looseness=1.05, bend right=82] (-0.24,-0.56) to (0.24,-0.56);
\end{scope}
\end{tikzpicture}}
    \end{subfigure}\hfill
    \begin{subfigure}[t]{0.60\linewidth}
        \centering
        \caption{$\Delta_4(1,0)$}
        \label{fig: cell module 4 10}
        \scalebox{0.92}{\input{tikz_drawings/kuperberg_cell_basis_L4_10.tikz}}
    \end{subfigure}

    \vspace{0.65cm}

    \begin{subfigure}[t]{0.55\linewidth}
        \centering
        \caption{$\Delta_4(1,1)$}
        \label{fig: cell module 4 11}
        \scalebox{0.92}{\input{tikz_drawings/kuperberg_cell_basis_L4_11.tikz}}
    \end{subfigure}\hfill
    \begin{subfigure}[t]{0.44\linewidth}
        \centering
        \caption{$\Delta_4(2,0)$}
        \label{fig: cell module 4 20}
        \scalebox{0.92}{\input{tikz_drawings/kuperberg_cell_basis_L4_20.tikz}}
    \end{subfigure}
    \caption{Basis vectors for the cell modules of $\mathcal K_4$ with $x\geq y$ (first part).
    The top sign string of each row is displayed to its left.}
    \label{fig: cell modules K4 plate}
\end{figure}

\clearpage

\begin{figure}[!ht]
    \ContinuedFloat
    \centering
    \begin{subfigure}[t]{0.38\linewidth}
        \centering
        \caption{$\Delta_4(3,0)$}
        \label{fig: cell module 4 30}
        \scalebox{0.92}{
\tikzset{
    style/.style={#1},
    none/.style={},
    mid arrow/.style={postaction={decorate,decoration={
            markings, mark=at position 0.55 with {\arrow{stealth}}
        }}
    },
    dotted/.style={dashed}
}
\begin{tikzpicture}[
    x=1cm, y=1cm,
    every node/.style={font=\tiny, inner sep=0.5pt},
    labelnode/.style={font=\tiny, inner sep=0.3pt}
]
\node[anchor=east] at (-1.3, -0.47) {$S=++--$};
\begin{scope}[shift={(0,0)}]
    \node at (0,0.32) {$P_{1}$};
    \node[labelnode] at (-0.72,0.12) {$ \scriptscriptstyle +,1 $};
    \node[labelnode] at (-0.24,0.12) {$ \scriptscriptstyle +,1 $};
    \node[labelnode] at (0.24,0.12) {$ \scriptscriptstyle -,1 $};
    \node[labelnode] at (0.72,0.12) {$ \scriptscriptstyle -,0 $};
    \draw [style=mid arrow] (-0.48,-0.28) -- (-0.72,0);
    \draw [style=mid arrow] (0,-0.28) -- (-0.24,0);
    \draw [style=mid arrow] (0.24,0) -- (0.48,-0.148);
    \draw [style=mid arrow] (0.72,0) -- (0.48,-0.148);
    \draw [style=mid arrow] (0.48,-0.28) -- (0.48,-0.148);
    \node[labelnode] at (-0.48,-0.4) {$ \scriptscriptstyle +,1 $};
    \node[labelnode] at (0,-0.4) {$ \scriptscriptstyle +,1 $};
    \node[labelnode] at (0.48,-0.4) {$ \scriptscriptstyle +,1 $};
\end{scope}
\node[anchor=east] at (-1.3, -1.89) {$S=+-+-$};
\begin{scope}[shift={(0,-1.42)}]
    \node at (0,0.32) {$P_{2}$};
    \node[labelnode] at (-0.72,0.12) {$ \scriptscriptstyle +,1 $};
    \node[labelnode] at (-0.24,0.12) {$ \scriptscriptstyle -,1 $};
    \node[labelnode] at (0.24,0.12) {$ \scriptscriptstyle +,1 $};
    \node[labelnode] at (0.72,0.12) {$ \scriptscriptstyle -,0 $};
    \draw [style=none] (-0.72,-0.28) -- (-0.72,0);
    \draw [style=none] (-0.24,0) -- (-0.24,-0.28);
    \draw [style=mid arrow] (0.25,-0.148) -- (0.24,0);
    \draw [style=none] (0.25,-0.148) -- (0.24,-0.28);
    \draw [style=mid arrow] (0.25,-0.148) -- (0.71,-0.148);
    \draw [style=mid arrow] (0.72,0) -- (0.71,-0.148);
    \draw [style=none] (0.72,-0.28) -- (0.71,-0.148);
    \draw [style=mid arrow] (-0.48,-0.56) -- (-0.72,-0.28);
    \draw [style=mid arrow] (0.48,-0.56) -- (0.72,-0.28);
    \draw [style=mid arrow] (-0.24,-0.28) -- (0,-0.428);
    \draw [style=mid arrow] (0.24,-0.28) -- (0,-0.428);
    \draw [style=mid arrow] (0,-0.56) -- (0,-0.428);
    \node[labelnode] at (-0.48,-0.68) {$ \scriptscriptstyle +,1 $};
    \node[labelnode] at (0,-0.68) {$ \scriptscriptstyle +,1 $};
    \node[labelnode] at (0.48,-0.68) {$ \scriptscriptstyle +,1 $};
\end{scope}
\node[anchor=east] at (-1.3, -3.31) {$S=+--+$};
\begin{scope}[shift={(0,-2.84)}]
    \node at (0,0.32) {$P_{3}$};
    \node[labelnode] at (-0.72,0.12) {$ \scriptscriptstyle +,1 $};
    \node[labelnode] at (-0.24,0.12) {$ \scriptscriptstyle -,1 $};
    \node[labelnode] at (0.24,0.12) {$ \scriptscriptstyle -,0 $};
    \node[labelnode] at (0.72,0.12) {$ \scriptscriptstyle +,1 $};
    \draw [style=mid arrow] (-0.48,-0.28) -- (-0.72,0);
    \draw [style=mid arrow] (0.48,-0.28) -- (0.72,0);
    \draw [style=mid arrow] (-0.24,0) -- (0,-0.148);
    \draw [style=mid arrow] (0.24,0) -- (0,-0.148);
    \draw [style=mid arrow] (0,-0.28) -- (0,-0.148);
    \node[labelnode] at (-0.48,-0.4) {$ \scriptscriptstyle +,1 $};
    \node[labelnode] at (0,-0.4) {$ \scriptscriptstyle +,1 $};
    \node[labelnode] at (0.48,-0.4) {$ \scriptscriptstyle +,1 $};
\end{scope}
\node[anchor=east] at (-1.3, -4.73) {$S=-++-$};
\begin{scope}[shift={(0,-4.26)}]
    \node at (0,0.32) {$P_{4}$};
    \node[labelnode] at (-0.72,0.12) {$ \scriptscriptstyle -,1 $};
    \node[labelnode] at (-0.24,0.12) {$ \scriptscriptstyle +,1 $};
    \node[labelnode] at (0.24,0.12) {$ \scriptscriptstyle +,1 $};
    \node[labelnode] at (0.72,0.12) {$ \scriptscriptstyle -,0 $};
    \draw [style=none] (-0.72,0) -- (-0.72,-0.28);
    \draw [style=none] (-0.24,-0.28) -- (-0.24,0);
    \draw [style=mid arrow] (0.25,-0.148) -- (0.24,0);
    \draw [style=none] (0.25,-0.148) -- (0.24,-0.28);
    \draw [style=mid arrow] (0.25,-0.148) -- (0.71,-0.148);
    \draw [style=mid arrow] (0.72,0) -- (0.71,-0.148);
    \draw [style=none] (0.72,-0.28) -- (0.71,-0.148);
    \draw [style=none] (-0.72,-0.28) -- (-0.72,-0.56);
    \draw [style=none] (0.72,-0.56) -- (0.72,-0.28);
    \draw [style=mid arrow] (-0.23,-0.428) -- (-0.24,-0.28);
    \draw [style=none] (-0.23,-0.428) -- (-0.24,-0.56);
    \draw [style=mid arrow] (-0.23,-0.428) -- (0.23,-0.428);
    \draw [style=mid arrow] (0.24,-0.28) -- (0.23,-0.428);
    \draw [style=none] (0.24,-0.56) -- (0.23,-0.428);
    \draw [style=mid arrow] (0,-0.84) -- (0.24,-0.56);
    \draw [style=mid arrow] (0.48,-0.84) -- (0.72,-0.56);
    \draw [style=mid arrow] (-0.72,-0.56) -- (-0.48,-0.708);
    \draw [style=mid arrow] (-0.24,-0.56) -- (-0.48,-0.708);
    \draw [style=mid arrow] (-0.48,-0.84) -- (-0.48,-0.708);
    \node[labelnode] at (-0.48,-0.96) {$ \scriptscriptstyle +,1 $};
    \node[labelnode] at (0,-0.96) {$ \scriptscriptstyle +,1 $};
    \node[labelnode] at (0.48,-0.96) {$ \scriptscriptstyle +,1 $};
\end{scope}
\node[anchor=east] at (-1.3, -6.15) {$S=-+-+$};
\begin{scope}[shift={(0,-5.68)}]
    \node at (0,0.32) {$P_{5}$};
    \node[labelnode] at (-0.72,0.12) {$ \scriptscriptstyle -,1 $};
    \node[labelnode] at (-0.24,0.12) {$ \scriptscriptstyle +,1 $};
    \node[labelnode] at (0.24,0.12) {$ \scriptscriptstyle -,0 $};
    \node[labelnode] at (0.72,0.12) {$ \scriptscriptstyle +,1 $};
    \draw [style=none] (-0.72,0) -- (-0.72,-0.28);
    \draw [style=none] (0.72,-0.28) -- (0.72,0);
    \draw [style=mid arrow] (-0.23,-0.148) -- (-0.24,0);
    \draw [style=none] (-0.23,-0.148) -- (-0.24,-0.28);
    \draw [style=mid arrow] (-0.23,-0.148) -- (0.23,-0.148);
    \draw [style=mid arrow] (0.24,0) -- (0.23,-0.148);
    \draw [style=none] (0.24,-0.28) -- (0.23,-0.148);
    \draw [style=mid arrow] (0,-0.56) -- (0.24,-0.28);
    \draw [style=mid arrow] (0.48,-0.56) -- (0.72,-0.28);
    \draw [style=mid arrow] (-0.72,-0.28) -- (-0.48,-0.428);
    \draw [style=mid arrow] (-0.24,-0.28) -- (-0.48,-0.428);
    \draw [style=mid arrow] (-0.48,-0.56) -- (-0.48,-0.428);
    \node[labelnode] at (-0.48,-0.68) {$ \scriptscriptstyle +,1 $};
    \node[labelnode] at (0,-0.68) {$ \scriptscriptstyle +,1 $};
    \node[labelnode] at (0.48,-0.68) {$ \scriptscriptstyle +,1 $};
\end{scope}
\node[anchor=east] at (-1.3, -7.57) {$S=--++$};
\begin{scope}[shift={(0,-7.1)}]
    \node at (0,0.32) {$P_{6}$};
    \node[labelnode] at (-0.72,0.12) {$ \scriptscriptstyle -,1 $};
    \node[labelnode] at (-0.24,0.12) {$ \scriptscriptstyle -,0 $};
    \node[labelnode] at (0.24,0.12) {$ \scriptscriptstyle +,1 $};
    \node[labelnode] at (0.72,0.12) {$ \scriptscriptstyle +,1 $};
    \draw [style=mid arrow] (0,-0.28) -- (0.24,0);
    \draw [style=mid arrow] (0.48,-0.28) -- (0.72,0);
    \draw [style=mid arrow] (-0.72,0) -- (-0.48,-0.148);
    \draw [style=mid arrow] (-0.24,0) -- (-0.48,-0.148);
    \draw [style=mid arrow] (-0.48,-0.28) -- (-0.48,-0.148);
    \node[labelnode] at (-0.48,-0.4) {$ \scriptscriptstyle +,1 $};
    \node[labelnode] at (0,-0.4) {$ \scriptscriptstyle +,1 $};
    \node[labelnode] at (0.48,-0.4) {$ \scriptscriptstyle +,1 $};
\end{scope}
\end{tikzpicture}}
    \end{subfigure}\hfill
    \begin{subfigure}[t]{0.60\linewidth}
        \centering
        \caption{$\Delta_4(2,1)$}
        \label{fig: cell module 4 21}
        \scalebox{0.92}{\input{tikz_drawings/kuperberg_cell_basis_L4_21.tikz}}
    \end{subfigure}

    \vspace{0.65cm}

    \begin{subfigure}[t]{0.31\linewidth}
        \centering
        \caption{$\Delta_4(4,0)$}
        \label{fig: cell module 4 40}
        \scalebox{0.92}{
\tikzset{
    style/.style={#1},
    none/.style={},
    mid arrow/.style={postaction={decorate,decoration={
            markings, mark=at position 0.55 with {\arrow{stealth}}
        }}
    },
    dotted/.style={dashed}
}
\begin{tikzpicture}[
    x=1cm, y=1cm,
    every node/.style={font=\tiny, inner sep=0.5pt},
    labelnode/.style={font=\tiny, inner sep=0.3pt}
]
\node[anchor=east] at (-1.3, -0.47) {$S=++++$};
\begin{scope}[shift={(0,0)}]
    \node at (0,0.32) {$P_{1}$};
    \node[labelnode] at (-0.72,0.12) {$ \scriptscriptstyle +,1 $};
    \node[labelnode] at (-0.24,0.12) {$ \scriptscriptstyle +,1 $};
    \node[labelnode] at (0.24,0.12) {$ \scriptscriptstyle +,1 $};
    \node[labelnode] at (0.72,0.12) {$ \scriptscriptstyle +,1 $};
    \draw [style=mid arrow] (-0.72,-0.72) -- (-0.72,0);
    \draw [style=mid arrow] (-0.24,-0.72) -- (-0.24,0);
    \draw [style=mid arrow] (0.24,-0.72) -- (0.24,0);
    \draw [style=mid arrow] (0.72,-0.72) -- (0.72,0);
    \node[labelnode] at (-0.72,-0.84) {$ \scriptscriptstyle +,1 $};
    \node[labelnode] at (-0.24,-0.84) {$ \scriptscriptstyle +,1 $};
    \node[labelnode] at (0.24,-0.84) {$ \scriptscriptstyle +,1 $};
    \node[labelnode] at (0.72,-0.84) {$ \scriptscriptstyle +,1 $};
\end{scope}
\end{tikzpicture}}
    \end{subfigure}\hfill
    \begin{subfigure}[t]{0.31\linewidth}
        \centering
        \caption{$\Delta_4(3,1)$}
        \label{fig: cell module 4 31}
        \scalebox{0.92}{
\tikzset{
    style/.style={#1},
    none/.style={},
    mid arrow/.style={postaction={decorate,decoration={
            markings, mark=at position 0.55 with {\arrow{stealth}}
        }}
    },
    dotted/.style={dashed}
}
\begin{tikzpicture}[
    x=1cm, y=1cm,
    every node/.style={font=\tiny, inner sep=0.5pt},
    labelnode/.style={font=\tiny, inner sep=0.3pt}
]
\node[anchor=east] at (-1.3, -0.47) {$S=+++-$};
\begin{scope}[shift={(0,0)}]
    \node at (0,0.32) {$P_{1}$};
    \node[labelnode] at (-0.72,0.12) {$ \scriptscriptstyle +,1 $};
    \node[labelnode] at (-0.24,0.12) {$ \scriptscriptstyle +,1 $};
    \node[labelnode] at (0.24,0.12) {$ \scriptscriptstyle +,1 $};
    \node[labelnode] at (0.72,0.12) {$ \scriptscriptstyle -,1 $};
    \draw [style=mid arrow] (-0.72,-0.72) -- (-0.72,0);
    \draw [style=mid arrow] (-0.24,-0.72) -- (-0.24,0);
    \draw [style=mid arrow] (0.24,-0.72) -- (0.24,0);
    \draw [style=mid arrow] (0.72,0) -- (0.72,-0.72);
    \node[labelnode] at (-0.72,-0.84) {$ \scriptscriptstyle +,1 $};
    \node[labelnode] at (-0.24,-0.84) {$ \scriptscriptstyle +,1 $};
    \node[labelnode] at (0.24,-0.84) {$ \scriptscriptstyle +,1 $};
    \node[labelnode] at (0.72,-0.84) {$ \scriptscriptstyle -,1 $};
\end{scope}
\node[anchor=east] at (-1.3, -1.89) {$S=++-+$};
\begin{scope}[shift={(0,-1.42)}]
    \node at (0,0.32) {$P_{2}$};
    \node[labelnode] at (-0.72,0.12) {$ \scriptscriptstyle +,1 $};
    \node[labelnode] at (-0.24,0.12) {$ \scriptscriptstyle +,1 $};
    \node[labelnode] at (0.24,0.12) {$ \scriptscriptstyle -,1 $};
    \node[labelnode] at (0.72,0.12) {$ \scriptscriptstyle +,1 $};
    \draw [style=mid arrow] (-0.72,-0.72) -- (-0.72,0);
    \draw [style=mid arrow] (-0.24,-0.72) -- (-0.24,0);
    \draw [style=mid arrow] (0.24,0) -- (0.24,-0.72);
    \draw [style=mid arrow] (0.72,-0.72) -- (0.72,0);
    \node[labelnode] at (-0.72,-0.84) {$ \scriptscriptstyle +,1 $};
    \node[labelnode] at (-0.24,-0.84) {$ \scriptscriptstyle +,1 $};
    \node[labelnode] at (0.24,-0.84) {$ \scriptscriptstyle -,1 $};
    \node[labelnode] at (0.72,-0.84) {$ \scriptscriptstyle +,1 $};
\end{scope}
\node[anchor=east] at (-1.3, -3.31) {$S=+-++$};
\begin{scope}[shift={(0,-2.84)}]
    \node at (0,0.32) {$P_{3}$};
    \node[labelnode] at (-0.72,0.12) {$ \scriptscriptstyle +,1 $};
    \node[labelnode] at (-0.24,0.12) {$ \scriptscriptstyle -,1 $};
    \node[labelnode] at (0.24,0.12) {$ \scriptscriptstyle +,1 $};
    \node[labelnode] at (0.72,0.12) {$ \scriptscriptstyle +,1 $};
    \draw [style=mid arrow] (-0.72,-0.72) -- (-0.72,0);
    \draw [style=mid arrow] (-0.24,0) -- (-0.24,-0.72);
    \draw [style=mid arrow] (0.24,-0.72) -- (0.24,0);
    \draw [style=mid arrow] (0.72,-0.72) -- (0.72,0);
    \node[labelnode] at (-0.72,-0.84) {$ \scriptscriptstyle +,1 $};
    \node[labelnode] at (-0.24,-0.84) {$ \scriptscriptstyle -,1 $};
    \node[labelnode] at (0.24,-0.84) {$ \scriptscriptstyle +,1 $};
    \node[labelnode] at (0.72,-0.84) {$ \scriptscriptstyle +,1 $};
\end{scope}
\node[anchor=east] at (-1.3, -4.73) {$S=-+++$};
\begin{scope}[shift={(0,-4.26)}]
    \node at (0,0.32) {$P_{4}$};
    \node[labelnode] at (-0.72,0.12) {$ \scriptscriptstyle -,1 $};
    \node[labelnode] at (-0.24,0.12) {$ \scriptscriptstyle +,1 $};
    \node[labelnode] at (0.24,0.12) {$ \scriptscriptstyle +,1 $};
    \node[labelnode] at (0.72,0.12) {$ \scriptscriptstyle +,1 $};
    \draw [style=mid arrow] (-0.72,0) -- (-0.72,-0.72);
    \draw [style=mid arrow] (-0.24,-0.72) -- (-0.24,0);
    \draw [style=mid arrow] (0.24,-0.72) -- (0.24,0);
    \draw [style=mid arrow] (0.72,-0.72) -- (0.72,0);
    \node[labelnode] at (-0.72,-0.84) {$ \scriptscriptstyle -,1 $};
    \node[labelnode] at (-0.24,-0.84) {$ \scriptscriptstyle +,1 $};
    \node[labelnode] at (0.24,-0.84) {$ \scriptscriptstyle +,1 $};
    \node[labelnode] at (0.72,-0.84) {$ \scriptscriptstyle +,1 $};
\end{scope}
\end{tikzpicture}}
    \end{subfigure}\hfill
    \begin{subfigure}[t]{0.31\linewidth}
        \centering
        \caption{$\Delta_4(2,2)$}
        \label{fig: cell module 4 22}
        \scalebox{0.92}{
\tikzset{
    style/.style={#1},
    none/.style={},
    mid arrow/.style={postaction={decorate,decoration={
            markings, mark=at position 0.55 with {\arrow{stealth}}
        }}
    },
    dotted/.style={dashed}
}
\begin{tikzpicture}[
    x=1cm, y=1cm,
    every node/.style={font=\tiny, inner sep=0.5pt},
    labelnode/.style={font=\tiny, inner sep=0.3pt}
]
\node[anchor=east] at (-1.3, -0.47) {$S=++--$};
\begin{scope}[shift={(0,0)}]
    \node at (0,0.32) {$P_{1}$};
    \node[labelnode] at (-0.72,0.12) {$ \scriptscriptstyle +,1 $};
    \node[labelnode] at (-0.24,0.12) {$ \scriptscriptstyle +,1 $};
    \node[labelnode] at (0.24,0.12) {$ \scriptscriptstyle -,1 $};
    \node[labelnode] at (0.72,0.12) {$ \scriptscriptstyle -,1 $};
    \draw [style=mid arrow] (-0.72,-0.72) -- (-0.72,0);
    \draw [style=mid arrow] (-0.24,-0.72) -- (-0.24,0);
    \draw [style=mid arrow] (0.24,0) -- (0.24,-0.72);
    \draw [style=mid arrow] (0.72,0) -- (0.72,-0.72);
    \node[labelnode] at (-0.72,-0.84) {$ \scriptscriptstyle +,1 $};
    \node[labelnode] at (-0.24,-0.84) {$ \scriptscriptstyle +,1 $};
    \node[labelnode] at (0.24,-0.84) {$ \scriptscriptstyle -,1 $};
    \node[labelnode] at (0.72,-0.84) {$ \scriptscriptstyle -,1 $};
\end{scope}
\node[anchor=east] at (-1.3, -1.89) {$S=+-+-$};
\begin{scope}[shift={(0,-1.42)}]
    \node at (0,0.32) {$P_{2}$};
    \node[labelnode] at (-0.72,0.12) {$ \scriptscriptstyle +,1 $};
    \node[labelnode] at (-0.24,0.12) {$ \scriptscriptstyle -,1 $};
    \node[labelnode] at (0.24,0.12) {$ \scriptscriptstyle +,1 $};
    \node[labelnode] at (0.72,0.12) {$ \scriptscriptstyle -,1 $};
    \draw [style=mid arrow] (-0.72,-0.72) -- (-0.72,0);
    \draw [style=mid arrow] (-0.24,0) -- (-0.24,-0.72);
    \draw [style=mid arrow] (0.24,-0.72) -- (0.24,0);
    \draw [style=mid arrow] (0.72,0) -- (0.72,-0.72);
    \node[labelnode] at (-0.72,-0.84) {$ \scriptscriptstyle +,1 $};
    \node[labelnode] at (-0.24,-0.84) {$ \scriptscriptstyle -,1 $};
    \node[labelnode] at (0.24,-0.84) {$ \scriptscriptstyle +,1 $};
    \node[labelnode] at (0.72,-0.84) {$ \scriptscriptstyle -,1 $};
\end{scope}
\node[anchor=east] at (-1.3, -3.31) {$S=+--+$};
\begin{scope}[shift={(0,-2.84)}]
    \node at (0,0.32) {$P_{3}$};
    \node[labelnode] at (-0.72,0.12) {$ \scriptscriptstyle +,1 $};
    \node[labelnode] at (-0.24,0.12) {$ \scriptscriptstyle -,1 $};
    \node[labelnode] at (0.24,0.12) {$ \scriptscriptstyle -,1 $};
    \node[labelnode] at (0.72,0.12) {$ \scriptscriptstyle +,1 $};
    \draw [style=mid arrow] (-0.72,-0.72) -- (-0.72,0);
    \draw [style=mid arrow] (-0.24,0) -- (-0.24,-0.72);
    \draw [style=mid arrow] (0.24,0) -- (0.24,-0.72);
    \draw [style=mid arrow] (0.72,-0.72) -- (0.72,0);
    \node[labelnode] at (-0.72,-0.84) {$ \scriptscriptstyle +,1 $};
    \node[labelnode] at (-0.24,-0.84) {$ \scriptscriptstyle -,1 $};
    \node[labelnode] at (0.24,-0.84) {$ \scriptscriptstyle -,1 $};
    \node[labelnode] at (0.72,-0.84) {$ \scriptscriptstyle +,1 $};
\end{scope}
\node[anchor=east] at (-1.3, -4.73) {$S=-++-$};
\begin{scope}[shift={(0,-4.26)}]
    \node at (0,0.32) {$P_{4}$};
    \node[labelnode] at (-0.72,0.12) {$ \scriptscriptstyle -,1 $};
    \node[labelnode] at (-0.24,0.12) {$ \scriptscriptstyle +,1 $};
    \node[labelnode] at (0.24,0.12) {$ \scriptscriptstyle +,1 $};
    \node[labelnode] at (0.72,0.12) {$ \scriptscriptstyle -,1 $};
    \draw [style=mid arrow] (-0.72,0) -- (-0.72,-0.72);
    \draw [style=mid arrow] (-0.24,-0.72) -- (-0.24,0);
    \draw [style=mid arrow] (0.24,-0.72) -- (0.24,0);
    \draw [style=mid arrow] (0.72,0) -- (0.72,-0.72);
    \node[labelnode] at (-0.72,-0.84) {$ \scriptscriptstyle -,1 $};
    \node[labelnode] at (-0.24,-0.84) {$ \scriptscriptstyle +,1 $};
    \node[labelnode] at (0.24,-0.84) {$ \scriptscriptstyle +,1 $};
    \node[labelnode] at (0.72,-0.84) {$ \scriptscriptstyle -,1 $};
\end{scope}
\node[anchor=east] at (-1.3, -6.15) {$S=-+-+$};
\begin{scope}[shift={(0,-5.68)}]
    \node at (0,0.32) {$P_{5}$};
    \node[labelnode] at (-0.72,0.12) {$ \scriptscriptstyle -,1 $};
    \node[labelnode] at (-0.24,0.12) {$ \scriptscriptstyle +,1 $};
    \node[labelnode] at (0.24,0.12) {$ \scriptscriptstyle -,1 $};
    \node[labelnode] at (0.72,0.12) {$ \scriptscriptstyle +,1 $};
    \draw [style=mid arrow] (-0.72,0) -- (-0.72,-0.72);
    \draw [style=mid arrow] (-0.24,-0.72) -- (-0.24,0);
    \draw [style=mid arrow] (0.24,0) -- (0.24,-0.72);
    \draw [style=mid arrow] (0.72,-0.72) -- (0.72,0);
    \node[labelnode] at (-0.72,-0.84) {$ \scriptscriptstyle -,1 $};
    \node[labelnode] at (-0.24,-0.84) {$ \scriptscriptstyle +,1 $};
    \node[labelnode] at (0.24,-0.84) {$ \scriptscriptstyle -,1 $};
    \node[labelnode] at (0.72,-0.84) {$ \scriptscriptstyle +,1 $};
\end{scope}
\node[anchor=east] at (-1.3, -7.57) {$S=--++$};
\begin{scope}[shift={(0,-7.1)}]
    \node at (0,0.32) {$P_{6}$};
    \node[labelnode] at (-0.72,0.12) {$ \scriptscriptstyle -,1 $};
    \node[labelnode] at (-0.24,0.12) {$ \scriptscriptstyle -,1 $};
    \node[labelnode] at (0.24,0.12) {$ \scriptscriptstyle +,1 $};
    \node[labelnode] at (0.72,0.12) {$ \scriptscriptstyle +,1 $};
    \draw [style=mid arrow] (-0.72,0) -- (-0.72,-0.72);
    \draw [style=mid arrow] (-0.24,0) -- (-0.24,-0.72);
    \draw [style=mid arrow] (0.24,-0.72) -- (0.24,0);
    \draw [style=mid arrow] (0.72,-0.72) -- (0.72,0);
    \node[labelnode] at (-0.72,-0.84) {$ \scriptscriptstyle -,1 $};
    \node[labelnode] at (-0.24,-0.84) {$ \scriptscriptstyle -,1 $};
    \node[labelnode] at (0.24,-0.84) {$ \scriptscriptstyle +,1 $};
    \node[labelnode] at (0.72,-0.84) {$ \scriptscriptstyle +,1 $};
\end{scope}
\end{tikzpicture}}
    \end{subfigure}
    \caption{Basis vectors for the cell modules of $\mathcal K_4$ with $x\geq y$ (continued).
    The diagram scale and row labelling are the same as on the preceding page.}
\end{figure}
\clearpage
\endgroup

\end{appendices}

\newpage
\printbibliography

\end{document}